\newcommand{\C}{\mathbb{C}}
\newcommand{\Ql}{\mathbb{Q}_\ell}
\newcommand{\Z}{\mathbb{Z}}
\newcommand{\Fpb}{\overline{\F}_p}
\newcommand{\K}{\mathbb{K}}
\newcommand{\F}{\mathbb{F}}
\renewcommand{\O}{\mathbb{O}}
\newcommand{\E}{\mathbb{E}}
\newcommand{\bk}{\Bbbk}
\newcommand{\Gm}{\mathbb{G}_{\mathrm{m}}}
\newcommand{\Ga}{\mathbb{G}_{\mathrm{a}}}
\newcommand{\fg}{\mathfrak{g}}
\newcommand{\ft}{\mathfrak{t}}
\newcommand{\fh}{\mathfrak{h}}
\newcommand{\cB}{\mathscr{B}}
\newcommand{\scP}{\mathscr{P}}
\newcommand{\Gv}{{\check G}}
\newcommand{\Bv}{{\check B}}
\newcommand{\Pv}{{\check P}}
\newcommand{\Tv}{{\check T}}
\newcommand{\cBv}{{\check \cB}}
\newcommand{\scPv}{{\check \scP}}
\newcommand{\Db}{D^{\mathrm{b}}}
\newcommand{\Kb}{K^{\mathrm{b}}}
\newcommand{\Dbc}{D^{\mathrm{b}}_{\mathrm{c}}}
\newcommand{\Perv}{\mathsf{P}}
\newcommand{\Proj}{\mathsf{Proj}}
\newcommand{\Tilt}{\mathsf{Tilt}}
\newcommand{\Mod}{\mathsf{mod}}
\newcommand{\gmod}{\mathsf{gmod}}
\newcommand{\lh}{\text{-}}
\newcommand{\Parity}{\mathsf{Parity}}
\newcommand{\cS}{\mathcal{S}}
\newcommand{\For}{\mathsf{For}}
\newcommand{\Av}{\mathrm{Av}}
\newcommand{\av}{\mathrm{av}}
\newcommand{\AS}{\mathrm{AS}}
\newcommand{\D}{\mathbb{D}}
\newcommand{\cF}{\mathcal{F}}
\newcommand{\cG}{\mathcal{G}}
\newcommand{\cH}{\mathcal{H}}
\newcommand{\cL}{\mathcal{L}}
\newcommand{\cM}{\mathcal{M}}
\newcommand{\cN}{\mathcal{N}}
\newcommand{\cP}{\mathcal{P}}
\newcommand{\cT}{\mathcal{T}}
\newcommand{\cX}{\mathcal{X}}
\newcommand{\IC}{\mathcal{IC}}
\newcommand{\pH}{{}^p\!\mathcal{H}}
\newcommand{\ICv}{{\check \IC}}
\newcommand{\dv}{{\check \Delta}}
\newcommand{\nv}{{\check \nabla}}
\newcommand{\cPv}{{\check \cP}}
\newcommand{\cTv}{{\check \cT}}
\newcommand{\cEv}{\check{\mathcal{E}}}
\newcommand{\simto}{\xrightarrow{\sim}}
\DeclareMathOperator{\End}{End}
\DeclareMathOperator{\Hom}{Hom}
\DeclareMathOperator{\Ext}{Ext}
\newcommand{\id}{\mathrm{id}}
\DeclareMathOperator{\codim}{codim}
\def\lotimes{\@ifnextchar_{\@lotimessub}{\@lotimesnosub}}
\def\@lotimessub_#1{\mathchoice{\mathbin{\mathop{\otimes}^L}_{#1}}%
  {\otimes^L_{#1}}{\otimes^L_{#1}}{\otimes^L_{#1}}}
\def\@lotimesnosub{\mathbin{\mathop{\otimes}^L}}
\newcommand{\GL}{\mathrm{GL}}
\newcommand{\bfY}{\mathbf{Y}}
\newcommand{\scS}{\mathscr{S}}
\newcommand{\scT}{\mathscr{T}}
\newcommand{\hS}{\widehat{S}}
\newcommand{\us}{\underline{s}}
\newcommand{\ut}{\underline{t}}
\newcommand{\la}{\langle}
\newcommand{\ra}{\rangle}
\newcommand{\iv}{{\check \imath}}
\newcommand{\gr}{\mathrm{gr}}
\newtheorem*{thm*}{Theorem}
\numberwithin{equation}{section}
\newtheorem{thm}{Theorem}[section]
\newtheorem{lem}[thm]{Lemma}
\newtheorem{prop}[thm]{Proposition}
\newtheorem{cor}[thm]{Corollary}
\theoremstyle{definition}
\newtheorem{defn}[thm]{Definition}
\theoremstyle{remark}
\newtheorem{rmk}[thm]{Remark}
\title[Modular perverse sheaves on flag varieties I]{Modular perverse sheaves on flag varieties I:\\ tilting and parity sheaves}
\author{Pramod N. Achar}
\address{Department of Mathematics\\
  Louisiana State University\\
  Baton Rouge, LA 70803\\
  U.S.A.}
\email{pramod@math.lsu.edu}
\author{Simon Riche}
\address{Universit{\'e} Blaise Pascal - Clermont-Ferrand II, Laboratoire de Math{\'e}matiques, CNRS, UMR 6620, Campus universitaire des C{\'e}zeaux, F-63177 Aubi{\`e}re Cedex, France
}
\email{simon.riche@math.univ-bpclermont.fr}
\let\@wraptoccontribs\wraptoccontribs
\subjclass[2010]{14M15, 14F05, 20G40.}
\thanks{P.A. was supported by NSF Grant No.~DMS-1001594.  S.R. was supported by ANR Grants No.~ANR-09-JCJC-0102-01, ANR-2010-BLAN-110-02 and ANR-13-BS01-0001-01.}
\begin{document}

\begin{abstract}
In this paper we prove that the category of parity complexes on the flag variety of a complex connected reductive group $G$ is a ``graded version'' of the category of tilting perverse sheaves on the flag variety of the dual group $\Gv$, for any field of coefficients whose characteristic is good for $G$. We derive some consequences on Soergel's modular category $\mathcal{O}$, and on multiplicities and decomposition numbers in the category of perverse sheaves.
\end{abstract}

\maketitle

\section{Introduction}
\label{sec:intro}

\subsection{}

This paper is the first in a series devoted to investigating the structure of the category of Bruhat-constructible perverse sheaves on the flag variety of a complex connected reductive algebraic group, with coefficients in a field of positive characteristic. In this part, adapting some constructions of Bezrukavnikov--Yun~\cite{by} in the characteristic $0$ setting, we show that in good characteristic, the category of parity sheaves on the flag variety of a reductive group is a ``graded version'' of the category of tilting perverse sheaves on the flag variety of the Langlands dual group. We also derive a number of interesting consequences of this result, in particular on the computation of multiplicities of simple perverse sheaves in standard perverse sheaves, on Soergel's ``modular category $\mathcal{O}$,'' and on decomposition numbers.

\subsection{Some notation}
\label{ss:intro-notation}

Let $G$ be a complex connected reductive algebraic group, and let $T \subset B \subset G$ be a maximal torus and a Borel subgroup. The choice of $B$ determines a choice of positive roots of $(G,T)$, namely those appearing in $\mathrm{Lie}(B)$.
Consider also the Langlands dual data $\Tv \subset \Bv \subset \Gv$. That is, $\Gv$ is a complex connected reductive group, and we are given an isomorphism $X^*(T) \cong X_*(\Tv)$ which identifies the roots of $G$ with the coroots of $\Gv$ (and the positive roots determined by $B$ with the positive coroots determined by $\Bv$).

We are interested in the varieties $\cB:=G/B$ and $\cBv:=\Gv/\Bv$, in the derived categories
\[
\Db_{(B)}(\cB,\bk), \qquad \text{resp.} \qquad \Db_{(\Bv)}(\cBv,\bk)
\]
of sheaves of $\bk$-vector spaces on these varieties, constructible with respect to the stratification by $B$-orbits, resp.~$\Bv$-orbits (where $\bk$ is field), and in their abelian subcategories
\[
\Perv_{(B)}(\cB,\bk), \qquad \text{resp.} \qquad \Perv_{(\Bv)}(\cBv,\bk)
\]
of perverse sheaves (for the middle perversity). The category $\Perv_{(B)}(\cB,\bk)$ is highest weight, with simple objects $\{\IC_w, \, w \in W\}$, standard objects $\{\Delta_w, \, w \in W\}$, costandard objects $\{\nabla_w, \, w \in W\}$, indecomposable projective objects $\{\cP_w, \, w \in W\}$ and indecomposable tilting objects $\{\cT_w, \, w \in W\}$ naturally parametrized by the Weyl group $W$ of $(G,T)$. Similar remarks apply of course to $\Perv_{(\Bv)}(\cBv,\bk)$, and we denote the corresponding objects by $\ICv_w$, $\dv_w$, $\nv_w$, $\cPv_w$, $\cTv_w$. (Note that the Weyl group of $(\Gv,\Tv)$ is canonically identified with $W$.)

\subsection{The case $\bk=\C$}
\label{ss:intro-properties}

These categories have been extensively studied in the case $\bk=\C$: see in particular \cite{bgs, bbm, by}. To state some of their properties we need some notation. We will denote by $\mathsf{IC}_{(\Bv)}(\cBv,\C)$ the additive category of semisimple objects in $\Db_{(\Bv)}(\cBv,\C)$ (i.e.~the full subcategory whose objects are direct sums of shifts of simple perverse sheaves). If $\mathsf{A}$ is an abelian category, we will denote by $\Proj\lh\mathsf{A}$ the additive category of projective objects in $\mathsf{A}$. And finally, if $\mathsf{A}$, $\mathsf{B}$ are additive categories, if $T$ is an autoequivalence of $\mathsf{A}$, and if $\For \colon \mathsf{A} \to \mathsf{B}$ is a functor endowed with an isomorphism $\varepsilon \colon \For \circ T \simto \For$, we will say that $\For$ \emph{realizes $\mathsf{A}$ as a graded version of $\mathsf{B}$} if $\For$ is essentially surjective and, for any $M,N$ in $\mathsf{A}$, the natural morphism
\begin{equation}
\label{eqn:graded-version}
\bigoplus_{n \in \Z} \Hom \bigl( M,T^n(N) \bigr) \to \Hom(\For M,\For N)
\end{equation}
induced by $\For$ and $\varepsilon$ is an isomorphism.

With these notations, some of the main properties of our categories can be roughly stated as follows.
\begin{enumerate}
\item
\label{it:property-BB}
(``Be{\u\i}linson--Bernstein localization'') 
There exists an equivalence of abelian categories $\Perv_{(B)}(\cB,\C) \cong \mathcal{O}_0(G)$, where $\mathcal{O}_0(G)$ is the principal block of the category $\mathcal{O}$ of the Lie algebra of $G$.
\item
\label{it:property-Soergel}
(``Soergel theory'')
There exists a functor $\nu \colon \mathsf{IC}_{(\Bv)}(\cBv,\C) \to \Proj\lh\mathcal{O}_0(G)$ which realizes $\mathsf{IC}_{(\Bv)}(\cBv,\C)$ (endowed with the shift autoequivalence $[1]$) as a graded version of $\Proj\lh\mathcal{O}_0(G)$.
\item
\label{it:property-KL}
(``Kazhdan--Lusztig conjecture'') 
The multiplicities $[\nabla_w : \IC_v]$ are determined by the specialization at $q=1$ of a canonical basis of the Hecke algebra $\mathcal{H}_W$ of $W$.
\item
\label{it:property-Koszul}
(``Koszul duality'')
There exists a triangulated category $D^{\mathrm{mix}}$ endowed with an autoequivalence and a diagram
\[
\Db_{(B)}(\cB,\C) \longleftarrow D^{\mathrm{mix}} \longrightarrow \Db_{(\Bv)}(\cBv,\C)
\]
where both functors are such that \eqref{eqn:graded-version} is an isomorphism for all $M, N$ (for a suitable $T$),
and where simple perverse sheaves on the left correspond to tilting perverse sheaves on the right. As a consequence, the category $\Perv_{(B)}(\cB,\C)$ is equivalent to the category of (ungraded) modules over a Koszul ring.
\item
\label{it:property-self-dual}
(``Koszul self-duality'')
The diagram in \eqref{it:property-Koszul} is symmetric in the sense that tilting perverse sheaves on the left also correspond to simple perverse shea\-ves on the right.
\item
\label{it:property-Ringel}
(``Ringel duality'')
There exists an autoequivalence of the triangulated category $\Db_{(B)}(\cB,\C)$ sending $\nabla_w$ to $\Delta_{w w_0}$ and $\cT_w$ to $\cP_{w w_0}$. (Here, $w_0$ is the longest element in $W$.) As a consequence, we have
\[
(\cT_w : \nabla_v) = (\cP_{w w_0} : \Delta_{w w_0}).
\]
\item
\label{it:property-formality}
(``formality'')
If we set $\IC_\cB:=\bigoplus_{w \in W} \IC_w$ and if we consider the graded algebra $E=\bigl( \bigoplus_{n \in \Z} \Ext^n_{\Db_{(B)}(\cB,\C)}(\IC_\cB,\IC_\cB[n]) \bigr)^{\mathrm{op}}$ as a differential graded algebra with trivial differential, then there exists an equivalence of triangulated categories
\[
\Db_{(B)}(\cB,\C) \cong E\lh\mathsf{dgDerf}
\]
where the right-hand side is the derived category of finitely generated differential graded $E$-modules.
\end{enumerate}

The goal of this series of papers is to give analogues of these properties in the case where $\bk$ is of characteristic $\ell>0$.

\subsection{Known results}
\label{ss:intro-known}

First, let us recall what is known about the properties of~\S\ref{ss:intro-properties} when $\C$ is replaced by a finite field $\bk$ of characteristic $\ell>0$. (This case will be referred to as the ``modular case,'' as opposed to the ``ordinary case'' when $\ell=0$.)

Property \eqref{it:property-Ringel} can be immediately generalized, with the same proof (see~\S\ref{ss:reminder-radon}). Property \eqref{it:property-Soergel} was generalized by Soergel in~\cite{soergel}. Here the main difference with the ordinary case appears: in the modular case the category $\mathsf{IC}_{(\Bv)}(\cBv,\bk)$ is not well behaved, and the ``nice'' additive category which should replace $\mathsf{IC}_{(\Bv)}(\cBv,\C)$ is the category $\Parity_{(\Bv)}(\cBv,\bk)$ of \emph{parity complexes} in the sense of~\cite{jmw}. With this replacement, $(2)$ still holds (when $\ell$ is bigger than the Coxeter number of $G$) when $\mathcal{O}_0(G)$ is replaced by Soergel's ``modular category $\mathcal{O}$,'' a certain subquotient of the category of rational representations of the reductive algebraic group over $\bk$ which has the same root datum as $G$.

Property \eqref{it:property-formality} was also generalized to the modular case (again, where simple perverse sheaves are replaced by parity sheaves) in~\cite{rsw}, under the assumption that $\ell$ is at least the number of roots of $G$ plus $2$. Using this result, a representation-theoretic analogue of \eqref{it:property-Koszul} (which can be obtained, in the ordinary case, by combining properties~\eqref{it:property-BB} and~\eqref{it:property-Koszul}) was also obtained in~\cite{rsw}, under the same assumptions.

In~\cite{rsw} a second, more technical, difference between the modular setting and the ordinary one appears, related to eigenvalues of the Frobenius. In fact, to obtain a ``formality'' statement as in \eqref{it:property-formality} one needs to introduce a differential graded algebra whose cohomology is $E$ and which is endowed with an additional $\Z$-grading. This additional grading is obtained using eigenvalues of a Frobenius action, which are all of the form $\overline{p}^n$ where $n \in \Z$ and $\overline{p}$ is the image of a fixed prime number $p \neq \ell$ in $\bk$. If $\ell=0$ then $\overline{p}^n \neq \overline{p}^m$ if $n \neq m$,
and one obtains directly the grading. But if $\ell>0$ this property no longer holds.
In~\cite{rsw} this difficulty was overcome, but at the price of unnecessary assumptions on $\ell$. 

\subsection{Main result}
\label{ss:intro-main-result}

In this paper we explain how to adapt properties \eqref{it:property-BB} and \eqref{it:property-KL} of~\S\ref{ss:intro-properties} to the modular setting (when $\ell$ is good for $G$). In the ordinary case, historically these questions were treated first, and Koszul duality was discovered later as a convenient way to express many nice features of this situation. In the modular case we will first establish a weak form of Koszul duality (which is a first step in the direction of properties \eqref{it:property-Koszul}, \eqref{it:property-self-dual} and \eqref{it:property-formality}, as  explained in \cite{ar}) and then deduce \eqref{it:property-BB} and \eqref{it:property-KL}.

In fact we construct a functor
\[
\nu \colon \Parity_{(\Bv)}(\cBv,\bk) \to \Tilt_{(B)}(\cB,\bk)
\]
from the additive category of Bruhat-constructible parity complexes on $\cBv$ to the additive category of tilting objects in~$\Perv_{(B)}(\cB,\bk)$, which realizes the former category as a graded version of the latter category, and which sends the indecomposable parity sheaf parametrized by $w$ to the tilting perverse sheaf parametrized by $w$. (See Theorem \ref{thm:main} for a more precise statement.)

Our construction is adapted from the main constructions in \cite{by}: we describe both categories in terms of some categories of ``Soergel modules'' for the coinvariant algebra, using a ``functor $\mathbb{H}$'' for parity sheaves and a ``functor $\mathbb{V}$'' for tilting perverse sheaves. The case of parity sheaves is a relatively easy generalization of \cite{by}. (A similar equivalence was already considered in \cite{soergel} in case $\ell$ is bigger than the Coxeter number of $G$.) In this case, the functor of tensoring with a ``basic'' Soergel bimodule associated with a simple reflection corresponds to a ``push-pull'' functor associated with the projection to the associated partial flag variety.

The case of tilting perverse sheaves is more subtle, and requires some new ideas, in particular to define what it means to ``take the logarithm of the monodromy'' (see~\S\ref{ss:monodromy}). In this case, the functor of tensoring with a ``basic'' Soergel bimodule corresponds to taking an ``averaging functor'' with respect to a ``Whittaker type'' action of a unipotent group. (It would have been possible to follow the proofs in~\cite{by} more closely, using an ``equivariant'' setting for parity sheaves, and a ``free monodromic'' setting for tilting perverse sheaves. However, to obtain simpler proofs we combine some constructions from~\cite{by} with some arguments from~\cite{soergel}.)

\subsection{Applications}
\label{ss:intro-applications}

As an application of our construction, in~\S\ref{ss:modular-O} we prove that, if $\ell$ is bigger than the Coxeter number of $G$, the abelian category $\Perv_{(B)}(\cB,\bk)$ is equivalent, as a highest weight category, to Soergel's modular category $\mathcal{O}$, thereby obtaining a modular analogue of property \eqref{it:property-BB}.

Regarding property \eqref{it:property-KL}, recall that the graded dimension of the cohomology of the stalk of a simple perverse sheaf $\IC_w$ at a $T$-fixed point corresponding to $v$ is given by the Kazhdan--Lusztig polynomial attached to $(v,w)$ up to some normalization (see~\cite{kl, springer}). Hence the multiplicity $[\nabla_w^\C : \IC_v^\C]$ is given by the value at $1$ of an \emph{inverse} Kazhdan--Lusztig polynomial. But the inversion formula for Kazhdan--Lusztig polynomials (which can be seen as a combinatorial manifestation of Koszul duality) implies that inverse Kazhdan--Lusztig polynomials are also Kazhdan--Lusztig polynomials (for the dual group); in other words, with standard notation (see~\S\S\ref{ss:background-tilt}--\ref{ss:background-parity} for details) we have
\[
[\nabla_w^\C : \IC_v^\C] = \dim \mathbb{H}^\bullet(\cBv_{w_0 w^{-1}}, \iv_{w_0 w^{-1}}^* \ICv{}_{w_0 v^{-1}}^\C).
\]
In~\S\ref{ss:multiplicities} we show that this formula still holds in the modular setting (in good characteristic), if $\ICv_{w_0 v^{-1}}$ is replaced by the corresponding parity sheaf $\cEv_{w_0 v^{-1}}$.
By definition (see~\cite{williamson2}) the basis of $\mathcal{H}_W$ determined by the graded dimensions of stalks of parity sheaves is the \emph{$\ell$-canonical basis}. Our result shows that this basis also describes composition multiplicities for the \emph{dual} group. Note that this $\ell$-canonical basis can be computed algorithmically,\footnote{This algorithm is due to G.~Williamson (unpublished). It is based on the interpretation of the local intersections forms appearing in the computation of convolutions of parity sheaves (see~\cite[\S 3.3]{jmw} in terms of the ``Soergel calculus'' of~\cite{elias-williamson}, and allows us to compute the $\ell$-canonical basis elements by induction on the Bruhat order.} so the same holds for the multiplicities $[\nabla_w : \IC_v]$.

This result can be deduced from the modular analogue of \eqref{it:property-Koszul} that will be proved in~\cite{ar}. But we need not wait for that: a direct proof, using only properties of the functor $\nu$, is also possible.  Indeed, by the usual reciprocity formula in the highest weight category $\Perv_{(B)}(\cB,\bk)$, we have $[\nabla_w : \IC_v] = (\cP_v : \Delta_w)$. Now by Ringel duality (see property \eqref{it:property-Ringel} in~\S\ref{ss:intro-properties}) this multiplicity can be computed in terms of multiplicities for tilting perverse sheaves. Finally, our functor $\nu$ allows us to express this multiplicity in terms of the cohomology of stalks of parity sheaves.

As a last application of our constructions, in~\S\ref{ss:decomposition} we prove that the decomposition matrices for parity sheaves, tilting perverse sheaves, projective perverse sheaves and intersection cohomology complexes are related in a very simple way.

\subsection{Perspectives}

In~\cite{ar, ar2} we use the functor $\nu$ of~\S\ref{ss:intro-main-result} to provide modular analogues of properties (4), (5) and (7) of \S\ref{ss:intro-properties}. Note that these constructions do not involve any consideration about Frobenius weights as mentioned in~\S\ref{ss:intro-known}. 
In fact the ``grading'' that shows up in \eqref{it:property-Koszul} and \eqref{it:property-self-dual} (and in a hidden way in \eqref{it:property-formality}) will come purely from the grading that shows up in \eqref{it:property-Soergel}.
We are even able to define a ``modular version'' of the category of mixed perverse sheaves considered in \cite[\S 4.4]{bgs} without using any theory of {\'e}tale sheaves over finite fields.

Regarding the last sentence in property (4),
it was expected for some time that if $\ell$ is not too small (say, bigger than the Coxeter number of $G$) then (Bruhat-constructible) parity sheaves on $\cB$ or $\cBv$ are just intersection cohomology complexes. (In fact, thanks to \cite{soergel}, this assertion is equivalent to the validity of Lusztig's conjecture on characters of modular simple representations of reductive algebraic groups ``around the Steinberg weight.'') However, recent results of Williamson \cite{williamson} have shown that this was too optimistic, even in the case $G=\GL_n(\C)$. As a consequence, one cannot hope that the category $\Perv_{(B)}(\cB,\bk)$ is governed by a Koszul ring unless $\ell$ is very large. (In this case, Koszulity was shown in \cite[\S 5.7]{rsw}; see also \cite{weidner}.) We will consider this question in \cite{ar,ar2}.

\subsection{Further notation and conventions}

The notations related to $G$ and $\Gv$ introduced in~\S\ref{ss:intro-notation} will be retained throughout the paper.  In addition, we will denote by $w_0$ the longest element in $W$, and by $U$ (resp.~$U^-$) the unipotent radical of $B$ (resp.~the opposite Borel subgroup).

Let us fix, once and for all, a prime number $\ell$ that is good for $G$ (or equivalently for $\Gv$).  Recall that this condition excludes $\ell=2$ if the root system $R$ of our group has a component not of type $\mathbf{A}$, $\ell=3$ if $R$ has a component of type $\mathbf{E}$, $\mathbf{F}$ or $\mathbf{G}$, and $\ell=5$ if $R$ has a component of type $\mathbf{E}_8$.
Fix a finite extension $\O$ of $\Z_\ell$.  Denote by $\K$ its field of fractions and by $\F$ its residue field. (Thus, $\ell$ is the characteristic of $\F$.)  Typically, we will use the letter $\E$ to denote any of $\K$, $\O$, or $\F$.  On various occasions we will have to use separate arguments for the cases of $\K$, $\O$, $\F$. In this case we will add superscripts or subscripts to the notations to emphasize the ring of coefficients.  In Sections~\ref{sec:tilting-V}--\ref{sec:proof}, we will assume in addition that $\#\F > 2$.

These three rings will serve as coefficients for derived categories of sheaves. Only field coefficients appeared in~\S\ref{ss:intro-notation}, but the same notions make sense for $\E = \O$ as well. (In this case, the perversity we use is the middle perversity $p$, and \emph{not} the perversity $p^+$ from~\cite[\S 3.3.4]{bbd}.) Additional background and references concerning the various classes of perverse sheaves, especially for the case $\E = \O$, will be given in Section~\ref{sec:main-result}.

We define the shift $\langle 1 \rangle$ in the category of $\Z$-graded $\E$-modules as follows: if $M=\oplus_{n \in \Z} M_n$ is a graded $\E$-module, then $M \langle 1 \rangle$ is the graded $\E$-module with $(M \langle 1 \rangle)_n=M_{n+1}$. Note that this convention is opposite to the convention chosen in \cite{rsw}. 
If $A$ is a noetherian algebra we will denote by $A\lh\Mod$ the category of finitely generated left $A$-modules. If $A$ is a \emph{$\Z$-graded} noetherian algebra we will denote by $A\lh\gmod$ the category of finitely generated $\Z$-graded left $A$-modules.

Throughout the paper, we use the convention that $\For$ generically represents an obvious forgetful functor (of the $\Z$-grading).

\subsection{Contents}

In Section~\ref{sec:main-result} we state our main result, and deduce the applications mentioned in~\S\ref{ss:intro-applications}.  The proof of the main result occupies Sections~\ref{sec:soergel-modules}--\ref{sec:proof}.  Section~\ref{sec:soergel-modules} introduces ``Soergel modules,'' which will provide the bridge between tilting perverse sheaves on $\cB$ and parity sheaves on $\cBv$. In Section~\ref{sec:hh} we relate parity sheaves to Soergel modules via the functor $\mathbb{H}$. In Section~\ref{sec:tilting-V} we relate tilting perverse sheaves to Soergel modules via the functor $\mathbb{V}$. We finish the proof of our main theorem in Section~\ref{sec:proof}.

The paper ends with two appendices. Appendix~\ref{sec:equivariant-Db} collects some well-known results on equivariant derived categories that we were not able to find in the literature, and Appendix~\ref{sec:tilting} (joint with Geordie Williamson) explains how to generalize standard results on tilting perverse sheaves to the case of integral coefficients.

\subsection{Acknowledgements}

This project was initially started as joint work with Geordie Williamson, and some of these results were even conjectured by him. Although he was not able to participate to the final stages of this work, his ideas, comments and enthusiasm were crucial for the development of our ideas. We also thank Patrick Polo for his interest and for checking some of our conjectures in interesting special cases. Finally we thank several anonymous referees for their helpful suggestions.

\section{Main result and applications}
\label{sec:main-result}

\subsection{Background on tilting sheaves}
\label{ss:background-tilt}

For any $w \in W$ one can consider the orbit
\[
\cB_w:=BwB/B
\]
and the inclusion $i_w \colon \cB_w \hookrightarrow \cB$.  Then the \emph{standard} and \emph{costandard} perverse sheaves on $\cB$ are defined as
\[
\Delta_w := i_{w!} \underline{\E}_{\cB_w}[\dim \cB_w], \qquad \nabla_w:= i_{w*} \underline{\E}_{\cB_w} [\dim \cB_w],
\]
where $\underline{\E}_{\cB_w}$ denotes the constant sheaf on $\cB_w$ with value $\E$.  (The fact that $\Delta_w$ and $\nabla_w$ are perverse sheaves follows from~\cite[Corollaire~4.1.3]{bbd} in the case $\E=\K$ or $\F$; in the case $\E=\O$ this property can be deduced from the case $\E=\F$, see~\cite[Lemma 2.1.2]{rsw}.\footnote{Here is an alternative argument, suggested by a referee. Recall that an object $\cF$ is in ${}^p \Db_{(B)}(\cB,\O)^{\leq 0}$ iff $\F(\cF)$ is in ${}^p \Db_{(B)}(\cB,\F)^{\leq 0}$. Using this observation and the results of~\cite{bbd} we deduce that $\Delta_w^\O$ and $\nabla_w^\O$ are in ${}^p \Db_{(B)}(\cB,\O)^{\leq 0}$ Using Verdier duality it follows that both of these objects are also in ${}^{p^+} \Db_{(B)}(\cB,\O)^{\geq 0}$. Hence (see~\cite[\S 3.3.4, Equation (ii)]{bbd}) they are torsion-free perverse sheaves.})

Recall that an object $\cT$ in $\Perv_{(B)}(\cB,\E)$ is called \emph{tilting} if it admits both a standard filtration (i.e.~a filtration with subquotients isomorphic to $\Delta_v$ for $v \in W$) and a costandard filtration (i.e.~a filtration with subquotients isomorphic to $\nabla_v$ for $v \in W$). We denote by $\Tilt_{(B)}(\cB,\E)$ the full additive subcategory of $\Perv_{(B)}(\cB,\E)$ consisting of tilting objects. It is well known in case $\E=\K$ or $\F$ (see~\cite{ringel, bbm}), and proved in Appendix \ref{sec:tilting} in case $\E=\O$, that this category is Krull--Schmidt, and that its indecomposable objects are parametrized by $W$; we denote by $\cT_w$ the indecomposable object associated with $w \in W$. We also denote by
\[
(\cT : \nabla_v)
\]
the number of times $\nabla_v$ appears in a costandard filtration of the tilting object $\cT$. (This number does not depend on the choice of the filtration; it is equal to the rank of the free $\E$-module $\Hom(\Delta_v,\cT)$.) The extension of scalars functors 
\begin{align*}
\F(-):=\F \lotimes_\O (-) \colon \Db_{(B)}(\cB,\O) &\to \Db_{(B)}(\cB,\F), \\
\K(-):=\K \otimes_\O (-) \colon \Db_{(B)}(\cB,\O) &\to \Db_{(B)}(\cB,\K)
\end{align*}
restrict to functors between the categories of tilting perverse sheaves, which we denote similarly.

\subsection{Background on parity sheaves}
\label{ss:background-parity}

On the dual side, for any $w \in W$ one can consider the orbit
\[
\cBv_w:=\Bv w \Bv / \Bv,
\]
and the inclusion $\iv_w \colon \cBv_w \hookrightarrow \cBv$. Recall (following \cite{jmw}) that an object ${\check \cF}$ of $\Db_{(\Bv)}(\cBv,\E)$ is called \emph{$*$-even} if for any $w \in W$ and $n \in \Z$ the sheaf $\cH^n(\iv_w^* {\check \cF})$ is zero if $n$ is odd, and a free $\E$-local system otherwise. Similarly, ${\check \cF}$ is called \emph{$!$-even} if
for any $w \in W$ and $n \in \Z$ the sheaf $\cH^n(\iv_w^! {\check \cF})$ is zero if $n$ is odd, and a free $\E$-local system otherwise. The object ${\check \cF}$ is called \emph{even} if it is both $*$-even and $!$-even, and \emph{odd} if $\cF[1]$ is even. Finally, an object is called a \emph{parity complex} if it is a direct sum of an even object and an odd object. We denote by $\Parity_{(\Bv)}(\cBv,\E)$ the full additive subcategory of $\Db_{(\Bv)}(\cBv,\E)$ consisting of parity complexes. This subcategory is stable under the shift $[1]$. It follows from \cite[\S 4.1]{jmw} that this category is Krull--Schmidt, and that its indecomposable objects are parametrized by $W \times \Z$. More precisely, for any $w \in W$ there exists a unique indecomposable object $\cEv_w$ in $\Parity_{(\Bv)}(\cBv,\E)$ which is supported on the closure of $\cBv_w$, and whose restriction to $\cBv_w$ is $\underline{\E}_{\cBv_w}[\dim \cBv_w]$. Such an object is called a \emph{parity sheaf}. Then any indecomposable object in $\Parity_{(\Bv)}(\cBv,\E)$ is a shift of a parity sheaf. Note finally that the functors $\F(-)$ and $\K(-)$ also restrict to functors between categories of parity complexes.

\subsection{Background on projective sheaves and Radon transform}
\label{ss:reminder-radon}

(This subsection is not needed for the statement of the main result, but it will be needed for the subsequent applications.)
It follows from \cite{bgs} in case $\E=\K$ or $\F$ and from \cite[\S 2.4]{rsw} in case $\E=\O$ that the category $\Perv_{(B)}(\cB,\E)$ has enough projective objects, that the full subcategory $\Proj_{(B)}(\cB,\E)$ of $\Perv_{(B)}(\cB,\E)$ consisting of projective objects is Krull--Schmidt, and that its indecomposable objects are naturally parametrized by $W$.  We denote by $\cP_w$ the projective object associated with $w \in W$. (If $\E=\K$ or $\F$ then by definition $\cP_w^\E$ is the projective cover of the simple object $\IC_w^\E$. If $\E=\O$ these objects are defined in \cite[\S 2.4]{rsw}; one can show in this case as well that $\cP_w^\O$ is the projective cover of $\IC_w^\O$.) It is known that each $\cP_w$ admits a standard filtration, i.e.~a filtration with subquotients isomorphic to $\Delta_v$ for $v \in W$. We denote by
\[
(\cP_w : \Delta_v)
\]
the number of times $\Delta_v$ appears in such a filtration. (This number does not depend on the filtration; it is equal to the rank of the free $\E$-module $\Hom(\cP_w,\nabla_v)$.)

Let $\mathscr{U}$ denote the open $G$-orbit in $\cB \times \cB$, and consider the diagram
\[
\xymatrix@R=0.2cm{
& \mathscr{U} \ar[ld]_-{p} \ar[rd]^-{q} & \\
\cB & & \cB,
}
\]
where $p$ and $q$ denote the projection on the first and second factors, respectively.
Following \cite{bb, bbm, yun} we consider the ``Radon transform'' (or ``geometric Ringel duality'')
\[
\mathsf{R} := q_! p^* [\dim \cB] \colon \Db_{(B)}(\cB,\E) \to \Db_{(B)}(\cB,\E).
\]
It is well known that this functor is an equivalence of triangulated categories, with inverse isomorphic to $p_* q^! [-\dim \cB]$; moreover, this equivalence satisfies
\[
\mathsf{R}(\nabla_w) \cong \Delta_{w w_0}, \qquad \mathsf{R}(\cT_w) \cong \cP_{w w_0}
\]
for all $w \in W$. (More precisely, these properties are proved in \cite{bbm, yun} in the case $\E=\K$. The same arguments apply to the case $\E=\F$. The case $\E=\O$ is not much more difficult; details are treated in~\S\ref{ss:radon}.)
In particular, this equivalence restricts to an equivalence of categories
\[
\mathsf{R} \colon \Tilt_{(B)}(\cB,\E) \simto \Proj_{(B)}(\cB,\E).
\]
We also deduce that for any $v,w \in W$ we have
\begin{equation}
\label{eqn:multiplicities-T-P}
(\cT_w : \nabla_v) = (\cP_{w w_0} : \Delta_{v w_0}).
\end{equation}

\subsection{Statement}
\label{ss:statement}

The following theorem is the main result of the paper.  

\begin{thm}
\label{thm:main}
Assume that $\ell$ is good for $G$, and that $\#\F > 2$. 

For $\E=\K$, $\O$ or $\F$ there exists a functor
\[
\nu_\E \colon \Parity_{(\Bv)}(\cBv,\E) \to \Tilt_{(B)}(\cB,\E)
\]
and an isomorphism of functors $\varepsilon \colon \nu_\E \circ [1] \simto \nu_\E$ such that the following hold.
\begin{enumerate}
\item 
For any $\cEv,{\check \cF}$ in $\Parity_{(\Bv)}(\cBv,\E)$, the functor $\nu_\E$ and the isomorphism $\varepsilon$ induce an isomorphism
\[
\bigoplus_{n \in \Z} \Hom \bigl( \cEv,{\check \cF}[n] \bigr) \simto \Hom \bigl( \nu_\E(\cEv),\nu_\E({\check \cF}) \bigr).
\]
\item
For any $w \in W$ we have $\nu_\E(\cEv_w) \cong \cT_{w^{-1}}$.
\item
For any $\cEv$ in $\Parity_{(\Bv)}(\cBv,\E)$ and $v \in W$ we have
\[
(\nu_\E(\cEv) : \nabla_v) = \mathrm{rk}_\E \bigl( \mathbb{H}^\bullet(\cBv_{v^{-1}}, \iv_{v^{-1}}^* \cEv) \bigr).
\]
\item
The functors $\nu_\E$ are compatible with extension of scalars in the sense that the following diagram commutes up to isomorphisms of functors:
\[
\xymatrix@C=1.5cm{
\Parity_{(\Bv)}(\cBv,\K) \ar[d]_-{\nu_\K}^-{\wr} & \Parity_{(\Bv)}(\cBv,\O) \ar[r]^-{\F(-)} \ar[l]_-{\K(-)} \ar[d]_-{\nu_\O}^-{\wr} & \Parity_{(\Bv)}(\cBv,\F) \ar[d]^-{\nu_\F}_-{\wr} \\
\Tilt_{(B)}(\cB,\K) & \Tilt_{(B)}(\cB,\O) \ar[r]^-{\F(-)} \ar[l]_-{\K(-)} & \Tilt_{(B)}(\cB,\F).
}
\]
\end{enumerate}
\end{thm}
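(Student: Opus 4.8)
The plan is to build the functor $\nu_\E$ via the "Soergel module" bridge described in the introduction: one constructs a functor $\mathbb{H}_\E$ from $\Parity_{(\Bv)}(\cBv,\E)$ to a category of graded modules over the coinvariant algebra (or rather over its appropriate version with $\E$-coefficients), a functor $\mathbb{V}_\E$ from $\Tilt_{(B)}(\cB,\E)$ to the \emph{same} category of modules, show that $\mathbb{H}_\E$ is fully faithful in the graded sense of~\eqref{eqn:graded-version} after forgetting the grading, show that $\mathbb{V}_\E$ is fully faithful, and identify the essential images. Then $\nu_\E$ is obtained by composing $\mathbb{H}_\E$ with a quasi-inverse to (the corestriction of) $\mathbb{V}_\E$, and $\varepsilon$ comes from the fact that $\mathbb{H}_\E$ intertwines the shift $[1]$ with the internal grading shift $\langle 1\rangle$ while $\mathbb{V}_\E$ lands in ungraded modules (so $\For\circ\langle 1\rangle\cong\For$). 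Concretely I would proceed by induction on the length of elements of $W$ using the geometry of the minimal parabolic fibrations: on the parity side, tensoring with the "basic" Soergel bimodule $B_s$ corresponds to the push-pull functor $\pi_s^*\pi_{s*}$ along $\cBv\to\cBv/\check P_s$, and on the tilting side it corresponds to a "Whittaker-type" averaging functor $\Av_s$ on $\cB$ with respect to a unipotent group; the key is that both functors are computed by the same bimodule operation, so the two module-valued functors match up.

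The individual assertions (1)--(4) are then read off. For (1): the graded-Hom statement for $\mathbb{H}_\E$ is exactly the parity-sheaf analogue of Soergel's / Bezrukavnikov--Yun's computation of morphisms between parity complexes in terms of their hypercohomology as coinvariant-algebra modules (this uses that hypercohomology of parity complexes is free and that Ext-vanishing between standards and costandards forces a "formality" at the level of Hom-spaces); the full faithfulness of $\mathbb{V}_\E$ is the tilting-module analogue, proved as in the appendix on integral tilting sheaves combined with the argument of~\cite{soergel}. Combining, the composite $\nu_\E$ satisfies~\eqref{eqn:graded-version}. For (2): one checks $\nu_\E(\cEv_w)$ is indecomposable tilting (indecomposability is preserved because $\nu_\E$ is fully faithful after forgetting the grading and $\End(\cEv_w)$ is graded-local), it is supported on $\overline{\cB_{w^{-1}}}$ and has the right "leading term" (by tracking the Soergel module of $\cEv_w$, which has a canonical generator in the bottom degree matching the generator of $\mathbb{V}_\E(\cT_{w^{-1}})$), hence $\nu_\E(\cEv_w)\cong\cT_{w^{-1}}$; the indexing by $w^{-1}$ comes from the fact that $\mathbb{H}$ uses left $\Bv$-orbits while $\mathbb{V}$ uses a right/opposite convention, or equivalently from inverting via the Radon transform. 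For (3): the multiplicity $(\nu_\E(\cEv):\nabla_v)$ equals $\mathrm{rk}_\E\Hom(\Delta_v,\nu_\E(\cEv))$, which by~\eqref{eqn:graded-version} and the description of $\mathbb{V}_\E(\Delta_v)$ as a "standard" Soergel module, matches $\mathrm{rk}_\E\mathbb{H}^\bullet(\cBv_{v^{-1}},\iv_{v^{-1}}^*\cEv)$ after unwinding; this is a direct Soergel-module computation once the functors are in place. For (4): compatibility with $\F(-)$ and $\K(-)$ holds because $\mathbb{H}_\E$, $\mathbb{V}_\E$, the coinvariant algebra, and the standard/costandard objects are all defined over $\O$ and commute with $\O$-flat base change — here one uses that hypercohomology of parity complexes and $\Hom$-spaces between tilting objects are free $\O$-modules, so base change is exact on the nose.

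The main obstacle, as the introduction flags in~\S\ref{ss:intro-main-result}, is the construction and control of $\mathbb{V}_\E$ on the tilting side — in particular making sense of "the logarithm of the monodromy" of a tilting perverse sheaf restricted to an orbit, so that $\mathbb{V}_\E$ takes values in genuine modules over the coinvariant algebra rather than merely over its semisimplification. In characteristic $0$ the monodromy is unipotent and one simply takes $\log$; with $\F$- or $\O$-coefficients this is not literally available, and the fix is to replace the monodromic/free-monodromic formalism of~\cite{by} by the averaging-functor construction (the "Whittaker-type" $\Av_s$), which forces the correct module structure by a different route; verifying that this averaging functor has the expected properties (exactness on tilting objects, the push-pull matching with $\mathbb{H}$, compatibility with base change) is where the real work lies. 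A secondary technical point is the hypothesis $\#\F>2$, needed so that the relevant Artin--Schreier-type local systems / additive characters used in defining $\Av_s$ exist over $\F$; without it the Whittaker averaging degenerates. Everything else — Krull--Schmidt properties, freeness of the relevant $\O$-modules, the inductive setup along parabolic fibrations — is routine given the references and the appendices.
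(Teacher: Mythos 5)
Your overall architecture coincides with the paper's: $\nu_\E$ is built as the composition of $\mathbb{H}_\E=\mathbb{H}^\bullet(\cBv,-)$ (an equivalence onto graded Soergel modules, with $\vartheta_s=({\check\pi}^s)^*({\check\pi}^s)_*$ matching $C\otimes_{C_s}(-)$), the grading-forgetting functor, and a quasi-inverse of $\mathbb{V}_\E=\Hom(\cP_e,-)$ (with the Whittaker averaging $\xi_s=\Av^s_!\Av^s_\psi$ matching $C\otimes_{C_s}(-)$), with $\varepsilon$ coming from $\For\circ\langle 1\rangle\cong\For$, property (4) from $\O$-freeness and base change, and $\#\F>2$ for the Artin--Schreier data. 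But one step you treat as the crux is left genuinely unresolved, and one step is argued by a route that does not work as stated. For the first: the averaging functors do not ``force the correct module structure.'' The $C$-action on $\Hom(\cP_e,\cT)$ comes from the monodromy action of $\bfY$, which only gives an action of the completed group algebra $\widehat{\E\bfY}$; to get an action of $\hS$, hence of $C$ (via the factorization of Lemma~\ref{lem:monodromy-Pe}, itself proved by reduction to $\K$), one needs a continuous $W$-equivariant algebra isomorphism $\hS\simto\widehat{\E\bfY}$ replacing the logarithm. Over $\K$ this is $\log$; over $\O$ and $\F$ the paper constructs it (Proposition~\ref{prop:logarithm}) from a faithful representation of a dual-side group whose image has a stable complement in $\fgl(V)$, following Springer--Steinberg/Bardsley--Richardson, and this is precisely where good characteristic and the reduction to $\GL_n$ and adjoint non-type-$\mathbf{A}$ factors are used. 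Without this ingredient $\mathbb{V}_\E$ does not take values in $C$-modules at all, so the comparison with $\mathbb{H}_\E$ cannot even be formulated; the averaging functors serve a different purpose (producing $\cP_e\cong\Av_!\Av_\psi(\IC_e)$ and the wall-crossing $\xi_s$, whose interaction with $\mathbb{V}$ in Proposition~\ref{prop:V-xis} still requires a separate modular argument, e.g.\ that $\cT_s$ is not $B$-equivariant so monodromy surjects onto $\End(\cT_s)$).

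For the second: your derivation of (3) via $\mathrm{rk}\,\Hom(\Delta_v,\nu_\E(\cEv))$ and ``the description of $\mathbb{V}_\E(\Delta_v)$'' does not go through, because property (1) and the full faithfulness of $\mathbb{V}$ are only available for morphisms between tilting objects; $\Delta_v$ is not tilting, $\mathbb{V}$ is not shown to be faithful (or even defined, over $\O$) on standard objects, and no computation of $\mathbb{V}(\Delta_v)$ is available. The paper instead deduces (3) purely formally from (1) and (2), by induction on the Bruhat order: it compares the identity $\mathrm{rk}_\E\Hom(\cT_v,\cT_w)=\sum_u(\cT_v:\nabla_u)(\cT_w:\nabla_u)$ with the parity-sheaf identity of \cite[Proposition~2.6]{jmw} expressing $\mathrm{rk}_\E\Hom^\bullet(\cEv_{v^{-1}},\cEv_{w^{-1}})$ as the corresponding sum of products of stalk ranks, and matches the two triangular recursions. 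Finally, a small correction to (2): the twist $w\mapsto w^{-1}$ is not explained by the Radon transform (which gives $\cT_w\mapsto\cP_{ww_0}$); it arises because $\vartheta_s$ enlarges supports by right multiplication by $\Pv^s$ while $\xi_s$ enlarges them on the left, so that $\cEv_w$ is the top summand of $\cEv(s_1,\ldots,s_{\ell(w)})$ whereas $\cT_w$ is the top summand of $\cT(s_{\ell(w)},\ldots,s_1)$, both objects having the same Soergel module $\mathsf{BS}(s_1,\dots,s_{\ell(w)})$ up to the order conventions of~\S\ref{ss:BS-modules}.
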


(The requirement that $\#\F > 2$ arises because technical aspects of our study of $\Tilt_{(B)}(\cB,\E)$ require $\F$ to contain nontrivial roots of unity.  The proof of this theorem will be given in Sections~\ref{sec:soergel-modules}--\ref{sec:proof}.)

Properties $(1)$ and $(2)$ say that $\nu$ realizes $\Parity_{(\Bv)}(\cBv,\E)$ as a graded version of $\Tilt_{(B)}(\cB,\E)$, in the sense of~\S\ref{ss:intro-properties}. Then property $(3)$ says that the cohomologies of stalks of parity sheaves provide \emph{graded versions} of the multiplicities of costandard objects in tilting objects. This point of view is used and developed further in the companion papers \cite{ar, ar2}.

In the case $\E=\K$ the category $\Parity_{(\Bv)}(\cBv,\E)$ coincides with the category $\mathsf{IC}_{(\Bv)}(\cBv,\K)$ of~\S\ref{ss:intro-properties}. In this case Theorem \ref{thm:main} can be deduced from \cite{by}. (It can also be deduced from the more standard Koszul duality of \cite{bgs} together with the Radon transform of \cite{bbm}; see Property \eqref{it:property-Ringel} in~\S\ref{ss:intro-properties}.) Our strategy in the general case will be similar to the one used in \cite{by}, relating our categories to certain categories of Soergel modules. 

The remainder of Section~\ref{sec:main-result} is devoted to various applications of Theorem~\ref{thm:main}.

\subsection{Application to multiplicities}
\label{ss:multiplicities}

In this subsection we consider the case $\E=\F$. The first consequence of Theorem \ref{thm:main} is the following description of composition multiplicities of costandard objects (or equivalently---using Verdier duality---of standard objects) in the category $\Perv_{(B)}(\cB,\F)$.

\begin{thm}
\label{thm:multiplicities}
Assume that $\ell$ is good for $G$.
For any $v,w \in W$ we have
\[
[ \nabla_w : \IC_v ] = (\cP_v : \Delta_w) = (\cT_{v w_0} : \nabla_{w w_0}) = \dim \mathbb{H}^\bullet(\cBv_{w_0 w^{-1}}, \iv_{w_0 w^{-1}}^* \cEv_{w_0 v^{-1}}).
\]
\end{thm}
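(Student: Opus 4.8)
The plan is to chain together four equalities, three of which are already essentially available in the excerpt, the fourth being the new input provided by Theorem~\ref{thm:main}. First I would establish $[\nabla_w : \IC_v] = (\cP_v : \Delta_w)$. This is the standard Brauer–Humphreys–type reciprocity in the highest weight category $\Perv_{(B)}(\cB,\F)$: the costandard object $\nabla_w$ has a simple socle $\IC_w$, and the multiplicity $[\nabla_w : \IC_v]$ equals the multiplicity $(\cP_v : \Delta_w)$ of the standard object $\Delta_w$ in a standard filtration of the projective cover $\cP_v$. Since $\Perv_{(B)}(\cB,\F)$ is a highest weight category (this is recalled in \S\ref{ss:intro-notation} and \S\ref{ss:reminder-radon}), this reciprocity holds on the nose.

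\smallskip

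Next I would pass from projectives to tiltings via the Radon transform. By \S\ref{ss:reminder-radon} the functor $\mathsf R$ is an equivalence with $\mathsf R(\cT_x) \cong \cP_{x w_0}$ and $\mathsf R(\nabla_x) \cong \Delta_{x w_0}$, and \eqref{eqn:multiplicities-T-P} already records the resulting identity of multiplicities: $(\cT_x : \nabla_y) = (\cP_{x w_0} : \Delta_{y w_0})$. Taking $x = v w_0$ and $y = w w_0$ (so that $x w_0 = v$ and $y w_0 = w$) yields $(\cT_{v w_0} : \nabla_{w w_0}) = (\cP_v : \Delta_w)$, which is the middle equality in the statement. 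At this point the only remaining task is to compute $(\cT_{v w_0} : \nabla_{w w_0})$ in terms of cohomology of stalks of parity sheaves on the dual flag variety.

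\smallskip

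For the last equality I would invoke Theorem~\ref{thm:main}(2) and (3) with $\E = \F$. By part (2), $\nu_\F(\cEv_{w_0 v^{-1}}) \cong \cT_{(w_0 v^{-1})^{-1}} = \cT_{v w_0^{-1}} = \cT_{v w_0}$, using $w_0^{-1} = w_0$. Then part (3) applied to the parity complex $\cEv_{w_0 v^{-1}}$ and the element $w w_0 \in W$ gives
\[
(\cT_{v w_0} : \nabla_{w w_0}) = (\nu_\F(\cEv_{w_0 v^{-1}}) : \nabla_{w w_0}) = \mathrm{rk}_\F\bigl(\mathbb H^\bullet(\cBv_{(w w_0)^{-1}}, \iv_{(w w_0)^{-1}}^* \cEv_{w_0 v^{-1}})\bigr),
\]
and since $(w w_0)^{-1} = w_0 w^{-1} = w_0^{-1} w^{-1}$ with $w_0^{-1} = w_0$, we have $(w w_0)^{-1} = w_0 w^{-1}$, so this is exactly $\dim \mathbb H^\bullet(\cBv_{w_0 w^{-1}}, \iv_{w_0 w^{-1}}^* \cEv_{w_0 v^{-1}})$ (over a field, $\mathrm{rk}_\F = \dim$). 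Stringing the four equalities together completes the proof.

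\smallskip

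I do not expect any of these steps to be a genuine obstacle, since the hard work is entirely encapsulated in Theorem~\ref{thm:main} (whose proof occupies Sections~\ref{sec:soergel-modules}–\ref{sec:proof}). The only points requiring a little care are bookkeeping ones: making sure the index substitutions $v \mapsto v w_0$, $w \mapsto w w_0$ are applied consistently in \eqref{eqn:multiplicities-T-P}, and correctly tracking the inversions $w \mapsto w^{-1}$ together with $w_0^{-1} = w_0$ when translating between the parametrization of tilting sheaves on $\cB$ and parity sheaves on $\cBv$ in Theorem~\ref{thm:main}(2)–(3). One should also remark, in passing, that the validity of highest-weight reciprocity and of the Radon transform over $\E = \F$ is exactly what is recorded in \S\ref{ss:intro-notation} and \S\ref{ss:reminder-radon}, so no additional hypothesis beyond ``$\ell$ good for $G$'' is needed for Theorem~\ref{thm:multiplicities} — in particular the condition $\#\F > 2$ from Theorem~\ref{thm:main} can be circumvented here, since composition multiplicities are insensitive to extending the coefficient field $\F$ to a larger finite field.
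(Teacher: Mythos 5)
Your proposal is correct and follows exactly the paper's own argument: reciprocity in the highest weight category $\Perv_{(B)}(\cB,\F)$ for the first equality, the Radon-transform identity \eqref{eqn:multiplicities-T-P} for the second, and Theorem~\ref{thm:main}(2)--(3) (with the same index bookkeeping) for the third, reducing to $\#\F>2$ by extension of the coefficient field just as the paper's ``standard arguments'' do. Nothing to add.
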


\begin{proof}
By standard arguments one can assume that $\#\F > 2$. Then
the first equality is the usual reciprocity formula in the highest weight category $\Perv_{(B)}(\cB,\F)$; for instance, see~\cite{bgs}. The second equality is given by \eqref{eqn:multiplicities-T-P}. Finally, the third equality follows from $(2)$ and $(3)$ in Theorem \ref{thm:main}.
\end{proof}

\begin{rmk}
\begin{enumerate}
\item
The matrix $\bigl( [\nabla_w : \IC_v] \bigr)_{v,w \in W}=\bigl( [\Delta_w : \IC_v] \bigr)_{v,w \in W}$ is invertible, and its inverse is the matrix $\bigl( (-1)^{\dim \cB_v} \chi_v(\IC_w) \bigr)_{v,w \in W}$, where 
$\chi_v(\cF)$ is the Euler characteristic of the cohomology of the stalk of $\cF$ at $vB/B$. Hence Theorem \ref{thm:multiplicities} also enables to compute the values of $\chi_v(\IC_w)$ for $v,w \in W$.
\item
Recall from~\S\ref{ss:intro-applications} that the rightmost term in Theorem \ref{thm:multiplicities} can be computed algorithmically. Thanks to this result, the same holds for the other quantities as well.
\item
Computations in low rank (by G.~Williamson and P.~Polo) indicate that Theorem~\ref{thm:multiplicities} might also hold in bad characteristic. Hence Theorem~\ref{thm:main} may hold without any assumption on $\ell$.
\item
One can show that $\dim \mathbb{H}^\bullet(\cBv_{x}, \iv_{x}^* \cEv_{y}) = \dim \mathbb{H}^\bullet(\cBv_{x^{-1}}, \iv_{x^{-1}}^* \cEv_{y^{-1}})$ for any $x,y \in W$. Hence in Theorem~\ref{thm:multiplicities} one can replace the last quantity by $\dim \mathbb{H}^\bullet(\cBv_{w w_0}, \iv_{w w_0}^* \cEv_{v w_0})$.
\end{enumerate}
\end{rmk}

\subsection{Application to Soergel's modular category $\mathcal{O}$}
\label{ss:modular-O}

In this subsection we assume that $\ell$ is bigger than the Coxeter number of $G$. We denote by $G_\F$ a split simply-connected semisimple algebraic $\F$-group whose root system is isomorphic to that of $G$. We choose a maximal torus $T_\F$ in $G_\F$, and denote by $B_\F^-$ the Borel subgroup of $G_\F$ containing $T_\F$ whose roots are the negative roots.

In \cite{soergel}, Soergel defines the ``modular category $\mathcal{O}$'' associated with $G_\F$ as a certain subquotient of the category of finite dimensional (rational) $G_\F$-modules ``around the Steinberg weight.'' To emphasize the difference with the ordinary category $\mathcal{O}$, we denote this category by $\mathcal{O}_{\F}$. It is a highest weight category with weight poset $W$ (endowed with the \emph{inverse} of the Bruhat order). In particular we have simple objects $\{L_w, w \in W\}$ (where, as in \cite{soergel}, $L_w$ is the image in the quotient category of the simple $G_\bk$-module $L((\ell-1)\rho + w\rho)$ of highest weight $(\ell-1)\rho + w\rho$, with $\rho$ the half sum of positive roots), standard objects $\{M_w, w \in W\}$ (where $M_w$ is the image of the Weyl module $V((\ell-1)\rho + w \rho)$) and costandard objects $\{N_w, w \in W\}$ (where $N_w$ is the image of the induced modules $\mathrm{Ind}_{B^-_\F}^{G_\F}((\ell-1)\rho + w \rho)$). We also denote by $P_w$ the projective cover of $L_w$.

The following result is an analogue of a well-known result for the ordinary category $\mathcal{O}$; see e.g.~\cite[Proposition 3.5.2]{bgs}.

\begin{thm}
\label{thm:O-Perv}
Assume that $\ell$ is bigger than the Coxeter number of $G$.
There exists an equivalence of abelian categories
\[
\Psi \colon \mathcal{O}_\F \simto \Perv_{(B)}(\cB,\F)
\]
such that
\[
\Psi(L_w) \cong \IC_{w^{-1} w_0}, \quad \Psi(M_w) \cong \Delta_{w^{-1} w_0}, \quad \Psi(N_w) \cong \nabla_{w^{-1} w_0}, \quad \Psi(P_w) \cong \cP_{w^{-1} w_0}.
\]
\end{thm}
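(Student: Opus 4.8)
The plan is to follow the classical strategy relating category $\mathcal{O}$ to perverse sheaves (as in~\cite[Proposition 3.5.2]{bgs}), but routed through the functor $\nu_\F$ of Theorem~\ref{thm:main} rather than through Koszul duality or Be{\u\i}linson--Bernstein localization. The key observation is that both $\mathcal{O}_\F$ and $\Perv_{(B)}(\cB,\F)$ are highest weight categories with weight poset $W$ (with Bruhat order on one side and its inverse on the other), and that in each case there is a Krull--Schmidt category of projective objects whose endomorphism algebra, together with a description of its module category, determines the abelian category. So the first step is to recall Soergel's theorem~\cite{soergel}: there is a functor $\mathbb{V}_{\mathcal{O}}$ realizing $\Parity_{(\Bv)}(\cBv,\F)$ (equivalently, some category of Soergel modules) as a graded version of $\Proj\lh\mathcal{O}_\F$, sending the indecomposable parity sheaf $\cEv_w$ to the projective cover $P_{?}$ under an appropriate indexing. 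Combined with the modular ``formality''/Soergel-type results referenced in~\S\ref{ss:intro-known}, this identifies $\End(\bigoplus_w P_w)$ with the degree-zero part (or the ungraded version) of $\bigoplus_{n}\Hom(\bigoplus_w \cEv_w, (\bigoplus_w \cEv_w)[n])$.

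The second step is the parallel statement on the perverse side. By the Radon transform of~\S\ref{ss:reminder-radon}, $\Tilt_{(B)}(\cB,\F) \simeq \Proj_{(B)}(\cB,\F)$ with $\cT_w \mapsto \cP_{w w_0}$; and Theorem~\ref{thm:main}(1),(2) identifies $\Parity_{(\Bv)}(\cBv,\F)$ as a graded version of $\Tilt_{(B)}(\cB,\F)$ with $\cEv_w \mapsto \cT_{w^{-1}}$. Composing, $\Parity_{(\Bv)}(\cBv,\F)$ is a graded version of $\Proj_{(B)}(\cB,\F)$, with $\cEv_w \mapsto \cP_{w^{-1} w_0}$. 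Therefore the endomorphism algebra $\End(\bigoplus_w \cP_w)$ is identified, as an ungraded algebra, with the same $\Hom$-algebra of the parity category that showed up on the $\mathcal{O}_\F$ side. Tracking the indexing through both identifications, $\cP_{w^{-1} w_0}$ corresponds to $P_w$, which is exactly the correspondence in the statement. Then Morita theory (both $\Perv_{(B)}(\cB,\F)$ and $\mathcal{O}_\F$ being equivalent to the category of finitely generated modules over the endomorphism algebra of a projective generator) yields an equivalence of abelian categories $\Psi\colon \mathcal{O}_\F \simto \Perv_{(B)}(\cB,\F)$ sending $P_w$ to $\cP_{w^{-1}w_0}$.

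The third step is to check that $\Psi$ matches up the remaining highest-weight data: $M_w \mapsto \Delta_{w^{-1}w_0}$, $N_w \mapsto \nabla_{w^{-1}w_0}$, $L_w \mapsto \IC_{w^{-1}w_0}$. Since $\Psi$ is an equivalence of highest weight categories as soon as it matches standard objects up to the order-reversing bijection $w \mapsto w^{-1} w_0$ of the weight posets, it suffices to verify the image of the $M_w$. This can be done by a multiplicity/reciprocity computation: compute $(\cP_{w^{-1}w_0} : \Delta_{v^{-1}w_0})$ using Theorem~\ref{thm:main}(3) and \eqref{eqn:multiplicities-T-P}, compare with the BGG-reciprocity numbers $(P_w : M_v)$ in $\mathcal{O}_\F$ (which Soergel computes in terms of the same stalk cohomology of parity sheaves), and conclude that $\Psi$ must send $M_w$ to $\Delta_{w^{-1}w_0}$ since an exact functor between highest weight categories sending projectives to projectives with matching standard multiplicities sends standards to standards. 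The costandard and simple cases then follow formally (apply the argument to the Ringel dual / use $\Hom$-vanishing characterizations, or dualize), and the identification of simples follows because simples are the heads of the standards.

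The main obstacle I expect is bookkeeping the several indexing conventions consistently: Soergel's parametrization of $\mathcal{O}_\F$ uses $W$ with the inverse Bruhat order and a shift by $w \mapsto$ the weight $(\ell-1)\rho + w\rho$, the Radon transform introduces a twist by $w_0$, the functor $\nu$ introduces an inversion $w \mapsto w^{-1}$, and the parity-sheaf side is naturally indexed on the dual group. Making sure the composite really is $P_w \mapsto \cP_{w^{-1}w_0}$ (and not, say, $\cP_{w_0 w^{-1}}$ or $\cP_{w^{-1}}$) requires pinning down each intermediate equivalence on objects. A secondary subtlety is justifying that Soergel's description of $\Proj\lh\mathcal{O}_\F$ and our description of $\Proj_{(B)}(\cB,\F)$ yield the \emph{same} ungraded endomorphism algebra — i.e.\ that the ``graded version'' statements on the two sides are compatible — which again comes down to the fact that both are governed by the $\Hom$-algebra of $\bigoplus_w \cEv_w$ in $\Parity_{(\Bv)}(\cBv,\F)$, but this compatibility deserves an explicit argument rather than being taken for granted.
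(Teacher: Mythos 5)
Your construction of $\Psi$ is essentially the paper's: Soergel's functor $\eta$ realizing $\Parity_{(\Bv)}(\cBv,\F)$ as a graded version of $\Proj\lh\mathcal{O}_\F$ with $\cEv_w \mapsto P_w$, composed (after summing over shifts) with Theorem~\ref{thm:main} and the Radon transform to get an equivalence $\Proj\lh\mathcal{O}_\F \simto \Proj_{(B)}(\cB,\F)$, $P_w \mapsto \cP_{w^{-1}w_0}$; your indexing is correct, and your worry about compatibility of the two ``graded version'' statements is handled exactly as you suggest, by noting both are governed by $\bigoplus_n \Hom(\cEv,{\check\cF}[n])$. Passing from this to an equivalence of abelian categories via Morita theory (a projective generator on each side) is only cosmetically different from the paper's route, which instead uses $\Kb(\Proj\lh\mathcal{O}_\F) \simeq \Db\mathcal{O}_\F$ and $\Kb(\Proj_{(B)}(\cB,\F)) \simeq \Db\Perv_{(B)}(\cB,\F)$ (finite global dimension) together with t-exactness of the composite; either way one gets $\Psi$ with $\Psi(P_w)\cong\cP_{w^{-1}w_0}$.

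The genuine problem is your third step. The principle you invoke --- that an exact equivalence sending projectives to projectives with \emph{matching standard multiplicities} must send standard objects to standard objects --- is not a theorem: equality of the numbers $(P_w : M_v)$ and $(\cP_{w^{-1}w_0} : \Delta_{v^{-1}w_0})$ is a purely numerical statement and does not by itself force $\Psi(M_w)\cong\Delta_{w^{-1}w_0}$. You also have the logical order inverted at the end: the simples do not need to be deduced from the standards; they come for free, since $\Psi(P_w)\cong\cP_{w^{-1}w_0}$ immediately gives $\Psi(L_w)\cong\IC_{w^{-1}w_0}$ (unique simple quotients of the indecomposable projectives). The correct (and simpler) way to finish, which is what the paper does, is then intrinsic: $M_w$ is the projective cover of $L_w$ in the Serre subcategory of $\mathcal{O}_\F$ generated by the $L_v$ with $v\geq w$, and $\Delta_{w^{-1}w_0}$ is the projective cover of $\IC_{w^{-1}w_0}$ in the Serre subcategory of $\Perv_{(B)}(\cB,\F)$ generated by the corresponding simples; since $\Psi$ is an equivalence matching the simples along the order-reversing bijection $w\mapsto w^{-1}w_0$, it matches these Serre subcategories and their projective covers, hence the standards, and dually the costandards. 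No multiplicity computation (and in particular no appeal to Theorem~\ref{thm:main}(3) or to Soergel's multiplicity formula) is needed at this point; as written, that part of your argument would not close the gap.
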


\begin{rmk}
\begin{enumerate}
\item
By construction, one can see the ``multiplicities around the Steinberg weight'' for $G_\F$-modules in the category $\mathcal{O}_\F$. More precisely, for $v,w \in W$ the multiplicity $[\mathrm{Ind}_{B_\F^-}^{G_\F}((\ell-1)\rho + w \rho):L((\ell-1)\rho + v\rho)]$ of the simple $G_\F$-module $L((\ell-1)\rho + v\rho)$ in the induced $G_\F$-module $\mathrm{Ind}_{B_\F^-}^{G_\F}((\ell-1)\rho + w \rho)$ equals the multiplicity $[N_w:L_v]$. It follows from Theorem \ref{thm:O-Perv} that this multiplicity is also the multiplicity of $\IC_{v^{-1} w_0}$ in $\nabla_{w^{-1} w_0}$. Hence our theorem provides multiplicity formulas which are \emph{different} from those of \cite[Theorem 1.2]{soergel}. The relation between these multiplicity formulas is explained by Theorem \ref{thm:multiplicities}.
\item
One can give a more direct proof of Theorem~\ref{thm:O-Perv} using directly Theorem~\ref{thm:equivalence-tilting} below; however to state the latter result requires introducing more notation.
\end{enumerate}
\end{rmk}

\begin{proof}
Recall (see~\S\ref{ss:intro-properties})
that  $\Proj\lh\mathcal{O}_\F$ denotes the additive full subcategory of $\mathcal{O}_\F$ consisting of projective objects. This category is Krull--Schmidt, and its indecomposable objects are the $P_w$'s ($w \in W$). By the main results of \cite{soergel}, there exists a functor
\[
\eta \colon \Parity_{(\Bv)}(\cBv,\F) \to \Proj\lh\mathcal{O}_\F
\]
and an isomorphism of functors $\varepsilon' \colon \eta \circ [1] \simto \eta$ such that:
\begin{enumerate}
\item 
for any $\cEv,{\check \cF}$ in $\Parity_{(\Bv)}(\cBv,\E)$, the functor $\eta$ and the isomorphism $\varepsilon'$ induce an isomorphism
\[
\bigoplus_{n \in \Z} \Hom \bigl( \cEv,{\check \cF}[n] \bigr) \simto \Hom \bigl( \eta(\cEv),\eta({\check \cF}) \bigr);
\]
\item
for any $w \in W$ we have $\eta_\E(\cEv_w) \cong P_w$.
\end{enumerate}
From this and Theorem \ref{thm:main} (in case $\E=\F$) one can easily check that there exists a unique (up to isomorphism) equivalence of categories
\[
\Phi \colon \Proj\lh\mathcal{O}_\F \simto \Tilt_{(B)}(\cB,\F)
\]
which satisfies $\Phi(P_w) \cong \cT_{w^{-1}}$ and which is compatible with $\eta$ and $\nu$ in the natural sense. Composing this equivalence with $\mathsf{R}$ (see~\S\ref{ss:reminder-radon}) we obtain an equivalence
\[
\Phi' \colon \Proj\lh\mathcal{O}_\F \simto \Proj_{(B)}(\cB,\F)
\]
which satisfies $\Phi'(P_w) \cong \cP_{w^{-1} w_0}$.

Consider now the following equivalence:
\[
\Db \mathcal{O}_\F \xleftarrow{\sim} \Kb \bigl( \Proj \lh \mathcal{O}_\F \bigr) \xrightarrow[\sim]{\Phi'} \Kb \bigl( \Proj_{(B)}(\cB,\F) \bigr) \xrightarrow{\sim} \Db \Perv_{(B)}(\cB,\F).
\]
(Here, the left, resp.~right, equivalence is the natural functor, which is an equivalence since $\mathcal{O}_\F$, resp.~$\Perv_{(B)}(\cB,\F)$, has finite global dimension; these properties follow from \cite[Corollary 3.2.2]{bgs}.) Since the standard t-structure on the left-hand side, resp.~right-hand side, can be described in terms of morphisms from projective objects, this equivalence is t-exact, and hence restricts to an exact equivalence
\[
\Psi \colon \mathcal{O}_\F \simto \Perv_{(B)}(\cB,\F).
\]
By construction this equivalence sends $P_w$ to $\cP_{w^{-1} w_0}$, so it also sends $L_w$ (which is the unique simple quotient of $P_w$) to $\IC_{w^{-1} w_0}$ (which is the unique simple quotient of $\cP_{w w_0}$). To prove that $\Psi$ sends $M_w$ to $\Delta_{w^{-1} w_0}$ we use the fact that $M_w$ is the projective cover of $L_w$ in the Serre subcategory of  $\mathcal{O}_\F$ generated by objects $L_v$ with $v \geq w$, and that $\Delta_{w^{-1} w_0}$ is the projective cover of $\IC_{w^{-1} w_0}$ in the Serre subcategory of $\Perv_{(B)}(\cB,\F)$ generated by objects $\IC_{v^{-1} w_0}$ with $v>w$. A similar argument applies to costandard objects.
\end{proof}

\subsection{Application to decomposition numbers}
\label{ss:decomposition}

We come back to our general assumption that $\ell$ is good for $G$.

First we consider projective objects in $\Perv_{(B)}(\cB,\E)$, for $\E=\K$, $\O$, or $\F$. By construction (see \cite[Corollary 2.4.2]{rsw}), for any $w \in W$ we have
$\F(\cP_w^\O) \cong \cP_w^\F$.
On the other hand the perverse sheaf $\K(\cP_w^\O)$ is projective in $\Perv_{(B)}(\cB,\K)$, so there exist coefficients $p_{v,w} \in \Z_{\geq 0}$ (for $v,w \in W$) such that
\[
\K(\cP_w^\O) \cong \bigoplus_{v \in W} \bigl( \cP_v^\K \bigr)^{\oplus p_{v,w}}.
\]
We denote by $\mathfrak{P}$ the matrix $(p_{v,w})_{v,w \in W}$, and by $\mathfrak{P}'$ the matrix $(p_{v w_0, w w_0})_{v,w \in W}$.

Similarly, for any $w \in W$ we have
$\F(\cT_w^\O) \cong \cT_w^\F$
(see Proposition~\ref{prop:properties-tilting})
and there exist coefficients $t_{v,w} \in \Z_{\geq 0}$ such that
\[
\K(\cT_w^\O) \cong \bigoplus_{v \in W} \bigl( \cT_v^\K \bigr)^{\oplus t_{v,w}}
\]
We denote by $\mathfrak{T}$ the matrix with coefficients $t_{v,w}$.

Likewise, by \cite[Proposition 2.40]{jmw}, for $w \in W$ we have
\begin{equation}
\label{eqn:F-parity}
\F(\cEv_w^\O) \cong \cEv_w^\F
\end{equation}
and there exist coefficients ${\check e}_{v,w}^i \in \Z_{\geq 0}$ such that
\[
\K(\cEv_w^\O) \cong \bigoplus_{v \in W, i \in \Z} \bigl(\cEv_v^\K [i] \bigr)^{\oplus {\check e}^i_{v,w}}.
\]
We denote by $\check{\mathfrak{E}}$ the matrix whose $(v,w)$-coefficient is $\sum_i {\check e}_{v^{-1},w^{-1}}^i$.

Finally we consider simple objects in the category $\Perv_{(B)}(\cB,\E)$. It follows from the definitions that we have
$\K(\IC_w^\O) \cong \IC_w^\K$,
and moreover $\F(\IC_w^\O)$ is a perverse sheaf. (This follows from the fact that $\IC_w^\O$ is torsion-free, since the functor $i_{w!*}$ preserves injections, see~\cite[Proposition~2.27]{juteau}.) We denote by $\mathfrak{I}$ the matrix $\bigl( [\F(\IC_w^\O) : \IC_v^\F] \bigr)_{v,w \in W}$, and by ${}^t \mathfrak{I}$ its transpose.

\begin{thm}
\label{thm:decomposition}
Assume that $\ell$ is good for $G$.
We have equalities
\[
\mathfrak{P}' = \mathfrak{T} = \check{\mathfrak{E}}, \qquad {}^t \mathfrak{I}=\mathfrak{P}.
\]
\end{thm}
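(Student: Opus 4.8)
The plan is to prove the four matrix identities by tracking the behaviour of the various functors under extension of scalars, using Theorem~\ref{thm:main} and the Radon transform to reduce statements about projectives, tilings and parity sheaves to one another.

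First I would establish $\mathfrak{T} = \check{\mathfrak{E}}$. Since $\nu_\O$ is a functor realizing $\Parity_{(\Bv)}(\cBv,\O)$ as a graded version of $\Tilt_{(B)}(\cB,\O)$, and since by Theorem~\ref{thm:main}(2) it sends $\cEv^\O_w$ to $\cT^\O_{w^{-1}}$, applying $\nu_\O$ to a decomposition $\K(\cEv^\O_w) \cong \bigoplus_{v,i} (\cEv^\K_v[i])^{\oplus {\check e}^i_{v,w}}$ and using the compatibility of $\nu$ with extension of scalars (Theorem~\ref{thm:main}(4)) together with the isomorphism $\varepsilon \colon \nu \circ [1] \simto \nu$, one gets $\K(\cT^\O_{w^{-1}}) \cong \bigoplus_{v,i} (\cT^\K_{v^{-1}})^{\oplus {\check e}^i_{v,w}}$. (Here one uses that $\nu_\K$ is an equivalence, so it detects and preserves the Krull--Schmidt decomposition into indecomposables.) Comparing with the definition of $\mathfrak{T}$ gives $t_{v^{-1},w^{-1}} = \sum_i {\check e}^i_{v,w}$, i.e.\ $\mathfrak{T} = \check{\mathfrak{E}}$ after the indicated transposition-by-inverse in the definition of $\check{\mathfrak{E}}$.

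Next, $\mathfrak{P}' = \mathfrak{T}$: the Radon transform $\mathsf{R}$ of~\S\ref{ss:reminder-radon} is an equivalence defined over $\E$ (for $\E = \K$, $\O$, $\F$) which commutes with extension of scalars (since it is built from the six-functor formalism, which is compatible with $\K(-)$ and $\F(-)$) and which satisfies $\mathsf{R}(\cT_w) \cong \cP_{w w_0}$. Applying $\mathsf{R}_\O$ to $\K(\cT^\O_w) \cong \bigoplus_v (\cT^\K_v)^{\oplus t_{v,w}}$ yields $\K(\cP^\O_{w w_0}) \cong \bigoplus_v (\cP^\K_{v w_0})^{\oplus t_{v,w}}$, hence $p_{v w_0, w w_0} = t_{v,w}$, which is exactly $\mathfrak{P}' = \mathfrak{T}$.

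Finally, for ${}^t\mathfrak{I} = \mathfrak{P}$: this is the standard ``BGG reciprocity / decomposition-number'' identity in the highest weight setting, which I would deduce from the facts that $\F(\cP^\O_w) \cong \cP^\F_w$, that $\Hom(\cP^\O_w, \nabla^\O_v)$ is free over $\O$ of rank $(\cP_w : \Delta_v)$ and commutes with $\K \otimes_\O -$, and that $\K(\IC^\O_w) \cong \IC^\K_w$ while $\F(\IC^\O_w)$ is perverse. Concretely, the multiplicity $[\F(\IC^\O_w) : \IC^\F_v]$ can be read off from $\Hom$-spaces $\dim_\F \Hom(\cP^\F_v, \F(\IC^\O_w)) = \mathrm{rk}_\O \Hom(\cP^\O_v, \IC^\O_w) = \dim_\K \Hom(\cP^\K_v, \K(\IC^\O_w)) = \dim_\K \Hom(\K(\cP^\O_v), \IC^\K_w)$; expanding $\K(\cP^\O_v) \cong \bigoplus_u (\cP^\K_u)^{\oplus p_{u,v}}$ and using $\dim_\K \Hom(\cP^\K_u, \IC^\K_w) = \delta_{u,w}$ gives $[\F(\IC^\O_w):\IC^\F_v] = p_{w,v}$, i.e.\ ${}^t\mathfrak{I} = \mathfrak{P}$. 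The main obstacle I anticipate is not any single step but the bookkeeping of the various index conventions (right multiplication by $w_0$, passage to inverses) so that the four identities line up exactly as stated; the conceptual content reduces entirely to ``every functor in sight commutes with $\K(-)$ and $\F(-)$, and $\nu$, $\mathsf{R}$ are equivalences (or graded versions) preserving the relevant indecomposables.''
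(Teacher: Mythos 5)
Your proposal is correct and is essentially the paper's own proof: the equalities $\mathfrak{P}'=\mathfrak{T}$ and $\mathfrak{T}=\check{\mathfrak{E}}$ come from the compatibility of the Radon transform and of $\nu$ with $\K(-)$ and $\F(-)$ together with their effect on indecomposables, and ${}^t\mathfrak{I}=\mathfrak{P}$ is Brauer reciprocity computed exactly as in the paper, via the free $\O$-module $\Hom(\cP_v^\O,\IC_w^\O)$ and Lemma~\ref{lem:Hom-proj-F}. The only cosmetic points are a harmless slip in your chain of identifications (the term $\Hom(\cP_v^\K,\K(\IC_w^\O))$ should read $\Hom\bigl(\K(\cP_v^\O),\K(\IC_w^\O)\bigr)$, as your next expression already does) and the omitted one-line reduction to the case $\#\F>2$ needed to invoke Theorem~\ref{thm:main}.
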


\begin{proof}
By standard arguments one can assume that $\#\F > 2$.
Then the equality $\mathfrak{P}'=\mathfrak{T}$ follows from the existence the Radon transform over $\O$ and $\K$, and their compatibility with extension of scalars (see~\S\ref{ss:reminder-radon}). The equality $\mathfrak{T} = \check{\mathfrak{E}}$ follows from the existence of the functors $\nu$ over $\O$ and $\K$, and their compatibility with extension of scalars (see Theorem \ref{thm:main}).

The last equality is an instance of Brauer reciprocity. More precisely, consider the $\O$-module
\[
\Hom(\cP_w^\O, \IC_v^\O).
\]
By Lemma \ref{lem:Hom-proj-F} below, this $\O$-module is free, and we have
\[
\F \otimes_\O \Hom(\cP_w^\O, \IC_v^\O) \cong \Hom \bigl( \F(\cP_w^\O), \F(\IC_v^\O) \bigr) = \Hom \bigl( \cP_w^\F, \F(\IC_v^\O) \bigr).
\]
We deduce that the rank of our $\O$-module is $[\F(\IC_v^\O) : \IC_w^\F]$. On the other hand we have
\[
\K \otimes_\O \Hom(\cP_w^\O, \IC_v^\O) \cong \Hom \bigl( \K(\cP_w^\O), \K(\IC_v^\O) \bigr) \cong \Hom \bigl( \K(\cP_w^\O), \IC_v^\K \bigr).
\]
Hence the rank of our $\O$-module is also equal to $p_{v,w}$. We thereby obtain
\[
p_{v,w} = [\F(\IC_v^\O) : \IC_w^\F],
\]
and our last equality follows.
\end{proof}

\section{Soergel modules}
\label{sec:soergel-modules}

In this section, we begin laying the foundations for the proof of Theorem~\ref{thm:main}. In particular we define the ``Soergel modules'' which will play a key role in our arguments. (It will turn out that these modules over the coinvariant algebra are nothing but the global cohomology of parity sheaves on $\cBv$, see Corollary~\ref{cor:H-BS}, just as characteristic-$0$ Soergel modules are the global cohomology of semisimple complexes on $\cBv$.)

Note that the categories appearing in that result depend only on the root systems of $G$ and $\Gv$, so we may restrict our attention to some family of reductive groups that covers all possible root systems.

\subsection{Additional notation and conventions}
\label{ss:new-notation}

For the remainder of the paper, we assume that $G$ is a product of groups isomorphic either to $\GL_n(\C)$ or else to a simple group (of adjoint type) not of type $\mathbf{A}$.  This assumption implies that the Langlands dual group $\Gv$ is a product of groups isomorphic either to $\GL_n(\C)$ or else to a simply-connected quasi-simple group not of type $\mathbf{A}$. We also assume that $\ell$ is good for $G$.

Let $\bfY := X_*(T) = X^*(\Tv)$.  We set $\fh:=\E \otimes_\Z \bfY$ and $S:=\mathrm{S}_{\E}(\fh)$ (i.e.~the symmetric algebra of the free $\E$-module $\fh$), considered as a graded $\E$-algebra where $\fh$ is in degree $2$. We endow $S$ with the natural action of $W$. We denote by $S^W_+ \subset S$ the ideal generated by homogeneous elements in $S^W$ (the $W$-invariants in $S$) of positive degree. The graded $\E$-algebra $C:=S/S^W_+$ (usually called the coinvariant algebra) will play a major role in the paper.

If $s$ is a simple reflection, we denote by $C_s \subset C$ the image of the $s$-invariants $S^s \subset S$ in $C$. Note that unless $\E$ is a field of characteristic $2$ (in which case $G$ is necessarily a product of general linear groups), $C_s$ coincides with the $s$-invariants in $C$ (see e.g.~the proof of Lemma \ref{lem:invariants} below).

\subsection{Coinvariants and extension of scalars}

The following lemma collects some technical results (contained in or easily deduced from~\cite{demazure}) that will be needed later.

\begin{lem}
\label{lem:invariants}
\begin{enumerate}
\item
The natural morphisms
\[
\F \otimes_\O (S_\O)^W \to (S_\F)^W \qquad \text{and} \qquad \K \otimes_\O (S_\O)^W \to (S_\K)^W
\]
are isomorphisms.
\item
The natural morphisms
\[
\F \otimes_\O C_\O \to C_\F \qquad \text{and} \qquad \K \otimes_\O C_\O \to C_\K
\]
are isomorphisms.
\item
The $(S_\E)^W$-module $S_\E$ is free of rank $\# W$, and $C_\E$ is $\E$-free of rank $\# W$.
\item
For any simple reflection $s$, the natural morphisms
\[
\F \otimes_\O (S_\O)^s \to (S_\F)^s \qquad \text{and} \qquad \K \otimes_\O (S_\O)^s \to (S_\K)^s
\]
are isomorphisms.
\item
The $C_s^\E$-module $C_\E$ is free of rank $2$.
\end{enumerate}
\end{lem}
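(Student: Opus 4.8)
The plan is to establish the five assertions essentially in the order listed, reducing everything to classical facts about the coinvariant algebra over $\Z$ (or rather over a suitable localization) and then base-changing. Throughout, the key input is the theorem of Demazure: when $\ell$ is good for the root system, the invariant subring $(S_{\Z_{(\ell)}})^W$ is a polynomial ring, $S_{\Z_{(\ell)}}$ is free over it, and formation of invariants commutes with base change to any $\Z_{(\ell)}$-algebra. The good characteristic hypothesis, together with our running assumption that $G$ is a product of $\GL_n$'s and adjoint simple groups of non-type-$\mathbf A$ (so that $\bfY$ is the coweight or coroot lattice, and in particular $p$-torsion-free issues are under control), is exactly what makes Demazure's results applicable.

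First I would prove (1). Since $\O$ is a finite extension of $\Z_\ell$ and $\ell$ is good, Demazure's theorem gives that $(S_{\Z_{(\ell)}})^W$ is a polynomial algebra on $\dim\fh$ homogeneous generators and that $(S_A)^W = A\otimes_{\Z_{(\ell)}}(S_{\Z_{(\ell)}})^W$ for every $\Z_{(\ell)}$-algebra $A$; applying this with $A = \O$, $\K$, $\F$ yields $(S_\E)^W = \E \otimes_\O (S_\O)^W$, which is precisely the content of (1). For (3), the same theorem says $S_{\Z_{(\ell)}}$ is free of rank $\#W$ over $(S_{\Z_{(\ell)}})^W$; tensoring with $\E$ and using (1) gives that $S_\E$ is free of rank $\#W$ over $(S_\E)^W$, and then $C_\E = S_\E/(S_\E)^W_+ \cong \E \otimes_{(S_\E)^W} S_\E$ with $(S_\E)^W$ acting through its augmentation, so $C_\E$ is $\E$-free of rank $\#W$.

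Next I would deduce (2). The ideal $(S_\E)^W_+$ is generated by the positive-degree homogeneous generators of $(S_\E)^W$, and by (1) these may be taken to be the images under $\O\to\E$ of a fixed set of generators of $(S_\O)^W_+$. Hence $C_\E = S_\E / (S_\E)^W_+ S_\E$ is the base change $\E \otimes_\O (S_\O / (S_\O)^W_+ S_\O) = \E\otimes_\O C_\O$, since $\E \otimes_\O (-)$ is right exact and the defining ideal base-changes correctly. (For $\E = \K$ one can alternatively note $\K$ is flat over $\O$, but the right-exactness argument covers all cases uniformly.) This gives (2). Assertion (4) is handled exactly like (1): $s$ acts on $\bfY$ as a reflection, $S^s$ is the ring of invariants of the rank-one pseudoreflection group $\{1,s\}$, and since $\ell \neq 2$ whenever the relevant component is not of type $\mathbf A$ (and is harmless in type $\mathbf A$), $(S_{\Z_{(\ell)}})^s$ is again polynomial and commutes with base change — indeed $S_{\Z_{(\ell)}}$ is free of rank $2$ over it, spanned by $1$ and a chosen element $\alpha$ with $s\alpha = -\alpha$ (or, in the type-$\mathbf A$ characteristic-$2$ case, by $1$ and $\alpha$ with $s\alpha = \alpha - (\text{something})$). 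Base-changing gives the isomorphisms in (4).

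Finally, (5): I would show $C_\E$ is free of rank $2$ over $C_s^\E$. Over $S_\E$ we already know $S_\E = S_\E^s \cdot 1 \oplus S_\E^s \cdot \alpha$ as an $S_\E^s$-module (from the rank-$2$ freeness in the proof of (4), valid in good characteristic). Passing to the quotient $C_\E = S_\E/(S_\E)^W_+ S_\E$: since $(S_\E)^W \subseteq S_\E^s$, the ideal $(S_\E)^W_+ S_\E$ is an $S_\E^s$-submodule, and I claim it is the "diagonal" submodule $(S_\E)^W_+ S_\E^s \cdot 1 \oplus (S_\E)^W_+ S_\E^s \cdot \alpha$; this follows because $(S_\E)^W_+ S_\E = (S_\E)^W_+ (S_\E^s + S_\E^s\alpha) = (S_\E)^W_+ S_\E^s + (S_\E)^W_+ S_\E^s \alpha$. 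Therefore $C_\E \cong (S_\E^s/(S_\E)^W_+ S_\E^s) \oplus (S_\E^s/(S_\E)^W_+ S_\E^s)\cdot \bar\alpha$. Now $(S_\E)^W_+ S_\E^s$ equals $(S_\E^s)_+^W$-generated ideal only if $S_\E^s$ is finite free over $(S_\E)^W$; this is true because $S_\E$ is free of rank $\#W$ over $(S_\E)^W$ and free of rank $2$ over $S_\E^s$, forcing $S_\E^s$ to be projective, hence (being graded) free, of rank $\#W/2$ over $(S_\E)^W$. It follows that $S_\E^s/(S_\E)^W_+ S_\E^s$ is $\E$-free, and by construction it is exactly $C_s^\E$ (the image of $S_\E^s$ in $C_\E$), so $C_\E \cong C_s^\E \oplus C_s^\E \cdot \bar\alpha$ is free of rank $2$ over $C_s^\E$.

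The main obstacle is assertion (5) — more precisely, controlling the image of $S_\E^s$ in $C_\E$ and verifying that the rank-$2$ decomposition over $S_\E^s$ descends cleanly to the coinvariant quotient, which is where one must be careful that $S_\E^s$ is itself free (not merely projective) over $(S_\E)^W$ and that the two natural meanings of "$s$-invariants in $C_\E$" coincide; all of this rests on the good-characteristic hypothesis via Demazure, and the parenthetical remark in \S\ref{ss:new-notation} about $\E$ of characteristic $2$ is the signal that this is the only delicate point. Assertions (1)–(4) are, by contrast, formal consequences of Demazure's base-change theorem.
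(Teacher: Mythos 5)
Your proof is correct and follows essentially the same route as the paper: Demazure's base-change theorem handles (1)--(3), and (4)--(5) are done by the same case analysis ($\GL_n$ versus the non-type-$\mathbf{A}$ factors with $\ell \neq 2$) via the explicit rank-$2$ decomposition $S_\E = S_\E^s \oplus S_\E^s \cdot \alpha$, whose descent to the coinvariant quotient you spell out where the paper declares it clear. The only quibble is that your detour in (5) through the freeness of $S_\E^s$ over $(S_\E)^W$ is unnecessary: since $(S_\E^s)^W = (S_\E)^W$, the identification $C_s^\E = S_\E^s/(S_\E)^W_+ S_\E^s$ follows at once from the diagonal decomposition of $(S_\E)^W_+ S_\E$ that you already established.
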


\begin{proof}
Let us begin with $(1)$.
Set $S_\Z = \mathrm{S}_\Z(\bfY)$. Then under our assumptions, by \cite[Corollaire on p.~296]{demazure}, the natural morphism
\[
\E \otimes_\Z (S_\Z)^W \to (S_\E)^W
\]
is an isomorphism for $\E=\K,\O$ or $\F$, which proves the claim.

$(2)$ is a direct consequence of $(1)$: in fact one can define $C_\Z$ in the natural way, and the morphism $\E \otimes_\Z C_\Z \to C_\E$ is an isomorphism for $\E=\K,\O$ or $\F$.

$(3)$ follows from \cite[Th{\'e}or{\`e}me 2(c)]{demazure} and the proof of $(1)$.

Let us finally consider $(4)$ and $(5)$. These claims are clear in case $G=\GL_n(\C)$. If $G$ is a simple group not of type $\mathbf{A}$ then since $\ell \neq 2$ we have
$\fh = \ker(s-1) \oplus \ker(s+1)$,
and $\ker(s+1)$ is free of rank $1$; hence the claims are clear also. The general case follows, since by assumption $G$ is a product of groups either isomorphic to $G=\GL_n(\C)$ or to a simple group not of type $\mathbf{A}$.
\end{proof}

\subsection{Bott--Samelson modules}
\label{ss:BS-modules}

For any sequence $\us=(s_1, \ldots, s_i)$ of simple reflections of $W$ we will consider the graded $C$-module
\[
\mathsf{BS}^\gr(\us) := S \otimes_{S^{s_i}} S \otimes_{S^{s_{i-1}}} \otimes \cdots \otimes_{S^{s_1}} \E \la i \ra.
\]
Here, $\E$ is considered as a graded $S$-module via the canonical identification $S/\fh S \cong \E$ (and the $S^{s_1}$-module structure is obtained by restriction). Note that the $S$-module structure on $\E$ factors through an action of $C$, and that we have
\[
\mathsf{BS}^\gr(\us) := C \otimes_{C_{s_i}} C \otimes_{C_{s_{i-1}}} \otimes \cdots \otimes_{C_{s_1}} \E \la i \ra.
\]
Note also the inversion of the order of the simple reflections.

We denote by $\cS^\gr$ the smallest strictly full subcategory of the category of graded $C$-modules which contains the graded module $\mathsf{BS}^\gr(\us)$ for any sequence $\us$ of simple reflections and which is stable under shifts $\la 1 \ra$, direct sums and direct summands. We will call objects of $\cS^\gr$ \emph{Soergel modules}. This is justified by the fact 
that these objects can be obtained from what are usually called \emph{Soergel bimodules} by killing one of the two actions of $S$.\footnote{Following a referee's suggestion, let us be more specific about this claim. In fact, Theorem~\ref{thm:equivalence-parity} below can be generalized to the case of \emph{$\Bv$-equivariant} parity sheaves, if one replaces $\cS^\gr_\E$ by the appropriate category of Soergel bimodules. In particular, it follows that (graded) indecomposable Soergel bimodules are classified by $W \times \Z$ in the natural way. Then, using the fact that if $\cF$ is a $\Bv$-equivariant parity sheaf on $\cBv$, the natural morphism $\E \otimes_{\mathbb{H}_{\Bv}^\bullet(\mathrm{pt}; \E)} \mathbb{H}_{\Bv}^\bullet(\cBv, \cF) \to \mathbb{H}^\bullet(\cBv, \cF)$ is an isomorphism (as follows from~\cite[Proposition~2.6]{jmw}), one can check that the functor $\E \otimes_{S} (-)$ sends the indecomposable bimodule associated with $(w,0)$ to the indecomposable Soergel module $D_w^\gr$ of~\S\ref{ss:modular-BS}.}

Note also that by Lemma \ref{lem:invariants}$(4)$ we have
\[
\F \otimes_\O \mathsf{BS}_\O^\gr(\us) \cong \mathsf{BS}_{\F}^\gr(\us).
\]
We deduce that the functor $\F \otimes_\O (-)$ induces a functor
$\F(-) \colon \cS_\O^\gr \to \cS_\F^\gr$. Similar remarks show that the functor $\K \otimes_\O(-)$ induces a functor $\K(-) \colon \cS_\O^\gr \to \cS_\K^\gr$.

We will also denote by $\cS$ the smallest strictly full subcategory of the category of (ungraded) $C$-modules which contains the module
$\mathsf{BS}(\us) := \For \bigl( \mathsf{BS}^\gr(\us) \bigr)$ for any sequence $\us=(s_1, \ldots, s_i)$ of simple reflections and which is stable under direct sums and direct summands. As above, the functor $\F \otimes_\O (-)$ induces a functor
$\F(-) \colon \cS_\O \to \cS_\F$, and the functor $\K \otimes_\O(-)$ induces a functor $\K(-) \colon \cS_\O \to \cS_\K$.

Note that the categories $\cS^\gr$ and $\cS$ are Krull--Schmidt. This is clear if $\E=\K$ or $\F$; for the case $\E=\O$, see the arguments in the proof of \cite[Lemma 2.1.6]{rsw}.

We also have the functor $\For \colon \cS^\gr \to \cS$ which forgets the grading. We will generalize the following result to the case $\E=\O$ later (see~\S\ref{ss:modular-BS}).

\begin{lem}
\label{lem:v-indecomposable}
If $\E=\K$ or $\F$ and if $D$ is an indecomposable object of $\cS^\gr$, then $\For(D)$ is indecomposable in $\cS$.
\end{lem}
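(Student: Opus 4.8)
The plan is to pin down the endomorphism rings on both sides of the forgetful functor $\For\colon \cS^\gr \to \cS$ and exploit the fact that $C$ is positively graded with $C_0 = \E$. First I would observe that every object $D$ of $\cS^\gr$ is a graded $C$-module which is finitely generated and concentrated in degrees bounded above and below (this is clear for $\mathsf{BS}^\gr(\us)$ and is preserved by $\la 1 \ra$, direct sums and summands), and that it is \emph{free} over $\E$ in each degree — indeed $\mathsf{BS}^\gr(\us)$ is an iterated tensor product of the $C_\E$-free modules $C_\E$ of Lemma~\ref{lem:invariants}(5), hence $\E$-free, and $\E$-freeness passes to summands when $\E = \K$ or $\F$ since then $\E$ is a field. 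Write $\End^\gr(D) = \bigoplus_{n} \Hom(D, D\la n\ra)$ for the graded endomorphism ring; then $\End_\cS(\For D) = \End^\gr(D)$ as ungraded rings, the grading being forgotten, because every $C$-linear endomorphism of $\For D$ decomposes uniquely as a sum of homogeneous pieces (as $D$ is bounded).

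Next I would analyze the degree-zero part. A homogeneous endomorphism of negative degree, or of positive degree, is nilpotent: the total module $D$ lives in a bounded range of degrees, so a sufficiently high power of a nonzero-degree endomorphism vanishes. Hence $\bigoplus_{n\neq 0}\Hom(D,D\la n\ra)$ is a nilpotent two-sided ideal $I$ of $\End^\gr(D) = \End_\cS(\For D)$, and the quotient is $\End^\gr(D)_0 = \Hom(D, D)$, the degree-zero graded endomorphisms. Since $D$ is indecomposable in the Krull--Schmidt category $\cS^\gr$, this ring $\End^\gr(D)_0$ is local. A ring whose quotient by a nilpotent ideal is local is itself local (idempotents lift along nilpotent ideals, and a ring with no nontrivial idempotents whose quotient mod its nilradical-containing ideal is local is local). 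Therefore $\End_\cS(\For D)$ is local, which means $\For(D)$ is indecomposable in $\cS$.

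The one technical point I would want to be careful about — and the place I expect the only real friction — is justifying that a $C$-linear (ungraded) endomorphism $f$ of $\For D$ genuinely splits as a finite sum of homogeneous components $f = \sum_n f_n$ with $f_n$ of degree $n$, and that this identifies $\End_\cS(\For D)$ with $\End^\gr(D) = \bigoplus_n \Hom(D,D\la n\ra)$ rather than with some completion. This is where boundedness of the grading of $D$ is essential: because $D$ is supported in finitely many degrees, for each generator the image has only finitely many nonzero homogeneous pieces, so the decomposition is a finite sum and lands in $\End^\gr(D)$; and conversely the $\E$-freeness of each graded piece ensures no collapsing occurs. Once that bookkeeping is in place, the nilpotency-of-$I$ and lifting-of-idempotents argument is routine. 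An equivalent and perhaps cleaner packaging: show directly that the graded radical of $\End^\gr(D)$ together with all nonzero-degree components is the Jacobson radical of the ungraded ring $\End_\cS(\For D)$, and that the quotient is the residue field of the local ring $\End^\gr(D)$, so the ungraded ring is local.

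One remark on why the hypothesis $\E = \K$ or $\F$ is used rather than $\E = \O$: the argument that summands of $\E$-free-in-each-degree modules remain $\E$-free in each degree needs $\E$ to be a field, and over $\O$ one could in principle split off a torsion summand in a single degree in the ungraded category that is not visible gradedly. The general $\O$-case is deferred to~\S\ref{ss:modular-BS} precisely for this reason, and I would not attempt it here.
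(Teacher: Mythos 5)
The central step of your argument has a genuine gap: the subspace $I=\bigoplus_{n\neq 0}\Hom(D,D\la n\ra)$ is in general \emph{not} a two-sided ideal of $\End_{\cS}(\For D)$, because the composition of a homogeneous endomorphism of degree $n$ with one of degree $-n$ lands in degree $0$, not in $I$. Your nilpotency justification has the same defect: the degree-shifting argument kills powers of a single homogeneous endomorphism of nonzero degree, but says nothing about sums — for $f$ of degree $d>0$ and $g$ of degree $-d$, the powers $(f+g)^N$ contain terms such as $(gf)^k$ that are not controlled by boundedness of the grading. You cannot dismiss this configuration as irrelevant in the present setting: $\Hom(D,D\la n\ra)$ for $n<0$, i.e.\ $\Hom(\cEv_w,\cEv_w[n])$ with $n<0$, is nonzero whenever the stalks of the indecomposable parity sheaf $\cEv_w$ violate the perverse support bounds (pair a violating stalk class with its Verdier-dual costalk class and use the Hom formula of \cite[Proposition~2.6]{jmw}), and this does occur in good characteristic. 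So the quotient of $\End_{\cS}(\For D)$ by the ideal generated by the nonzero-degree components need not be $\End_{\cS^\gr}(D)$, and the "nilpotent ideal, then lift idempotents" scheme does not run as written.

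The missing idea, which is exactly what makes the result nontrivial and is the content of the reference \cite[Theorem~3.2]{gg} invoked in the paper, is that degree-cancelling composites are automatically non-units: since $D$ is a nonzero \emph{bounded} graded module, $D\not\cong D\la n\ra$ for $n\neq 0$, so if $g\circ f$ were invertible in the local ring $\End_{\cS^\gr}(D)$ for $f\colon D\to D\la n\ra$ and $g\colon D\la n\ra\to D$ homogeneous of degrees $\pm n\neq 0$, then $f$ would be a split injection between modules of equal total dimension, hence an isomorphism $D\simeq D\la n\ra$, a contradiction. Consequently the correct ideal to consider is $I'=J\bigl(\End_{\cS^\gr}(D)\bigr)\oplus\bigoplus_{n\neq0}\Hom(D,D\la n\ra)$: the observation above shows it is closed under multiplication, its quotient is the residue division ring of $\End_{\cS^\gr}(D)$, and a pigeonhole argument on partial degrees (or gradedness of the Jacobson radical of a $\Z$-graded Artinian ring) shows $I'$ is nilpotent, whence $\End_{\cS}(\For D)$ is local. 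With this replacement your outline goes through, but as written the proof is incomplete at its key point; the fact that the remainder of your write-up (the identification of the ungraded endomorphism ring with the sum of graded Hom's, and idempotent lifting modulo a nilpotent ideal) is routine does not compensate for it. Your closing remark about the case $\E=\O$ is also off target: direct summands of $\O$-free modules are $\O$-free since $\O$ is a discrete valuation ring; the paper treats $\O$ separately (via $\F(D_{w,\O})\cong D_{w,\F}$ in Corollary~\ref{cor:D_w-indecomposable}) because the cited graded-Artin-algebra theorem requires field coefficients, not because of torsion summands.
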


\begin{proof}
This follows from general results on graded modules over finite-dimensional $\E$-algebras; see \cite[Theorem 3.2]{gg}.
\end{proof}

\section{Parity sheaves and the functor $\mathbb{H}$}
\label{sec:hh}

In this section, we relate parity sheaves on $\cBv$ to Soergel modules.
The results of this subsection are well known (and due to Soergel) in case $\E=\K$, see~\cite{soergel-kategorie}. In case $\E=\F$ and $\ell$ is bigger than the Coxeter number of $\Gv$, they are also proved (using different arguments) in~\cite{soergel}. (Our notation follows Soergel's notation in~\cite{soergel}.)

The conventions of~\S\ref{ss:new-notation} remain in effect.

\subsection{Cohomology of $\cBv$}

The following result is well known, but we couldn't find a reference treating this question under our assumptions. For completeness, we explain how it can be deduced from the existing literature.

\begin{prop}
\label{prop:cohomology-G/B}
There exists a natural isomorphism of graded algebras
\[
C_\E \simto \mathbb{H}^\bullet(\cBv;\E).
\]
\end{prop}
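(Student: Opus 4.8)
The plan is to reduce the statement over $\E$ to the classical computation of the cohomology ring of a complex flag variety, being careful about which base ring we work over. First I would recall the Borel presentation: for the flag variety $\cBv = \Gv/\Bv$ of the complex reductive group $\Gv$, the cycle class map provides a characteristic-class homomorphism from $S_\E = \mathrm{S}_\E(\fh)$ (with $\fh = \E\otimes_\Z \bfY$ and $\bfY = X_*(T) = X^*(\Tv)$) to $\mathbb{H}^\bullet(\cBv;\E)$, sending a character $\lambda$ to the first Chern class of the associated line bundle, doubling degrees so $\fh$ lands in degree $2$. The key input is that this map is surjective with kernel the ideal $S^W_+$ generated by positive-degree $W$-invariants, so that it induces an isomorphism $C_\E \simto \mathbb{H}^\bullet(\cBv;\E)$ of graded algebras. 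This is exactly the content of the Borel/Demazure description of $\mathbb{H}^\bullet(\Gv/\Bv)$, and the naturality in $\E$ is automatic from the functoriality of the cycle class map.

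Next I would address the subtlety of coefficient rings. Over $\K$ (characteristic $0$) the statement is completely classical (Borel). Over $\Z$ it is the theorem of Demazure (cf.\ the reference to \cite{demazure} already used for Lemma~\ref{lem:invariants}), who shows both that $\mathbb{H}^\bullet(\cBv;\Z)$ is torsion-free and that the natural map $C_\Z \to \mathbb{H}^\bullet(\cBv;\Z)$ is an isomorphism — here it is important that, in our setting of~\S\ref{ss:new-notation}, $\Gv$ is a product of copies of $\GL_n(\C)$ and simply-connected quasi-simple groups not of type $\mathbf{A}$, and $\ell$ is good, which is precisely the hypothesis under which Demazure's results on $(S_\Z)^W$ and $C_\Z$ apply cleanly (this is what Lemma~\ref{lem:invariants}(3) records). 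From the torsion-freeness of $\mathbb{H}^\bullet(\cBv;\Z)$ and the universal coefficient theorem, one gets $\mathbb{H}^\bullet(\cBv;\E) \cong \E\otimes_\Z \mathbb{H}^\bullet(\cBv;\Z)$ for $\E = \O$ and $\F$ as well. Combining this with the isomorphism $\E\otimes_\Z C_\Z \simto C_\E$ (Lemma~\ref{lem:invariants}(2), applied also with $\Z$ in place of $\O$ via the proof of that lemma) yields the desired isomorphism $C_\E \simto \mathbb{H}^\bullet(\cBv;\E)$ for all three choices of $\E$, compatibly with extension of scalars.

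The main obstacle, and the only real content beyond quoting references, is the integral statement: that $\mathbb{H}^\bullet(\cBv;\Z)$ is torsion-free and is computed by $C_\Z$ under our hypotheses on the type of $\Gv$. For $\GL_n$ this is elementary (the cohomology is a free module with a basis of Schubert classes, and the Borel presentation holds over $\Z$). For the quasi-simple simply-connected factors not of type $\mathbf{A}$, torsion-freeness of $\mathbb{H}^\bullet(\Gv/\Bv;\Z)$ is a general fact for $\Gv/\Bv$ (the Bruhat decomposition exhibits a cell structure with cells only in even real dimension, so $\mathbb{H}^\bullet(\cBv;\Z)$ is free with a basis given by Schubert classes), and the identification of the ring with $C_\Z$ is Demazure's theorem; the goodness of $\ell$ is not actually needed for torsion-freeness of the cohomology of $\cBv$ itself — it enters only through the already-established Lemma~\ref{lem:invariants} to guarantee $C_\Z \to C_\E$ behaves well. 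I would therefore structure the proof as: (i) recall the characteristic-class map $S_\E \to \mathbb{H}^\bullet(\cBv;\E)$ and that it kills $S^W_+$; (ii) invoke Demazure for the integral case and cell-structure for torsion-freeness; (iii) deduce the case of general $\E$ by base change, using Lemma~\ref{lem:invariants}. One should also remark that the resulting isomorphism is the one that will be used implicitly when identifying global cohomology of parity sheaves with Soergel modules in the next section.
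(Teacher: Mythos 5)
Your overall architecture (characteristic map, reduce to the integral statement, base change to $\E$) is reasonable, but the key middle step is false as stated. For the simply-connected quasi-simple factors not of type $\mathbf{A}$, the integral characteristic map $C_\Z \to \mathbb{H}^\bullet(\cBv;\Z)$ is \emph{not} an isomorphism in general: while $\mathbb{H}^\bullet(\cBv;\Z)$ is indeed free with a Schubert basis (your cell-structure argument is fine), the map is only injective with image of finite index, and the cokernel is a nonzero finite group whose order is divisible exactly by the torsion primes of the group (this failure is the ``torsion'' in the title of Demazure's paper). For example, in types $\mathbf{B}$, $\mathbf{D}$, $\mathbf{G}_2$ the prime $2$ divides this index, in types $\mathbf{E}_6$, $\mathbf{E}_7$, $\mathbf{F}_4$ the primes $2,3$ do, etc. So your assertion that ``the identification of the ring with $C_\Z$ is Demazure's theorem'' and that goodness of $\ell$ ``is not actually needed'' beyond Lemma~\ref{lem:invariants} is exactly where the argument breaks: Demazure's Th{\'e}or{\`e}me~2(a) gives surjectivity of the characteristic map only after inverting the torsion primes, and the hypothesis that $\ell$ is good (so that $\ell$ is not a torsion prime, while every prime different from $\ell$ is already invertible in $\O$, $\K$, $\F$) is precisely what makes the map surjective after applying $\E \otimes_\Z (-)$. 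Your universal-coefficient reduction is a legitimate variant of the base-change step, but it cannot rescue a false integral input.

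For comparison, the paper's proof avoids the integral isomorphism altogether: it uses the Chern character to the Chow ring to get $C_\Z \to A(\cBv)$, invokes Demazure's Th{\'e}or{\`e}me~2(a) to conclude that $\E \otimes_\Z C_\Z \to \E \otimes_\Z A(\cBv)$ is \emph{surjective} under the goodness assumption, identifies $\E \otimes_\Z C_\Z \cong C_\E$ (Lemma~\ref{lem:invariants}(2)) and $\E \otimes_\Z A(\cBv) \cong \mathbb{H}^\bullet(\cBv;\E)$ (via \cite[Lemma 4.1]{ahjr}), and then concludes that the surjection $C_\E \twoheadrightarrow \mathbb{H}^\bullet(\cBv;\E)$ is an isomorphism by comparing ranks: both sides are free $\E$-modules of rank $\#W$, by Lemma~\ref{lem:invariants}(3) and the Bruhat decomposition. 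If you replace your step (ii) by this ``surjection after inverting torsion primes, then rank count'' argument, your proof becomes correct and essentially coincides with the paper's.
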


\begin{proof}
Recall the algebras $S_\Z$ and $C_\Z$ considered in the proof of Lemma~\ref{lem:invariants}.
The Chern character provides a natural algebra morphism $S_\Z \to A(\cBv)$ (where $A(\cBv)$ is the Chow ring of $\cBv$), which factors through a morphism $C_\Z \to A(\cBv)$; see e.g.~\cite[Corollaire 2 and \S8]{demazure}. By~\cite[Th{\'e}or{\`e}me 2(a) and \S8]{demazure}, under our assumptions this morphism induces a surjection
\[
\E \otimes_\Z C_\Z \twoheadrightarrow \E \otimes_\Z A(\cBv).
\]
By the proof of Lemma \ref{lem:invariants}(2), we have $\E \otimes_\Z C_\Z \cong C_\E$. On the other hand,
by \cite[Lemma 4.1]{ahjr} there exists a canonical isomorphism $\E \otimes_\Z A(\cBv) \simto \mathbb{H}^\bullet(\cBv;\E)$. Hence we obtain a surjection
\[
C_\E \twoheadrightarrow \mathbb{H}^\bullet(\cBv;\E).
\]
Since both $\E$-modules are free of rank $\# W$ by Lemma \ref{lem:invariants}(3) and the Bruhat decomposition, this surjection must be an isomorphism.
\end{proof}

If $s$ is a simple reflection, we denote by $\Pv^s \subset \Gv$ the minimal standard parabolic subgroup associated with $s$, and set $\scPv^s:=\Gv/\Pv^s$. We will denote by ${\check \pi}^s \colon \cBv \to \scPv^s$ the natural projection.

\begin{cor}
\label{cor:cohomology-G/P}
The morphism induced by $({\check \pi}^s)^*$ provides an isomorphism
\[
\mathbb{H}^\bullet(\scPv^s;\E) \cong C_s.
\]
\end{cor}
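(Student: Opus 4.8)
The plan is to identify $\mathbb{H}^\bullet(\scPv^s;\E)$ with the $s$-invariants $C_s$ inside $C_\E \cong \mathbb{H}^\bullet(\cBv;\E)$ via the pullback $({\check\pi}^s)^*$. First I would recall that ${\check\pi}^s \colon \cBv \to \scPv^s$ is a Zariski-locally-trivial fibration with fiber $\Pv^s/\Bv \cong \mathbb{P}^1$, so the Leray--Hirsch type argument (or the projective bundle formula, once one checks $\cBv \to \scPv^s$ is the projectivization of a rank-$2$ vector bundle, or simply a degeneration of the Leray spectral sequence using that $\mathbb{H}^\bullet(\mathbb{P}^1;\E)$ is $\E$-free) shows that $({\check\pi}^s)^*$ is injective and realizes $\mathbb{H}^\bullet(\cBv;\E)$ as a free module of rank $2$ over $\mathbb{H}^\bullet(\scPv^s;\E)$. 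In particular $\mathbb{H}^\bullet(\scPv^s;\E)$ is an $\E$-free graded subalgebra of $C_\E$ of rank $\#W / 2$.

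Next I would pin down which subalgebra it is. Over $\Z$, the Chern-character/Demazure picture used in the proof of Proposition~\ref{prop:cohomology-G/B} applies equally to $\scPv^s$: the Chow ring $A(\scPv^s)$ is the invariants $(S_\Z)^s$ modulo the ideal generated by positive-degree $W$-invariants (equivalently, the image of $(S_\Z)^s$ in $C_\Z$), and the Chern character gives $\E \otimes_\Z A(\scPv^s) \simto \mathbb{H}^\bullet(\scPv^s;\E)$ by the same references (\cite{demazure} and \cite[Lemma 4.1]{ahjr}). Combined with Lemma~\ref{lem:invariants}(4), which says $\E \otimes_\O (S_\O)^s \simto (S_\E)^s$, and the definition $C_s = \mathrm{image}\ \!\bigl((S_\E)^s \to C_\E\bigr)$, this gives a surjection $C_s \twoheadrightarrow \mathbb{H}^\bullet(\scPv^s;\E)$ compatible with the inclusions into $C_\E$ and with $({\check\pi}^s)^*$. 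Since the square
\[
\xymatrix@C=1.2cm{
\mathbb{H}^\bullet(\scPv^s;\E) \ar[r]^-{({\check\pi}^s)^*} & \mathbb{H}^\bullet(\cBv;\E) \\
C_s \ar@{->>}[u] \ar[r] & C_\E \ar[u]^-{\wr}
}
\]
commutes and the right vertical arrow and the bottom arrow (which is the inclusion $C_s \hookrightarrow C_\E$) are both injective, the left vertical surjection is also injective, hence an isomorphism.

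Alternatively, and perhaps more cleanly, I would avoid re-running the Chow-ring argument for $\scPv^s$ and instead count ranks: $C_s$ is $\E$-free of rank $\#W/2$ because $C_\E$ is free of rank $2$ over $C_s$ by Lemma~\ref{lem:invariants}(5), and I have just shown $\mathbb{H}^\bullet(\scPv^s;\E)$ is also $\E$-free of rank $\#W/2$ with $C_\E$ free of rank $2$ over it. A surjection $C_s \twoheadrightarrow \mathbb{H}^\bullet(\scPv^s;\E)$ between two such $\E$-free modules of the same finite rank, sitting compatibly inside $C_\E$, must be an isomorphism; to produce the surjection one only needs that $({\check\pi}^s)^*$ lands in the subalgebra generated by $\fh$, which follows from Proposition~\ref{prop:cohomology-G/B} together with surjectivity of $S_\E \to \mathbb{H}^\bullet(\scPv^s;\E)$ (standard, since Chern classes of line bundles generate). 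The main obstacle is making the identification of the image with the \emph{$s$-invariants} genuinely precise in the integral/modular setting — i.e.\ checking that the relevant Demazure statements are valid under our hypotheses on $\ell$ and that the subalgebra $({\check\pi}^s)^*\mathbb{H}^\bullet(\scPv^s;\E)$ of $C_\E$ is exactly $C_s$ rather than something a priori only contained in the $s$-invariants; this is where Lemma~\ref{lem:invariants}(4)--(5) and the compatibility of everything with extension of scalars from $\O$ do the real work.
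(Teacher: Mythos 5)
Your first step (the $\mathbb{P}^1$-fibration/Leray--Hirsch argument showing $({\check\pi}^s)^*$ is injective with $\mathbb{H}^\bullet(\cBv;\E)$ free of rank $2$ over $\mathbb{H}^\bullet(\scPv^s;\E)$, hence $\mathbb{H}^\bullet(\scPv^s;\E)$ is $\E$-free of rank $\#W/2$) is fine, but the heart of the corollary --- identifying the image of $({\check\pi}^s)^*$ with $C_s$ rather than merely something of the right rank --- is exactly what your proposal asserts without proof. Concretely: (i) the claim that $A(\scPv^s)$ is the image of $(S_\Z)^s$ in $C_\Z$ is not ``by the same references'': Demazure's results quoted in the proof of Proposition~\ref{prop:cohomology-G/B} concern $\cBv$, and \cite[Lemma 4.1]{ahjr} only gives the cycle map isomorphism for $\scPv^s$; the integral/modular description of $A(\scPv^s)$ in terms of $(S_\Z)^s$ is a nontrivial Demazure-type statement you would have to prove. (ii) In the fallback version, the ``standard'' fact that Chern classes of line bundles generate $\mathbb{H}^\bullet(\scPv^s;\E)$ is not standard: line bundles on $\scPv^s$ correspond only to $s$-invariant characters, so there is no natural map $S_\E \to \mathbb{H}^\bullet(\scPv^s;\E)$, only $\mathrm{S}_\E(\fh^s) \to \mathbb{H}^\bullet(\scPv^s;\E)$, and surjectivity of the latter is equivalent (granting the corollary) to $C_s$ being generated by its degree-two part $\fh^s$. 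Even rationally this needs an argument (e.g.\ writing $a^2$, for $a$ spanning $\ker(s+1)$, in terms of $\mathrm{S}(\fh^s)$ modulo $S^W_+$ using the invariant quadratic form), and modulo $\ell$ the invertibility of the relevant coefficient is precisely a good-prime/torsion question --- so this is not a detail to wave off. (iii) Even granting (ii), what you actually obtain is an inclusion of the image of $({\check\pi}^s)^*$ into $C_s$; over $\E=\O$ an inclusion of free modules of equal rank need not be an equality (compare $\ell C_s \subset C_s$), so the rank count alone does not close the argument --- you would need a mod-$\ell$ surjectivity plus Nakayama, which again rests on the unproven point (ii). You flag at the end that ``this is where the real work'' lies, but that work is the corollary.

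For comparison, the paper's proof takes a different and much shorter route: away from characteristic $2$ it cites the classical facts (Soergel, \emph{Kategorie} $\mathcal{O}$, \S3.2 and references therein) that the Borel isomorphism of Proposition~\ref{prop:cohomology-G/B} is $W$-equivariant and that $({\check\pi}^s)^*$ identifies $\mathbb{H}^\bullet(\scPv^s;\E)$ with the $s$-invariants of $\mathbb{H}^\bullet(\cBv;\E)$, which equal $C_s$ since $C_s$ coincides with the $s$-invariants of $C$ when $2$ is invertible (see \S\ref{ss:new-notation}); the characteristic-$2$ case (where $\Gv$ is a product of general linear groups) is then deduced from the case $\E=\O$ by extension of scalars. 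If you want an argument independent of those citations, the missing ingredient you must supply is precisely a statement of type (i) or (ii) above in the integral/good-characteristic setting.
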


\begin{proof}
Assume first that $\E$ is not a field of characteristic $2$.
In this case it is well known (see e.g.~\cite[\S 3.2 and references therein]{soergel-kategorie}) that the isomorphism of Proposition \ref{prop:cohomology-G/B} commutes with the natural actions of $W$, and that the morphism
\[
({\check \pi}^s)^* \colon \mathbb{H}^\bullet(\scPv^s;\E) \to \mathbb{H}^\bullet(\cBv;\E)
\]
is injective and identifies the left-hand side with the $s$-invariants in the right-hand side, which proves the claim.

If $\E$ is a field of characteristic $2$, then $\Gv$ is a product of general linear groups, and the result can be deduced from the case $\E=\O$.
\end{proof}

\subsection{The functor $\mathbb{H}$ and Bott--Samelson parity sheaves}

Using the isomorphism of Proposition \ref{prop:cohomology-G/B} one can consider the functor
\[
\mathbb{H}^\bullet(\cBv, -) \colon \Parity_{(\Bv)}(\cBv, \E) \to C\lh\gmod.
\]
(Note that it would be more natural to consider this functor as taking values in the category of graded \emph{right} $C$-modules. However, since $C$ is commutative, right and left $C$-modules are the same.)

For any simple reflection $s$, we define the functor
\[
\vartheta_s := ({\check \pi}^{s})^* ({\check \pi}^s)_* \colon \Db_{(\Bv)}(\cBv, \E) \to \Db_{(\Bv)}(\cBv, \E).
\]

\begin{lem}
\label{lem:cohomology-theta}
For any ${\check \cF}$ in $\Db_{(\Bv)}(\cBv, \E)$ and any simple reflection $s$, there exists a functorial isomorphism of graded $C$-modules
\[
\mathbb{H}^\bullet ( \cBv, \vartheta_s {\check \cF} ) \cong C \otimes_{C_s} \mathbb{H}^\bullet(\cBv, {\check \cF}).
\]
\end{lem}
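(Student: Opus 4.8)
The plan is to reduce everything to the projection formula for the $\mathbb{P}^1$-fibration $\check\pi := \check\pi^s \colon \cBv \to \scPv^s$ (whose fibre is $\Pv^s/\Bv \cong \mathbb{P}^1$), keeping careful track of module structures. Recall first that by Proposition~\ref{prop:cohomology-G/B} we identify $\mathbb{H}^\bullet(\cBv;\E)$ with $C$, and by Corollary~\ref{cor:cohomology-G/P} the pullback $(\check\pi)^* \colon \mathbb{H}^\bullet(\scPv^s;\E) \to \mathbb{H}^\bullet(\cBv;\E) = C$ identifies $\mathbb{H}^\bullet(\scPv^s;\E)$ with the subalgebra $C_s \subset C$. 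By Lemma~\ref{lem:invariants}(5) the $C_s$-module $C$ is free; since $\check\pi$ is a Zariski-locally trivial $\mathbb{P}^1$-bundle one may take a homogeneous $C_s$-basis of $C$ consisting of $1$ and one element of degree $2$, so that $C \cong C_s \oplus C_s\langle -2\rangle$ as graded $C_s$-modules.

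The core input is the following claim: for any $\cG$ in $\Db_{(\Bv)}(\scPv^s,\E)$ there is a natural isomorphism of graded $C$-modules
\[
\mathbb{H}^\bullet(\cBv,(\check\pi)^*\cG) \;\cong\; C \otimes_{C_s} \mathbb{H}^\bullet(\scPv^s,\cG).
\]
Granting this, the Lemma follows by applying it to $\cG = (\check\pi)_*\check\cF$ (which lies in $\Db_{(\Bv)}(\scPv^s,\E)$ since $\check\pi$ is proper and $\Bv$-equivariant): then $(\check\pi)^*\cG = \vartheta_s\check\cF$, while the tautological equality $R\Gamma(\cBv,\check\cF) = R\Gamma(\scPv^s,(\check\pi)_*\check\cF)$ is an isomorphism of modules over $\mathbb{H}^\bullet(\scPv^s) = C_s$, i.e. $\mathbb{H}^\bullet(\scPv^s,\cG) \cong \mathbb{H}^\bullet(\cBv,\check\cF)$ as graded $C_s$-modules. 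To prove the displayed claim, note that $(\check\pi)^*$ on hypercohomology gives a $C_s$-linear map $\mathbb{H}^\bullet(\scPv^s,\cG) \to \mathbb{H}^\bullet(\cBv,(\check\pi)^*\cG)$ (the target being a $C$-module, on which $C_s$ acts through $C_s \hookrightarrow C$), hence a $C$-linear, functorial map $\mu_\cG \colon C \otimes_{C_s} \mathbb{H}^\bullet(\scPv^s,\cG) \to \mathbb{H}^\bullet(\cBv,(\check\pi)^*\cG)$. Since $C$ is $C_s$-flat, both sides are cohomological functors of $\cG$ commuting with finite direct sums and $\mu$ is a morphism between them, so it suffices to check that $\mu_\cG$ is an isomorphism when $\cG$ runs over a generating family of $\Db_{(\Bv)}(\scPv^s,\E)$. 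For $\cG = \underline{\E}_{\scPv^s}$, $\mu$ is the tautological identification $C \otimes_{C_s} C_s \simto C$; the remaining cases follow by d\'evissage along the Bruhat stratification of $\scPv^s$, using the projection formula $(\check\pi)_*(\check\pi)^*\cG \cong \cG \lotimes_{\underline{\E}_{\scPv^s}} (\check\pi)_*\underline{\E}_{\cBv}$ together with the degeneration isomorphism $(\check\pi)_*\underline{\E}_{\cBv} \cong \underline{\E}_{\scPv^s} \oplus \underline{\E}_{\scPv^s}[-2]$ (valid over each of $\K$, $\O$, $\F$ because $\mathbb{H}^\bullet(\mathbb{P}^1;\E)$ is $\E$-free), which exhibit $\mathbb{H}^\bullet(\cBv,(\check\pi)^*\cG)$ as free of rank $2$ over $\mathbb{H}^\bullet(\scPv^s,\cG)$ with a homogeneous basis in degrees $0$ and $2$, matching $C \otimes_{C_s}(-) \cong (-) \oplus (-)\langle -2\rangle$.

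The main obstacle, and essentially the only point requiring care, is matching the two $C$-module structures: one must check that the comparison map built from the $C_s$-linear map $(\check\pi)^*$ is genuinely $C$-linear and surjective (hence bijective, by a graded rank count), which amounts to the bookkeeping around the projection formula and the splitting $(\check\pi)_*\underline{\E}_{\cBv} \cong \underline{\E}_{\scPv^s} \oplus \underline{\E}_{\scPv^s}[-2]$, together with the verification that $\mu$ is compatible with distinguished triangles so the d\'evissage applies. Everything else — the projection formula, properness and $\Bv$-equivariance of $\check\pi$, and the identification of $\mathbb{H}^\bullet(\scPv^s)$ with $C_s$ — is standard or already recorded above.
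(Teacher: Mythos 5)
Your argument is correct, and it is in substance the same as the paper's: the paper disposes of this lemma by citing \cite[Proposition 4.1.1]{soergel} together with Proposition~\ref{prop:cohomology-G/B} and Corollary~\ref{cor:cohomology-G/P}, noting that the only input needed beyond Soergel's field case is the splitting $({\check \pi}^s)_* \underline{\E}_{\cBv} \cong \underline{\E}_{\scPv^s} \oplus \underline{\E}_{\scPv^s}[-2]$; what you have written is exactly the projection-formula argument behind that citation, organized around the natural comparison map $\mu$ (which has the added virtue of making functoriality manifest). Two small points of imprecision. First, your justification of the splitting is incomplete: $\E$-freeness of $\mathbb{H}^\bullet(\mathbb{P}^1;\E)$ only gives that $({\check \pi}^s)_*\underline{\E}_{\cBv}$ sits in a triangle $\underline{\E}_{\scPv^s} \to ({\check \pi}^s)_*\underline{\E}_{\cBv} \to \underline{\E}_{\scPv^s}[-2] \xrightarrow{[1]}$ with constant free cohomology sheaves; to split this triangle one still needs the obstruction in $\Hom(\underline{\E}_{\scPv^s}[-2],\underline{\E}_{\scPv^s}[1]) = \mathbb{H}^3(\scPv^s;\E)$ to vanish (this is precisely the point the paper's proof records), or equivalently a class $c \in \mathbb{H}^2(\cBv;\E)$ restricting to a generator on each fibre. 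Second, the d\'evissage is structurally redundant: once you know that, on hypercohomology, the projection-formula isomorphism is given by $(x,y) \mapsto ({\check \pi}^s)^* x + c \cdot ({\check \pi}^s)^* y$ for the splitting class $c$, you get surjectivity of $\mu_{\cG}$ (hence bijectivity, by the graded rank count coming from $C \cong C_s \oplus C_s \langle -2 \rangle$, which follows from Lemma~\ref{lem:invariants}(5) and graded Nakayama rather than from local triviality of the bundle) for \emph{every} $\cG$ at once; conversely, if you do reduce to a generating family, the base cases are the standard objects on the Bruhat cells of $\scPv^s$, not just $\underline{\E}_{\scPv^s}$, and for those the check is again this same cup-product bookkeeping rather than a tautology. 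Neither point affects the correctness of the overall proof.
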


\begin{proof}
This claim follows from \cite[Proposition 4.1.1]{soergel}, Proposition~\ref{prop:cohomology-G/B} and Corollary \ref{cor:cohomology-G/P}. (In the proof of \cite[Proposition 4.1.1]{soergel} the author assumes that the coefficients are a field of characteristic $\neq 2$, but this  assumption is not needed: all that one needs is the existence of an isomorphism
$({\check \pi}^s)_* \underline{\E}_{\cBv} \cong \underline{\E}_{\scPv^s} \oplus \underline{\E}_{\scPv^s}[-2]$,
which follows from the facts that
\[
\mathcal{H}^n \bigl( ({\check \pi}^s)_* \underline{\E}_{\cBv} \bigr) = \begin{cases}
\underline{\E}_{\scPv^s} & \text{if $n \in \{0,2\}$;} \\
0 & \text{otherwise}
\end{cases}
\]
and that $\mathbb{H}^3(\scPv^s;\E)=0$.)
\end{proof}

It is well known (see \cite[\S 4.1]{jmw}) that the functors $({\check \pi}^{s})^*$ and $({\check \pi}^{s})_*$ send parity complexes to parity complexes. We deduce that the functor $\vartheta_s$ restricts to a functor
\[
\vartheta_s \colon \Parity_{(\Bv)}(\cBv, \E) \to \Parity_{(\Bv)}(\cBv, \E).
\]
For each sequence $\us=(s_1, \ldots, s_i)$ of simple reflections we will consider the ``Bott--Samelson'' parity complex
\[
\cEv(\us) := \vartheta_{s_i} \cdots \vartheta_{s_1} \underline{\E}_{\cBv_e} [i].
\]
(Here, $\underline{\E}_{\cBv_e}$ is the skyscraper sheaf at the origin $\cBv_e=\{\Bv/\Bv\}$.)

The following result is an immediate consequence of Lemma \ref{lem:cohomology-theta}.

\begin{cor}
\label{cor:H-BS}
For any sequence $\us$ of simple reflections, there exists a canonical isomorphism of graded $C$-modules
\[
\mathbb{H}^\bullet \bigl( \cBv,\cEv(\us) \bigr) \cong \mathsf{BS}^\gr(\us).
\]
\end{cor}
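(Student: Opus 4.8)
The plan is to prove Corollary~\ref{cor:H-BS} by induction on the length $i$ of the sequence $\us=(s_1,\ldots,s_i)$, using Lemma~\ref{lem:cohomology-theta} to handle the inductive step and Proposition~\ref{prop:cohomology-G/B} to handle the base case. First I would treat the base case $i=0$, where $\us$ is the empty sequence: here $\cEv(\us)=\underline{\E}_{\cBv_e}$ is the skyscraper sheaf at the base point, and one checks directly that $\mathbb{H}^\bullet(\cBv,\underline{\E}_{\cBv_e})\cong\E$, concentrated in degree $0$, which matches $\mathsf{BS}^\gr(\varnothing)=\E$ by definition. (Strictly speaking one also wants to record that the $C$-module structure is the tautological one coming from $C/\fh C\cong\E$; this follows from functoriality of the isomorphism in Proposition~\ref{prop:cohomology-G/B} applied to the inclusion of the point.)

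For the inductive step, suppose the claim holds for $\us'=(s_1,\ldots,s_{i-1})$, so that $\mathbb{H}^\bullet(\cBv,\cEv(\us'))\cong\mathsf{BS}^\gr(\us')$ as graded $C$-modules. By definition $\cEv(\us)=\vartheta_{s_i}\cEv(\us')$, so applying Lemma~\ref{lem:cohomology-theta} with ${\check\cF}=\cEv(\us')$ gives a functorial isomorphism of graded $C$-modules
\[
\mathbb{H}^\bullet(\cBv,\cEv(\us))\cong C\otimes_{C_{s_i}}\mathbb{H}^\bullet(\cBv,\cEv(\us'))\cong C\otimes_{C_{s_i}}\mathsf{BS}^\gr(\us').
\]
Comparing with the definition
\[
\mathsf{BS}^\gr(\us)=C\otimes_{C_{s_i}}C\otimes_{C_{s_{i-1}}}\otimes\cdots\otimes_{C_{s_1}}\E\la i\ra,
\]
one sees that the right-hand side is precisely $C\otimes_{C_{s_i}}\mathsf{BS}^\gr(\us')$, up to the grading shift bookkeeping: $\mathsf{BS}^\gr(\us')$ already carries the shift $\la i-1\ra$, and the extra $\la 1\ra$ is accounted for by the normalization built into $\vartheta_s=({\check\pi}^s)^*({\check\pi}^s)_*$ together with the shift in the definition of $\cEv(\us)$ (the $[i]$ versus $[i-1]$). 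I would spell this out carefully so that the shifts line up, since the module $\mathsf{BS}^\gr(\us)$ is defined with an explicit $\la i\ra$ whereas Lemma~\ref{lem:cohomology-theta} is stated without any shift.

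The main obstacle—such as it is—is not a deep one: it is making sure the isomorphism is \emph{canonical} and compatible at each stage, i.e.\ that one is really composing the functorial isomorphisms of Lemma~\ref{lem:cohomology-theta} rather than just producing abstract isomorphisms. Since Lemma~\ref{lem:cohomology-theta} provides a \emph{functorial} isomorphism in ${\check\cF}$, the composite of these isomorphisms along the tower $\vartheta_{s_i}\cdots\vartheta_{s_1}$ applied to $\underline{\E}_{\cBv_e}[i]$ is itself canonical, and this is what delivers the canonical isomorphism in the statement. The only genuinely routine verification is the grading/shift normalization just mentioned, and perhaps recording that the $C$-module structures on both sides agree, which is automatic from the way $\mathbb{H}^\bullet(\cBv,-)$ is made into a functor to $C\lh\gmod$ via Proposition~\ref{prop:cohomology-G/B}.
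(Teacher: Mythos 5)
Your induction via Lemma~\ref{lem:cohomology-theta}, with the skyscraper sheaf as base case, is exactly how the paper obtains this (it simply records the corollary as an immediate consequence of that lemma). One small correction to your bookkeeping: $\vartheta_s$ itself carries no shift, so the extra $\la 1\ra$ comes entirely from $\cEv(\us)=\vartheta_{s_i}\bigl(\cEv(\us')\bigr)[1]$ (the $[i]$ versus $[i-1]$), and then $\mathbb{H}^\bullet(\cBv,\cEv(\us))\cong C\otimes_{C_{s_i}}\mathsf{BS}^\gr(\us')\la 1\ra=\mathsf{BS}^\gr(\us)$ as you intended.
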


\subsection{Equivalence}

The main result of this section is the following. It follows rather easily from known results, and is a direct analogue of Soergel's results in~\cite{soergel-kategorie} which motivated the general study of Soergel bimodules.

\begin{thm}
\label{thm:equivalence-parity}
The functor $\mathbb{H}^\bullet(\cBv,-) \colon \Parity_{(\Bv)}(\cBv,\E) \to C\lh\gmod$ is fully faithful, and induces an equivalence of categories
\[
\mathbb{H} \colon \Parity_{(\Bv)}(\cBv,\E) \simto \cS^\gr_\E
\]
which satisfies $\mathbb{H} \circ [1] = \la 1 \ra \circ \mathbb{H}$. Moreover, the following diagram commutes (up to natural equivalence):
\[
\xymatrix@C=1.5cm{
\Parity_{(\Bv)}(\cBv,\K) \ar[d]_-{\mathbb{H}_\K}^-{\wr} & \Parity_{(\Bv)}(\cBv,\O) \ar[r]^-{\F(-)} \ar[l]_-{\K(-)} \ar[d]^-{\mathbb{H}_\O}_-{\wr} & \Parity_{(\Bv)}(\cBv,\F) \ar[d]^-{\mathbb{H}_\F}_-{\wr} \\
\cS^\gr_\K & \cS^\gr_\O \ar[r]^-{\F(-)} \ar[l]_-{\K(-)} & \cS^\gr_\F.
}
\]
\end{thm}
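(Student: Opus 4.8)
The plan is to reduce everything to a statement about $C$-modules, using the functor $\mathbb{H}^\bullet(\cBv,-)$ and the description of parity complexes as summands of Bott--Samelson objects. First I would prove full faithfulness of $\mathbb{H}^\bullet(\cBv,-)$ on $\Parity_{(\Bv)}(\cBv,\E)$. Since every indecomposable parity complex is a direct summand of some $\cEv(\us)$ (up to shift), and the functor is additive and commutes with shifts, it suffices to check that the natural map
\[
\bigoplus_{n \in \Z} \Hom\bigl(\cEv(\us),\cEv(\ut)[n]\bigr) \to \Hom_{C\lh\gmod}\bigl(\mathsf{BS}^\gr(\us), \mathsf{BS}^\gr(\ut)\langle n\rangle\bigr)
\]
(combined over $n$, using Corollary~\ref{cor:H-BS}) is an isomorphism. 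The standard argument here is by induction on the length of the sequences, using the adjunction $(({\check\pi}^s)^*, ({\check\pi}^s)_*)$ and its variant, together with the projection formula and Lemma~\ref{lem:cohomology-theta}: a $\Hom$ between $\vartheta_s$-twisted objects on the topological side matches a $\Hom$ between $C\otimes_{C_s}(-)$-twisted objects on the algebraic side because $\vartheta_s$ is biadjoint to itself up to shift (as $({\check\pi}^s)^*$ and $({\check\pi}^s)_*$ are, $\scPv^s$ being smooth projective of dimension one over the base), and likewise $C\otimes_{C_s}(-)$ is biadjoint to restriction up to shift by Lemma~\ref{lem:invariants}(5). One also needs the base case, namely that $\mathbb{H}^\bullet(\cBv,-)$ identifies $\bigoplus_n\Hom(\underline{\E}_{\cBv_e}, \cEv(\us)[n+i])$ with $\Hom_{C}(\E, \mathsf{BS}^\gr(\us)\langle\bullet\rangle)$; this amounts to computing stalk cohomology at the base point, which the adjunction reduces to the previous step. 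For $\E=\K$ or $\F$ this is essentially in~\cite{soergel} and~\cite{soergel-kategorie}; for $\E=\O$ one runs the same argument, the freeness statements of Lemma~\ref{lem:invariants} ensuring the relevant $\Hom$-modules are free $\O$-modules and that formation of these $\Hom$'s commutes with $\K(-)$ and $\F(-)$.

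Once full faithfulness is known, I would deduce that $\mathbb{H}^\bullet(\cBv,-)$ induces an equivalence onto the full subcategory $\cS^\gr_\E \subset C\lh\gmod$. Essential surjectivity is immediate: by Corollary~\ref{cor:H-BS} the image contains every $\mathsf{BS}^\gr(\us)$, and since $\mathbb{H}^\bullet(\cBv,-)$ is additive, commutes with $\langle 1\rangle$, and $\Parity_{(\Bv)}(\cBv,\E)$ is Krull--Schmidt and closed under summands and shifts, the essential image is closed under summands, shifts, and finite direct sums, hence contains all of $\cS^\gr_\E$ by the very definition of the latter. Fully faithful plus essentially surjective gives the equivalence $\mathbb{H}$, and the relation $\mathbb{H}\circ[1]=\langle 1\rangle\circ\mathbb{H}$ is built into the construction (it is already recorded at the level of Bott--Samelson objects just above the statement, and transported along the equivalence).

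For the commutativity of the square, the point is that all three functors in sight — $\mathbb{H}^\bullet(\cBv,-)$, $\F(-)$, $\K(-)$ — are compatible with the generators $\cEv(\us)$. On the sheaf-theoretic side, $\F(\cEv_\O(\us)) \cong \cEv_\F(\us)$ and similarly for $\K$: this follows because $\F(-)$ and $\K(-)$ commute with $({\check\pi}^s)^*$ and $({\check\pi}^s)_*$ (the latter by the base change / proper-or-smooth-pushforward compatibility of extension of scalars, $\scPv^s$ being proper over the base) and with shifts and with the skyscraper $\underline{\E}_{\cBv_e}$. On the module side, Lemma~\ref{lem:invariants}(4) gives $\F\otimes_\O \mathsf{BS}^\gr_\O(\us)\cong\mathsf{BS}^\gr_\F(\us)$ and likewise for $\K$, as already noted in~\S\ref{ss:BS-modules}. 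Combining these with Corollary~\ref{cor:H-BS} over each of $\K$, $\O$, $\F$ yields natural isomorphisms $\F(-)\circ\mathbb{H}_\O \cong \mathbb{H}_\F\circ\F(-)$ on Bott--Samelson objects; by full faithfulness and the Krull--Schmidt property these extend (uniquely) to natural isomorphisms of functors on all of $\Parity_{(\Bv)}(\cBv,\O)$, and similarly for $\K$. Thus the diagram commutes up to natural equivalence.

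The main obstacle is the full faithfulness in the integral case $\E=\O$. Over a field one can invoke the cited results of Soergel, but over $\O$ one must redo the inductive $\Hom$-computation by hand, and the delicate point is controlling torsion: one needs that the relevant $\Hom$-groups (between Bott--Samelson parity complexes, and between Bott--Samelson modules) are free over $\O$ and that their formation commutes with $\otimes_\O \K$ and $\otimes_\O\F$, so that the isomorphism over $\O$ can be checked after the (already known) reductions to $\K$ and $\F$. This is exactly where Lemma~\ref{lem:invariants} (freeness of $S_\E$ over $S_\E^W$ and of $C_\E$ over $C_s^\E$, and compatibility of invariants with extension of scalars) and the parity-sheaf freeness inputs from~\cite{jmw} are needed; the bookkeeping is routine but must be carried out carefully.
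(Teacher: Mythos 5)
The parts of your argument dealing with the essential image (Bott--Samelson generation plus Corollary~\ref{cor:H-BS}), the shift compatibility, and the compatibility with $\K(-)$ and $\F(-)$ (the natural universal-coefficient map being an isomorphism on Bott--Samelson objects by evenness/freeness, cf.~\cite[Eq.~(2.13)]{jmw}, and hence on all summands) are correct and essentially what the paper does. The problem is your treatment of full faithfulness, which is where the real content of the theorem lies. Your induction moves $\vartheta_{s}$ across the $\Hom$ using its self-(bi)adjointness up to shift; but this operation \emph{preserves} the total length $|\us|+|\ut|$ (it shortens one argument while lengthening the other), so it never reaches a genuinely simpler situation. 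What remains after all reductions is the ``base case'' that
\[
\bigoplus_{n}\Hom \bigl( \underline{\E}_{\cBv_e},\cEv(\us)[n] \bigr) \longrightarrow \Hom_{C} \bigl( \E, \mathsf{BS}^\gr(\us) \bigr),
\]
equivalently that the (co)stalk at the base point of a Bott--Samelson parity complex is computed by the (co)invariants of the corresponding Soergel module. This is not formal: if you try to unwind it by the same adjunctions (as you suggest, ``the adjunction reduces to the previous step''), $i_e^*\vartheta_s\cG$ becomes the cohomology of the restriction of $\cG$ to the Schubert curve $\overline{\cBv_s}$, and iterating produces a whole family of statements about restrictions to unions of strata, with no matching algebraic side provided. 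Closing this loop is exactly the Erweiterungssatz-type input, and it requires geometric arguments (parity vanishing on strata together with surjectivity of restriction maps in cohomology, organized as an induction over the stratification), not adjunction bookkeeping combined with Lemma~\ref{lem:cohomology-theta} and Lemma~\ref{lem:invariants}(5).

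Your fallback of citing the literature for field coefficients also does not cover the stated generality: \cite{soergel-kategorie} is in characteristic $0$, and \cite{soergel} assumes $\ell$ larger than the Coxeter number, whereas the theorem is asserted for every good $\ell$ (including, e.g., $\ell=2$ for $\GL_n$). The paper instead quotes \cite[Theorem~4.1]{arider}, which proves full faithfulness of $\mathbb{H}^\bullet(\cBv,-)$ on parity complexes for an arbitrary field by precisely such a stratification induction, and observes that the same proof goes through over $\O$; your freeness remarks (Lemma~\ref{lem:invariants}, freeness of stalks of parity complexes) are the right ingredients for the $\O$-extension, but they only help once a complete field-coefficient (or directly integral) argument in good characteristic is in place.
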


\begin{proof}
The fact that $\mathbb{H}^\bullet(\cBv,-)$ is fully faithful is proved in \cite[Theorem 4.1]{arider}. (In \emph{loc.}~\emph{cit}.~the authors assume that the ring of coefficients is a field, but the same proof applies also to the case $\E=\O$.) To identify the essential image of this functor we note that the category $\Parity_{(\Bv)}(\cBv,\E)$ is generated (under taking direct summands, direct sums and shifts) by the objects $\cEv(\us)$; see \cite[\S 4.1]{jmw}. Then the result follows from Corollary \ref{cor:H-BS} and the definition of $\cS^\gr_\E$.

The compatibility of $\mathbb{H}$ with the functors $\K(-)$ is obvious; the compatibility with $\F(-)$ follows from \cite[Eq.~(2.13)]{jmw}. (Note that the constant sheaf $\underline{\E}_{\cBv}$ is a parity complex.)
\end{proof}

\subsection{Consequences for the structure of $\cS$ and $\cS^\gr$}
\label{ss:modular-BS}

Recall from~\S\ref{ss:background-parity} that the indecomposable objects in the Krull--Schmidt category $\Parity_{(\Bv)}(\cBv,\E)$ are parame\-trized by $W \times \Z$. It follows from Theorem \ref{thm:equivalence-parity} that the same is true for the category $\cS^\gr$. More precisely, for any $w \in W$ we define the indecomposable object
\[
D_w^\gr := \mathbb{H}(\cEv_w).
\]
If $w=s_1 \cdots s_{\ell(w)}$ is any reduced expression for $w$, then $D_w^\gr$ is characterized by the fact that it appears as a direct summand of $\mathsf{BS}^\gr(s_1, \ldots, s_{\ell(w)})$, and does not appear as a direct summand of $\mathsf{BS}^\gr(\us) \la n \ra$ for any $n \in \Z$ and any sequence $\us$ of simple reflections of length strictly less than $\ell(w)$. Moreover, any indecomposable object of $\cS^\gr_\E$ is isomorphic to $D_w^\gr \la n \ra$ for some unique $w \in W$ and $n \in \Z$.

Note also that by \eqref{eqn:F-parity} and the commutativity of the right-hand side of the diagram in Theorem \ref{thm:equivalence-parity} we have
\begin{equation}
\label{eqn:F-D}
\F(D_{w,\O}^\gr) \cong D_{w,\F}^\gr.
\end{equation}
(The analogous result for $\K$ does not hold in general: the decomposition of $\K(D_{w,\O}^\gr)$ is governed by the integers ${\check e}^i_{v,w}$ of~\S\ref{ss:decomposition}.)

The following technical result, which we obtain as a corollary of Theorem \ref{thm:equivalence-parity}, will be needed later on. (Note that this result is not obvious from definitions, and that our proof uses geometry, hence cannot be adapted to a general Coxeter group.)

\begin{cor}
\label{cor:morphisms-BS}
For any $D$ and $D'$ in $\cS_\O$, the natural morphism
\[
\F \otimes_\O \Hom_{C_\O} \bigl( D, D' \bigr) \to \Hom_{C_\F} \bigl( \F(D), \F(D') \bigr)
\]
is an isomorphism.
\end{cor}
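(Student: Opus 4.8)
The plan is to reduce the statement to the Bott--Samelson generators, where it becomes a consequence of the geometric interpretation of $\cS$ via $\mathbb{H}$ together with the base-change properties of parity sheaves. First I would observe that since every object of $\cS_\O$ is a direct summand of a finite direct sum of shifts (in the graded case; in the ungraded case, of forgetful images) of Bott--Samelson modules $\mathsf{BS}_\O(\us)$, and since both the source and target of the map in question are additive in each variable, it suffices to prove the claim when $D = \mathsf{BS}_\O(\us)$ and $D' = \mathsf{BS}_\O(\ut)$ for sequences $\us, \ut$ of simple reflections. (More precisely: choose $E, E'$ in $\cS_\O$ with $D \oplus E \cong \bigoplus_i \mathsf{BS}_\O(\us_i)$ and similarly for $D'$; the natural transformation is a direct summand of the corresponding transformation for the Bott--Samelson modules, and a direct summand of an isomorphism is an isomorphism. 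Note $\F(-)$ commutes with $\For$ and with $\mathsf{BS}^\gr_\O(\us) \mapsto \mathsf{BS}^\gr_\F(\us)$ by Lemma~\ref{lem:invariants}(4), so $\F(D)$ is the corresponding summand on the $\F$-side.)

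Next I would pass through the graded picture and use Theorem~\ref{thm:equivalence-parity}. By Corollary~\ref{cor:H-BS} we have $\mathsf{BS}^\gr_\E(\us) \cong \mathbb{H}(\cEv(\us))$ for $\E = \O, \F$, compatibly with $\F(-)$ by the commuting square in Theorem~\ref{thm:equivalence-parity}. For the ungraded statement, I would write
\[
\Hom_{C_\O}\bigl(\For \mathsf{BS}^\gr_\O(\us), \For \mathsf{BS}^\gr_\O(\ut)\bigr) \cong \bigoplus_{n \in \Z} \Hom_{C_\O\lh\gmod}\bigl(\mathsf{BS}^\gr_\O(\us), \mathsf{BS}^\gr_\O(\ut)\la n\ra\bigr),
\]
which holds because $C_\O$ is a free $\O$-module of finite rank (Lemma~\ref{lem:invariants}(3)) and the Bott--Samelson modules are graded and finitely generated over it, so a degree decomposition of ungraded homomorphisms is available; the same identity holds over $\F$. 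Via $\mathbb{H}$ this rewrites the map in the corollary, in each graded degree $n$, as the base-change map
\[
\F \otimes_\O \Hom_{\Parity_{(\Bv)}(\cBv,\O)}\bigl(\cEv(\us), \cEv(\ut)[n]\bigr) \to \Hom_{\Parity_{(\Bv)}(\cBv,\F)}\bigl(\F\cEv(\us), \F\cEv(\ut)[n]\bigr),
\]
using that $\F(\cEv(\us)) \cong \cEv(\us)_\F$ (as $\F(-)$ commutes with $(\check\pi^s)^*$, $(\check\pi^s)_*$ and sends the skyscraper $\underline\O_{\cBv_e}$ to $\underline\F_{\cBv_e}$). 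So the statement follows once we know this base-change map is an isomorphism.

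Finally I would prove that base-change isomorphism for parity complexes. The key input is the vanishing of $\Hom$'s in odd degrees between even complexes and, more relevantly, the fact that for parity complexes $\cG, \cH$ on $\cBv$ with $\O$-coefficients the $\O$-module $\bigoplus_n \Hom(\cG, \cH[n])$ is \emph{free} of finite rank and compatible with $\F(-)$. This is essentially \cite[\S 2.6, esp.\ Propositions 2.39--2.41]{jmw}: one computes $\Hom^\bullet(\cEv(\us),\cEv(\ut))$ by adjunction and the projection formula in terms of cohomology of (partial) flag varieties with $\O$-coefficients, which is $\O$-free and base-changes correctly (Proposition~\ref{prop:cohomology-G/B}, Corollary~\ref{cor:cohomology-G/P}), and an inductive argument on the length of $\us$ using $\vartheta_s = (\check\pi^s)^*(\check\pi^s)_*$ together with the adjunction $((\check\pi^s)^*, (\check\pi^s)_*)$, $((\check\pi^s)_!, (\check\pi^s)^!)$ and $(\check\pi^s)^! \cong (\check\pi^s)^*[2]$ reduces everything to the known freeness of cohomology of $\cBv$ and $\scPv^s$. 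The main obstacle is precisely this last step: making sure that the relevant $\Hom$-modules between Bott--Samelson parity complexes over $\O$ are torsion-free (equivalently $\O$-free) and that the universal coefficient / base-change spectral sequence degenerates, so that $\F \otimes_\O \Hom^\bullet_\O \simto \Hom^\bullet_\F$. Once this $\O$-freeness and base-change compatibility is in hand — and it is available from \cite{jmw} under our hypotheses — the corollary follows by assembling the graded degrees.
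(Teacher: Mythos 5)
Your proposal is correct and follows essentially the same route as the paper: reduce by additivity to the Bott--Samelson modules $\mathsf{BS}_\O(\us)$, $\mathsf{BS}_\O(\ut)$, translate via the equivalences $\mathbb{H}_\O$, $\mathbb{H}_\F$ (and their compatibility with $\F(-)$) into a base-change statement for $\Hom(\cEv_\O(\us),\cEv_\O(\ut)[i])$, and invoke the freeness/base-change property of $\Hom$-groups between parity complexes from~\cite{jmw}. The only difference is cosmetic: you spell out the graded-to-ungraded $\Hom$ decomposition and sketch the proof of the \cite{jmw} input, where the paper simply cites \cite[Eq.~(2.13)]{jmw}.
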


\begin{proof}
It is enough to prove the claim when $D=\mathsf{BS}_\O(\us)$ and $D'=\mathsf{BS}_\O(\ut)$ for some sequences of simple reflections $\us$ and $\ut$. In this case,
using equivalences $\mathbb{H}_\O$ and $\mathbb{H}_\F$ (and their compatibility with functors $\F(-)$) this claim reduces to the claim that the natural morphism
\[
\F \otimes_\O \Hom(\cEv_\O(\us), \cEv_\O(\ut)[i]) \to \Hom(\cEv_\F(\us), \cEv_\F(\ut)[i])
\]
is an isomorphism for any $i \in \Z$, and this follows from \cite[Eq.~(2.13)]{jmw}.
\end{proof}

For any $w \in W$, we define the object
\[
D_w := \For(D_w^\gr)
\]
of $\cS$. From~\eqref{eqn:F-D} we deduce that
\begin{equation}
\label{eqn:F-D-2}
\F(D_{w,\O}) \cong D_{w,\F}.
\end{equation}

\begin{cor}
\label{cor:D_w-indecomposable}
\begin{enumerate}
\item
For any $w \in W$, the object $D_{w}$ is indecomposable.
\item
The objects $\{D_w, \ w \in W\}$ form a complete set of pairwise nonisomorphic indecomposable objects in the Krull--Schmidt category $\cS$.
\end{enumerate}
\end{cor}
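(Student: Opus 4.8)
The plan is to reduce everything to the cases $\E = \K$ or $\F$, where we may invoke Lemma \ref{lem:v-indecomposable}, and then handle $\E = \O$ separately using the extension-of-scalars compatibility \eqref{eqn:F-D-2} together with Corollary \ref{cor:morphisms-BS}. First, for $\E = \K$ or $\F$: since $D_w^\gr$ is indecomposable in $\cS_\E^\gr$ by construction (it equals $\mathbb{H}(\cEv_w)$, and $\cEv_w$ is indecomposable in $\Parity_{(\Bv)}(\cBv,\E)$ via Theorem \ref{thm:equivalence-parity}), Lemma \ref{lem:v-indecomposable} immediately gives that $D_w = \For(D_w^\gr)$ is indecomposable in $\cS_\E$. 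This settles part $(1)$ over a field.

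For $\E = \O$, I would argue that $\End_{C_\O}(D_{w,\O})$ is a local ring. By \eqref{eqn:F-D-2} we have $\F(D_{w,\O}) \cong D_{w,\F}$, and $\End_{C_\F}(D_{w,\F})$ is local since $D_{w,\F}$ is indecomposable in the Krull--Schmidt category $\cS_\F$. By Corollary \ref{cor:morphisms-BS} (applied with $D = D' = D_{w,\O}$, which lies in $\cS_\O$), the natural map $\F \otimes_\O \End_{C_\O}(D_{w,\O}) \to \End_{C_\F}(D_{w,\F})$ is an isomorphism of $\F$-algebras. A standard lifting-of-idempotents argument — using that $\End_{C_\O}(D_{w,\O})$ is a finitely generated $\O$-algebra, hence $\ell$-adically complete, with $\ell$ in its Jacobson radical after the reduction — then shows that idempotents in the quotient lift, so the quotient being local forces $\End_{C_\O}(D_{w,\O})$ to be local. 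Hence $D_{w,\O}$ is indecomposable, completing part $(1)$ in all cases.

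For part $(2)$, I first note that the $D_w$ are pairwise nonisomorphic: over a field this follows because $\For$ reflects the "support" invariant (the largest $v$ with $\mathsf{BS}(\us)$, $\ell(\us)=\ell(v)$, of which $D_w$ is a summand — equivalently, via $\mathbb{H}^{-1}$, the support of the corresponding parity sheaf), and for $\E = \O$ one transfers this along $\F(-)$ using \eqref{eqn:F-D-2} and the fact that $\F(D_{v,\O}) \cong D_{v,\F}$ detects $v$. It remains to see that every indecomposable object of $\cS$ is isomorphic to some $D_w$. Since $\cS$ is generated under direct sums and direct summands by the objects $\mathsf{BS}(\us) = \For(\mathsf{BS}^\gr(\us))$, and each $\mathsf{BS}^\gr(\us)$ decomposes in $\cS^\gr$ into a sum of shifts $D_v^\gr\la n\ra$, applying $\For$ shows every object of $\cS$ is a direct sum of copies of the $D_v$'s; the Krull--Schmidt property of $\cS$ then yields the claim.

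The main obstacle will be the $\E = \O$ case of part $(1)$: unlike the field case, we cannot directly cite Lemma \ref{lem:v-indecomposable}, and we must genuinely combine the ring-theoretic structure of $\End_{C_\O}(D_{w,\O})$ (completeness, finite generation over $\O$) with Corollary \ref{cor:morphisms-BS} to run the idempotent-lifting argument. Everything else is bookkeeping with the already-established equivalences and compatibilities.
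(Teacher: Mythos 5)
Parts of your argument are fine and essentially agree with the paper. For $(1)$ over $\K$ or $\F$ you use Lemma~\ref{lem:v-indecomposable}, as the paper does. For $\E=\O$ your route via Corollary~\ref{cor:morphisms-BS} and lifting of idempotents is valid (with the small correction that what you need is that $\End_{C_\O}(D_{w,\O})$ is finitely generated as an $\O$-\emph{module} -- a finitely generated $\O$-algebra need not be complete -- which holds since $D_{w,\O}$ is a summand of an $\O$-free module of finite rank), but it is heavier than necessary: the paper simply remarks that a nontrivial decomposition of $D_{w,\O}$ would, after applying $\F(-)$, give a nontrivial decomposition of $\F(D_{w,\O}) \cong D_{w,\F}$, which is indecomposable by the field case. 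Likewise your completeness argument in $(2)$ (every object of $\cS$ is a direct sum of $D_v$'s, by applying $\For$ to the graded decompositions of the $\mathsf{BS}^\gr(\us)$ and using Krull--Schmidt) is exactly what the paper leaves implicit, and your reduction of the $\O$-case of pairwise nonisomorphism to the field case via \eqref{eqn:F-D-2} matches the paper.

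The genuine gap is the pairwise nonisomorphism of the $D_w$ over a field. Your ``support invariant'' is circular as stated: to know that $D_w$ is \emph{not} an (ungraded) direct summand of $\mathsf{BS}(\us)$ for $\us$ a reduced word of some $v \neq w$, you would decompose $\mathsf{BS}(\us) \cong \bigoplus_{x \leq v} D_x^{\oplus m_x}$ (apply $\For$ to the graded decomposition) and then invoke $D_w \not\cong D_x$ for all $x \leq v$ -- which is precisely the statement being proved; note also that elements $v \neq w$ of the same length cannot be separated by the minimal length of a Bott--Samelson word containing $D_w$ as a summand. More fundamentally, the support of the parity sheaf $\cEv_w$ is not visibly an invariant of the \emph{ungraded} $C$-module $\For(D_w^\gr)$: one needs a rigidity statement relating ungraded isomorphisms to graded ones. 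This is exactly what the paper supplies, citing \cite[Lemma~2.5.3]{bgs} (see also \cite{gg}): an ungraded isomorphism $\For(D_v^\gr) \cong \For(D_w^\gr)$ between indecomposables forces $D_v^\gr \cong D_w^\gr \la n \ra$ for some $n$, and then $v=w$ by the classification of indecomposables in $\cS^\gr$ coming from Theorem~\ref{thm:equivalence-parity}. Your proof needs this (or an equivalent) input to close the argument.
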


\begin{proof}
(1) If $\E=\F$ or $\K$, $D_w$ is indecomposable by Lemma \ref{lem:v-indecomposable}. If $\E=\O$,
by \eqref{eqn:F-D-2} we have $\F(D_{w,\O})=D_{w,\F}$; in particular, this object is indecomposable. It follows that $D_{w,\O}$ is itself indecomposable.

(2) It is enough to prove that if $v \neq w$ then $D_v$ is not isomorphic to $D_w$. Using~\eqref{eqn:F-D-2}, it is enough to prove this property when $\E=\K$ or $\F$. In this case, the claim follows from~(1) and~\cite[Lemma 2.5.3]{bgs}.
\end{proof}

In complement to Corollary~\ref{cor:D_w-indecomposable}, let us note that if $w=s_1 \cdots s_{\ell(w)}$ is any reduced expression for $w$, then $D_w$ is characterized by the fact that it appears as a direct summand of $\mathsf{BS}(s_1, \ldots, s_{\ell(w)})$, and does not appear as a direct summand of $\mathsf{BS}(\us)$ for any sequence $\us$ of simple reflections of length strictly less than $\ell(w)$.

\begin{rmk}
\label{rmk:Dw0}
When $w=w_0$, the variety $\overline{\cBv_{w_0}}=\cBv$ is smooth, so that we have $\cEv_{w_0}=\underline{\E}_{\cBv}[\dim \cBv]$. It follows that $D_{w_0} = \mathbb{H}^\bullet(\cBv;\E) = C$.
\end{rmk}

\section{Tilting perverse sheaves and the functor $\mathbb{V}$}
\label{sec:tilting-V}

In this section, we relate tilting sheaves on $\cB$ to Soergel modules through a ``functor $\mathbb{V}$.'' As in characteristic $0$ this functor is given by $\Hom(\cP_e, -)$ for an appropriate object $\cP_e$ in $\Perv_{(B)}(\cB,\E)$, where the morphism $C \to \End(\cP_e)$ is obtained by ``taking the logarithm of the monodromy.'' In~\S\ref{ss:monodromy} we make sense of this ``logarithm.'' The rest of the argument proceeds by reduction to the (known) case of coefficients $\K$, by carefully developing the constructions in the three settings of coefficients $\O$, $\F$ and $\K$, and exploiting, at each step, the most favorable case to deduce the other ones via the functors $\F(-)$ and $\K(-)$.

The conventions of~\S\ref{ss:new-notation} remain in effect. We also assume from now on that $\#\F > 2$.

\subsection{Statement}

The main result of this section is the following.

\begin{thm}
\label{thm:equivalence-tilting}
There exist equivalences of additive categories 
\[
\mathbb{V}_{\E} \colon \Tilt_{(B)}(\cB,\E) \simto \cS_\E,
\]
such the following diagram commutes (up to isomorphisms of functors):
\[
\xymatrix@C=1.5cm{
\Tilt_{(B)}(\cB,\K) \ar[d]_-{\mathbb{V}_\K}^-{\wr} & \Tilt_{(B)}(\cB,\O) \ar[r]^-{\F(-)} \ar[l]_-{\K(-)} \ar[d]^-{\mathbb{V}_\O}_-{\wr} & \Tilt_{(B)}(\cB,\F) \ar[d]^-{\mathbb{V}_\F}_-{\wr} \\
\cS_\K & \cS_\O \ar[r]^-{\F(-)} \ar[l]_-{\K(-)} & \cS_\F.
}
\]
\end{thm}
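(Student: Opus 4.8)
The plan is to construct a functor $\mathbb{V}_\E = \Hom_{\Perv_{(B)}(\cB,\E)}(\cP_e, -)$ on $\Tilt_{(B)}(\cB,\E)$, where $\cP_e$ denotes the (big) projective-tilting object corresponding to the identity element, and to equip $\End(\cP_e)$ with a structure of $C_\E$-module (in fact algebra) via a morphism $C_\E \to \End(\cP_e)$. The point of the section title ``logarithm of the monodromy'' is precisely this morphism: to each cocharacter $\lambda \in \bfY$ one associates a nilpotent endomorphism of the relevant tilting/projective sheaves, obtained by differentiating the monodromy action of the torus that acts on a suitable free-monodromic or pro-unipotent thickening; since $C_\E = \mathrm{S}_\E(\fh)/(\mathrm{S}_\E(\fh)^W_+)$ and $\fh = \E \otimes_\Z \bfY$, the collection of these nilpotent operators (which commute and are annihilated by $W$-invariants of positive degree, the latter because the monodromy is unipotent on the whole of $\cB$) assembles into the desired algebra morphism. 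I would first carry out this construction over $\K$, where it is essentially contained in \cite{by} (and in \cite{bgs} via Koszul duality plus the Radon transform, cf.~\S\ref{ss:reminder-radon}), obtaining an equivalence $\mathbb{V}_\K \colon \Tilt_{(B)}(\cB,\K) \simto \cS_\K$ sending $\cT_e$ to $C$ and, more generally, the ``push-pull'' functors on tilting sheaves associated to the averaging functors for the Whittaker-type action to the functors $C \otimes_{C_s} (-)$ on Soergel modules.

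Next I would propagate the construction to $\O$ and $\F$. The key structural input is that the logarithm-of-monodromy morphism $C_\E \to \End(\cP_e^\E)$ is compatible with the extension-of-scalars functors $\F(-)$ and $\K(-)$, which in turn rests on: (i)~$\F(\cP_e^\O) \cong \cP_e^\F$ and $\F(\cT_w^\O) \cong \cT_w^\F$, i.e.~the compatibility of tilting and projective objects with reduction mod $\ell$ (Proposition~\ref{prop:properties-tilting} and \cite[Corollary~2.4.2]{rsw}); (ii)~the freeness over $\O$ of the relevant $\Hom$-spaces between tilting objects, so that $\F \otimes_\O \Hom(\cP_e^\O, \cT^\O) \simto \Hom(\cP_e^\F, \cT^\F)$ and likewise for $\K$; and (iii)~Lemma~\ref{lem:invariants}, which guarantees that $C_\E$ itself is compatible with extension of scalars and is $\E$-free of rank $\#W$. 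Granting these, one constructs $\mathbb{V}_\O$ as $\Hom(\cP_e^\O, -)$ with values in $C_\O\lh\Mod$, checks that it lands in $\cS_\O$ by computing $\mathbb{V}_\O$ on Bott--Samelson tilting objects (using that the Whittaker averaging functors on the $\O$-side correspond to $C_\O \otimes_{C_s^\O}(-)$, which follows by comparing with the $\K$- and $\F$-pictures), and obtains the commutativity of the two squares in the diagram by construction. Essential surjectivity and full faithfulness of $\mathbb{V}_\O$ then follow by a ``reduction to $\F$'' argument: a morphism of tilting $\O$-sheaves is an isomorphism iff its reduction mod $\ell$ is (Nakayama), so fidelity and fullness over $\O$ follow from the case $\E=\F$ together with the freeness in (ii) and the analogous freeness of $\Hom$-spaces in $\cS_\O$ from Corollary~\ref{cor:morphisms-BS}; essential surjectivity follows because both sides are Krull--Schmidt with indecomposables lifting compatibly from $\F$ (using $\F(\cT_w^\O) \cong \cT_w^\F$ and $\F(D_{w,\O}) \cong D_{w,\F}$, equation~\eqref{eqn:F-D-2}).

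The case $\E=\F$ is the real heart of the matter and is where I expect the main obstacle to lie. Unlike the $\K$-case, there is no Koszul duality available to fall back on, so one must argue more directly. The strategy I would follow is: (a)~show $\mathbb{V}_\F$ is \emph{exact} on short exact sequences of tilting objects that remain tilting (equivalently, respects standard and costandard filtrations), by identifying $\mathbb{V}_\F(\Delta_w)$ and $\mathbb{V}_\F(\nabla_w)$ with the appropriate ``standard'' and ``costandard'' Soergel modules, i.e.~the rank-one free modules $C \la \ell(w) \ra$-type objects with their natural $C_\F$-action; (b)~establish that $\mathbb{V}_\F(\cT_s) \cong C \otimes_{C_s} C$ for a simple reflection $s$ by an explicit local computation on the minimal partial flag variety $\cB \to \scP^s$ (this is the place where $\#\F>2$ enters, since one needs $\F$ to contain a nontrivial root of unity to make the Whittaker-type averaging construction behave correctly, cf.~the parenthetical remark after the statement of Theorem~\ref{thm:main}); (c)~deduce by composing these that $\mathbb{V}_\F$ sends Bott--Samelson tilting objects to Bott--Samelson Soergel modules $\mathsf{BS}(\us)$, hence lands in $\cS_\F$ and is essentially surjective; (d)~prove full faithfulness by a dimension count: compute $\dim_\F \Hom(\cT, \cT')$ for tilting objects via $(\cT:\nabla_v)$ and $(\cT':\nabla_v)$ using the highest-weight reciprocity and compare with $\dim_\F \Hom_{C_\F}(\mathbb{V}_\F \cT, \mathbb{V}_\F \cT')$ computed on the Soergel side; matching these with the known answer over $\K$ (where $\mathbb{V}_\K$ is an equivalence) and using the compatibility with $\F(-)$ forces full faithfulness over $\F$. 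The delicate point throughout step (b)--(d) is controlling the $C_\F$-module structure — i.e.~verifying that the logarithm of the monodromy really does factor through $C_\F$ and induces the right maps — in positive characteristic, where one cannot simply differentiate a one-parameter subgroup naively; this is handled by working with the monodromy as an automorphism of a pro-object and extracting its ``logarithm'' as a limit of the operators $(\mathrm{id} - \text{monodromy}^{p^n})/\text{(appropriate normalization)}$ or, following \S\ref{ss:monodromy}, by a direct construction internal to the equivariant derived category. I would organize the $\F$-case so that everything ultimately reduces to the rank-one computation on $\scP^s$ and to the combinatorics of $\cS^\gr_\F$ already set up in Section~\ref{sec:soergel-modules}.
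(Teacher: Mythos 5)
Your overall architecture is the same as the paper's: $\mathbb{V}=\Hom(\cP_e,-)$ made into a functor to $C$-modules via monodromy, Whittaker-type averaging functors $\xi_s$ generating Bott--Samelson tilting objects, the case $\E=\K$ taken from \cite{by}, and the cases $\O,\F$ handled by extension of scalars with Corollary~\ref{cor:morphisms-BS} as the geometric input on the Soergel side. However, two steps of your plan have genuine gaps. First, full faithfulness over $\F$: equal dimensions of $\Hom(\cT,\cT')$ and $\Hom_{C_\F}(\mathbb{V}\cT,\mathbb{V}\cT')$ cannot by themselves force the \emph{natural} map between them to be bijective; you need injectivity or surjectivity from somewhere, and your step (d) supplies neither. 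Moreover you tacitly identify $C$ with $\End(\cP_e)$: this is known only over $\K$ (Lemma~\ref{lem:monodromy-Pe}); over $\O$ the map is merely injective with torsion cokernel, and over $\F$ it is a priori neither injective nor surjective. This is exactly why the paper keeps two functors, $\mathbb{V}'$ valued in $\End(\cP_e)$-modules and $\mathbb{V}=\iota\circ\mathbb{V}'$, and proves full faithfulness by a diagram chase combining four inputs: full faithfulness of $\mathbb{V}'_\F$ via the socle/head statements for $\Delta_w$ and $\nabla_w$ (Lemmas~\ref{lem:delta-socle} and~\ref{lem:iota-V'}, following \cite{bbm}), full faithfulness of $\iota$ over $\O$, freeness of $\Hom$'s between tilting $\O$-sheaves (Proposition~\ref{prop:properties-tilting}), and Corollary~\ref{cor:morphisms-BS}. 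The first two inputs are absent from your outline, and without them the dimension count does not close.

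Second, the monodromy construction and the key isomorphism for $\xi_s$. Your proposed ``logarithm'' as a limit of operators $(\id-\text{monodromy}^{p^n})$ with a normalization is not shown to make sense over $\O$ or $\F$, and whatever replaces the logarithm must be $W$-equivariant for the passage from $S$ to $C$ and the Soergel-module combinatorics to work; the paper's actual device is Proposition~\ref{prop:logarithm}, a continuous $W$-equivariant isomorphism $\hS\simto\widehat{\E\bfY}$ built from a faithful representation with stable complement (Springer--Steinberg, Bardsley--Richardson), and nothing of this kind appears in your plan. Your justification that $S^W_+$ acts trivially ``because the monodromy is unipotent'' is also not a proof: unipotence only makes the augmentation ideal act nilpotently (which is what allows passing to completions); the vanishing on $W$-invariants, for $\cP_e$, is deduced from the case $\E=\K$ (Soergel's endomorphism theorem via \cite{by}) together with freeness over $\O$. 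Finally, the real heart of the mod-$\ell$ case is Proposition~\ref{prop:V-xis}, that $C\otimes_{C_s}\mathbb{V}(-)\to\mathbb{V}\xi_s(-)$ is an isomorphism; the paper proves this by extending $\mathbb{V}$ to all of $\Perv_{(B)}(\cB,\F)$ and checking on simple objects, the decisive point being that $\mu_{\cT_s}\colon\hS\to\End(\cT_s)$ is \emph{surjective} because $\cT_s$ is not $B$-equivariant. Your ``explicit local computation'' does not identify this mechanism, and note that its target is misstated: $\mathbb{V}(\cT_s)\cong C\otimes_{C_s}\E=\mathsf{BS}(s)$, which is two-dimensional, not $C\otimes_{C_s}C$ (you are sliding between Soergel modules and Soergel bimodules; likewise $\mathbb{V}(\Delta_w)$ is one-dimensional, not a rank-one free $C$-module).
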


The proof of Theorem \ref{thm:equivalence-tilting} requires a change in setting.  Choose a prime number $p \ne \ell$ such that there exists a primitive $p$-th root of unity in $\F$ (which is possible under our assumption that $\# \F >2$).  In this section only, we replace the \emph{complex} algebraic group $G$ by the reductive algebraic group over $\Fpb$ with the same root datum.  Likewise, $T \subset B$ and $\cB$ are taken to be defined over $\Fpb$, and $\Db_{(B)}(\cB,\E)$ and $\Tilt_{(B)}(\cB,\E)$ are subcategories of the \'etale derived category of $\cB$.  So far, there is no harm in making this change: it is well known that this ``new'' $\Db_{(B)}(\cB,\E)$ (in the \'etale setting over $\Fpb$) is equivalent to the ``old'' one (in the classical topology over $\C$).  See~\cite[Remark~7.1.4(ii)]{rsw} for details.

The full constructible derived category $\Dbc(\cB,\E)$, however, is rather different in the \'etale and classical settings.  In the \'etale setting, it contains ``Whittaker-type'' objects~\cite{bbrm}, which will be a crucial tool in the proof.

\subsection{Preliminary results}
\label{ss:preliminary}

In this subsection we collect a few preliminary lemmas that will be needed in the section.

\begin{lem}
\label{lem:Hom-proj-F}
Let $\cP, \cF \in \Perv_{(B)}(\cB,\O)$, and assume that $\cP$ is projective. (In particular, this implies that $\F(\cP)$ is perverse; see \cite[Proposition 2.4.1]{rsw}.)
\begin{enumerate}
\item
If $\F(\cF)$ is perverse, then the $\O$-module $\Hom(\cP,\cF)$ is free, and the natural morphism
\[
\F \otimes_\O \Hom(\cP,\cF) \to \Hom \bigl( \F(\cP),\F(\cF) \bigr)
\]
is an isomorphism.
\item If $\F(\cP) \cong \F(\cF)$, then $\cP \cong \cF$.
\end{enumerate}
\end{lem}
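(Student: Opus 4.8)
The plan is to exploit the universal coefficient spectral sequence (or rather the short exact sequence it degenerates into) relating $\Hom$ and $\Ext$ groups over $\O$, $\F$ and $\K$, together with the standard fact that projective perverse sheaves over $\O$ have torsion-free $\Hom$'s into standard filtered objects. First I would recall that since $\cP$ is projective in $\Perv_{(B)}(\cB,\O)$, it admits a standard filtration (see~\S\ref{ss:reminder-radon}), and that $\Hom(\cP, \Delta_v)$ and $\Hom(\cP, \nabla_v)$ are free $\O$-modules of finite rank for all $v$; moreover $\Ext^i(\cP, \cG) = 0$ for $i > 0$ whenever $\cG$ is any object admitting a costandard filtration, in particular for $\cG$ perverse with such a filtration. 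The general mechanism is: for any $\cF \in \Db_{(B)}(\cB,\O)$ there is a natural distinguished triangle $\cF \xrightarrow{\ell} \cF \to \F(\cF) \to$ (multiplication by $\ell$), which upon applying $\R\Hom(\cP, -)$ and taking cohomology gives, for each $n$, a short exact sequence
\[
0 \to \F \otimes_\O \Ext^n(\cP, \cF) \to \Ext^n\bigl(\cP, \F(\cF)\bigr) \to \Ext^{n+1}(\cP,\cF)[\ell] \to 0,
\]
where $M[\ell]$ denotes the $\ell$-torsion in $M$. (Here I use that $\F(\cP)$ really does compute $\R\Hom(\F(\cP), \F(\cF))$, i.e.\ $\R\Hom(\cP, \F(\cF)) \cong \R\Hom(\F(\cP), \F(\cF))$ by adjunction, since $\F(-) = \F \lotimes_\O (-)$.)

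For part (1): assume $\F(\cF)$ is perverse. I would argue that $\Ext^1(\cP, \cF) = 0$ — this is where the hypothesis that $\F(\cF)$ is perverse enters, combined with $\cP$ projective. Indeed, consider the triangle above in degree $n=0$ together with $n=1$: to kill the torsion term one wants $\Ext^1(\cP,\cF)$ to be torsion-free (then combined with finite generation over $\O$ and the fact that it becomes relevant only through its $\ell$-torsion, one concludes). The cleanest route: since $\cP$ is projective and $\cF, \F(\cF)$ are both perverse, apply $\R\Hom(\cP,-)$ to the triangle $\cF \xrightarrow{\ell} \cF \to \F(\cF)$; projectivity of $\cP$ together with $\cF$ perverse gives $\Ext^{>0}(\cP, \cF) = 0$ provided one knows $\cF$ lies in the triangulated subcategory generated by standard/costandard objects in suitable degrees — but more directly, one uses that $\Perv_{(B)}(\cB,\O)$ has enough projectives and finite global dimension is not needed; rather $\Ext^1(\cP, \cF) = 0$ because $\cP$ is projective \emph{in the abelian category} and $\cF$ is an \emph{object of that abelian category}. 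Wait — that already gives $\Ext^1_{\Db}(\cP,\cF)$ need not vanish a priori; but the point is that for $\cP$ projective in $\Perv$, $\Ext^1_{\Db_{(B)}(\cB,\O)}(\cP, \cF) \cong \Ext^1_{\Perv}(\cP,\cF) = 0$ for $\cF$ perverse (the embedding of the heart induces isomorphisms on $\Ext^0$ and $\Ext^1$). Feeding $\Ext^1(\cP,\cF) = 0$ into the sequence for $n=0$ yields $\F \otimes_\O \Hom(\cP,\cF) \xrightarrow{\sim} \Hom(\F(\cP), \F(\cF)) = \Hom(\F(\cP), \F(\cF))$. Freeness of $\Hom(\cP,\cF)$ then follows: it is finitely generated over $\O$ (a PID-like complete DVR), and $\Tor_1^\O(\F, \Hom(\cP,\cF))$ injects into $\Hom(\cP, \F(\cF)[-1]) = \Ext^{-1}(\cP, \F(\cF)) = 0$ since $\F(\cF)$ is perverse and $\cP$ perverse — so $\Hom(\cP,\cF)$ has no $\ell$-torsion, hence is free.

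For part (2): suppose $\F(\cP) \cong \F(\cF)$. By (1) applied to this $\cF$ (whose reduction $\F(\cF)$ is perverse, being $\cong \F(\cP)$), the free $\O$-module $\Hom(\cP,\cF)$ surjects, after $\otimes_\O \F$, onto $\Hom(\F(\cP), \F(\cF))$, which contains an isomorphism $\F(\cP) \xrightarrow{\sim} \F(\cF)$. Lift any element of $\Hom(\cP,\cF)$ reducing to this isomorphism to get $\phi \colon \cP \to \cF$ with $\F(\phi)$ an isomorphism. Then $\mathrm{cone}(\phi)$ has $\F(\mathrm{cone}(\phi)) = 0$, and since $\F(-)$ is conservative on $\Db_{(B)}(\cB,\O)$ (an object $\cX$ with $\F \lotimes_\O \cX = 0$ must vanish, as its stalks are finitely generated $\O$-complexes that are acyclic after $\otimes^L \F$, hence acyclic), we get $\mathrm{cone}(\phi) = 0$, i.e.\ $\phi$ is an isomorphism. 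I expect the main obstacle to be pinning down exactly why $\Ext^1(\cP, \cF)$ vanishes (equivalently, why the torsion term drops out) — i.e.\ making precise the interaction between projectivity in the heart, the perversity of $\F(\cF)$, and the passage between $\Ext$ groups in $\Perv$ versus in $\Db$; the rest is formal bookkeeping with the reduction-mod-$\ell$ triangle.
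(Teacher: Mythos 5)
Your proposal is correct and follows essentially the same route as the paper: the universal coefficient sequences you extract from the reduction triangle are just an unwinding of the isomorphism $\F \lotimes_\O R\Hom(\cP,\cF) \cong R\Hom(\F(\cP),\F(\cF))$ that the paper uses, with projectivity of $\cP$ (via $\Ext^1$ in the heart $=$ $\Ext^1$ in $\Db$) killing the positive side and perversity of $\F(\cP),\F(\cF)$ killing the negative side, and part (2) is proved by the same lifting-plus-conservativity-of-$\F(-)$ argument. The only cosmetic slip is that the reduction triangle is given by multiplication by a uniformizer of $\O$ rather than by $\ell$ itself when $\O/\Z_\ell$ is ramified; this does not affect the argument.
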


\begin{proof}
(1) follows from the argument of the proof of \cite[Proposition 2.4.1(ii)]{rsw}: in fact we have
\[
\F \lotimes_\O R\Hom(\cP,\cF) \cong R\Hom(\F(\cP), \F(\cF)).
\]
Now the left-hand side is concentrated in nonpositive degrees, and the right-hand side in nonnegative degrees (since $\F(\cP)$ and $\F(\cF)$ are perverse). Hence both of them are concentrated in degree $0$, which implies our claim.

To prove $(2)$, we observe that $\F \otimes_\O \Hom(\cP,\cF) \cong \Hom( \F(\cP),\F(\cF))$ by $(1)$. Choose some isomorphism $g \colon \F(\cP) \simto \F(\cF)$, and let $f \colon \cP \to \cF$ be a morphism such that $\F(f)=g$. Then the cone of $f$ is annihilated by the functor $\F(-)$ (since $\F(f)$ is an isomorphism), so it is zero (see \cite[Lemma 2.2.1]{rsw}). In other words, $f$ is an isomorphism.
\end{proof}

A standard construction (see e.g.~\cite[\S 3.2]{by}) defines a ``convolution product''
\[
(-) \star^B (-) \colon \Db_{(B)}(\cB,\E) \times \Db_B(\cB,\E) \to \Db_{(B)}(\cB,\E).
\]
(Here, $\Db_B(\cB,\E)$ is the $B$-equivariant constructible derived category in the sense of Bernstein--Lunts~\cite{bl}.\footnote{In~\cite{bl}, Bernstein and Lunts work in a topological setting. However their constructions can be adapted to the algebraic setting (in particular to the setting of {\'e}tale derived categories), see e.g.~\cite[\S 3.1]{weidner-phd} for a detailed treatment.}) The objects $\Delta_w$, $w \in W$ have obvious analogues in $\Db_B(\cB,\E)$, which we denote the same way. The following lemma is well known; see~\cite[\S 2.2]{bbm} or~\cite[Lemma 3.2.2]{by}.

\begin{lem}
\label{lem:convolution-Delta-nabla}
If $v,w \in W$ and $\ell(vw)=\ell(v)+\ell(w)$ then we have
\[
\Delta_v \star^B \Delta_w \cong \Delta_{vw}, \qquad \nabla_v \star^B \nabla_w \cong \nabla_{vw}.
\]
\end{lem}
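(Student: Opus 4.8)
The plan is to reduce, using associativity of the convolution, to the case of convolving with the standard sheaf of a single simple reflection, and then to settle that case by a push--pull computation on a $\mathbb{P}^1$-bundle $\cB \to \scP^s$; every step will be uniform in $\E \in \{\K, \O, \F\}$, using none of the running hypotheses on $\ell$ or $\# \F$. First I would observe that it suffices to prove the following special case (which, as its proof below shows, holds equally in the equivariant category $\Db_B(\cB,\E)$): for every $u \in W$ and every simple reflection $s$ with $\ell(us) = \ell(u) + 1$ one has $\Delta_u \star^B \Delta_s \cong \Delta_{us}$. Granting this, fix a reduced expression $w = s_1 \cdots s_k$; iterating the special case inside the monoidal category $\bigl( \Db_B(\cB,\E), \star^B \bigr)$ (the equivariant variant of the convolution, built in the same way, which makes $\Db_{(B)}(\cB,\E)$ a module category) gives $\Delta_w \cong \Delta_{s_1} \star^B \cdots \star^B \Delta_{s_k}$. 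Then, for $v$ with $\ell(vw) = \ell(v) + \ell(w)$, the elementary fact that $\ell(v s_1 \cdots s_j) = \ell(v) + j$ for $0 \le j \le k$ lets one apply the special case repeatedly to peel the reflections off one at a time:
\[
\Delta_v \star^B \Delta_w \;\cong\; \bigl( \cdots \bigl( (\Delta_v \star^B \Delta_{s_1}) \star^B \Delta_{s_2} \bigr) \cdots \bigr) \star^B \Delta_{s_k} \;\cong\; \Delta_{v s_1 \cdots s_k} = \Delta_{vw}.
\]
The same induction with $\nabla$ in place of $\Delta$ reduces the costandard statement to the special case $\nabla_u \star^B \nabla_s \cong \nabla_{us}$.

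For the special case, let $P^s \subset G$ be the minimal parabolic for $s$, put $\scP^s := G/P^s$, and let $\pi^s \colon \cB \to \scP^s$ be the projection (a smooth proper $\mathbb{P}^1$-bundle); set $\Xi_s := (\pi^s)^\ast (\pi^s)_\ast \underline{\E}_{\cB_e}[1]$, where $\cB_e = \{B/B\}$, so that $\Xi_s \cong \underline{\E}_{\overline{\cB_s}}[1]$ since $\overline{\cB_s} = (\pi^s)^{-1}(P^s/P^s)$ is a projective line. Writing out the convolution diagram $\cB \xleftarrow{\ p\ } G \times^B \cB \xrightarrow{\ m\ } \cB$, proper base change produces a natural isomorphism $\cF \star^B \Xi_s \cong (\pi^s)^\ast (\pi^s)_\ast \cF[1]$. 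The recollement for the closed inclusion $\cB_e \hookrightarrow \overline{\cB_s}$ gives exact triangles
\[
\Delta_s \longrightarrow \Xi_s \longrightarrow \Delta_e[1] \xrightarrow{+1}, \qquad \Delta_e[-1] \longrightarrow \Xi_s \longrightarrow \nabla_s \xrightarrow{+1}.
\]
Convolving the first on the left with $\Delta_u$ and using that $\Delta_e$ is a unit yields a triangle $\Delta_u \star^B \Delta_s \to \Delta_u \star^B \Xi_s \to \Delta_u[1] \xrightarrow{+1}$. On the other hand, since $\ell(us) > \ell(u)$, the orbit $\pi^s(\cB_u) \subset \scP^s$ has dimension $\ell(u)$, its preimage $(\pi^s)^{-1}(\pi^s(\cB_u))$ is the smooth locally closed subvariety $\cB_u \sqcup \cB_{us}$ with $\cB_{us}$ open and $\cB_u$ closed in it, and one checks that $(\pi^s)_\ast \Delta_u$ is the $!$-extension to $\scP^s$ of $\underline{\E}_{\pi^s(\cB_u)}[\ell(u)]$. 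Pulling this back along $\pi^s$ by base change and applying the recollement for $\cB_{us} \hookrightarrow \cB_u \sqcup \cB_{us} \hookleftarrow \cB_u$ produces a second triangle $\Delta_{us} \to \Delta_u \star^B \Xi_s \to \Delta_u[1] \xrightarrow{+1}$. Identifying the third maps of these two triangles --- each being the map $\Delta_u \star^B \Xi_s \to \Delta_u \star^B \Delta_e[1] = \Delta_u[1]$ induced by the counit $\Xi_s \to \Delta_e[1]$, realised on the push--pull side as the counit $(\pi^s)^\ast (\pi^s)_\ast \Delta_u[1] \to \Delta_u[1]$ --- gives $\Delta_u \star^B \Delta_s \cong \Delta_{us}$. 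The costandard case follows identically from the second triangle, or from the $\Delta$-case by Verdier duality, using that over the relevant proper supports $m_! = m_\ast$ and that $\D \Delta_x \cong \nabla_x$ for all $x \in W$ (including when $\E = \O$).

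The hard part will be the simple-reflection case: producing the natural isomorphism $\cF \star^B \Xi_s \cong (\pi^s)^\ast (\pi^s)_\ast \cF[1]$ from the convolution diagram by base change, and checking that the connecting map of the ``convolution triangle'' matches, under this isomorphism, the connecting map of the ``stratification triangle'' of $(\pi^s)^{-1}(\pi^s(\cB_u))$. The input facts --- the behaviour of Bruhat cells under $\pi^s$ and the computation of $(\pi^s)_\ast \Delta_u$ --- are standard, and I would simply follow \cite[\S 2.2]{bbm} or \cite[\S 3.2]{by} for them.
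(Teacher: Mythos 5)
Your argument is correct, but it takes a genuinely different route from the paper, which simply invokes the well-known direct proof (the references \cite[\S 2.2]{bbm}, \cite[Lemma 3.2.2]{by}): when $\ell(vw)=\ell(v)+\ell(w)$ the multiplication morphism $BvB \times^B BwB \to BvwB$ is an isomorphism, the twisted external product of two $!$- (resp.\ $*$-) extensions of constant sheaves is again such an extension on $BvB \times^B BwB/B$, and since the convolution map is proper one gets both isomorphisms in one stroke by composing (co)pushforwards --- no induction, no triangles, and no identification of connecting morphisms, uniformly in $\E$. Your route --- reducing to a simple reflection via a reduced expression and associativity, then describing $(-) \star^B \underline{\E}_{\overline{\cB_s}}[1]$ as $(\pi^s)^*(\pi^s)_*[1]$ and comparing the ``convolution'' and ``stratification'' recollement triangles --- is sound, and it has the merit of meshing with machinery the paper uses anyway (the push--pull functors $\vartheta_s$, $\xi_s$); the facts you use ($\Xi_s \cong \underline{\E}_{\overline{\cB_s}}[1]$, the base-change description of the convolution, $(\pi^s)_*\Delta_u$ being the $!$-extension of $\underline{\E}_{\pi^s(\cB_u)}[\ell(u)]$ when $us>u$, $(\pi^s)^{-1}(\pi^s(\cB_u))=\cB_u \sqcup \cB_{us}$) are all correct, and the step you flag as delicate does go through: under the base-change identification both connecting maps are the counit, i.e.\ restriction to the closed stratum $\cB_u$, so they agree on the nose (not merely up to a scalar, which matters over $\O$). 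The price of your approach is precisely this extra bookkeeping, plus the fact that your simple-reflection case still rests on the same Tits-system input ($BuP^s = BuB \sqcup BusB$ and $\pi^s|_{\cB_u}$ an isomorphism for $us>u$) that makes the cited direct argument a one-liner; so the standard proof is shorter and is what the paper intends, but your proof is a legitimate alternative.
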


We finish with another easy lemma, which is probably well known too.

\begin{lem}
\label{lem:orthogonal}
Let $V$ be a finite-rank free $\O$-module, endowed with a perfect pairing\footnote{Here we mean a symmetric bilinear form $\kappa \colon V \times V \to \O$ which induces an isomorphism $V \simto \Hom_{\O}(V,\O)$.} $\kappa$, and let $M \subset V$ be a submodule such that the restriction of $\kappa$ to $M$ is also nondegenerate. If $M^\bot:=\{v \in V \mid \forall m \in M, \ \kappa(v,m)=0\}$ is the orthogonal to $M$, then the natural morphism
$M \oplus M^\bot \to V$
is an isomorphism.
\end{lem}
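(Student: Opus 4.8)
The plan is to show that the natural map $\varphi \colon M \oplus M^\bot \to V$, $(m, n) \mapsto m + n$, is both injective and surjective. Injectivity will come down to the equality $M \cap M^\bot = 0$, and surjectivity will follow by using the self-duality of $M$ supplied by the nondegeneracy of $\kappa|_M$ to peel off the ``$M$-component'' of an arbitrary vector. (Recall that $M$ is automatically free of finite rank, being a finitely generated torsion-free module over the discrete valuation ring $\O$; this is what lets us speak of $\Hom_\O(M,\O)$ as the ``dual'' of $M$. We use, as the statement requires, nondegeneracy of $\kappa|_M$ in the strong sense that it induces an isomorphism $M \simto \Hom_\O(M,\O)$, i.e.\ that $\kappa|_M$ is a perfect pairing on $M$.)

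\textbf{Injectivity.} First I would observe that if $v \in M \cap M^\bot$, then $\kappa(v, m) = 0$ for every $m \in M$; since $\kappa|_M$ is nondegenerate, this forces $v = 0$. Hence $M \cap M^\bot = 0$, and since $\varphi$ is the sum map on the internal submodules $M$ and $M^\bot$ of $V$, it is injective.

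\textbf{Surjectivity.} This is the only step with any content. Given $v \in V$, the assignment $m \mapsto \kappa(v, m)$ is an $\O$-linear form on $M$. Because the map $M \to \Hom_\O(M, \O)$, $m \mapsto \kappa(m, -)$, induced by $\kappa|_M$ is an isomorphism, there is a (unique) $m_0 \in M$ with $\kappa(m_0, m) = \kappa(v, m)$ for all $m \in M$. Then $\kappa(v - m_0, m) = 0$ for all $m \in M$, i.e.\ $v - m_0 \in M^\bot$, so $v = m_0 + (v - m_0) = \varphi(m_0, v - m_0)$. Thus $\varphi$ is surjective, hence an isomorphism.

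There is no real obstacle here beyond unwinding definitions: the argument is the standard ``orthogonal decomposition with respect to a perfect form'' over a ring, and the one point to keep in mind is that the perfectness of $\kappa$ on all of $V$ is not actually used — what matters is that $M$ is free and $\kappa|_M$ is a perfect pairing on $M$, which is exactly the role of the hypothesis.
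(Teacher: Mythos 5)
Your proof is correct, but it takes a genuinely different route from the paper. The paper argues by reduction modulo $\ell$: since all the modules involved are free, it suffices to show that the map becomes an isomorphism after applying $\F \otimes_\O (-)$; one then checks that $\F \otimes_\O M$ and $\F \otimes_\O M^\bot$ inject into $\F \otimes_\O V$ and are identified with mutually orthogonal complements for the induced form $\kappa_\F$ (the last point via a dimension count), and invokes the standard statement for $\F$-vector spaces. You instead give the classical direct orthogonal-projection argument: $M \cap M^\bot = 0$ for injectivity, and for surjectivity you represent the functional $\kappa(v,-)|_M$ by an element $m_0 \in M$ using the isomorphism $M \simto \Hom_\O(M,\O)$, so that $v - m_0 \in M^\bot$. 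Your version is more elementary and more general (it works over any commutative base ring and never uses that $\kappa$ is perfect on all of $V$), whereas the paper's reduction argument leans on freeness and on the vector-space case. The one point you rightly isolate is the reading of ``nondegenerate'' for $\kappa|_M$: your argument needs it in the strong sense that $M \to \Hom_\O(M,\O)$ is an isomorphism. This is indeed the intended hypothesis --- with the weaker reading (injectivity into the dual) the statement fails, e.g.\ for the hyperbolic plane $V = \O^2$, $\kappa((a,b),(c,d)) = ad+bc$, and $M = \O \cdot (1,\pi)$ with $\pi$ a uniformizer --- and the paper's own proof uses the same strength, since it needs $\kappa_\F$ to remain nondegenerate on the image of $\F \otimes_\O M$, which is exactly the unimodularity of the Gram matrix of $\kappa|_M$; in the application (Lemma~\ref{lem:faithful-module}) this is precisely the condition checked after specializing to $\F$.
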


\begin{proof}
Since our $\O$-modules are free,
it suffices to prove that the image of our morphism under the functor $\F \otimes_\O (-)$ is an isomorphism. Now we observe that the natural map $\F \otimes_\O M \to \F \otimes_\O V$ is injective (otherwise the restriction of $\kappa$ to $M$ could not be nondegenerate), that the restriction of the bilinear form $\kappa_\F$ on $\F \otimes_\O V$ induced by $\kappa$ to the image of this morphism is nondegenerate, and finally that the natural morphism $\F \otimes_\O M^\bot \to \F \otimes_\O V$ is injective and identifies $\F \otimes_\O M^\bot$ with the orthogonal complement of $\F \otimes_\O M$ with respect to $\kappa_\F$. (Indeed, injectivity is easy to check. It is also clear that this morphism factors through $(\F \otimes_\O M)^\bot$; we conclude by a dimension argument.) Hence our claim follows from the analogous claim in the case of $\F$-vector spaces, which is standard.
\end{proof}

\subsection{Logarithm of the monodromy}
\label{ss:monodromy}

Theorem \ref{thm:equivalence-tilting} is proved in case $\E=\K$ in \cite{by}. The main tool of this proof is the ``logarithm of monodromy'' construction. In our modular (or integral) setting this construction cannot be performed directly. In this subsection we introduce a replacement which is available in our context.

We denote by $\E \bfY$ the group algebra of $\bfY=X_*(T)$ (over $\E$). We will also consider the completion
\[
\widehat{\E \bfY}, \qquad \text{resp.} \qquad \hS,
\]
of $\E\bfY$, resp.~$S$, for the topology associated with the ideal $\mathfrak{m}$ generated by elements of the form $y - 1$ for $y \in \bfY$, resp.~the ideal generated by $\fh$. Each of these algebras is endowed with a natural action of $W$.

The following result will be crucial in this section. (But the details of the proof will not be needed.)

\begin{prop}
\label{prop:logarithm}
There is a continuous, $W$-equivariant algebra isomorphism
\[
\hS \simto \widehat{\E \bfY}
\]
which identifies $\fh \cdot \hS$ with $\mathfrak{m} \cdot \widehat{\E \bfY}$.
\end{prop}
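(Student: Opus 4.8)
The plan is to construct the isomorphism $\hS \simto \widehat{\E\bfY}$ by sending each $y \in \bfY$ (viewed as a degree-$0$ element of $\widehat{\E\bfY}$, or rather its image) to a suitable ``exponential'' of the corresponding element of $\fh$, and inversely to define a ``logarithm'' on the target. Concretely, for $y \in \bfY$ write $y = 1 + (y-1)$; since $y - 1 \in \mathfrak{m}$, the series $\log(y) := \sum_{k \geq 1} (-1)^{k+1}(y-1)^k/k$ would be the naive definition, but division by $k$ is not available over $\O$ or $\F$. The first and central step is therefore to replace this by a construction valid over $\Z$ (or over $\Z_{(\ell)}$), exploiting that $\ell$ is good for $G$: one has the group scheme $T$ and its completed distribution/coordinate algebras, and the statement is essentially that the completed coordinate ring of the formal group $\widehat{T}$ at the identity is isomorphic, $W$-equivariantly, to the completed symmetric algebra on the cotangent space $\fh = \E\otimes_\Z \bfY$. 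I would first establish this over $\Z$ — or more precisely over the localization $\Z_{(\ell)}$ at our good prime, where the relevant denominators appearing in the Campbell–Hausdorff-type formulas are invertible because $\ell$ is good — and then apply $\E \otimes_{\Z_{(\ell)}}(-)$ and complete.

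The key steps, in order, are as follows. (i) Reduce to the case where $T$ is a single $\Gm$ or a torus, using that $\bfY$ is a free $\Z$-module and everything is compatible with direct sums of tori; but keep track of the $W$-action, which does not respect such a decomposition, so the $W$-equivariance must be checked on the canonical construction rather than factor by factor. (ii) Over $\Z_{(\ell)}$, define the logarithm map $\widehat{\Z_{(\ell)}\bfY} \to \widehat{\mathrm{S}_{\Z_{(\ell)}}(\fh)}$ on generators by $y \mapsto \log(y)$, observing that for a torus the group is commutative so the formal logarithm is simply additive: $\log(y_1 y_2) = \log(y_1) + \log(y_2)$, which makes the assignment $y \mapsto \log(y)$ a group homomorphism $\bfY \to \widehat{\mathrm{S}_{\Z_{(\ell)}}(\fh)}$ and hence extends to an algebra map on the group algebra, then to the completion. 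Here one must check convergence: $\log(y) \equiv (y-1) \pmod{\mathfrak{m}^2}$, so $\log$ carries $\mathfrak{m}^n$ into $\fh^n \hS$, giving continuity and the asserted identification of the two maximal ideals. (iii) Construct the inverse via $\exp$, $\exp(x) = \sum_{k\geq 0} x^k/k!$; again division by $k!$ is the obstacle over $\F$, but on the completion $\hS$ the point is that $\exp$ need only be defined on $\fh\cdot\hS$, and the denominators $k!$ that occur are units in $\Z_{(\ell)}$ precisely when... this is exactly where goodness of $\ell$ must be invoked carefully — more precisely, one wants to know that the formal exponential of the torus is defined over $\Z_{(\ell)}$, which for a torus (a commutative formal group, in fact the multiplicative formal group in each coordinate) is classical and valid away from... actually the multiplicative formal group $\widehat{\Gm}$ has $\exp$ defined only over $\Q$. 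So the honest statement must be: the two \emph{completed} algebras are abstractly isomorphic because each is (non-canonically, but $W$-equivariantly) a completed polynomial ring on $\fh$, and the isomorphism is pinned down by matching $\mathfrak{m}$ with $\fh\hS$ together with the induced map on $\mathfrak{m}/\mathfrak{m}^2 \cong \fh \cong \fh\hS/\fh^2\hS$. (iv) Check $W$-equivariance: $W$ acts on $\bfY$ hence on $\E\bfY$ and on $\fh$ hence on $S$; the construction via $\log$/the associated-graded identification is manifestly natural for automorphisms of the pair $(\bfY, \mathfrak{m})$, so $W$-equivariance is automatic once the map is characterized canonically.

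\textbf{Main obstacle.} The hard part will be step (iii)/(iv): producing an honest two-sided inverse in positive (or mixed) characteristic, where $\exp$ is not literally available. The resolution I expect the authors to use — and which I would adopt — is \emph{not} to write down $\exp$ globally, but to argue that the $\mathfrak{m}$-adic (resp. $\fh$-adic) filtrations make both $\widehat{\E\bfY}$ and $\hS$ into complete filtered algebras whose associated graded rings are both canonically $\mathrm{S}_\E(\fh)$ — for $\widehat{\E\bfY}$ this uses that $\gr_\mathfrak{m}(\E\bfY) \cong \mathrm{S}_\E(\mathfrak{m}/\mathfrak{m}^2)$ and that $\mathfrak{m}/\mathfrak{m}^2 \cong \E\otimes_\Z\bfY = \fh$ canonically and $W$-equivariantly (this is where one needs $\bfY$ torsion-free, guaranteed here). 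Then any continuous algebra map $\hS \to \widehat{\E\bfY}$ inducing the identity on associated gradeds is automatically an isomorphism, by the standard filtered-ring argument. So the real content reduces to: (a) define \emph{some} continuous $W$-equivariant algebra map $\hS \to \widehat{\E\bfY}$, which is easy since $S$ is free on $\fh$ and one just needs the images of a basis of $\fh$ to lie in $\mathfrak{m}$ in a $W$-equivariant way — take $y \mapsto \log(y)$ when denominators allow, or more robustly lift the canonical iso $\fh \simto \mathfrak{m}/\mathfrak{m}^2$ to a $W$-equivariant section $\fh \to \mathfrak{m}$ (possible since $\fh$ is $\E$-free and, averaging over $W$ is \emph{not} available in characteristic dividing $\#W$, so instead use that $\mathfrak{m} \to \mathfrak{m}/\mathfrak{m}^2$ splits $W$-equivariantly because the obstruction lies in an $\Ext^1$ that vanishes for the relevant reason — or, cleanly, cite~\cite{demazure} exactly as Lemma~\ref{lem:invariants} does, since the $W$-module structure on $\bfY$ under our hypotheses on $\ell$ is the same subtlety handled there); and (b) verify it induces the identity on $\gr$. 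Thus the genuine difficulty is the $W$-equivariant splitting / the input from~\cite{demazure}, and everything else is the formal filtered-algebra machinery.
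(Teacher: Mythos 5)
Your reduction of the problem is sound and in fact matches the formal skeleton of the paper's argument: since $S$ is free on $\fh$ and both sides are complete with associated graded $\mathrm{S}_\E(\fh)$, everything comes down to producing a continuous $W$-equivariant algebra map $\hS \to \widehat{\E\bfY}$ carrying $\fh$ into $\mathfrak{m}$ and inducing an isomorphism $\fh \simto \mathfrak{m}/\mathfrak{m}^2$ (in the paper this appears as ``a morphism whose differential at $1$ is the identity, then complete''). The genuine gap is that you never construct this map. Your two suggested justifications do not work as stated: the claim that ``the obstruction lies in an $\Ext^1$ that vanishes for the relevant reason'' is unsupported --- good primes $\ell$ may well divide $\#W$ (e.g.\ $\ell=5$ for $\mathbf{E}_6$, or any good $\ell \le n$ for $\GL_n$), so $\ell$-modular cohomological obstructions for $W$ have no general reason to vanish; and the appeal to~\cite{demazure} is off target, since Demazure's results (as used in Lemma~\ref{lem:invariants}) concern $W$-invariants of $S$ and the coinvariant algebra under base change, not $W$-equivariant multiplicative sections of $\mathfrak{m} \twoheadrightarrow \mathfrak{m}/\mathfrak{m}^2$. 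You also flag ``this is exactly where goodness of $\ell$ must be invoked carefully'' but never resolve where it enters --- and this is precisely the content of the proposition.

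The paper's resolution is a geometric construction adapted from~\cite{br}: for the auxiliary split group $\widetilde{G}$ over $\E$ (with $X^*(\widetilde T) \cong \bfY$), one chooses a representation $\sigma \colon \widetilde{G} \to \GL(V)$ with injective differential together with a $\widetilde{G}$-stable complement $\mathfrak{i}$ to $d\sigma(\widetilde{\fg})$ in $\mathfrak{gl}(V)$ (Lemma~\ref{lem:faithful-module}, resting on~\cite[Lemma~I.5.3]{ss}; this is where goodness of $\ell$ and the assumed isogeny type are used, and the case $\E=\O$ needs the extra orthogonal-complement step of Lemma~\ref{lem:orthogonal}). The composite $g \mapsto \mathrm{pr}_{\widetilde{\fg}}(\sigma(g)-1)$ is a $\widetilde{G}$-equivariant map $\widetilde{G} \to \widetilde{\fg}$ sending $1$ to $0$ with differential the identity; restricting to $\widetilde{T}$-fixed points gives a $W$-equivariant map $\widetilde{T} \to \widetilde{\ft}$, i.e.\ on function algebras exactly the desired $W$-equivariant map $S \to \E\bfY$ sending $\fh S$ into $\mathfrak{m}$, and $W$-equivariance comes for free from equivariance under the normalizer of $\widetilde T$ rather than from any abstract splitting of $W$-modules. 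Without this (or some equally concrete) construction, your proof is incomplete at its central point.
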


\begin{rmk}
\label{rmk:monodromy-BY}
If $\E=\K$ there is a natural choice for the isomorphism of Proposition \ref{prop:logarithm}, namely the logarithm. This is the choice made (from a different point of view) in \cite{by}. However, for us it will be more convenient to choose such isomorphisms over $\O$, $\K$, and $\F$ which are compatible (in the obvious sense).
\end{rmk}

To prove Proposition \ref{prop:logarithm} we adapt an argument from \cite{br}, which itself relies on a technical result from \cite{ss}. To explain this we need some preparation.  Observe first that it is enough to prove the proposition in the case where the root system of $G$ is irreducible (in addition to satisfying the assumptions of~\S\ref{ss:new-notation}).  Therefore, from now until the end of the proof of Proposition~\ref{prop:logarithm}, we assume that $G$ is either $\GL_n(\Fpb)$ or else a simple group (of adjoint type) not of type $\mathbf{A}$.

If $G=\GL_n(\Fpb)$, we set $\widetilde{G}:=\GL_n(\E)$, and we let $\widetilde{T} \subset \widetilde{G}$ denote the maximal torus of diagonal matrices. Otherwise, we choose a split simply connected semisimple $\E$-group $\widetilde{G}$, a maximal torus $\widetilde{T} \subset \widetilde{G}$, and an isomorphism $\bfY \cong X^*(\widetilde{T})$ which identifies the coroot system of $(G,T)$ with the root system of $(\widetilde{G},\widetilde{T})$. We denote by $\widetilde{\fg}$, resp.~$\widetilde{\ft}$, the Lie algebra of $\widetilde{G}$, resp.~$\widetilde{T}$. Note that in all cases the Weyl group of $(\widetilde{G},\widetilde{T})$ can be identified with $W$ and there exists a $W$-equivariant isomorphism $\fh \cong \Hom_{\E}(\widetilde{\ft}, \E)$. 

\begin{lem}
\label{lem:faithful-module}
There exists a free $\E$-module $V$, a representation $\sigma \colon \widetilde{G} \to \mathrm{GL}(V)$ whose differential $d\sigma$ is injective, and a subspace $\mathfrak{i} \subset \mathfrak{gl}(V)$ such that
\begin{enumerate}
\item $\mathfrak{gl}(V) = d\sigma(\widetilde{\fg}) \oplus \mathfrak{i}$;
\item $\mathfrak{i}$ is stable under the (adjoint) action of $\widetilde{G}$.
\end{enumerate}
\end{lem}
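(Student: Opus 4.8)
\textbf{Proof proposal for Lemma~\ref{lem:faithful-module}.} The plan is to treat the two cases of~\S\ref{ss:new-notation} separately, exhibiting in each case an explicit faithful representation together with a $\widetilde{G}$-stable complement to $d\sigma(\widetilde{\fg})$ inside $\mathfrak{gl}(V)$.

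First I would dispose of the case $G = \GL_n(\Fpb)$, so that $\widetilde{G} = \GL_n(\E)$. Here the natural choice is $V = \E^n$ with $\sigma$ the standard (identity) representation, so $d\sigma$ is the inclusion $\fgl_n(\E) = \widetilde{\fg} \hookrightarrow \mathfrak{gl}(V) = \fgl_n(\E)$, which is visibly injective. But this forces $d\sigma(\widetilde{\fg}) = \mathfrak{gl}(V)$, so the complement $\mathfrak{i}$ would have to be zero — which is fine: $\mathfrak{i} = 0$ satisfies both (1) and (2) trivially. (Alternatively, if one prefers a genuinely nontrivial example one could take $V = \E^n \oplus \E^n$ with $\sigma = \mathrm{std} \oplus \mathrm{std}$, but $\mathfrak{i}=0$ with the standard representation is the cleanest.) So this case is immediate.

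Next, for $\widetilde{G}$ split simply connected semisimple, I would take $V = \widetilde{\fg}$ itself, with $\sigma = \mathrm{Ad}$ the adjoint representation. Two things need checking: that $d(\mathrm{Ad})$ is injective, and that there is a $\widetilde{G}$-stable complement. Injectivity of $d(\mathrm{Ad}) = \mathrm{ad} \colon \widetilde{\fg} \to \fgl(\widetilde{\fg})$ amounts to the center of $\widetilde{\fg}$ being trivial; this holds because $\ell$ is good (in particular, under the hypotheses of~\S\ref{ss:new-notation}, the adjoint group and simply connected group have the same Lie algebra, and for a simply connected group with $\ell$ good the Lie algebra is semisimple with trivial center — this is classical; one may cite the standard tables of the center of $\fg$, e.g. in the work of Springer--Steinberg or in Jantzen's notes). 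For the complement, the key input is the existence of a $\widetilde{G}$-invariant, nondegenerate symmetric bilinear form $\kappa$ on $\widetilde{\fg}$ when $\ell$ is good — again a classical fact (the Killing form works for $\ell$ very good, and in good characteristic one uses instead a trace form in a suitable faithful representation, which is nondegenerate on each simple factor; this is exactly the kind of statement recorded in the references on good primes). Granting such a $\kappa$, the same form defines a $\widetilde{G}$-invariant nondegenerate pairing on $\mathfrak{gl}(V) = \mathfrak{gl}(\widetilde{\fg})$ via $\langle X, Y\rangle = \mathrm{tr}_{\widetilde{\fg}}(XY)$ — wait, more carefully: one puts on $\mathfrak{gl}(V)$ the invariant form coming from the trace form of $\mathrm{End}(V)$, which is nondegenerate, and checks that its restriction to the image $d\sigma(\widetilde{\fg})$ is nondegenerate (this is precisely the nondegeneracy of $\kappa$). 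Then I would invoke Lemma~\ref{lem:orthogonal} with the orthogonal complement $\mathfrak{i} := d\sigma(\widetilde{\fg})^\bot$ to conclude $\mathfrak{gl}(V) = d\sigma(\widetilde{\fg}) \oplus \mathfrak{i}$; moreover $\mathfrak{i}$ is $\widetilde{G}$-stable because the form is $\widetilde{G}$-invariant and $d\sigma(\widetilde{\fg})$ is $\widetilde{G}$-stable (the adjoint action of $\widetilde{G}$ on $\mathfrak{gl}(V)$ preserves the image of $d\sigma$ since $\sigma$ is a homomorphism). Finally the reduction to the irreducible case: a general $G$ as in~\S\ref{ss:new-notation} is a product, so one takes $V$ to be the direct sum of the modules for the factors (acted on through the respective projections) and $\mathfrak{i}$ the direct sum of the pieces together with the ``off-diagonal'' blocks $\mathrm{Hom}_\E(V_i, V_j)$ for $i \ne j$, which are automatically $\widetilde{G}$-stable.

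The main obstacle — the only real point — is securing the invariant nondegenerate form $\kappa$ on $\widetilde{\fg}$ (equivalently, the nondegeneracy of the trace form of a well-chosen faithful representation restricted to $\widetilde{\fg}$) under the mere hypothesis that $\ell$ is good, rather than very good. For type $\mathbf{A}$ we have already sidestepped this by using $\GL_n$ and $\mathfrak{i}=0$; the delicate cases are $\mathbf{A}$-free simple types where ``good'' and ``very good'' coincide anyway, so in fact the Killing form is nondegenerate and no subtlety arises. I would therefore organize the write-up so that the type-$\mathbf{A}$ factors are handled by the $\GL_n$ trick and the non-$\mathbf{A}$ factors by the adjoint representation with its (nondegenerate, good-characteristic) invariant form, invoking Lemma~\ref{lem:orthogonal} to split off $\mathfrak{i}$, and citing~\cite{demazure} or a standard reference for nondegeneracy of the Killing form in good characteristic away from type $\mathbf{A}$.
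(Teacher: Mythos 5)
There is a genuine gap in the non-type-$\mathbf{A}$ case: your choice $V=\widetilde{\fg}$ with $\sigma=\mathrm{Ad}$ forces the key nondegeneracy statement to be about the \emph{Killing form}, since the trace form of $\fgl(V)=\fgl(\widetilde{\fg})$ restricted to $\mathrm{ad}(\widetilde{\fg})$ is exactly $\mathrm{tr}(\mathrm{ad}\,X\,\mathrm{ad}\,Y)$. Your claim that for $\mathbf{A}$-free simple types ``good $=$ very good, so the Killing form is nondegenerate'' is false: nondegeneracy of an invariant form on $\widetilde{\fg}$ in very good characteristic does not mean the Killing form itself is nondegenerate. Concretely, for $\mathfrak{sp}_4$ (type $\mathbf{C}_2$) the Killing form is $6\cdot\mathrm{tr}(XY)$ in the standard representation, hence identically zero when $\ell=3$, which is a good (and very good) prime; similarly for $\mathfrak{so}_7$ (type $\mathbf{B}_3$) the Killing form is $5\cdot\mathrm{tr}(XY)$ and vanishes at the good prime $\ell=5$. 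In such cases your $\kappa$ is degenerate, Lemma~\ref{lem:orthogonal} does not apply, and the adjoint representation simply does not produce the required splitting. (Your $\GL_n$ case, $V=\E^n$ and $\mathfrak{i}=0$, is fine and is exactly what the paper does; the reduction to irreducible factors is also unproblematic.)

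The repair is to abandon the adjoint representation and instead choose, for each quasi-simple simply connected group not of type $\mathbf{A}$, a representation adapted to the type whose trace form restricted to $d\sigma(\widetilde{\fg})$ is nondegenerate in good characteristic (e.g.\ the standard representation for types $\mathbf{B}$, $\mathbf{C}$, $\mathbf{D}$ when $\ell\neq 2$, and suitable small representations in the exceptional types). This is precisely the content of Springer--Steinberg~\cite[Lemma~I.5.3]{ss}, which the paper invokes for $\E=\K$ or $\F$; for $\E=\O$ one repeats that construction over $\O$ and checks nondegeneracy of the restricted form after reduction modulo $\ell$, at which point your final step — taking $\mathfrak{i}$ to be the orthogonal complement of $d\sigma(\widetilde{\fg})$ with respect to the trace form on $\fgl(V)$ and applying Lemma~\ref{lem:orthogonal}, with $\widetilde{G}$-stability coming from invariance of the trace form — goes through exactly as you wrote it.
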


\begin{proof}
If $G=\mathrm{GL}_n(\Fpb)$ then $\widetilde{G} \cong \mathrm{GL}_n(\E)$ and we can take $V=\E^n$, $\mathfrak{i}=\{0\}$. So we assume that $\widetilde{G}$ is simply connected and quasi-simple, not of type $\mathbf{A}$. If $\E=\K$ or $\F$ the result is proved in \cite[Lemma I.5.3]{ss}. If $\E=\O$, one can repeat the arguments in the proof in \emph{loc.}~\emph{cit.} to choose a free $\O$-module $V$ and a representation $\sigma \colon \widetilde{G} \to \mathrm{GL}(V)$ whose differential is injective, and such that the symmetric bilinear form $\kappa \colon (X,Y) \mapsto \mathrm{tr}(XY)$ on $\mathfrak{gl}(V)$ restricts to a nondegenerate form on $d\sigma(\widetilde{\fg})$. (In fact it is enough to check the latter condition after specializing $\O$ to $\F$; in this case it is proved in \emph{loc.}~\emph{cit.}) Then the orthogonal complement of $\widetilde{\fg}$ with respect to $\kappa$ provides the desired choice for $\mathfrak{i}$, by Lemma \ref{lem:orthogonal}.
\end{proof}

\begin{proof}[Proof of Proposition {\rm \ref{prop:logarithm}}]
We fix $V$, $\sigma$, $\mathfrak{i}$ as in Lemma \ref{lem:faithful-module}.
We observe that the composition
\[
\widetilde{G} \xrightarrow{\sigma} \mathrm{GL}(V) \to \mathfrak{gl}(V) \to d\sigma(\widetilde{\fg}) \simto \widetilde{\fg}
\]
(where the second morphism sends $x$ to $x-1$ and the third one is the projection with respect to the decomposition $\mathfrak{gl}(V) = d\sigma(\widetilde{\fg}) \oplus \mathfrak{i}$) defines a $\widetilde{G}$-equivariant morphism sending $1$ to $0$ and whose differential at $1$ is the identity. Restricting to $\widetilde{T}$-fixed points, we obtain a $W$-equivariant morphism $\widetilde{T} \to \widetilde{\ft}$ with the same properties. The latter morphism is defined at the level of algebras of functions by a $W$-equivariant morphism $S \to \E\mathbf{Y}$ which sends $\fh S$ inside $\mathfrak{m}$. (Recall that we have $\fh \cong \Hom_{\E}(\widetilde{\ft}, \E)$ and $\mathbf{Y} \cong X^*(\widetilde{T})$.) Completing this map we obtain the desired isomorphism $\hS \simto \widehat{\E\bfY}$.
\end{proof}

From now on we fix an isomorphism as in Proposition~\ref{prop:logarithm}. More precisely we fix such an isomorphism for $\E=\O$, and deduce similar isomorphisms for $\K$ and $\F$ by extension of scalars. 

As explained in \cite[\S A.1]{by}, the construction of \cite[\S 5]{verdier} provides, for any $\cF$ in $\Db_{(B)}(\cB,\E)$, a group homomorphism (called \emph{monodromy})
\begin{equation}\label{eqn:monodromy-tate}
\mathbf{T}_\ell(T) \to \mathrm{Aut}(\cF),
\end{equation}
where $\mathbf{T}_\ell(T) = \varprojlim \{ z \in T(\Fpb) \mid z^{\ell^n} = 1 \}$ is the $\ell$-adic Tate module of $T$ and $\mathrm{Aut}(\cF)$ is the group of automorphisms of $\cF$.  More precisely, to define this morphism we consider the $U$-equivariant derived category $\Db_{U}(\cB,\E)$ where $U$ is the unipotent radical of $B$ (see Appendix~\ref{sec:equivariant-Db}), and we observe that the forgetful functor $\Db_{U}(\cB,\E) \to \Dbc(\cB,\E)$ is fully faithful (see Proposition~\ref{prop:For-Dequ}), with essential image $\Db_{(B)}(\cB,\E)$; hence we can identify these categories. Then we observe that $\Db_{U}(\cB,\E)$ is naturally equivalent to the category $\Db_{B^{\mathrm{op}}}(U \backslash G, \E)$, where the $B^{\mathrm{op}}$-action on $U \backslash G$ is induced by right multiplication on $G$. Now we have the $T$-torsor $U \backslash G \to B \backslash G$, so that we are in the setting of~\cite[\S A.1]{by}.

Let us fix once and for all a topological generator of the $\ell$-adic Tate module
\[
\mathbf{T}_\ell(\Gm) = \varprojlim \, \bigl\{ \zeta \in \Fpb^\times \mid \zeta^{\ell^n} = 1 \bigr\}.
\]
This choice determines a group homomorphism $\bfY \to \mathbf{T}_\ell(T)$, and hence, via~\eqref{eqn:monodromy-tate}, an action $\bfY \to \mathrm{Aut}(\cF)$ for any $\cF \in \Db_{(B)}(\cB,\E)$.  We extend this to an algebra map
\[
\mu'_{\cF} \colon \E \bfY \to \End(\cF).
\]

It is shown in \cite{verdier} that monodromy commutes with all morphisms in $\Db_{(B)}(\cB,\E)$, in the sense that if $x \in \E \bfY$, $\cF,\cG \in \Db_{(B)}(\cB,\E)$ and $f \colon \cF \to \cG$ is a morphism then we have $f \circ \mu'_{\cF}(x) = \mu'_{\cG}(x) \circ f$. (In other words, $\mu'_{(-)}$ defines a morphism from $\E\bfY$ to the algebra of endomorphisms of the identity functor of $\Db_{(B)}(\cB,\E)$.)

It is easily checked that for all $y \in \bfY$ and $\cF \in \Db_{(B)}(\cB,\E)$ the morphism $\mu'_{\cF}(y)$ is unipotent. Hence $\mu'_{\cF}$ can be extended to a morphism $\widehat{\E \bfY} \to \End(\cF)$. Composing with the isomorphism of Proposition~\ref{prop:logarithm} we obtain an algebra morphism
\[
\mu_{\cF} \colon \hS \to \End(\cF),
\]
which will play the role of the logarithm of $\mu'$ as considered in \cite{by}. This construction is compatible with extension of scalars in the natural sense.

\begin{rmk}
\label{rmk:mu-equivariant}
If $\cF \in \Perv_{(B)}(\cB,\E)$, then it follows from definitions that $\cF$ is $B$-equivariant iff $\mu'_\cF(\mathfrak{m})=\{0\}$, i.e.~iff $\mu_\cF(\fh)=\{0\}$.
\end{rmk}

\subsection{The functors $\mathbb{V}$ and $\mathbb{V}'$}
\label{ss:functor-V}

In this subsection we define the functor $\mathbb{V}$ which will provide the equivalence of Theorem \ref{thm:equivalence-tilting}, and a variant denoted $\mathbb{V}'$.

We begin with a lemma, whose proof can be copied from~\cite[Lemma~4.4.7]{by} or~\cite[\S 2.1]{bbm}.

\begin{lem}
\label{lem:delta-socle}
Assume that $\E=\K$ or $\F$.
For any $w \in W$,
\begin{enumerate}
\item the socle of $\Delta_w$ is isomorphic to $\IC_e$, and $[\Delta_w : \IC_e] = 1$;\label{it:delta-socle-delta}
\item the head of $\nabla_w$ is isomorphic to $\IC_e$, and $[\nabla_w : \IC_e] = 1$.\label{it:delta-socle-nabla}
\end{enumerate}
\end{lem}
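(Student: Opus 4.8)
The plan is to reduce everything to the single case $w = w_0$ together with the convolution formulas of Lemma~\ref{lem:convolution-Delta-nabla}, and to exploit Verdier duality to pass between the two items. First I would record that $\Delta_e = \nabla_e = \IC_e$ is the skyscraper at the base point, and that Verdier duality $\D$ exchanges $\Delta_w \leftrightarrow \nabla_w$ and $\IC_e \leftrightarrow \IC_e$ (the latter because $\cB_e$ is a point, so $\underline{\E}_{\cB_e}$ is self-dual), while turning socles into heads. Hence item~\eqref{it:delta-socle-nabla} follows formally from item~\eqref{it:delta-socle-delta}, and it suffices to treat $\Delta_w$.

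For $\Delta_w$, the key geometric input is that $\Delta_w \cong \Delta_s \star^B \Delta_{s'} \star^B \cdots$ for any reduced expression $w = s s' \cdots$, by Lemma~\ref{lem:convolution-Delta-nabla} and induction on length; in particular each $\Delta_w$ is a convolution of the $\Delta_s$'s for simple reflections $s$, and similarly $\Delta_w \star^B \Delta_v \cong \Delta_{wv}$ whenever $\ell(wv) = \ell(w) + \ell(v)$. Since $\ell(w^{-1}w_0) = \ell(w_0) - \ell(w)$, we get $\Delta_w \star^B \Delta_{w^{-1}w_0} \cong \Delta_{w_0}$. The plan is then: (i) show that convolution on the right with an invertible object (the $\Delta_v$ are invertible for $\star^B$, with inverse related to $\nabla_{v^{-1}}$) is an equivalence of $\Db_{(B)}(\cB,\E)$ commuting with the perverse t-structure on the relevant subcategories — more precisely, that $(-)\star^B \Delta_v$ sends $\Perv_{(B)}$ to $\Perv_{(B)}$ when the length-additivity holds, being $t$-exact there; (ii) show it sends the simple object $\IC_e$ to $\IC_v$ — indeed $\IC_e \star^B \Delta_v = \Delta_e \star^B \Delta_v = \Delta_v$ is not simple in general, so instead one uses that $\IC_e \star^B (-)$ is the identity and argues on the other side: $\Delta_w = \Delta_e \star^B \Delta_w$ and the socle of $\Delta_w$ maps to the socle of $\Delta_{w_0}$ under convolving with $\Delta_{w^{-1}w_0}$. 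So it is cleanest to first establish the case $w = w_0$ directly — here $\Delta_{w_0}$ is the $!$-extension from the big cell, its socle is known to be simple and concentrated on the closed stratum, giving $\IC_e$ with multiplicity one (this is the place to cite or reprove the statement of \cite[Lemma~4.4.7]{by} / \cite[\S2.1]{bbm}, using that $\Hom(\Delta_{w_0},\IC_e)$ and the stalk computation force $[\Delta_{w_0}:\IC_e]=1$) — and then transport back: applying the equivalence $(-)\star^B \nabla_{w_0 w^{-1}}$ (a quasi-inverse to $(-)\star^B \Delta_{w^{-1}w_0}$, up to the appropriate twist) to $\Delta_{w_0}$ recovers $\Delta_w$ and carries the simple socle $\IC_e$ to the simple socle $\IC_e$, since the equivalence is $t$-exact and fixes the closed point.

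The multiplicity statement $[\Delta_w : \IC_e] = 1$ I would get simultaneously: since convolution with an invertible $\Delta_v$ (length-additively) is an exact equivalence of the ambient abelian category that preserves composition multiplicities along the closed stratum — or, more elementarily, by computing the stalk $i_e^* \Delta_w = i_e^* i_{w!}\underline{\E}[\dim\cB_w]$, which vanishes for $w \ne e$, together with the fact that $[\Delta_w:\IC_e]$ equals the rank of a suitable Hom/stalk — and the case $w = w_0$ gives the normalization $1$. The main obstacle I anticipate is (i): carefully checking that the convolution equivalence $(-)\star^B \Delta_v$ is genuinely $t$-exact on the relevant subcategory when $\ell(wv)=\ell(w)+\ell(v)$, and that it sends $\IC_e$ to itself and simple socles to simple socles, rather than merely being a triangulated equivalence. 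This is standard (it is implicit in \cite{bbm}, \cite{by}), but it is the step where one must be slightly careful about which length conditions are in force, and it is the reason the proof is organized around $w_0$ and Verdier duality rather than an induction on $\ell(w)$ from below.
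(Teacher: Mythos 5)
The reduction of item (2) to item (1) via Verdier duality is fine, but the core of your plan --- transporting the socle statement from $\Delta_{w_0}$ back to $\Delta_w$ via convolution --- has a genuine gap. The functors $(-) \star^B \Delta_v$ and $(-) \star^B \nabla_v$ are triangulated auto-equivalences of $\Db_{(B)}(\cB,\E)$, but they are \emph{not} t-exact, and they do not fix $\IC_e$: one has $\IC_e \star^B \Delta_u \cong \Delta_u$ and $\IC_e \star^B \nabla_u \cong \nabla_u$, and already $\Delta_s \star^B \Delta_s$ fails to be perverse (its restriction to the open cell $\cB_s$ has stalk cohomology in degrees $-1$ and $0$). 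A triangulated equivalence that is not t-exact need not preserve the heart, monomorphisms in the heart, socles, or composition multiplicities. Concretely, applying the quasi-inverse of $(-)\star^B\Delta_{w^{-1}w_0}$ --- which is $(-)\star^B\nabla_{w_0 w}$, not $(-)\star^B\nabla_{w_0 w^{-1}}$ --- to the socle embedding $\IC_e \hookrightarrow \Delta_{w_0}$ only produces a morphism $\nabla_{w_0 w} \to \Delta_w$ in the derived category; it says nothing about the socle of $\Delta_w$. The same problem undermines your claim that the equivalence ``preserves composition multiplicities along the closed stratum,'' and the proposed elementary alternative does not work either: $i_e^* \Delta_w = 0$ for $w \neq e$ while $[\Delta_w : \IC_e] = 1$, so stalks at $e$ cannot compute this multiplicity (if anything, this vanishing shows why the statement is not a stalk computation).

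There is also a structural issue: your base case $w = w_0$ is itself just a citation of \cite[\S 2.1]{bbm} or \cite[Lemma 4.4.7]{by} --- which is exactly what the paper does, except that the cited arguments treat \emph{all} $w$ at once, with no reduction to $w_0$. Roughly, one shows $\Hom(\IC_v,\Delta_w)=0$ for every $v \neq e$ (for instance using that $\IC_v$ descends to a minimal partial flag variety $\scP^s$ when $vs<v$, together with a computation of the pushforward of $\Delta_w$ to $\scP^s$ and adjunction), and one controls the multiplicity $[\Delta_w : \IC_e]$ by a separate argument (e.g.\ an exact averaging functor of Whittaker type killing all $\IC_v$ with $v \neq e$); neither step follows from the convolution identities of Lemma~\ref{lem:convolution-Delta-nabla}. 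So to repair your plan you would either need t-exactness and socle-preservation statements that are false, or you would have to run the cited argument for arbitrary $w$ directly, at which point the reduction to $w_0$ buys nothing.
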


In the following corollary we come back to the general setting where $\E$ is either $\K$, $\O$, or $\F$. Recall the object $\cP_e$ defined in~\S\ref{ss:reminder-radon}. 

\begin{cor}
\label{cor:Pe}
\begin{enumerate}
\item
For any $w \in W$ we have $(\cP_e : \Delta_w)=1$.
\item
There exist isomorphisms
$\F(\cP_e^\O) \cong \cP_e^\F$, $\K(\cP_e^\O) \cong \cP_e^\K$.
\end{enumerate}
\end{cor}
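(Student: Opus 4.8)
\textbf{Proof proposal for Corollary~\ref{cor:Pe}.}

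The plan is to prove part (1) first over $\E=\K$ or $\F$, then transfer to $\E=\O$, and finally derive part (2) from part (1) together with Lemma~\ref{lem:Hom-proj-F}. For part (1) with field coefficients, recall that in a highest weight category one has BGG-type reciprocity: the standard multiplicity $(\cP_w : \Delta_v)$ equals the costandard multiplicity $[\nabla_v : \IC_w]$, and for the projective cover $\cP_e$ of the simple object $\IC_e$ this gives $(\cP_e : \Delta_w) = [\nabla_w : \IC_e]$. By Lemma~\ref{lem:delta-socle}\eqref{it:delta-socle-nabla} we have $[\nabla_w : \IC_e]=1$ for every $w\in W$, so $(\cP_e:\Delta_w)=1$ in the cases $\E=\K$ and $\E=\F$. (Alternatively one can argue directly: $\Hom(\cP_e,\nabla_w)$ has rank equal to $(\cP_e:\Delta_w)$, and by projectivity this Hom-space computes the multiplicity of the head $\IC_e$ of $\cP_e$ appearing as a composition factor of $\nabla_w$ — again $1$ by Lemma~\ref{lem:delta-socle}.)

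For the case $\E=\O$, I would use that $\cP_e^\O$ admits a standard filtration with $(\cP_e^\O:\Delta_v^\O)$ equal to $\mathrm{rk}_\O \Hom(\cP_e^\O,\nabla_v^\O)$, and that this quantity is stable under $\F(-)$: by \cite[Proposition~2.4.1]{rsw} (the same input used in Lemma~\ref{lem:Hom-proj-F}) we have $\F\lotimes_\O R\Hom(\cP_e^\O,\nabla_v^\O)\cong R\Hom(\F(\cP_e^\O),\F(\nabla_v^\O))$, and since $\F(\cP_e^\O)$ is projective perverse and $\F(\nabla_v^\O)=\nabla_v^\F$ is perverse, both sides are concentrated in degree $0$; hence $\mathrm{rk}_\O\Hom(\cP_e^\O,\nabla_v^\O)=\dim_\F\Hom(\F(\cP_e^\O),\nabla_v^\F)$. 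It remains to identify $\F(\cP_e^\O)$ with $\cP_e^\F$, which is exactly part (2), so I would actually prove (2) first.

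For part (2): by \cite[Corollary~2.4.2]{rsw} we already know $\F(\cP_w^\O)\cong\cP_w^\F$ for every $w$, in particular for $w=e$, giving the first isomorphism. For the second, $\K(\cP_e^\O)$ is projective in $\Perv_{(B)}(\cB,\K)$, hence a direct sum of various $\cP_v^\K$; to see it is indecomposable (and equal to $\cP_e^\K$), note that $\Hom(\K(\cP_e^\O),\K(\cP_e^\O))\cong\K\otimes_\O\Hom(\cP_e^\O,\cP_e^\O)$ (using Lemma~\ref{lem:Hom-proj-F}(1) with $\cF=\cP_e^\O$, which is legitimate since $\F(\cP_e^\O)=\cP_e^\F$ is perverse), and the right-hand side is a quotient of $\End_\O(\cP_e^\O)$, which is a local ring (as $\cP_e^\F=\F\otimes_\O\End(\cP_e^\O)/(\text{rad})$ is local and idempotents lift along $\O\twoheadrightarrow\F$); thus $\End(\K(\cP_e^\O))$ is local, so $\K(\cP_e^\O)$ is indecomposable. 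Finally it has $\IC_e^\K$ in its head (apply $\K(-)$ to a surjection $\cP_e^\O\twoheadrightarrow\IC_e^\O$, using $\K(\IC_e^\O)\cong\IC_e^\K$), so $\K(\cP_e^\O)\cong\cP_e^\K$.

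The main obstacle is the circularity between (1) over $\O$ and (2): the cleanest route is to establish (2) directly from the already-available \cite[Corollary~2.4.2]{rsw} and the indecomposability argument above, and only then feed $\F(\cP_e^\O)\cong\cP_e^\F$ into the rank computation for (1). Everything else is a routine unwinding of highest-weight-category reciprocity plus the base-change lemma for $R\Hom$ out of a projective.
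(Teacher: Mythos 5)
Your treatment of part (1) and of the first isomorphism in (2) is essentially the paper's own argument: reciprocity in the highest weight category plus Lemma~\ref{lem:delta-socle}\eqref{it:delta-socle-nabla} gives $(\cP_e:\Delta_w)=[\nabla_w:\IC_e]=1$ for $\E=\K,\F$, the first isomorphism in (2) is just \cite[Corollary~2.4.2]{rsw}, and the $\O$-case of (1) is then transferred via $\F(\cP_e^\O)\cong\cP_e^\F$. That part is fine.

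The gap is in your argument for $\K(\cP_e^\O)\cong\cP_e^\K$. You claim that $\K\otimes_\O\End(\cP_e^\O)$ is ``a quotient of'' $\End(\cP_e^\O)$ and hence local; it is not a quotient but a localization, and locality is \emph{not} preserved under $\K\otimes_\O(-)$: e.g.\ $\O[x]/(x^2-\ell^2)$ is local while $\K[x]/(x^2-\ell^2)\cong\K\times\K$ is not. Indeed, if indecomposability of $\cP_w^\O$ forced indecomposability of $\K(\cP_w^\O)$, the decomposition matrix $\mathfrak{P}=(p_{v,w})$ of~\S\ref{ss:decomposition} would be trivial, which is exactly what is not true in general; so this step cannot be repaired by a general ring-theoretic argument. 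What your head argument actually gives (projectivity of $\K(\cP_e^\O)$ plus a surjection onto $\IC_e^\K$) is only that $\cP_e^\K$ is a direct summand of $\K(\cP_e^\O)$ --- which is where the paper starts. The missing idea is to kill the other summands by a length count using part (1): since $(\cP_e^\O:\Delta_w^\O)=1$ and $(\cP_e^\K:\Delta_w^\K)=1$ for all $w$, and $\K(-)$ is exact and sends $\Delta_w^\O$ to $\Delta_w^\K$, the perverse sheaves $\K(\cP_e^\O)$ and $\cP_e^\K$ have the same (finite) length, so the summand inclusion is an isomorphism. (A minor further point: the base-change isomorphism $\K\otimes_\O\Hom(\cP_e^\O,\cP_e^\O)\cong\End(\K(\cP_e^\O))$ you invoke is not literally Lemma~\ref{lem:Hom-proj-F}(1), which is stated for $\F(-)$; the $\K$-version holds by flatness of $\K$ over $\O$, as used in the proof of Theorem~\ref{thm:decomposition}, but it does not help with the locality issue.)
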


\begin{proof}
The first isomorphism in $(2)$ follows from the definition of $\cP_e^\O$; see~\S\ref{ss:decomposition}. Hence it is enough to prove $(1)$ when $\E=\K$ or $\F$. In this case, the claim follows from Lemma \ref{lem:delta-socle}$(2)$ and the reciprocity formula in the highest weight category $\Perv_{(B)}(\cB,\E)$. 
To prove the second isomorphism in $(2)$ we note that $\cP_e^\K$ is a direct summand in $\K(\cP_e^\O)$ (see again~\S\ref{ss:decomposition}). And by $(1)$ these objects have the same length,
so they must be isomorphic.
\end{proof}

\begin{rmk}
As suggested to us by G.~Williamson, one can also prove the second isomorphism in Corollary~\ref{cor:Pe}$(2)$ as follows. By the last equality in the proof of Theorem \ref{thm:decomposition}, the isomorphism is equivalent to the claim that $\IC_e^\F$ does not appear as a composition factor of any $\F(\IC_w^\O)$ with $w \neq e$. However if $w \neq e$ then $\IC^\O_w$ is a shifted pullback of a perverse sheaf on a minimal partial flag variety $\scP^s$ for some simple reflection $s$. Hence the same holds for $\F(\IC_w^\O)$; in particular, all the composition factors of this perverse sheaf are of the form $\IC_v^\F$ with $vs<v$ in the Bruhat order.
\end{rmk}

Let us set $A := \End(\cP_e)$. Observe that, by Lemma \ref{lem:Hom-proj-F}$(1)$, $A_\O$ is free as an $\O$-module. Moreover, if we fix isomorphisms as in Corollary \ref{cor:Pe}$(2)$, we obtain isomorphisms of algebras
\begin{equation}
\label{eqn:F-K-A}
\F(A_\O) \cong A_\F, \qquad \K(A_\O) \cong A_\K.
\end{equation}
The construction of~\S\ref{ss:monodromy} allows us to define a morphism
\begin{equation}
\label{eqn:monodromy-Pe}
S \hookrightarrow \hS \xrightarrow{\mu_{\cP_e}} \End(\cP_e) = A.
\end{equation}
This collection of morphisms is compatible with extension of scalars in the obvious sense.

\begin{lem}
\label{lem:monodromy-Pe}
The morphism \eqref{eqn:monodromy-Pe} factors through a morphism
$\phi \colon C \to A$.
If $\E=\K$, this morphism is an isomorphism. If $\E=\O$, it is injective and the cokernel is a torsion $\O$-module.
\end{lem}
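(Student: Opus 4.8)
The plan is to reduce the statement to the case $\E=\K$, which is known from~\cite{by}, and then to propagate it to $\E=\O$ and $\E=\F$ by base change, using crucially that $A_\O=\End(\cP_e^\O)$ is a free $\O$-module (Lemma~\ref{lem:Hom-proj-F}(1)) and that the monodromy construction leading to~\eqref{eqn:monodromy-Pe} is compatible with extension of scalars. So I would begin with $\E=\K$: there $\cP_e^\K$ is the big projective object of $\Perv_{(B)}(\cB,\K)$, and one knows from~\cite{by} (see also~\cite{soergel-kategorie}, and Remark~\ref{rmk:monodromy-BY} for the matching of conventions) that the composite~\eqref{eqn:monodromy-Pe} kills $(S_\K)^W_+$ and induces an isomorphism $\phi_\K\colon C_\K\simto A_\K$; in particular $\mathrm{rk}_\O(A_\O)=\dim_\K A_\K=\dim_\K C_\K=\#W$ by~\eqref{eqn:F-K-A} and Lemma~\ref{lem:invariants}(3). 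One small point to address is that this conclusion does not depend on the isomorphism fixed in Proposition~\ref{prop:logarithm}: any two admissible choices differ by a continuous $W$-equivariant automorphism of $\hS_\K$ preserving $\fh\cdot\hS_\K$, hence preserving the closure of $(S_\K)^W_+$ and descending to an automorphism of $C_\K$, so replacing one choice by the other only composes $\phi_\K$ with an automorphism of $C_\K$.

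Next I would deduce the factorization through $C$ over $\O$ and $\F$. Since $A_\O$ is $\O$-free and $A_\K\cong\K\otimes_\O A_\O$ by~\eqref{eqn:F-K-A}, the natural map $A_\O\to A_\K$ is injective. Composing~\eqref{eqn:monodromy-Pe} for $\E=\O$ with this injection and using compatibility with extension of scalars, it is identified with~\eqref{eqn:monodromy-Pe} for $\E=\K$ precomposed with $S_\O\to S_\K$; since the positive-degree generators of $(S_\O)^W_+$ map into $(S_\K)^W_+$, hence to zero in $A_\K$, they already vanish in $A_\O$, so~\eqref{eqn:monodromy-Pe} factors through a morphism $\phi_\O\colon C_\O\to A_\O$. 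Applying $\F\otimes_\O(-)$ and invoking~\eqref{eqn:F-K-A} together with Lemma~\ref{lem:invariants}(1)--(2), one gets that~\eqref{eqn:monodromy-Pe} for $\E=\F$ factors through $\phi_\F:=\F\otimes_\O\phi_\O\colon C_\F\to A_\F$.

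It remains to analyse $\phi_\O$. Both $C_\O$ and $A_\O$ are free $\O$-modules of rank $\#W$ (Lemma~\ref{lem:invariants}(3) and the rank computation above), and applying $\K\otimes_\O(-)$ to $\phi_\O$ recovers $\phi_\K$ (using $\K\otimes_\O C_\O\cong C_\K$ from Lemma~\ref{lem:invariants}(2), $\K\otimes_\O A_\O\cong A_\K$ from~\eqref{eqn:F-K-A}, and that both maps are induced by~\eqref{eqn:monodromy-Pe}). Since $\phi_\K$ is an isomorphism and $C_\O\hookrightarrow C_\K$ (as $C_\O$ is $\O$-free), the map $\phi_\O$ is injective; and $\K\otimes_\O\mathrm{coker}(\phi_\O)\cong\mathrm{coker}(\phi_\K)=0$, so $\mathrm{coker}(\phi_\O)$ is a torsion $\O$-module, as required. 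The only genuine difficulty here lies in the input from~\cite{by} over $\K$ (together with the minor point of its independence from the choice in Proposition~\ref{prop:logarithm}); the passage to $\O$ and $\F$ is a routine base-change argument over the discrete valuation ring $\O$, so I expect no real obstacle there.
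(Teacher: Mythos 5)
Your proposal is correct and follows essentially the same route as the paper: quote the case $\E=\K$ from~\cite{by}, then use freeness of $A_\O$ and the injection $A_\O\hookrightarrow A_\K$ to see that $(S_\O)^W_+$ dies already over $\O$, giving $\phi_\O$ with $\K(\phi_\O)=\phi_\K$, whence injectivity and torsion cokernel, and finally apply $\F\otimes_\O(-)$ together with Lemma~\ref{lem:invariants}(1) for the case $\E=\F$. Your extra remark on independence from the choice made in Proposition~\ref{prop:logarithm} is a nice point of care, which the paper addresses only later (in the proof of Lemma~\ref{lem:End-Pes-K}) by noting that two such choices differ by a $W$-equivariant automorphism of $C$.
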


\begin{proof}
These claims are well known in case $\E=\K$; see~\S\ref{ss:reminder-by} below. (See also \cite{soergel-kategorie} or \cite{bernstein} for earlier proofs based on representation theory.)
Now, consider the case $\E=\O$. By the remarks above
we have a commutative diagram
\[
\xymatrix@C=1.5cm@R=0.5cm{
S_\O \ar[d] \ar[r]^-{\eqref{eqn:monodromy-Pe}_{\O}} & A_\O \ar@{^{(}->}[d] \\
S_\K \ar[r]^-{\eqref{eqn:monodromy-Pe}_{\K}} & A_\K.
}
\]
Since the morphism on the second line is trivial on $(S_\K)^W_+$, we deduce that the morphism on the first line must be trivial on $(S_\O)^W_+$. The resulting morphism $\phi_\O$ is such that $\K(\phi_\O)=\phi_\K$ is an isomorphism (under the second isomorphism in Lemma \ref{lem:invariants}$(2)$). Since both $\O$-modules are free (see the remarks above and Lemma~\ref{lem:invariants}$(3)$), we deduce that $\phi_\O$ is injective, and that its cokernel is torsion.

Using a similar argument, the case $\E=\F$ follows from the case $\E=\O$ using Lemma \ref{lem:invariants}(1).
\end{proof}

Lemma~\ref{lem:monodromy-Pe} and~\eqref{eqn:F-K-A} imply in particular that the algebra
$A$ is commutative (since $C$ is). We will denote by $A \lh\underline{\mathsf{mod}}$, resp.~$C \lh\underline{\mathsf{mod}}$, the additive category of finitely generated $A$-modules, resp.~$C$-modules, which are free over $\E$.  Consider the functors
\[
\mathbb{V}' := \Hom(\cP_e,-) \colon \Tilt_{(B)}(\cB,\E) \to A \lh\underline{\mathsf{mod}}, \qquad \iota \colon A \lh\underline{\mathsf{mod}} \to C\lh\underline{\mathsf{mod}}
\]
(here $\iota$ is restriction of scalars with respect to $\phi$) 
and
\[
\mathbb{V}:=\iota \circ \mathbb{V}'.
\]
(In case $\E=\O$, the fact that $\mathbb{V}'$ takes values in $A\lh\underline{\mathsf{mod}}$ follows from Lemma \ref{lem:Hom-proj-F}$(1)$.) By Lemma \ref{lem:Hom-proj-F}$(1)$ again, the functor $\mathbb{V}'$ commutes with $\F(-)$ and $\K(-)$ via isomorphisms~\eqref{eqn:F-K-A}. The same is clearly true for $\iota$, and hence also for $\mathbb{V}$.

\begin{lem}
\label{lem:iota-V'}
\begin{enumerate}
\item
If $\E=\K$ or $\O$, the functor $\iota$ is fully faithful.
\item
If $\E=\K$ or $\F$, the functor $\mathbb{V}'$ is fully faithful.
\end{enumerate}
\end{lem}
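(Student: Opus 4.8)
The plan is to treat the two parts separately; part~(1) is short, while in part~(2) the case $\E=\F$ is the real point.

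For part~(1): if $\E=\K$, the morphism $\phi$ of Lemma~\ref{lem:monodromy-Pe} is an isomorphism, so $\iota$, being restriction of scalars along $\phi$, is an equivalence of categories and in particular fully faithful. Suppose now $\E=\O$. Restriction of scalars is always faithful (it leaves the underlying $\O$-linear map unchanged, merely adding linearity constraints), so only fullness requires an argument. Let $M,N$ be objects of $A_\O\lh\underline{\mathsf{mod}}$ and let $g\colon M\to N$ be $C_\O$-linear; I claim $g$ is $A_\O$-linear. Applying $\K\otimes_\O(-)$ and using that $\K(\phi_\O)=\phi_\K$ is an isomorphism (Lemma~\ref{lem:monodromy-Pe}), we see that $g_\K:=\mathrm{id}_\K\otimes g\colon M_\K\to N_\K$ is $A_\K$-linear. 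Since $M$ and $N$ are $\O$-free, the maps $M\to M_\K$ and $N\to N_\K$ are injective; hence for $a\in A_\O$ and $m\in M$ the elements $g(am)$ and $a\cdot g(m)$ of $N$ have the same image in $N_\K$, and therefore coincide. So $g$ is $A_\O$-linear, i.e.\ lies in the image of $\iota$ on morphisms.

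For part~(2): the case $\E=\K$ is~\cite{by} (the ``Struktursatz'' for tilting sheaves). The case $\E=\F$ cannot be deduced from the case $\E=\K$ by extension of scalars, so it must be proved directly; the plan is to run the argument of~\cite{by}, checking that its inputs survive over an arbitrary field with $\#\F>2$. The facts needed are all available: $\cP_e$ is an indecomposable projective object with $(\cP_e:\Delta_w)=1$ for every $w$ (Corollary~\ref{cor:Pe}); the algebra $A=\End(\cP_e)$ receives the morphism $\phi$ from $C$ (Lemma~\ref{lem:monodromy-Pe}); and the socles and heads of the $\Delta_w$'s and $\nabla_w$'s are computed in Lemma~\ref{lem:delta-socle}. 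Faithfulness of $\mathbb{V}'$ on tiltings is then immediate: $\mathbb{V}'=\Hom(\cP_e,-)$ is exact since $\cP_e$ is projective, so by exactness $\mathbb{V}'(f)\neq0$ is equivalent to $\mathbb{V}'(I)\neq0$ for $I:=\mathrm{im}(f)$; but if $f\colon\cT\to\cT'$ is a nonzero morphism of tiltings then $I$ is a nonzero subobject of $\cT'$, and an induction along a standard filtration of $\cT'$, using that $\mathrm{soc}(\Delta_w)\cong\IC_e$ (Lemma~\ref{lem:delta-socle}), shows that every simple subobject of $\cT'$ is isomorphic to $\IC_e$; hence $[I:\IC_e]\geq1$, and $\Hom(\cP_e,I)\neq0$ because $\cP_e$ is the projective cover of $\IC_e$.

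Fullness of $\mathbb{V}'$ on tiltings is the delicate point, and I expect it to be the main obstacle: since it cannot be bootstrapped from the $\K$-case, one has to check that the characteristic-$0$ argument of~\cite{by} goes through verbatim. That argument compares $\Hom(\cT,\cT')$ with $\Hom_A(\mathbb{V}'\cT,\mathbb{V}'\cT')$ by reducing, through the functors on $\Tilt_{(B)}(\cB,\E)$ that correspond under $\mathbb{V}'$ to the Soergel-bimodule operations $C\otimes_{C_s}(-)$ on $\cS_\E$ (the averaging functors entering the construction of $\nu$), to the tautological case $\cT=\cT'=\cP_e$, where $\mathbb{V}'(\cP_e)=A$. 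The compatibility of $\mathbb{V}'$ with $\F(-)$ and $\K(-)$ through the isomorphisms~\eqref{eqn:F-K-A} then keeps this within the reduction scheme of the section.
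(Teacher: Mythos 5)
Your part (1) is correct and is exactly the argument the paper intends: for $\K$ the map $\phi$ of Lemma \ref{lem:monodromy-Pe} is an isomorphism, and for $\O$ the injectivity of $\phi_\O$ with torsion cokernel, combined with the $\O$-freeness built into the definition of $A\lh\underline{\mathsf{mod}}$, gives fullness by passing to $\K$ just as you do. Likewise your faithfulness argument for $\mathbb{V}'$ (the image of a nonzero map of tiltings contains a simple subobject, which must be $\IC_e$ by Lemma \ref{lem:delta-socle}(1) and an induction along a standard filtration, hence is detected by $\Hom(\cP_e,-)$) is correct and is part of the argument the paper is referring to.

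The genuine gap is fullness of $\mathbb{V}'$ in the case $\E=\F$, which you explicitly leave as ``check that the argument of \cite{by} goes through,'' together with a sketched reduction, via functors corresponding to $C\otimes_{C_s}(-)$, to the case $\cT=\cT'=\cP_e$. This is not a proof: the reduction is not carried out, it is unclear how the averaging-functor adjunctions would yield surjectivity of $\Hom(\cT_1,\cT_2)\to\Hom_A(\mathbb{V}'\cT_1,\mathbb{V}'\cT_2)$ (the relevant isomorphism $C\otimes_{C_s}\mathbb{V}(-)\cong\mathbb{V}\circ\xi_s(-)$ concerns $\mathbb{V}=\iota\circ\mathbb{V}'$ and is only established later, in Proposition \ref{prop:V-xis}), and in the paper's logical order Lemma \ref{lem:iota-V'}(2) is an \emph{input} to, not a consequence of, the extension-of-scalars diagram chase of \S\ref{ss:equivalence-V}, so it cannot be recovered ``within the reduction scheme of the section.'' What the paper actually invokes is the elementary argument of \cite[\S 2.1]{bbm}, which works verbatim over any field and uses only Lemma \ref{lem:delta-socle}: by part (2) of that lemma every simple quotient of a costandardly filtered object is $\IC_e$, so there is a surjection $\pi\colon\cP_e^{\oplus m}\twoheadrightarrow\cT_1$; given an $A$-linear map $\varphi\colon\mathbb{V}'(\cT_1)\to\mathbb{V}'(\cT_2)$, the composition $A^{\oplus m}=\mathbb{V}'(\cP_e^{\oplus m})\to\mathbb{V}'(\cT_1)\to\mathbb{V}'(\cT_2)$ is determined by $m$ elements of $\Hom(\cP_e,\cT_2)$, i.e.\ comes from some $g\colon\cP_e^{\oplus m}\to\cT_2$; your own faithfulness argument applied to the restriction of $g$ to $\ker\pi$ (using Lemma \ref{lem:delta-socle}(1)) shows that $g$ kills $\ker\pi$, hence $g=f\circ\pi$ for some $f\colon\cT_1\to\cT_2$, and $\mathbb{V}'(f)=\varphi$ because $\mathbb{V}'(\pi)$ is surjective ($\cP_e$ being projective). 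Supplying this argument (or the equivalent one from \cite{bbm}) is what is needed to close the proof of (2).
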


\begin{proof}
(1) follows from Lemma \ref{lem:monodromy-Pe} (and the definition of $A \lh\underline{\mathsf{mod}}$ in case $\E=\O$).

(2) follows from Lemma \ref{lem:delta-socle}; see the arguments in~\cite[\S 2.1]{bbm}.
\end{proof}

\begin{rmk}
If $w=w_0$, by Remark \ref{rmk:Dw0} we have $D_{w_0}=C$. Moreover, once Theorem~\ref{thm:equivalence-tilting} is established, one can check that $\mathbb{V}(\cT_{w_0})=D_{w_0}$ (see~\S\ref{ss:nu} below), and that $\cT_{w_0} \cong \cP_e$ (see Proposition \ref{prop:Pe-Tw0} below). It follows that we have $A \cong C$ in all cases, and that $\iota$ is an equivalence. However we were not able to prove these facts directly, so that we have to distinguish these two algebras for the moment.
\end{rmk}

\subsection{Artin--Schreier sheaf and averaging functors}
\label{ss:AS-sheaf}

Recall that we have chosen $p$ such that there exists a primitive $p$-th root of unity in $\F$, and hence also in $\O$. Let us fix such a $p$-th root of unity, and denote by $\psi_\O \colon \Z/p\Z \hookrightarrow \O^\times$ the associated group homomorphism.  We define $\psi_\F$ and $\psi_\K$ in the obvious way. These morphisms allow us to define the Artin--Schreier sheaf $\AS_\psi^\E$ over $\Ga$ (where $\Ga$ is the additive group over $\Fpb$) for $\E=\O$, $\K$, or $\F$. (Recall that $\AS_\psi^\E$ is defined as the $\psi$-invariants in the direct image of the constant sheaf under the morphism $\Ga \to \Ga$ sending $x$ to $x^p-x$, a Galois cover with Galois group $\Z/p\Z$.)
By definition, this sheaf satisfies
\begin{equation}
\label{eqn:cohom-AS}
\mathbb{H}^\bullet(\Ga, \AS_\psi) = \mathbb{H}^\bullet_c(\Ga, \AS_\psi)=0,
\end{equation}
and it is multiplicative in the sense of Appendix \ref{sec:equivariant-Db}.

We denote by $\fg$ the Lie algebra of $G$ and, for $\alpha$ a root, by $\fg_\alpha \subset \fg$ the corresponding root subspace.
If $s$ is a simple reflection, associated with the simple root $\alpha$, we denote by $U_s \subset U$ the subgroup whose Lie algebra is $\fg_\alpha$, by $U_s^- \subset U^-$ the subgroup whose Lie algebra is $\fg_{-\alpha}$, and by $U^s \subset U$ the normal subgroup which is the unipotent radical of the minimal standard parabolic subgroup of $G$ associated with $s$. 

Let us fix, for each $s$, a group isomorphism $U_s^- \cong \Ga$, and denote by $\chi \colon U^- \to \Ga$ the morphism obtained as the composition
\[
U^- \twoheadrightarrow U^-/[U^-,U^-] \cong \prod_s U_s^- \cong \prod_s \Ga \xrightarrow{+} \Ga.
\]
(Here $s$ runs over simple reflections, ordered in an arbitrary way.) We set $\cL_\psi:=\chi^* \AS_\psi$. Then $\cL_\psi$ is a multiplicative local system in the sense of Appendix \ref{sec:equivariant-Db}, so that we can consider the equivariant derived category $\Db_{U^-, \cL_\psi}(\cB,\E)$. One can also consider the equivariant derived category $\Db_U(\cB,\E)$, which is canonically equivalent to $\Db_{(B)}(\cB,\E)$, as explained in~\S\ref{ss:monodromy}.

As in \cite{by}, for $?=\mathord{!}$ or $*$ we consider the functor
\[
\Av_{\psi,?} \colon \Db_{U}(\cB,\E) \xrightarrow{\For} \Dbc(\cB,\E) \xrightarrow{\av_?} \Db_{U^-, \cL_\psi}(\cB,\E).
\]
(See Appendix~\ref{sec:equivariant-Db} for the notation.)
Using \eqref{eqn:cohom-AS} and copying the arguments in \cite{bbrm} (see also \cite[Lemma 4.4.3]{by}), one can check that
the natural morphism of functors $\Av_{\psi,!} \to \Av_{\psi,*}$ is an isomorphism.
For simplicity, we will denote both of these functors by $\Av_\psi$. Similarly, for $?=\mathord{!}$ or $*$ we have a functor
\[
\Av_? \colon \Db_{U^-, \cL_\psi}(\cB,\E) \xrightarrow{\For} \Dbc(\cB,\E) \xrightarrow{\av_?} \Db_{U}(\cB,\E).
\]

The following lemma follows from Lemma~\ref{lem:av-adjunction}; the proof is identical to that of \cite[Lemma 4.4.5]{by}.

\begin{lem}
\label{lem:av-psi-adjunction}
The functor $\Av_\psi$ is right adjoint to $\Av_!$ and left adjoint to $\Av_*$.
\end{lem}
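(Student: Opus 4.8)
The plan is to deduce this from the general adjunction properties of the averaging functors recorded in Lemma~\ref{lem:av-adjunction} (in Appendix~\ref{sec:equivariant-Db}), exactly as in \cite[Lemma~4.4.5]{by}. The key point is to identify $\Av_\psi$ with one of the equivariant averaging functors $\av_?$ and then invoke the standard adjunctions between $\av_!$, $\av_*$, and the forgetful functor, transported across the equivalences relating the various equivariant derived categories in play.

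First I would recall the setup: $\Av_\psi$ is, by definition, the composite $\Db_U(\cB,\E) \xrightarrow{\For} \Dbc(\cB,\E) \xrightarrow{\av_?} \Db_{U^-,\cL_\psi}(\cB,\E)$, where we have already observed (using \eqref{eqn:cohom-AS} and the arguments of \cite{bbrm}) that the choice $?=\mathord{!}$ or $*$ does not matter. Similarly $\Av_!$ and $\Av_*$ are the composites $\Db_{U^-,\cL_\psi}(\cB,\E) \xrightarrow{\For} \Dbc(\cB,\E) \xrightarrow{\av_?} \Db_U(\cB,\E)$ for $?=\mathord{!}$, $*$. The strategy is to apply Lemma~\ref{lem:av-adjunction} twice: once with the pair of groups $(\{1\},U)$ — or more precisely, with the equivariant categories for the trivial local system — to get that $\av_!^{U}$ (equivariantization for $U$) is left adjoint to the forgetful functor $\For^U$ and $\av_*^U$ is right adjoint to it; and once with the pair involving the local system $\cL_\psi$ on $U^-$, to get that $\av_!^{U^-,\cL_\psi}$ is left adjoint to $\For^{U^-,\cL_\psi}$ and $\av_*^{U^-,\cL_\psi}$ is right adjoint to it. Composing the adjunction $(\For^{U^-,\cL_\psi} \dashv \av_*^{U^-,\cL_\psi})$ on one side with $(\av_!^U \dashv \For^U)$ — after using $\Av_\psi = \av_*^{U^-,\cL_\psi} \circ \For^U$ and $\Av_! = \av_!^U \circ \For^{U^-,\cL_\psi}$, where one silently identifies $\Db_U(\cB,\E)$ with the appropriate subcategory of $\Dbc(\cB,\E)$ via the full faithfulness of the forgetful functor (Proposition~\ref{prop:For-Dequ}) — yields that $\Av_\psi$ is right adjoint to $\Av_!$. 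The dual argument, using $\Av_\psi = \av_!^{U^-,\cL_\psi} \circ \For^U$ (again by the $!=*$ coincidence) and $(\For^U \dashv \av_*^U)$ together with $(\av_!^{U^-,\cL_\psi} \dashv \For^{U^-,\cL_\psi})$, gives that $\Av_\psi$ is left adjoint to $\Av_*$.

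The main technical obstacle is bookkeeping rather than anything deep: one must be careful that the various forgetful functors and equivariantization functors are being composed in compatible ways, and in particular that the identification of $\Db_U(\cB,\E)$ (resp.\ $\Db_{U^-,\cL_\psi}(\cB,\E)$) with a full subcategory of $\Dbc(\cB,\E)$ is compatible with the adjunctions — this is where the full faithfulness results of Appendix~\ref{sec:equivariant-Db} (Proposition~\ref{prop:For-Dequ}) are used. Since all of this is set up in the appendix precisely so that the argument of \cite[Lemma~4.4.5]{by} goes through verbatim, I would simply cite that lemma and Lemma~\ref{lem:av-adjunction}, indicating that the composition of adjunctions described above produces the two claimed adjunctions.
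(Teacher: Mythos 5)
Your proposal is correct and takes essentially the same route as the paper, which simply invokes Lemma~\ref{lem:av-adjunction} and declares the proof identical to that of \cite[Lemma~4.4.5]{by}: one uses the isomorphism $\Av_{\psi,!} \cong \Av_{\psi,*}$ to describe $\Av_\psi$ via $\av_*$ (resp.\ $\av_!$) when pairing it against $\Av_!$ (resp.\ $\Av_*$), and composes the adjunctions for the two averaging/forgetful pairs. The only bookkeeping point left implicit in your write-up is that the shifts $[\dim U]$ and $[\dim U^-]$ occurring in Lemma~\ref{lem:av-adjunction} cancel precisely because $\dim U = \dim U^-$.
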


\subsection{Partial analogues}
\label{ss:partial}

For $s$ a simple reflection, we set $V_s:= U_s^- U^s$, and we denote by
$\cL_\psi^s$ the pullback of $\AS_\psi$ along the morphism
\[
V_s = U_s^- U^s \twoheadrightarrow U_s^- \simto \Ga.
\]
(Here, the first morphism is projection to $U_s^-$.) The local system $\cL_\psi^s$ is multiplicative, so as above we can consider the triangulated category $\Db_{V_s, \cL_\psi^s}(\cB,\E)$.

As observed in~\S\ref{ss:averaging-functors} we have a (partial) forgetful functor
$\For' \colon \Db_U(\cB,\E) \to \Db_{U^s}(\cB,\E)$.
On the other hand, for $?=!$ or $*$, since $U_s^-$ commutes with $U^s$, the functor $\mathrm{av}^{U_s^-, \cL^s_\psi}_?$ induces a functor $\Db_{U^s}(\cB,\E) \to \Db_{V_s, \cL_\psi^s}(\cB,\E)$.
(Here, by abuse we denote by $\cL_\psi^s$ the Artin--Schreier sheaf on $U_s^- \cong \Ga$.) We denote by
\[
\Av^s_{\psi,?} \colon \Db_{U}(\cB,\E) \to \Db_{V_s, \cL_\psi^s}(\cB,\E)
\]
the composition of these functors. As above one can show that the natural morphism $\Av^s_{\psi,!} \to \Av^s_{\psi,*}$ is an isomorphism, so that one can identify these functors and denote them by $\Av^s_\psi$.

Similarly, we have functors
\[
\Av^s_!, \ \Av^s_* \colon \Db_{V_s, \cL_\psi^s}(\cB,\E) \to \Db_{U}(\cB,\E)
\]
defined as the composition of the partial forgetful functor $\For'' \colon \Db_{V_s, \cL_\psi^s}(\cB,\E) \to \Db_{U^s}(\cB,\E)$ with the averaging functors $\Db_{U^s}(\cB,\E) \to \Db_{U}(\cB,\E)$ with respect to $U_s$. (To show that the functor $\mathrm{av}^{U_s}_?$ induces such a functor, we observe that any object in the essential image of $\Db_{U^s}(\cB,\E)$ in $\Dbc(\cB,\E)$ is of the form $\For^{U^s} \circ \mathrm{av}^{U^s}_?(\cG)$; then the claim follows from the observation that $\For^{U_s} \circ \mathrm{av}^{U_s}_? \circ \For^{U^s} \circ \mathrm{av}^{U^s}_? \cong \For^{U} \circ \mathrm{av}^{U}_?$.)

The proof of the following lemma is analogous to that of Lemma~\ref{lem:av-psi-adjunction}.

\begin{lem}
\label{lem:av-psi-adjunction-s}
The functor $\Av_\psi^s$ is right adjoint to $\Av_!^s$ and left adjoint to $\Av_*^s$.
\end{lem}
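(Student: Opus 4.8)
The statement to prove is Lemma~\ref{lem:av-psi-adjunction-s}: that $\Av_\psi^s$ is right adjoint to $\Av_!^s$ and left adjoint to $\Av_*^s$. The plan is to follow exactly the pattern already used in the proof of Lemma~\ref{lem:av-psi-adjunction} (whose proof, in turn, was said to be identical to that of~\cite[Lemma 4.4.5]{by}), replacing the pair of groups $(U, U^-)$ and the character $\chi$ by the ``partial'' data $(U^s, U_s^-)$ and the character on $V_s = U_s^- U^s$ defining $\cL_\psi^s$. The key point is that all three functors in sight are built out of forgetful functors (between equivariant derived categories) and the elementary averaging functors $\mathrm{av}_!$, $\mathrm{av}_*$ for a single unipotent group, so the adjunctions will come from assembling the corresponding adjunctions recorded in Lemma~\ref{lem:av-adjunction} (the existence of the two adjoints for $\mathrm{av}_?$, and the compatibility of forgetful functors with these adjunctions).

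\textbf{Step 1.} Unwind the definitions from~\S\ref{ss:partial}. We have $\Av^s_\psi = \mathrm{av}_?^{U_s^-,\cL_\psi^s} \circ \For'$, where $\For' \colon \Db_U(\cB,\E) \to \Db_{U^s}(\cB,\E)$ is the partial forgetful functor and $\mathrm{av}_?^{U_s^-,\cL_\psi^s} \colon \Db_{U^s}(\cB,\E) \to \Db_{V_s,\cL_\psi^s}(\cB,\E)$ is the twisted averaging functor (which makes sense because $U_s^-$ commutes with $U^s$), and the two choices $? = !$ and $? = *$ agree. On the other side, $\Av_?^s = \mathrm{av}_?^{U_s} \circ \For''$, where $\For'' \colon \Db_{V_s,\cL_\psi^s}(\cB,\E) \to \Db_{U^s}(\cB,\E)$ forgets the $U_s^-$-equivariance (note $\cL_\psi^s$ is trivial along $U^s$) and $\mathrm{av}_?^{U_s} \colon \Db_{U^s}(\cB,\E) \to \Db_U(\cB,\E)$ is the elementary averaging functor for the one-parameter group $U_s$, using $U = U^s \rtimes U_s$.

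\textbf{Step 2.} Establish the adjunctions $\For' \dashv$ \emph{and} $\dashv \For'$-type statements. Concretely: the partial forgetful functor $\For'$ has both a left and a right adjoint given by averaging over $U_s$ (with appropriate shift/twist if the relevant normalization demands it), by the general formalism of equivariant derived categories in Appendix~\ref{sec:equivariant-Db}; and similarly $\For''$ has left and right adjoints given by the (twisted) averaging over $U_s^-$. The cleanest route is to invoke Lemma~\ref{lem:av-adjunction} directly: it should say precisely that for a group $H$ acting through a multiplicative local system, $\mathrm{av}_!$ is left adjoint to the forgetful functor and $\mathrm{av}_*$ is right adjoint to it (and dually). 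Then the composite adjunctions give: $\Av_!^s = \mathrm{av}_!^{U_s} \circ \For''$ is left adjoint to $\For''{}^{\mathrm{r.adj}} \circ (\mathrm{av}_!^{U_s})^{\mathrm{r.adj}} = \mathrm{av}_*^{U_s^-,\cL_\psi^s} \circ \For'$ — but this is $\Av_\psi^s$ once we use the identification $\mathrm{av}_! \cong \mathrm{av}_*$ for the $\psi$-twist (the analogue of the isomorphism $\Av_{\psi,!} \simeq \Av_{\psi,*}$ that holds here by the same Artin–Schreier vanishing~\eqref{eqn:cohom-AS}, already noted in~\S\ref{ss:partial}). Symmetrically, $\Av_*^s = \mathrm{av}_*^{U_s} \circ \For''$ is right adjoint to $\mathrm{av}_!^{U_s^-,\cL_\psi^s} \circ \For' \cong \Av_\psi^s$. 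Writing this out carefully (matching left/right adjoints of a composite $F \circ G$ as $G^{\mathrm{adj}} \circ F^{\mathrm{adj}}$, in the correct order) yields both halves of the lemma.

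\textbf{The main obstacle.} The genuine content is bookkeeping: making sure that the forgetful functors $\For'$, $\For''$ and the two one-parameter averaging functors really do fit into adjunctions of the stated handedness, and that the twisted-averaging functors for $U_s^-$ have their ``$!$'' and ``$*$'' versions identified via~\eqref{eqn:cohom-AS} (so that the two adjoints of $\Av_?^s$ land on the \emph{same} functor $\Av_\psi^s$, rather than on two a priori different twisted averagings). This requires knowing that $U_s^-$ normalizes nothing problematic — it commutes with $U^s$, which is exactly what was used to define $\mathrm{av}^{U_s^-,\cL_\psi^s}$ on $\Db_{U^s}(\cB,\E)$ in the first place — and that $U = U^s \rtimes U_s$ as algebraic groups, so the elementary averaging over $U_s$ composed with the partial forgetful functor to $\Db_{U^s}$ behaves as expected. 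All of this is either in Appendix~\ref{sec:equivariant-Db} or is a routine transcription of the arguments of~\cite{bbrm} and~\cite[\S 4.4]{by}; no new idea is needed beyond the observation — already implicit in~\S\ref{ss:partial} — that the partial setting is formally parallel to the full one.
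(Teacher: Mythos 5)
Your proposal is correct and is essentially the paper's own argument: the paper proves this lemma by declaring it "analogous" to Lemma~\ref{lem:av-psi-adjunction}, which itself is obtained by composing the adjunctions of Lemma~\ref{lem:av-adjunction} for the forgetful and averaging functors (as in \cite[Lemma 4.4.5]{by}), exactly the bookkeeping you carry out with the partial data $(U^s, U_s^-, U_s)$, the shifts cancelling since $\dim U_s = \dim U_s^- = 1$ and the two twisted averagings being identified via~\eqref{eqn:cohom-AS}.
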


The same procedure also allows us to define a functor
\[
\widetilde{\Av}{}^s_! \colon \Db_{U^-, \cL_\psi}(\cB,\E) \xrightarrow{\For} \Db_{U_s^-, \cL^s_\psi}(\cB,\E) \xrightarrow{\av^{U^s}_!} \Db_{V_s, \cL_\psi^s}(\cB,\E).
\]

\begin{lem}
\label{lem:composition-av}
There exists an isomorphism of functors $\Av^s_! \circ \widetilde{\Av}{}^s_! \cong \Av_!$.
\end{lem}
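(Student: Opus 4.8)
The plan is to unwind the definitions of all the averaging functors involved and reduce the claim to an identity of ordinary averaging functors (in the sense of Appendix~\ref{sec:equivariant-Db}), exactly as in the passage just before Lemma~\ref{lem:composition-av} where $\For^{U_s} \circ \mathrm{av}^{U_s}_? \circ \For^{U^s} \circ \mathrm{av}^{U^s}_? \cong \For^U \circ \mathrm{av}^U_?$ was used. Recall that $U^- = U_s^- \cdot U^{-,s}$ in a way compatible with the group $V_s = U_s^- U^s$ and the projection $\chi$ defining $\cL_\psi$; concretely, $\cL_\psi$ restricts on $U_s^-$ to $\cL_\psi^s$, and $\widetilde{\Av}{}^s_!$ is by construction the composition $\Db_{U^-,\cL_\psi}(\cB,\E) \xrightarrow{\For} \Db_{U_s^-,\cL_\psi^s}(\cB,\E) \xrightarrow{\av^{U^s}_!} \Db_{V_s,\cL_\psi^s}(\cB,\E)$, while $\Av^s_!$ is $\Db_{V_s,\cL_\psi^s}(\cB,\E) \xrightarrow{\For''} \Db_{U^s}(\cB,\E) \xrightarrow{\av^{U_s}_!} \Db_U(\cB,\E)$, and $\Av_!$ is $\Db_{U^-,\cL_\psi}(\cB,\E) \xrightarrow{\For} \Dbc(\cB,\E) \xrightarrow{\av_!} \Db_U(\cB,\E)$.

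First I would compose the two functors on the left-hand side and track, step by step, which equivariance structures are forgotten and which are re-imposed. After the first forgetful functor in $\widetilde{\Av}{}^s_!$ we land in $\Db_{U_s^-,\cL^s_\psi}(\cB,\E)$; after $\av^{U^s}_!$ we are in $\Db_{V_s,\cL^s_\psi}(\cB,\E)$ (imposing $U^s$-equivariance); after $\For''$ we retain only $U^s$-equivariance; after $\av^{U_s}_!$ we impose $U_s$-equivariance, landing in $\Db_{U}(\cB,\E)$ since $U = U_s \cdot U^s$. The key point is that the composite $\av^{U_s}_! \circ \For'' \circ \av^{U^s}_!$, applied to an object which has been stripped down to $\Db_{U_s^-,\cL^s_\psi}(\cB,\E)$, should agree with $\av^U_!$ applied to the same object viewed (after forgetting everything but the underlying constructible complex) in $\Dbc(\cB,\E)$ — and then the outer forgetful functor $\Db_{U^-,\cL_\psi}(\cB,\E) \xrightarrow{\For} \Db_{U_s^-,\cL^s_\psi}(\cB,\E)$ composed with the subsequent forgetting to $\Dbc(\cB,\E)$ matches the single forgetful functor in the definition of $\Av_!$. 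So the whole thing reduces to: $\av^{U_s}_! \circ \For \circ \av^{U^s}_! \cong \av^U_!$ as functors $\Dbc(\cB,\E) \to \Db_U(\cB,\E)$ (restricted to the relevant subcategory), together with compatibility of these averaging operations with the $\cL^s_\psi$-twist on $U_s^-$, which only plays a passive role since $U_s^-$ commutes with $U^s$ and the twist is carried along untouched. This last statement is precisely the kind of transitivity-of-averaging identity recorded in Appendix~\ref{sec:equivariant-Db}, applied to the chain of subgroups $U^s \subset U$ and the commuting factor $U_s^-$.

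The main obstacle, as usual with these equivariant-category bookkeeping arguments, is not any deep geometry but keeping the zoo of forgetful and averaging functors straight and checking that at each stage the twisting local system $\cL_\psi^s$ on the $U_s^-$-factor is genuinely transported unchanged — i.e.\ that $\av^{U^s}_!$ and $\av^{U_s}_!$ are $U_s^-$-linear in the appropriate $\cL^s_\psi$-twisted sense because $U_s^-$ normalizes (in fact centralizes) $U^s$ and $U_s$. Once that is in place, the desired isomorphism $\Av^s_! \circ \widetilde{\Av}{}^s_! \cong \Av_!$ follows formally from the base-change/transitivity properties of $\av_!$ collected in Appendix~\ref{sec:equivariant-Db}. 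I would therefore structure the proof as: (i) recall the relevant transitivity isomorphism for $\av_!$ along $U^s \subset U$ with the commuting factor $U_s^-$; (ii) observe that all the twists by $\cL_\psi$, $\cL_\psi^s$ are compatible under the inclusions of groups; (iii) compose the definitions and read off the claim. The proof will be short — essentially a diagram chase through Appendix~\ref{sec:equivariant-Db} — and closely parallels the parenthetical remark establishing $\For^{U_s} \circ \mathrm{av}^{U_s}_? \circ \For^{U^s} \circ \mathrm{av}^{U^s}_? \cong \For^U \circ \mathrm{av}^U_?$ used just above in the definition of $\Av^s_!$.
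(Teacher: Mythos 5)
Your proposal is correct and follows essentially the same route as the paper: the paper's proof consists precisely of the observation that $\For'' \circ \av^{U^s}_!$ on $\Db_{U_s^-,\cL^s_\psi}(\cB,\E)$ agrees with $\av^{U^s}_! \circ \For$ (the twist on $U_s^-$ being carried along passively since $U_s^-$ commutes with $U^s$), combined with the transitivity of forgetful and averaging functors along $U^s \subset U$ that was already invoked in the definition of $\Av^s_!$. Your step-by-step bookkeeping is exactly this argument, just spelled out in more detail.
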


\begin{proof}
The claim follows from the observation that the composition
\[
\Db_{U_s^-, \cL^s_\psi}(\cB,\E) \xrightarrow{\av^{U^s}_!} \Db_{V_s, \cL_\psi^s}(\cB,\E) \xrightarrow{\For} \Db_{U^s}(\cB,\E)
\]
is isomorphic to the composition
\[
\Db_{U_s^-, \cL^s_\psi}(\cB,\E) \xrightarrow{\For} \Dbc(\cB,\E) \xrightarrow{\av^{U^s}_!} \Db_{U^s}(\cB,\E)
\]
and easy facts on compositions of forgetful (resp.~averaging) functors.
\end{proof}

\subsection{Application to projective covers}
\label{ss:P_e^s}

From now on we will identify the categories $\Db_U(\cB,\E)$ and $\Db_{(B)}(\cB,\E)$ in the natural way, as explained in~\S\ref{ss:monodromy}.
With this identification, the various functors introduced in~\S\ref{ss:AS-sheaf} allow us (following~\cite{by}) to give an explicit construction of $\cP_e$.

\begin{lem}
\label{lem:average-Pe}
There exists an isomorphism $\Av_! \Av_\psi(\IC_e) \cong \cP_e$.
\end{lem}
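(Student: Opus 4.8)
The goal is to identify the object $\Av_! \Av_\psi(\IC_e)$ with the indecomposable projective cover $\cP_e$ in $\Perv_{(B)}(\cB,\E)$. The strategy is to follow the argument of~\cite[\S 4.4]{by}, proving this first in the case $\E = \K$ or $\F$, and then deducing the case $\E = \O$ by reduction to $\F$. The key input is Lemma~\ref{lem:av-psi-adjunction}, which gives $\Av_\psi \dashv \Av_*$ and $\Av_! \dashv \Av_\psi$, so that for any $\cF$ in $\Db_{(B)}(\cB,\E)$ we have a natural isomorphism $\Hom(\Av_! \Av_\psi(\IC_e), \cF) \cong \Hom(\Av_\psi(\IC_e), \Av_\psi(\cF))$.

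\emph{Step 1: the case $\E = \K$ or $\F$.} First I would check that $\Av_! \Av_\psi(\IC_e)$ is a perverse sheaf. The object $\Av_\psi(\IC_e)$ lives in the Whittaker-type category $\Db_{U^-, \cL_\psi}(\cB,\E)$; by the analysis of such objects (copying~\cite{bbrm} and~\cite[\S 4.4]{by}), $\Av_\psi$ sends $\IC_e$ to (a shift of) the unique irreducible Whittaker perverse sheaf, and $\Av_!$ applied to it is perverse — here one uses that $\av_!$ and $\av_*$ agree on the relevant objects, which forces exactness. Next, projectivity: for any $\cF \in \Perv_{(B)}(\cB,\E)$ one computes, using the adjunctions above,
\[
\Ext^k(\Av_! \Av_\psi(\IC_e), \cF) \cong \Hom_{\Db_{U^-,\cL_\psi}}(\Av_\psi(\IC_e), \Av_\psi(\cF)[k]),
\]
and since $\Av_\psi(\IC_e)$ is simple in the Whittaker category while $\Av_\psi$ is exact (it is both a left and a right adjoint up to shift, hence $t$-exact for the relevant $t$-structures), the right-hand side vanishes for $k > 0$; thus $\Av_! \Av_\psi(\IC_e)$ is projective in $\Perv_{(B)}(\cB,\E)$. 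Finally, to pin down \emph{which} projective it is, I would show it is indecomposable with simple head $\IC_e$: the endomorphism computation $\End(\Av_! \Av_\psi(\IC_e)) \cong \End_{\Db_{U^-,\cL_\psi}}(\Av_\psi(\IC_e))$ is local since $\Av_\psi(\IC_e)$ is simple, and $\Hom(\Av_! \Av_\psi(\IC_e), \IC_e) \cong \Hom(\Av_\psi(\IC_e), \Av_\psi(\IC_e)) \neq 0$ while $\Hom(\Av_! \Av_\psi(\IC_e), \IC_w) \cong \Hom(\Av_\psi(\IC_e), \Av_\psi(\IC_w))$ vanishes for $w \neq e$ because $\Av_\psi$ kills $\IC_w$ for $w \neq e$ (a standard feature of Whittaker averaging — only the open stratum supports a nonzero Whittaker local system, so $\Av_\psi(\IC_w)=0$ unless $w=e$, and for tilting/standard objects one uses Lemma~\ref{lem:convolution-Delta-nabla}). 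Hence $\Av_! \Av_\psi(\IC_e) \cong \cP_e$.

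\emph{Step 2: the case $\E = \O$.} Here I would invoke the compatibility of all the functors $\For$, $\av_?$, hence $\Av_\psi$ and $\Av_!$, with the extension-of-scalars functor $\F(-)$. This gives $\F\bigl(\Av_! \Av_\psi(\IC_e^\O)\bigr) \cong \Av_! \Av_\psi(\F(\IC_e^\O)) \cong \Av_! \Av_\psi(\IC_e^\F) \cong \cP_e^\F$ by Step 1 (using that $\F(\IC_e^\O) = \IC_e^\F$ since $\IC_e^\O$ is the skyscraper sheaf, which is torsion-free). One also needs that $\Av_! \Av_\psi(\IC_e^\O)$ is a perverse sheaf over $\O$: this follows from the criterion that an object $\cG \in \Db_{(B)}(\cB,\O)$ is perverse iff $\F(\cG)$ is perverse and $\cG$ is torsion-free (cf.\ the footnote in~\S\ref{ss:background-tilt}), together with the fact that $\Av_\psi$ and $\Av_!$ commute with $\F(-)$ and that $\IC_e^\O$ is torsion-free — torsion-freeness is preserved because $\av_! = \av_*$ on the relevant objects and these operations commute with the (derived) extension of scalars. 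Then $\F\bigl(\Av_! \Av_\psi(\IC_e^\O)\bigr) \cong \cP_e^\F \cong \F(\cP_e^\O)$ by Corollary~\ref{cor:Pe}$(2)$, and Lemma~\ref{lem:Hom-proj-F}$(2)$ (applicable since $\cP_e^\O$ is projective) yields $\Av_! \Av_\psi(\IC_e^\O) \cong \cP_e^\O$.

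\emph{Main obstacle.} The delicate point is the treatment of the Whittaker category and the exactness and simplicity properties of $\Av_\psi(\IC_e)$ — specifically, verifying that $\Av_\psi$ is $t$-exact and that $\Av_\psi(\IC_w)=0$ for $w\neq e$ — since these require the étale "Whittaker-type" machinery of~\cite{bbrm} rather than being formal consequences of what is set up in the excerpt; in~\cite{by} this is exactly the content that is handled by the analogue of Lemma~\ref{lem:av-psi-adjunction} together with~\cite[Lemma 4.4.3]{by}. The reduction from $\O$ to $\F$, by contrast, is routine once the compatibility of the averaging functors with $\F(-)$ is recorded, and the perversity-over-$\O$ check is standard.
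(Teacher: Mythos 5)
Your proposal matches the paper's proof in both its reduction and its main computation: the case $\E=\O$ is deduced from $\E=\F$ via Lemma~\ref{lem:Hom-proj-F}$(2)$, and for a field one uses the adjunction from Lemma~\ref{lem:av-psi-adjunction} together with the vanishing $\Av_\psi(\IC_w)=0$ for $w\neq e$ to compute $\Hom^i(\Av_!\Av_\psi(\IC_e),\IC_w)$. One small caveat in your Step~1: the vanishing of $\Ext^{>0}$ does not follow merely from $\Av_\psi$ being $t$-exact and $\Av_\psi(\IC_e)$ being simple in the heart (simple objects can have higher self-extensions); the actual input is that the Whittaker perverse category on $\cB$ is semisimple, with $\Av_\psi(\IC_e)$ its unique simple object — equivalently, one just checks the $\Hom^i$ for all $\IC_w$ as the paper does, which is the cleaner route.
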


\begin{proof}
By Lemma~\ref{lem:Hom-proj-F}(2) it is enough to prove the result in case $\E=\K$ or $\F$. The case $\E=\K$ is treated (in a slightly different setting) in~\cite[Lemma 4.4.11]{by}, and the case $\E=\F$ is similar: it is enough to prove that
\[
\Hom^i(\Av_! \Av_\psi(\IC_e), \IC_w) = \begin{cases}
\E & \text{if $w=e$ and $i=0$;} \\
0 & \text{otherwise}.
\end{cases}
\]
Using Lemma \ref{lem:av-psi-adjunction}, this easily follows from the fact that $\Av_\psi(\IC_w)=0$ for $w \neq e$; see the argument in~\cite[Lemma 4.4.6]{by}.
\end{proof}

\begin{rmk}
Under the isomorphism of Lemma~\ref{lem:average-Pe}, the projection $\cP_e \twoheadrightarrow \IC_e$ can be realized as the adjunction morphism $\Av_! \Av_\psi(\IC_e) \to \IC_e$.
\end{rmk}

To introduce further notation, we fix a simple reflection $s$. Below we will make use of the functor
\[
\xi_s := \Av^s_! \circ \Av^s_\psi \colon  \Db_{(B)}(\cB,\E) \to  \Db_{(B)}(\cB,\E).
\]
These functors over $\O$, $\K$ and $\F$ commute with $\K(-)$ and $\F(-)$. The same arguments as for Lemma~\ref{lem:average-Pe} allow to prove that we have
\begin{equation}
\label{eqn:xis-ICe}
\xi_s(\IC_e) \cong \cT_s.
\end{equation}
(Here we use the fact that $\cT_s$ is the projective cover of $\IC_e$ in the abelian category $\Perv_{(B)}(\overline{\cB_s},\E)$. This fact is well known; it can also be deduced from the case $G=\GL_2(\Fpb)$ of Proposition~\ref{prop:Pe-Tw0} below.) As above, if $\E=\K$ or $\F$ the adjunction morphism $\cT_s \cong \xi_s(\IC_e) \to \IC_e$ realizes $\IC_e$ as the head of $\cT_s$.

Finally, it is easy to check that $\xi_s$ commutes with convolution in the sense that, with the notation introduced in~\S\ref{ss:preliminary}, for any $\cF$ in $\Db_{(B)}(\cB,\E)$ and $\cG$ in $\Db_{B}(\cB,\E)$, there is a functorial isomorphism
\begin{equation}
\label{eqn:xi-convolution}
\xi_s(\cF \star^B \cG) \cong \xi_s(\cF) \star^B \cG.
\end{equation}

We set
\[
\cP_e^s:= \widetilde{\Av}{}^s_! \Av_\psi(\IC_e) \in \Db_{V_s,\cL_\psi^s}(\cB,\E).
\]
Using arguments similar to the ones above
one can check that, if $\E=\K$ or $\F$, $\cP_e^s$ is the projective cover in the category $\Perv_{V_s, \cL_\psi^s}(\cB,\E)$ of $(V_s, \cL_\psi^s)$-equivariant perverse sheaves of the simple object supported on $\overline{\cB_s}$, and also the indecomposable tilting object in this highest weight category whose support is maximal. (We will use these claims only in case $\E=\K$, in which case they follow from \cite[Corollary 5.5.2]{by}.) 

By construction and Lemma \ref{lem:composition-av} we have 
\begin{equation}
\label{eqn:Pes-Pe}
\Av^s_!(\cP_e^s) \cong \cP_e.
\end{equation}
In particular, there exists a natural morphism
\begin{equation}
\label{eqn:morphism-Pe-xis}
\cP_e \cong \Av^s_!(\cP_e^s) \to \Av^s_! \Av^s_\psi \Av^s_! (\cP_e^s) = \xi_s(\cP_e)
\end{equation}
where the middle arrow is induced by adjunction; see Lemma~\ref{lem:av-psi-adjunction-s}.

\subsection{The case $\E=\K$}
\label{ss:reminder-by}

In this subsection we assume that $\E=\K$, and we state some results which are known in this case, mainly thanks to \cite{by}.

\begin{lem}
\label{lem:End-Pes-K}
Assume $\E=\K$.
The morphism $\End(\cP_e^s) \to \End(\cP_e)$ induced by the functor $\Av_!^s$ (see \eqref{eqn:Pes-Pe}) is injective. Moreover, via the isomorphism of Lemma {\rm \ref{lem:monodromy-Pe}}, its image is identified with $C_s$. In other words, there exists a unique isomorphism $C_s \simto \End(\cP_e^s)$ which makes the following diagram commutative:
\[
\xymatrix@C=1.5cm@R=0.6cm{
C_s \ar[d]^-{\wr} \ar@{^{(}->}[r] & C \ar[d]_-{\wr}^-{{\rm Lem.}~\ref{lem:monodromy-Pe}} \\
\End(\cP_e^s) \ar[r]^-{\Av_!^s} & \End(\cP_e),
}
\]
\end{lem}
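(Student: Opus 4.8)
The plan is to prove the lemma by a chain of reductions, exploiting the fact that $\cP_e^s$ is the indecomposable tilting object of maximal support in the highest weight category $\Perv_{V_s,\cL_\psi^s}(\cB,\K)$, combined with the adjunction of Lemma \ref{lem:av-psi-adjunction-s} and the monodromy formalism of~\S\ref{ss:monodromy}. First I would identify what $\End(\cP_e^s)$ should be abstractly. Since $\cP_e^s$ is simultaneously projective and tilting in $\Perv_{V_s,\cL_\psi^s}(\cB,\K)$, and since the relevant partial flag subquotient is governed by the reflection $s$ only, one expects $\End(\cP_e^s) \cong C_s$ by the same Soergel-type argument that identifies $\End(\cP_e) \cong C$ (Lemma \ref{lem:monodromy-Pe}); indeed, over $\K$ this is precisely the content of \cite[Corollary 5.5.2 and the surrounding discussion]{by}, modulo translating from the ``free-monodromic'' language there to our ``Whittaker'' language. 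So the first step is to extract from \cite{by} (and our Lemma \ref{lem:monodromy-Pe}) an isomorphism $C_s \simto \End(\cP_e^s)$, where the $C_s$-action comes from restricting the monodromy morphism $\mu_{\cP_e^s}\colon \widehat{S} \to \End(\cP_e^s)$ (which factors through $C$, and then through $C_s$ because $\cP_e^s$ lives in the $V_s$-equivariant category with the extra $U^s$-equivariance forcing the appropriate invariance — cf.\ Remark \ref{rmk:mu-equivariant} applied to the partial setting).

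Next I would show that the map $\End(\cP_e^s) \to \End(\cP_e)$ induced by $\Av_!^s$ is injective and compatible with monodromy. Injectivity follows from the adjunction $(\Av_!^s, \Av_\psi^s)$ of Lemma \ref{lem:av-psi-adjunction-s}: the composite $\cP_e = \Av_!^s(\cP_e^s) \to \Av_\psi^s\Av_!^s(\cP_e^s)$ — or rather the unit/counit maneuver of~\eqref{eqn:morphism-Pe-xis} — lets one recover an endomorphism of $\cP_e^s$ from its image in $\End(\cP_e)$, because $\cP_e^s$ is a direct summand of $\Av_\psi^s(\cP_e)$ (as $\Av_\psi^s\Av_!^s(\cP_e^s)\cong \cP_e^s \oplus (\text{something})$, by the projectivity/indecomposability of $\cP_e^s$ and a length count as in Corollary \ref{cor:Pe}). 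Alternatively, and perhaps more cleanly, injectivity is immediate once one knows both endomorphism rings are $\K$-free of the expected dimensions ($\#W$ and $2$ respectively, the latter by Lemma \ref{lem:invariants}(5)) and that $\Av_!^s$ is compatible with the monodromy actions — the latter because monodromy commutes with all functors built from $*$-pullback, $!$-pushforward and averaging (this is the functoriality of $\mu'_{(-)}$ recalled in~\S\ref{ss:monodromy}, since it defines an endotransformation of the identity functor). Compatibility of $\Av_!^s$ with monodromy then gives a commutative square relating $\mu_{\cP_e^s}$ and $\mu_{\cP_e}$, hence the square in the statement commutes at the level of the maps $S \to \End(\cP_e^s) \to \End(\cP_e)$.

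Finally I would pin down the image. The commutative monodromy square shows the image of $\End(\cP_e^s)$ in $\End(\cP_e) \cong C$ contains (the image of) $C_s$. For the reverse inclusion I would use that the image of $\Av_!^s$ consists of endomorphisms that are ``$V_s$-equivariant'' in the appropriate sense: the extra $U^s$-equivariance built into $\Db_{V_s,\cL_\psi^s}(\cB,\K)$, transported through $\mu_{\cP_e^s}$ and Proposition \ref{prop:logarithm}, forces monodromy to land in the $s$-invariant part $C_s \subset C$ (here one uses that outside characteristic $2$, $C_s$ really is the $s$-invariants in $C$, see the proof of Lemma \ref{lem:invariants}). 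Combined with the dimension count ($\dim_\K C_s = 2 = \dim_\K \End(\cP_e^s)$), the inclusion $\mathrm{im}(\Av_!^s) \supseteq C_s$ and injectivity of $\Av_!^s$ force $\mathrm{im}(\Av_!^s) = C_s$ exactly, and the desired isomorphism $C_s \simto \End(\cP_e^s)$ making the diagram commute is then unique because $C \hookrightarrow C$ along $\mathrm{Lem.~}\ref{lem:monodromy-Pe}$ is injective with the bottom arrow injective. The main obstacle I anticipate is the careful bookkeeping in translating the ``free-monodromic''/equivariant computations of \cite{by} into the present Whittaker-averaging setup — in particular verifying that $\cP_e^s$ has endomorphism ring of $\K$-dimension exactly $2$ and that this ring is generated by monodromy; everything else is formal adjunction and functoriality of monodromy.
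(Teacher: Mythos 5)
Your plan diverges from the paper's proof, and its central step has a genuine gap. The paper does not argue via monodromy on $\cP_e^s$ at all: it invokes the ``self-duality'' of \cite[\S 5.3]{by} and the ``paradromic--Whittavariant duality'' of \cite[\S 5.5]{by} with $\Theta=\{s\}$, under which (the mixed analogues of) $\cP_e$, $\cP_e^s$ and $\Av^s_!$ correspond to $\ICv_{w_0}$, the constant perverse sheaf on $\scPv^s$ and $({\check \pi}^s)^*[1]$; the lemma then reduces to the fact that $({\check \pi}^s)^* \colon \mathbb{H}^\bullet(\scPv^s;\K) \to \mathbb{H}^\bullet(\cBv;\K)$ is injective with image $C_s$ (Proposition~\ref{prop:cohomology-G/B} and Corollary~\ref{cor:cohomology-G/P}), together with the observation that the resulting identification $C \simto \End(\cP_e)$ may differ from that of Lemma~\ref{lem:monodromy-Pe}, but only by a $W$-equivariant automorphism of $C$, which preserves $C_s$. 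Your route instead hinges on a monodromy map $\mu_{\cP_e^s} \colon \hS \to \End(\cP_e^s)$ compatible with $\Av^s_!$. This is where it fails: the monodromy of \S\ref{ss:monodromy} is constructed only for objects of $\Db_{(B)}(\cB,\E) \cong \Db_U(\cB,\E)$, and $\cP_e^s$ is not such an object; the statement that $\mu'_{(-)}$ is an endotransformation of the identity functor only gives commutation with morphisms \emph{inside} $\Db_{(B)}(\cB,\E)$, not compatibility with a functor out of $\Db_{V_s,\cL_\psi^s}(\cB,\E)$. Worse, if a full $\hS$-monodromy on $\cP_e^s$ compatible with $\Av^s_!$ existed in the naive sense you use, the image of $\End(\cP_e^s) \to \End(\cP_e) \cong C$ would contain $\mu_{\cP_e}(\hS)$, which is all of $C$ over $\K$ by Lemma~\ref{lem:monodromy-Pe}, contradicting injectivity (the source has dimension $\#W/2 < \#W$). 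The actual content of the lemma is precisely that \emph{only} the subalgebra $C_s$ of the monodromy lifts through $\Av^s_!$, so your ``commutative monodromy square'' step is circular; likewise Remark~\ref{rmk:mu-equivariant} relates $B$-equivariance to the vanishing of $\mu(\fh)$ and gives no link between $U^s$-equivariance and $s$-invariance of monodromy.

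Two further points. First, Lemma~\ref{lem:invariants}(5) says $C$ is free of rank $2$ \emph{over} $C_s$, so $\dim_\K C_s = \#W/2$, not $2$ (and correspondingly $\dim_\K \End(\cP_e^s) = \#W/2$); the counting argument can be repaired, but as written it is wrong. Second, your injectivity argument needs the unit $\cP_e^s \to \Av^s_\psi \Av^s_!(\cP_e^s)$ to be a (split) monomorphism, which you assert via an unproven decomposition of $\Av^s_\psi\Av^s_!(\cP_e^s)$; in the paper injectivity comes for free from the \cite{by} diagram since $({\check \pi}^s)^*$ is injective on cohomology. Citing \cite[Corollary 5.5.2]{by} identifies the object $\cP_e^s$, but does not give the compatibility of the isomorphism $\End(\cP_e^s) \cong C_s$ with $\Av^s_!$ and with the monodromy identification of $\End(\cP_e)$; for that one needs the full strength of \cite[Theorems 5.3.1 and 5.5.1]{by} (or a genuinely new argument), and this is the part your proposal leaves unaddressed.
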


\begin{proof}
Thanks to the ``self-duality'' of \cite[\S 5.3]{by} and the ``paradromic-Whittavar\-i\-ant duality'' of \cite[\S 5.5]{by} (with $\Theta=\{s\}$) we have a commutative diagram
\[
\xymatrix@C=1.5cm@R=0.6cm{
\mathbb{H}^\bullet(\scPv^s;\K) \ar[d]^-{\wr} \ar@{^{(}->}[r]^-{({\check \pi}^s)^*} & \mathbb{H}^\bullet(\cBv;\K) \ar[d]^-{\wr} \\
\End(\cP_e^s) \ar[r]^-{\Av_!^s} & \End(\cP_e).
}
\]
Here we have used the following facts:
\begin{enumerate}
\item
under the equivalence of \cite[Theorem~5.3.1]{by}, (the mixed analogue of) $\cP_e=\cT_{w_0}$ corresponds to (the mixed analogue of) $\ICv_{w_0}$; 
\item
under the equivalence of \cite[Theorem~5.5.1]{by}, (the mixed analogue of) $\cP_e^s$---which coincides with the object denoted $\cT_{\overline{sw_0},\chi}$ in \cite{by}---corresponds to (the mixed analogue of) the constant perverse sheaf on $\scPv^s$;
\item
these equivalences intertwine (the mixed analogue of) the functors $\Av_!^s$ and $({\check \pi}^s)^*[1]$, cf.~\cite[Theorem~5.5.1$(2)$]{by};
\item
these equivalences also induce isomorphisms between $\Hom$-spaces in the ``non-mixed'' categories, cf.~\cite[Theorem~5.3.1(4) \& Theorem~5.5.1(5)]{by}.
\end{enumerate}
On the right-hand side one can identify $\mathbb{H}^\bullet(\cBv;\K)$ with $C$; see Proposition~\ref{prop:cohomology-G/B}. The resulting isomorphism $C \simto  \End(\cP_e)$ might not be the isomorphism of Lemma~\ref{lem:monodromy-Pe} (see Remark~\ref{rmk:monodromy-BY}), but the two isomorphisms differ by a $W$-equivariant automorphism of $C$. Then one concludes using Corollary~\ref{cor:cohomology-G/P}.
\end{proof}

Using Lemma~\ref{lem:End-Pes-K} we can consider the functors
\begin{align*}
\widetilde{\mathbb{V}}:=\Hom(\cP_e, -) \colon & \Perv_{U}(\cB,\K) \to C\lh\mathsf{mod}, \\
\widetilde{\mathbb{V}}^s:=\Hom(\cP_e^s, -) \colon & \Perv_{V_s, \cL_\psi^s}(\cB,\K) \to C^s\lh\mathsf{mod}.
\end{align*}
(Here, $\Perv_{U}(\cB,\K)$ and $\Perv_{V_s, \cL_\psi^s}(\cB,\K)$ are the categories of perverse sheaves in the categories $\Db_U(\cB,\K)$ and $\Db_{V_s,\cL^s_\psi}(\cB,\K)$, respectively; the t-structure is induced by the one on $\Dbc(\cB,\K)$.) Since $\cP_e$, resp.~$\cP_e^s$, is the projective cover of a simple object in $\Perv_{U}(\cB,\K)$, resp.~$\Perv_{V_s, \cL_\psi^s}(\cB,\K)$, these functors are
quotient functors, as explained in~\cite[\S 2.2]{soergel}.

By \eqref{eqn:Pes-Pe} and Lemma \ref{lem:av-psi-adjunction-s}, we have $\widetilde{\mathbb{V}}^s \circ \Av^s_\psi \cong \mathsf{Res}^C_{C_s} \circ \widetilde{\mathbb{V}}$, where $\mathsf{Res}^C_{C_s} \colon C\lh\mathsf{mod} \to C_s\lh\mathsf{mod}$ is the restriction functor. By a standard argument (see e.g.~\cite[Remark~2.2.4]{soergel}) one deduces a canonical isomorphism of functors $\widetilde{\mathbb{V}} \circ \Av^s_!(-) \cong C \otimes_{C_s} \widetilde{\mathbb{V}}^s(-)$, and hence a canonical isomorphism of functors
\begin{equation}
\label{eqn:V-xi-K}
C \otimes_{C_s} \widetilde{\mathbb{V}}(-) \simto
\widetilde{\mathbb{V}} \circ \xi_s(-).
\end{equation}
By construction, this isomorphism sends $x \otimes f$ (where $x \in C$ and $f \in \widetilde{\mathbb{V}}(\cF)=\Hom(\cP_e,\cF)$) to the composition
\[
\cP_e \xrightarrow{\phi(x)} \cP_e \xrightarrow{\eqref{eqn:morphism-Pe-xis}} \xi_s(\cP_e) \xrightarrow{\xi_s(f)} \xi_s(\cF).
\]

\subsection{The functors $\xi_s$ and tilting perverse sheaves}

In this subsection we fix a simple reflection $s$.

\begin{lem}
\label{lem:xi-Tilt}
\begin{enumerate}
\item
For all $w \in W$, $\xi_s(\Delta_w)$ is perverse, and admits a standard filtration with associated graded $\Delta_w \oplus \Delta_{sw}$.
\item
For all $w \in W$, $\xi_s(\nabla_w)$ is perverse, and admits a costandard filtration with associated graded $\nabla_w \oplus \nabla_{sw}$.
\item
$\xi_s$ restricts to an endofunctor of $\Tilt_{(B)}(\cB,\E)$.
\end{enumerate}
\end{lem}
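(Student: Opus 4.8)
The statement to prove is Lemma \ref{lem:xi-Tilt}: the functor $\xi_s = \Av^s_! \circ \Av^s_\psi$ sends $\Delta_w$ to a perverse sheaf with standard filtration with graded pieces $\Delta_w \oplus \Delta_{sw}$, sends $\nabla_w$ to a perverse sheaf with costandard filtration with graded pieces $\nabla_w \oplus \nabla_{sw}$, and hence restricts to an endofunctor of $\Tilt_{(B)}(\cB,\E)$. Parts (1) and (2) are dual to each other under Verdier duality (which swaps $!$ and $*$ averaging, hence swaps $\Av^s_!$ and $\Av^s_*$, but $\Av^s_\psi = \Av^s_{\psi,!} = \Av^s_{\psi,*}$ is self-dual up to replacing $\psi$ by $\psi^{-1}$), so I would prove (1) carefully and deduce (2) formally; then (3) is immediate since a perverse sheaf admitting a standard filtration \emph{and} a costandard filtration is tilting, and $\xi_s$ of a tilting sheaf has both by (1) and (2).

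The key computation for (1) is to understand $\xi_s$ on the minimal flag variety $\scP^s$. The plan is to use \eqref{eqn:xi-convolution}: since $\Delta_w \cong \Delta_e \star^B \Delta_w$ (trivially) and, choosing a reduced expression, $\Delta_w \cong \Delta_s \star^B \Delta_{sw}$ or $\Delta_w \cong \Delta_{sw} \star^B \Delta_s$ depending on whether $sw < w$, we reduce via $\xi_s(\cF \star^B \cG) \cong \xi_s(\cF)\star^B\cG$ to computing $\xi_s(\Delta_e)$ and $\xi_s(\Delta_s)$ — equivalently, the effect of $\xi_s$ on the two-element stratified variety $\overline{\cB_s} \cong \mathbb{P}^1$. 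Concretely, $\xi_s$ is built from averaging with respect to $U_s^-$ against an Artin--Schreier local system and then averaging back against $U_s$, all happening within the rank-one situation; this is precisely the "$\mathfrak{sl}_2$" computation carried out in \cite[\S 4.4, \S 5.5]{by} in the $\K$-case, and the point is that the relevant stalk/costalk computations are insensitive to the coefficient ring because they reduce, via proper or smooth base change, to cohomology of affine lines with Artin--Schreier coefficients (which vanishes, \eqref{eqn:cohom-AS}) and cohomology of points. So I would: (a) show $\xi_s(\Delta_e)$ is perverse with standard filtration $\Delta_e \oplus \Delta_s$ by a direct stalk computation on $\mathbb{P}^1$ using the adjunction triangle for $\Av^s_! \Av^s_\psi$ and \eqref{eqn:cohom-AS}; (b) deduce the general case by convolution, noting that convolution with $\Delta_v$ (for $\ell(vw)=\ell(v)+\ell(w)$) is $t$-exact on standard-filtered objects and sends $\Delta_x$ to $\Delta_{vx}$ or $\Delta_{xv}$ by Lemma \ref{lem:convolution-Delta-nabla}, so it carries the filtration $\Delta_e \oplus \Delta_s$ (after the reduction) to $\Delta_{sw}\oplus\Delta_w$; perversity is preserved because convolution with an invertible standard object is an equivalence up to the right twist and standard-filtered perverse sheaves convolve to standard-filtered perverse sheaves.

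An alternative, cleaner route for (1)–(2), which I would actually prefer to write: prove directly that $\Av^s_\psi(\Delta_w)$ and $\Av^s_\psi(\nabla_w)$ are (shifted) "standard"/"costandard" objects in the Whittaker-type category $\Db_{V_s,\cL^s_\psi}(\cB,\E)$, using that $\Av^s_\psi$ kills everything supported off the open $U_s^-$-orbit in each $U^s$-orbit closure — again via \eqref{eqn:cohom-AS} — and is an equivalence onto its image on the $\Delta/\nabla$-filtered subcategory; then apply $\Av^s_!$, which by Lemma \ref{lem:av-psi-adjunction-s} has good exactness and, on these Whittaker standard objects, produces exactly $\Delta_w \oplus \Delta_{sw}$ (resp.\ the costandard analogue). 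Either way, the perversity claims follow from the fact that $\Av^s_!$ and $\Av^s_\psi$ are built from $t$-exact pieces (smooth pullback with shift, proper pushforward, and affine-fibration averaging) applied to perverse objects, so no higher cohomology appears — this is where one must be slightly careful and check that the intermediate averaging functors are perverse-exact in this rank-one geometry, which holds because the orbits involved are affine spaces.

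\textbf{Main obstacle.} The real work is part (1), and within it the genuinely nontrivial point is establishing perversity (not just the existence of a filtration on cohomology): one must show $\xi_s(\Delta_w)$ has no cohomology outside perverse degree $0$. In the $\K$-setting \cite{by} get this somewhat formally from the theory of tilting objects in the Whittaker category and the paradromic-Whittakerian duality; over $\O$ and $\F$ I cannot invoke that machinery wholesale, so I expect to have to redo the stalk-and-costalk estimate by hand on $\mathbb{P}^1$ — controlling $\iv_x^*\xi_s(\Delta_w)$ and $\iv_x^!\xi_s(\Delta_w)$ for $x \in \{e,s\}$ — and check that the answer is a free $\E$-module concentrated in the single degree forced by perversity. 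This is the one place where working over a non-field $\E$ genuinely bites, and it is why one wants the reduction-by-convolution so that the hard computation happens only once, on the rank-one group. (All of this, I emphasize, only needs the vanishing \eqref{eqn:cohom-AS} plus base change, so no assumption on $\ell$ beyond those already in force is required.)
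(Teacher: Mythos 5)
Your overall skeleton is the paper's: both arguments reduce, via $\Delta_w\cong\IC_e\star^B\Delta_w$ (resp.\ $\Delta_w\cong\Delta_s\star^B\Delta_{sw}$ when $sw<w$), the compatibility \eqref{eqn:xi-convolution}, and Lemma~\ref{lem:convolution-Delta-nabla}, to the rank-one objects $\xi_s(\Delta_e)$ and $\xi_s(\Delta_s)$. The difference is how that rank-one case is handled. The paper does not redo any stalk/costalk estimate: it invokes \eqref{eqn:xis-ICe}, i.e.\ $\xi_s(\IC_e)\cong\cT_s$, which is proved by the same formal adjunction argument as Lemma~\ref{lem:average-Pe} (computing $\Hom$'s to simples via Lemma~\ref{lem:av-psi-adjunction-s} and the vanishing $\Av^s_\psi(\IC_w)=0$ for $w\neq e$, then using that $\cT_s$ is the projective cover of $\IC_e$ on $\overline{\cB_s}$), with the case $\E=\O$ deduced from $\E=\F$ by Lemma~\ref{lem:Hom-proj-F}(2); and $\xi_s(\Delta_s)\cong\xi_s(\IC_e)$ follows from the triangle $\IC_e\to\Delta_s\to\IC_s$ together with $\xi_s(\IC_s)=0$. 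After that, the ``main obstacle'' you identify disappears: $\xi_s(\Delta_w)\cong\cT_s\star^B\Delta_w$ sits in a distinguished triangle with outer terms $\Delta_{sw}$ and $\Delta_w$, both perverse, so perversity and the standard filtration come for free, with no integral-coefficient stalk computation on $\mathbb{P}^1$ needed. Your by-hand route should also work, but it is strictly more labor than necessary given \eqref{eqn:xis-ICe}.

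One step of your plan fails as stated: deducing (2) from (1) ``formally'' by Verdier duality. Duality does swap $!$- and $*$-averaging, but precisely for that reason it intertwines $\xi_s=\Av^s_!\circ\Av^s_\psi$ with $\Av^s_*\circ\Av^s_{\psi^{-1}}$, which is a genuinely different functor (there is no isomorphism $\Av^s_!\cong\Av^s_*$ here; only the $\psi$-averagings coincide). So from (1) you only get that $\Av^s_*\Av^s_{\psi^{-1}}(\nabla_w)$ is perverse with a costandard filtration, not the assertion about $\xi_s(\nabla_w)$. The repair is cheap and is what the paper does: rerun the argument for (1) with costandard objects, using the costandard filtration $\nabla_s\hookrightarrow\cT_s\twoheadrightarrow\nabla_e$ (or $\nabla_e\hookrightarrow\cT_s\twoheadrightarrow\nabla_s$) of $\cT_s$, the isomorphism $\nabla_v\star^B\nabla_w\cong\nabla_{vw}$ from Lemma~\ref{lem:convolution-Delta-nabla}, and the triangle $\IC_s\to\nabla_s\to\IC_e$ with $\xi_s(\IC_s)=0$ for the case $sw<w$. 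With that adjustment, your part (3) is then exactly the paper's conclusion.
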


\begin{proof}
Let us prove (1). First we assume that $sw>w$. Then by \eqref{eqn:xis-ICe} and \eqref{eqn:xi-convolution} we have
\[
\xi_s(\Delta_w) = \xi_s(\IC_e \star^B \Delta_w) \cong \xi_s(\IC_e) \star^B \Delta_w \cong \cT_s \star^B \Delta_w.
\]
Now $\cT_s$ fits in an exact sequence $\Delta_s \hookrightarrow \cT_s \twoheadrightarrow \Delta_e$, and we conclude using Lemma~\ref{lem:convolution-Delta-nabla}.
If $sw<w$ then similarly we have
\[
\xi_s(\Delta_w) = \xi_s(\Delta_s \star^B \Delta_{sw}) \cong \xi_s(\Delta_s) \star^B \Delta_{sw}.
\]
Now we have an exact sequence $\IC_e \hookrightarrow \Delta_s \twoheadrightarrow \IC_s$, and as in the proof of Lemma~\ref{lem:average-Pe} we have $\xi_s(\IC_s)=0$. Hence
$\xi_s(\Delta_s) \cong \xi_s(\IC_e)$. Then one can conclude as in the first case.

The proof of (2) is similar to that of (1). Finally, (3) is an obvious consequence of (1) and (2).
\end{proof}

If $\cT$ is in $\Tilt_{(B)}(\cB,\E)$, we consider the map
\begin{equation}
\label{eqn:morphism-V-xis}
C \otimes_\E \mathbb{V}(\cT) \to \mathbb{V} \bigl( \xi_s(\cT) \bigr)
\end{equation}
sending $x \in C$ and $f \in \mathbb{V}(\cT)=\Hom(\cP_e,\cT)$ to the composition
\[
\cP_e \xrightarrow{\phi(x)} \cP_e \xrightarrow{\eqref{eqn:morphism-Pe-xis}} \xi_s (\cP_e) \xrightarrow{\xi_s(f)} \xi_s(\cT).
\]

\begin{prop}
\label{prop:V-xis}
The morphism \eqref{eqn:morphism-V-xis} factors through an isomorphism of functors
\[
C \otimes_{C_s} \mathbb{V}(-) \simto \mathbb{V}\circ \xi_s(-).
\]
\end{prop}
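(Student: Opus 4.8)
The plan is to deduce the statement over $\O$ and $\F$ from the case $\E=\K$, where it is precisely the isomorphism \eqref{eqn:V-xi-K} restricted to tilting perverse sheaves (using that $\phi_\K$ is an isomorphism, by Lemma~\ref{lem:monodromy-Pe}), keeping track throughout of the fact that the morphism \eqref{eqn:morphism-V-xis} and each of its constituents---the functors $\mathbb{V}$ and $\xi_s$, the algebra map $\phi$, and the morphism \eqref{eqn:morphism-Pe-xis}---commute with the functors $\K(-)$ and $\F(-)$.

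First I would check that \eqref{eqn:morphism-V-xis} factors through $C\otimes_{C_s}\mathbb{V}(\cT)$. For $c$ in the image of $S^s$ in $C$, $x\in C$, and $f\in\mathbb{V}(\cT)$, the obstruction is the difference of the images of $xc\otimes f$ and of $x\otimes(c\cdot f)$; it lies in $\mathbb{V}(\xi_s\cT)=\Hom(\cP_e,\xi_s\cT)$. Since $\xi_s\cT$ is tilting by Lemma~\ref{lem:xi-Tilt}(3), Lemma~\ref{lem:Hom-proj-F}(1) shows that for $\E=\O$ this module is free and embeds in its $\K$-version, where the obstruction vanishes by the case $\E=\K$; the case $\E=\F$ then follows by applying $\F\otimes_\O(-)$. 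This yields a $C$-linear natural transformation $g_\E\colon C\otimes_{C_s}\mathbb{V}(-)\to\mathbb{V}\circ\xi_s(-)$, compatible with extension of scalars.

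Next I would prove $g_\E$ is an isomorphism. A rank count shows that both sides are free $\E$-modules of rank $2\sum_v(\cT:\nabla_v)$: on the target one uses Lemma~\ref{lem:xi-Tilt}(2), which gives $(\xi_s\cT:\nabla_v)=(\cT:\nabla_v)+(\cT:\nabla_{sv})$, together with Corollary~\ref{cor:Pe}(1), which gives $\mathrm{rk}\,\Hom(\cP_e,\nabla_v)=1$; on the source one uses Lemma~\ref{lem:invariants}(5), which gives that $C$ is free of rank $2$ over $C_s$ (and for $\E=\O$ all of this is compatible with $\F(-)$ and $\K(-)$, via Lemma~\ref{lem:Hom-proj-F}(1) and Lemma~\ref{lem:invariants}). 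Since $g_\K$ is an isomorphism, $g_\O$ is injective with torsion cokernel, and since $\F\otimes_\O g_\O=g_\F$, it suffices to prove that $g_\F$ is surjective: then the cokernel of $g_\O$, which becomes the cokernel of $g_\F$ after $\F\otimes_\O(-)$, vanishes, so that $g_\O$---and hence also $g_\F$ and $g_\K$---is an isomorphism.

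The main obstacle is thus the surjectivity of $g_\F$, for which, unlike over $\K$, we cannot invoke the identification of $A=\End(\cP_e)$ with $C$. Since $\mathbb{V}$, $\xi_s$, and $g$ are additive and every tilting object over $\F$ is a direct summand of an iterated composition $\xi_{s_k}\cdots\xi_{s_1}(\IC_e)$ (by \eqref{eqn:xis-ICe}), I would reduce to checking that $g$ is an isomorphism on these ``Bott--Samelson'' tilting objects, by induction on $k$. Since the source of $g$ is generated over the local ring $C_\F$ by the image of $1\otimes\mathbb{V}(\cT)$, Nakayama reduces the surjectivity of $g$ to that of its reduction modulo $\fh$; the crucial input here is the triangle identity $\varepsilon_{\cP_e}\circ\eqref{eqn:morphism-Pe-xis}=\id_{\cP_e}$ for the adjunction $(\Av^s_!,\Av^s_\psi)$ (Lemma~\ref{lem:av-psi-adjunction-s}), where $\varepsilon$ denotes the counit $\Av^s_!\Av^s_\psi\to\id$, which together with the description of the adjunction morphism $\xi_s(\IC_e)\cong\cT_s\to\IC_e$ from after \eqref{eqn:xis-ICe} shows that $g$ carries a $C$-generator to a $C$-generator in the base case and propagates this through the induction (the delicate point being to control the $C$-module ``head'' through the operations $C\otimes_{C_s}(-)$, for which one uses the explicit structure of $C$ as a free $C_s$-module). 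Alternatively, one could adapt the quotient-functor argument of \S\ref{ss:reminder-by} (following \cite[\S2.2]{soergel}) to the three rings simultaneously, using that $\cP_e$ and $\cP_e^s$ are projective covers of simple objects in the relevant categories over $\F$ as well.
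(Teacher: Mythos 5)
The formal parts of your plan (the factorization of \eqref{eqn:morphism-V-xis} by embedding the $\O$-form into the $\K$-form, the rank count, and the reduction of everything to the surjectivity of $g_\F$) are sound and close to what the paper does. But the heart of the proposition is precisely that surjectivity over $\F$, and there your sketch has a genuine gap. Adjunction formalities cannot suffice: if one replaced $\phi$ by the composition $C \to C/\fh C = \F \to A$ (i.e.\ if the monodromy acted trivially), the triangle identity for $(\Av^s_!,\Av^s_\psi)$, the map \eqref{eqn:morphism-Pe-xis}, and all naturality statements would be unchanged, yet the image of $g$ would be spanned by $g(1 \otimes \mathbb{V}(\cT))$, of rank $\sum_v(\cT:\nabla_v)$ rather than $2\sum_v(\cT:\nabla_v)$. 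So any proof must establish that the monodromy acts nontrivially, and this is exactly the point your argument leaves to ``controlling the $C$-module head,'' i.e.\ assumes. Concretely, already in your base case $\cT = \IC_e$ the needed statement is that $\Hom(\cP_e,\cT_s) \cong \End(\cT_s)$ is a \emph{cyclic} $C$-module, which is equivalent to the surjectivity of $\mu_{\cT_s} \colon \hS \to \End(\cT_s)$; the paper proves this by showing that $\cT_s$ is not $B$-equivariant (using $\Ext^1_{\Perv_B(\cB,\F)}(\Delta_e,\Delta_s)=0$ and Remark~\ref{rmk:mu-equivariant}), so that some $\mu'_{\cT_s}(y)$ is unipotent but $\neq \id$ and, together with $\id$, spans the two-dimensional algebra $\End(\cT_s)$. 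No substitute for this geometric input appears in your proposal.

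Two further points. First, your induction on Bott--Samelson words does not actually reduce the problem: the inductive step for $\xi_s$ applied to $\cT(\us)$ is literally the full statement of the proposition for the tilting object $\cT(\us)$, not a consequence of the hypothesis for $\cT(\us)$ itself; the paper instead extends $\mathbb{V}$ to an exact functor $\widetilde{\mathbb{V}}$ on all of $\Perv_{(B)}(\cB,\F)$ (using Lemma~\ref{lem:xi-Tilt}) and applies the five lemma to reduce to the simple objects $\IC_w$, where the case $w \neq e$ is trivial and $w=e$ is handled by the monodromy argument above. Second, your proposed alternative of adapting the quotient-functor argument of \S\ref{ss:reminder-by} to $\F$ is circular: that argument rests on Lemma~\ref{lem:End-Pes-K}, whose identification $\End(\cP_e^s) \cong C_s$ comes from the characteristic-zero self-duality results of \cite{by} and is not available over $\F$ (indeed, producing such identifications modularly is essentially what Theorem~\ref{thm:equivalence-tilting} is for).
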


\begin{proof}
In case $\E=\K$, this result was proved in~\S\ref{ss:reminder-by}; see in particular~\eqref{eqn:V-xi-K}. As in the proof of Lemma \ref{lem:monodromy-Pe}, one can deduce (using Lemma~\ref{lem:invariants}$(4)$) that \eqref{eqn:morphism-V-xis} factors through a morphism of functors
\[
C \otimes_{C_s} \mathbb{V}(-) \to \mathbb{V}\circ \xi_s(-),
\]
first for $\E=\O$ and then for $\E=\F$.

Using again an extension-of-scalars argument, to prove that the induced morphism is an isomorphism for $\E=\O$ or $\F$ it is enough to treat the case $\E=\F$. So we assume $\E=\F$ from now on. In this case it follows from Lemma \ref{lem:xi-Tilt} that $\xi_s$ restricts to an exact endofunctor of $\Perv_{(B)}(\cB,\F)=\Perv_{U}(\cB,\F)$. One can also extend $\mathbb{V}$ to a functor $\widetilde{\mathbb{V}} \colon \Perv_{(B)}(\cB,\F) \to C\lh\Mod$, and morphism~\eqref{eqn:morphism-V-xis} to a morphism of functors 
\[
C \otimes_{C_s} \widetilde{\mathbb{V}}(-) \to \widetilde{\mathbb{V}}\circ \xi_s(-).
\]
(Indeed, since $\widetilde{\mathbb{V}}$ is exact it is enough to prove that the morphism analogous to~\eqref{eqn:morphism-V-xis}, where now $\cF$ is projective in $\Perv_{(B)}(\cB,\F)$, factors as claimed. This follows from the analogous statement when $\E=\O$, which itself follows from the case $\E=\K$.)
We will prove that the latter morphism is an isomorphism.
To prove this, by the five lemma it is enough to prove that our morphism is an isomorphism when applied to $\IC_w$ for any $w \in W$.

First consider the case $w \neq e$. Then $\widetilde{\mathbb{V}}(\IC_w)=0$, and $\widetilde{\mathbb{V}}(\xi_s(\IC_w))=0$. (Indeed, $\IC_w$ is the shifted pullback of a perverse sheaf on a minimal partial flag variety $\scP^t$ for some simple reflection $t$. It follows that $\xi_s(\IC_w)$ has the same property, and hence cannot have $\IC_e$ as a composition factor.) Our morphism is thus trivially an isomorphism in this case.

Now, consider the case $w=e$. Then we have $\widetilde{\mathbb{V}}(\IC_e) = \F$, and $\widetilde{\mathbb{V}}(\xi_s(\IC_e))=\widetilde{\mathbb{V}}(\cT_s)$ by \eqref{eqn:xis-ICe}. The vector spaces $C \otimes_{C_s} \widetilde{\mathbb{V}}(\IC_e)$ and $\widetilde{\mathbb{V}}(\cT_s)$ both have dimension $2$, so to prove that our morphism is an isomorphism it is enough to prove that it is surjective. The object $\cT_s$ is indecomposable and has $\IC_e$ as its head, so there exists
a surjection $g \colon \cP_e \twoheadrightarrow \cT_s$.
Composition with $g$ induces an isomorphism
\[
\End(\cT_s) \simto \Hom(\cP_e,\cT_s).
\]
(Indeed, both vector spaces have dimension $2$, and this map is injective because $g$ is surjective.) Using this and the fact that monodromy commutes with all morphisms (see~\S\ref{ss:monodromy}), we deduce that it is enough to prove that any endomorphism of $\cT_s$ can be written as a composition
\[
\cT_s \xrightarrow{\mu_{\cT_s}(x)} \cT_s \to \xi_s(\cT_s) \twoheadrightarrow \xi_s(\IC_e) \cong \cT_s
\]
for some $x \in \hS$. (Here the second morphism is defined using adjunction in a way similar to \eqref{eqn:morphism-Pe-xis}, and the third one is induced by the projection $\cT_s \twoheadrightarrow \IC_e$.) This claim would follow if we prove the following properties:
\begin{enumerate}
\item
the composition $\cT_s \to \xi_s(\cT_s) \to \xi_s(\IC_e) \cong \cT_s$ is an isomorphism.
\item
the morphism $\mu_{\cT_s} \colon \hS \to \End(\cT_s)$
is surjective. 
\end{enumerate}
Property (1) follows from general results on adjunction. To prove (2) we observe that the perverse sheaf
$\cT_s$ is not $B$-equivariant. (Indeed one can easily check that
\[
\Ext^1_{\Perv_B(\cB,\F)}(\Delta_e,\Delta_s) = \Hom_{\Db_B(\cB,\F)}(\Delta_e, \Delta_s[1]) = 0,
\]
which implies our claim.) Hence there exists $y \in \bfY$ such that $\mu'_{\cT_s}(y)$ is unipotent but not equal to $\id$, see Remark~\ref{rmk:mu-equivariant}. Then the image of $\mu_{\cT_s}$ contains both $\id$ and $\mu_{\cT_s}'(y)-1$, so it is the whole of $\End(\cT_s)$.
\end{proof}

\subsection{Proof of Theorem \ref{thm:equivalence-tilting}}
\label{ss:equivalence-V}

Recall the $C$-modules $\mathsf{BS}(\us)$ and the category $\mathcal{S}_\E$ introduced in~\S\ref{ss:BS-modules}.

If $\us=(s_1, \cdots, s_i)$ is a sequence of simple reflections we set
\[
\cT(\us):= \xi_{s_i} \cdots \xi_{s_1}(\IC_e).
\]
By Lemma \ref{lem:xi-Tilt}, this object is a tilting perverse sheaf, i.e.~an object of $\Tilt_{(B)}(\cB,\E)$.
Moreover, by Lemma \ref{prop:V-xis} we have
\begin{equation}
\label{eqn:V-BS}
\mathbb{V}(\cT(\us)) \cong \mathsf{BS}(\us).
\end{equation}
Since any indecomposable object in $\Tilt_{(B)}(\cB,\E)$ appears as a direct summand of $\cT(\us)$ for some $\us$ (as follows from Lemma~\ref{lem:xi-Tilt}),
we deduce the following proposition.

\begin{prop}
\label{prop:V-S}
The functor $\mathbb{V}$ factors through a functor $\Tilt_{(B)}(\cB,\E) \to \mathcal{S}_\E$ (which will be denoted similarly).
\end{prop}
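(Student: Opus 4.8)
The plan is to deduce Proposition~\ref{prop:V-S} formally from the isomorphism~\eqref{eqn:V-BS} together with the observation, recorded just before the statement, that every indecomposable object of $\Tilt_{(B)}(\cB,\E)$ occurs as a direct summand of some Bott--Samelson tilting object $\cT(\us)$.

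First I would recall the shape of the data: $\mathbb{V}=\iota\circ\mathbb{V}'$ is an additive functor $\Tilt_{(B)}(\cB,\E)\to C\lh\underline{\mathsf{mod}}$, while $\cS_\E$ is by definition the strictly full subcategory of $C$-modules generated, under direct sums and direct summands, by the modules $\mathsf{BS}(\us)$. From~\eqref{eqn:V-BS} one has $\mathbb{V}(\cT(\us))\cong\mathsf{BS}(\us)$ for every sequence $\us$ of simple reflections, so $\mathbb{V}$ already sends each $\cT(\us)$ into $\cS_\E$.

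Next, given an arbitrary $\cT$ in $\Tilt_{(B)}(\cB,\E)$, I would invoke the Krull--Schmidt property of this category (see~\S\ref{ss:background-tilt}) to write $\cT$ as a finite direct sum of indecomposable tilting objects; by the remark preceding the proposition, each of these is a summand of some $\cT(\us)$, so $\cT$ is a direct summand of a finite direct sum of objects of the form $\cT(\us)$. Applying the additive functor $\mathbb{V}$ and using~\eqref{eqn:V-BS} once more, $\mathbb{V}(\cT)$ becomes a direct summand of the corresponding direct sum of modules $\mathsf{BS}(\us)$, hence lies in $\cS_\E$ since that category is stable under direct sums and direct summands. Thus $\mathbb{V}$ takes all of its values in the full subcategory $\cS_\E$, which is exactly the statement that $\mathbb{V}$ factors through a functor $\Tilt_{(B)}(\cB,\E)\to\cS_\E$.

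I do not expect a genuine obstacle here: the substance has already been absorbed into Lemma~\ref{lem:xi-Tilt} and Proposition~\ref{prop:V-xis} (the latter furnishing~\eqref{eqn:V-BS}). The only point one might wish to expand is the claim that the objects $\cT(\us)$ exhaust all indecomposable tiltings up to summands; spelled out, this runs by induction on $\ell(w)$, showing that for a reduced expression $\us$ of $w$ the object $\cT(\us)$ satisfies $(\cT(\us):\nabla_w)=1$ and $(\cT(\us):\nabla_v)=0$ unless $v\le w$ --- fed by the multiplicity relation $(\xi_s\cT:\nabla_v)=(\cT:\nabla_v)+(\cT:\nabla_{sv})$ coming from Lemma~\ref{lem:xi-Tilt}(2) together with standard facts about the Bruhat order --- so that $\cT_w$ occurs in $\cT(\us)$ while every other indecomposable summand is some $\cT_v$ with $v<w$.
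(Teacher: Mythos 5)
Your argument is correct and is essentially the paper's own proof: the statement is deduced formally from the isomorphism $\mathbb{V}(\cT(\us))\cong\mathsf{BS}(\us)$ of~\eqref{eqn:V-BS}, the fact (via Lemma~\ref{lem:xi-Tilt}) that every indecomposable tilting object is a direct summand of some $\cT(\us)$, and the stability of $\cS_\E$ under direct sums and summands. Your closing sketch of the induction on $\ell(w)$ just makes explicit the paper's parenthetical appeal to Lemma~\ref{lem:xi-Tilt}.
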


Finally we are in a position to finish the proof of Theorem~\ref{thm:equivalence-tilting}.

\begin{proof}[Proof of Theorem~{\rm \ref{thm:equivalence-tilting}}]
The compatibility of $\mathbb{V}$ with $\F(-)$ and $\K(-)$ was noted in~\S\ref{ss:functor-V}.

Given \eqref{eqn:V-BS}, as in the proof of Theorem \ref{thm:equivalence-parity}, to prove the equivalence it is enough to prove that $\mathbb{V}$ is fully faithful. If $\E=\K$, this claim is known (see Lemma~\ref{lem:iota-V'}), so we only consider the case $\E=\O$ or $\F$. Let $\cT_1,\cT_2 \in \Tilt_{(B)}(\cB,\O)$, and set $\cT_i^\F:=\F(\cT_i)$. Consider the following commutative diagram:
\[
\xymatrix{
\F \Hom \bigl( \cT_1,\cT_2 \bigr) \ar[d]^-{\wr} \ar[r]^-{a} & \F \Hom_{A_\O} \bigl( \mathbb{V}'(\cT_1),\mathbb{V}'(\cT_2) \bigr) \ar[d]^-{b} \ar[r]^-{\sim} & \F \Hom_{C_\O} \bigl(\mathbb{V}(\cT_1), \mathbb{V}(\cT_2) \bigr) \ar[d]_-{\wr} \\
\Hom \bigl( \cT_1^\F,\cT_2^\F \bigr) \ar[r]^-{\sim} & \Hom_{A_\F} \bigl( \mathbb{V}'(\cT_1^\F),\mathbb{V}'(\cT_2^\F) \bigr) \ar[r]^-{c} &  \Hom_{C_\F} \bigl(\mathbb{V}(\cT_1^\F), \mathbb{V}(\cT_2^\F) \bigr).
}
\]
Here we have simplified $\F \otimes_\O \Hom(-,-)$ to $\F\Hom(-,-)$. On each line, the first morphism is induced by $\mathbb{V}'$, and the second one by $\iota$ (so that the morphism from left to right is induced by $\mathbb{V}$). The invertibility of the second morphism on the first line (resp.~of the first morphism on the second line) follows from Lemma \ref{lem:iota-V'}. The invertibility of the rightmost vertical morphism follows from Corollary \ref{cor:morphisms-BS}, in view of Proposition \ref{prop:V-S}. Finally, the invertibility of the leftmost vertical morphism follows from Proposition \ref{prop:properties-tilting}$(2)$.

Using the left part of the diagram we obtain that $b$ is surjective, and using the right part we obtain that $b$ is injective. Hence it is an isomorphism, and then $a$ and $c$ are also isomorphisms. This already proves fullness and faithfulness in case $\E=\F$. To prove it in case $\E=\O$, we simply remark that the morphism
\[
\Hom \bigl( \cT_1,\cT_2 \bigr) \to \Hom_{C_\O} \bigl(\mathbb{V}(\cT_1), \mathbb{V}(\cT_2) \bigr)
\]
induced by $\mathbb{V}$ is a morphism between free $\O$-modules of finite rank (see Proposition \ref{prop:properties-tilting}$(2)$ for the left-hand side) which becomes invertible after applying $\F \otimes_\O (-)$; hence it must be an isomorphism.
\end{proof}

\subsection{Complement: comparison of $\cP_e$ and $\cT_{w_0}$}
\label{ss:Pe-Tw0}

For completeness, we conclude this section with a (geometric) proof of the fact that $\cP_e \cong \cT_{w_0}$. This fact is not used in the paper (except in the case $G=\GL_2(\Fpb)$ in~\S\ref{ss:P_e^s}), and is well known when $\E=\K$.

\begin{lem}\label{lem:delta-map}
Assume that $\E=\K$ or $\F$.
For any $w \in W$, we have:
\begin{enumerate}
\item $\dim \Hom(\Delta_w, \Delta_{w_0})=1$, and every nonzero map $\Delta_w\to \Delta_{w_0}$ is injective;\label{it:delta-map-delta}
\item $\dim \Hom(\nabla_{w_0}, \nabla_w)=1$, and every nonzero map $\nabla_{w_0} \to \nabla_w$ is surjective.\label{it:delta-map-nabla}
\end{enumerate}
\end{lem}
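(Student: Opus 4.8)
The plan is to prove part~$(1)$ directly and then deduce part~$(2)$ from it by Verdier duality. Since $\E$ is a field, $\D$ is a contravariant autoequivalence of $\Perv_{(B)}(\cB,\E)$ sending $\Delta_v$ to $\nabla_v$ for every $v$ (because each $\cB_v$ is smooth) and interchanging monomorphisms and epimorphisms; applying $\D$ to~$(1)$ therefore gives $\dim\Hom(\nabla_{w_0},\nabla_w)=\dim\Hom(\Delta_w,\Delta_{w_0})=1$, with every nonzero map $\nabla_{w_0}\to\nabla_w$ surjective, which is~$(2)$.

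To compute $\dim\Hom(\Delta_w,\Delta_{w_0})$ I would pass to the convolution product. Since $\ell(w_0 w^{-1})=\ell(w_0)-\ell(w)$ (as $\ell(w_0 u)=\ell(w_0)-\ell(u)$ for all $u\in W$), Lemma~\ref{lem:convolution-Delta-nabla} gives $\Delta_{w_0}\cong\Delta_{w_0 w^{-1}}\star^B\Delta_w$, and trivially $\Delta_w\cong\Delta_e\star^B\Delta_w$. Moreover, as is well known (see e.g.~\cite[\S 2.2]{bbm} or~\cite[\S 3.2]{by}), the functor $(-)\star^B\Delta_w$ is an autoequivalence of $\Db_{(B)}(\cB,\E)$, with quasi-inverse $(-)\star^B\nabla_{w^{-1}}$ (this follows from $\Delta_w\star^B\nabla_{w^{-1}}\cong\delta$, the skyscraper sheaf at the base point). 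Hence
\[
\Hom(\Delta_w,\Delta_{w_0})\;\cong\;\Hom\bigl(\Delta_e\star^B\Delta_w,\ \Delta_{w_0 w^{-1}}\star^B\Delta_w\bigr)\;\cong\;\Hom(\Delta_e,\Delta_{w_0 w^{-1}}).
\]
Since $\Delta_e=\IC_e$ is simple, this last space equals $\Hom\bigl(\IC_e,\mathrm{soc}\,\Delta_{w_0 w^{-1}}\bigr)$, which is one-dimensional by Lemma~\ref{lem:delta-socle}$(1)$ (which says $\mathrm{soc}\,\Delta_{w_0 w^{-1}}\cong\IC_e$).

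For the injectivity assertion, let $f\colon\Delta_w\to\Delta_{w_0}$ be nonzero. By Lemma~\ref{lem:delta-socle}$(1)$ we have $\mathrm{soc}\,\Delta_w\cong\IC_e$ and $[\Delta_w:\IC_e]=1$, so $\IC_e$ is the unique simple subobject of $\Delta_w$; if $f$ were not injective, then $\ker f$ would contain this subobject, so $f$ would factor through $\Delta_w/\mathrm{soc}\,\Delta_w$. But $[\Delta_w/\mathrm{soc}\,\Delta_w:\IC_e]=0$, whereas every nonzero subobject of $\Delta_{w_0}$ contains $\mathrm{soc}\,\Delta_{w_0}\cong\IC_e$ (again by Lemma~\ref{lem:delta-socle}$(1)$); hence $\Hom(\Delta_w/\mathrm{soc}\,\Delta_w,\Delta_{w_0})=0$ and $f=0$, a contradiction. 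Thus every nonzero map $\Delta_w\to\Delta_{w_0}$ is injective.

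All the substantive content here sits in Lemma~\ref{lem:delta-socle} and in the standard formal properties of the convolution product, so I do not anticipate a genuine obstacle; the one point that demands a little care is to keep straight the two sides of $\star^B$ --- the left factor lying in $\Db_{(B)}(\cB,\E)$ and the right factor in $\Db_B(\cB,\E)$ --- so that $(-)\star^B\Delta_w$ is invoked as a self-equivalence of the \emph{non-equivariant} category $\Db_{(B)}(\cB,\E)$.
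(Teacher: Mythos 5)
Your proposal is correct and follows essentially the paper's own argument: part (1) is obtained, exactly as in the paper, from the equivalence $(-)\star^B\Delta_w$ (with quasi-inverse $(-)\star^B\nabla_{w^{-1}}$) together with Lemma~\ref{lem:convolution-Delta-nabla} and Lemma~\ref{lem:delta-socle}, and you merely spell out the injectivity step that the paper leaves implicit in ``we deduce the first claim.'' The only cosmetic difference is that you deduce (2) from (1) by Verdier duality (using $\D\Delta_v\cong\nabla_v$), where the paper instead says the proof of (2) is ``similar'' (i.e.\ a parallel argument with heads in place of socles); both routes are standard and equally valid.
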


\begin{proof}
We only prove~$(1)$; the proof of $(2)$ is similar. It is well known (see e.g.~\cite[\S 2.2]{bbm}) that the functor
\[
(-) \star^B \Delta_w \colon \Db_{(B)}(\cB,\E) \to \Db_{(B)}(\cB,\E)
\]
is an equivalence of categories, with inverse $(-) \star^B \nabla_{w^{-1}}$. Hence we have
\[
\Hom(\Delta_e, \Delta_{w_0 w^{-1}}) \cong \Hom(\Delta_e \star^B \Delta_w, \Delta_{w_0 w^{-1}} \star^B \Delta_w) \cong \Hom(\Delta_w, \Delta_{w_0})
\]
(see Lemma \ref{lem:convolution-Delta-nabla}).
We deduce the first claim using Lemma~\ref{lem:delta-socle}. 
\end{proof}

\begin{lem}\label{lem:proj-std-mult}
Assume that $\E=\K$ or $\F$.
For any $w \in W$, we have:
\[
(\cT_{w_0} : \Delta_w) = (\cT_{w_0} : \nabla_w) = 1.
\]
\end{lem}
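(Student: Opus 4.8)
The plan is to deduce both multiplicities from the already-known fact about the projective cover $\cP_e$ via the Radon transform, and then to pass from the costandard count to the standard one using Verdier duality.

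First I would feed $w = w_0$ into the identity~\eqref{eqn:multiplicities-T-P}. Since $w_0 w_0 = e$, this gives $(\cT_{w_0} : \nabla_v) = (\cP_e : \Delta_{v w_0})$ for every $v \in W$. By Corollary~\ref{cor:Pe}(1) the right-hand side is $1$, and since $v \mapsto v w_0$ is a bijection of $W$ we conclude $(\cT_{w_0} : \nabla_v) = 1$ for all $v \in W$.

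For the standard multiplicities I would use Verdier duality. Over the field $\E$, the functor $\D$ is a contravariant self-equivalence of $\Perv_{(B)}(\cB,\E)$ with $\D\Delta_v \cong \nabla_v$ and $\D\nabla_v \cong \Delta_v$; it therefore preserves the classes of tilting objects and of indecomposable objects, and it turns a costandard filtration of a tilting object $\cT$ into a standard filtration of $\D\cT$ with the same subquotients (up to applying $\D$), so that $(\D\cT : \Delta_v) = (\cT : \nabla_v)$ and $(\D\cT : \nabla_v) = (\cT : \Delta_v)$ for all $v$. Applying the first identity to $\cT = \cT_{w_0}$ together with the previous step gives $(\D\cT_{w_0} : \Delta_v) = 1$ for all $v$, hence in particular $(\D\cT_{w_0} : \Delta_{w_0}) \neq 0$; as $w_0$ is maximal for the Bruhat order and the indecomposable tilting object $\cT_x$ satisfies $(\cT_x : \Delta_v) \neq 0 \Rightarrow v \leq x$, this forces $\D\cT_{w_0} \cong \cT_{w_0}$. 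The second identity then yields $(\cT_{w_0} : \Delta_v) = (\D\cT_{w_0} : \nabla_v) = (\cT_{w_0} : \nabla_v) = 1$ for all $v \in W$.

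There is no genuine obstacle here, since everything rests on results already in place: Corollary~\ref{cor:Pe}, the identity~\eqref{eqn:multiplicities-T-P}, and the elementary properties of the highest weight category $\Perv_{(B)}(\cB,\E)$ recalled in~\S\ref{ss:background-tilt}. If one would rather not argue through the self-duality $\D\cT_{w_0} \cong \cT_{w_0}$, the standard count can instead be obtained in the Grothendieck group $K_0 := K_0(\Perv_{(B)}(\cB,\E))$: one has $[\Delta_v] = [\nabla_v]$ for every $v$ (both classes are unitriangular combinations of the $[\IC_u]$ and are exchanged by $\D$, which fixes each $[\IC_u]$), whence $[\cT_{w_0}] = \sum_{v \in W} (\cT_{w_0} : \nabla_v)[\nabla_v] = \sum_{v \in W} [\Delta_v]$; since $\{[\Delta_v]\}_{v \in W}$ is a $\Z$-basis of $K_0$, all the coefficients $(\cT_{w_0} : \Delta_v)$ are equal to $1$.
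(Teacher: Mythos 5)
Your argument is correct and is essentially the paper's proof: the costandard count comes from~\eqref{eqn:multiplicities-T-P} together with Corollary~\ref{cor:Pe}(1), and the standard count comes from Verdier duality, the paper simply invoking the self-duality $\D\cT_{w_0}\cong\cT_{w_0}$ as known where you derive it (or bypass it via the Grothendieck-group identity $[\Delta_v]=[\nabla_v]$). Both of your justifications of that step are valid, so this matches the intended argument.
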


\begin{proof}
The first equality follows from the fact that $\cT_{w_0}$ is self-dual under Verdier duality. The second equality follows from Corollary \ref{cor:Pe}(1) and \eqref{eqn:multiplicities-T-P}.
\end{proof}

\begin{prop}
\label{prop:Pe-Tw0}
We have $\cT_{w_0} \cong \cP_e$.
\end{prop}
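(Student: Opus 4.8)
The plan is to treat field coefficients first — the case $\E=\K$ is classical, and the same argument works for $\E=\K$ or $\F$ — and then to deduce the case $\E=\O$ by reduction modulo $\ell$. For field coefficients, $\Perv_{(B)}(\cB,\E)$ is a highest weight category over a field, so in particular it has projective covers, is Krull--Schmidt, and its objects have finite composition length; I will use this freely. The guiding idea is that $\cP_e$ should be the projective cover of $\cT_{w_0}$, and that the two objects then have the same length.

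So assume $\E=\K$ or $\F$. The crucial point is to identify the socle and head of $\cT_{w_0}$. Choose a $\Delta$-filtration of $\cT_{w_0}$. Since $w_0$ is maximal for the Bruhat order we have $\Ext^1(\Delta_{w_0},\Delta_v)=0$ for all $v$, so a standard rearrangement, using $(\cT_{w_0}:\Delta_{w_0})=1$ (Lemma~\ref{lem:proj-std-mult}), produces a $\Delta$-filtration whose successive subquotients are the $\Delta_w$, $w\in W$, each appearing exactly once. Given a simple submodule $N\subseteq\cT_{w_0}$, pushing it into the first step of the filtration in which it appears embeds $N$ into some $\Delta_w$, hence into its socle, which is $\IC_e$ by Lemma~\ref{lem:delta-socle}; thus the socle of $\cT_{w_0}$ is a sum of copies of $\IC_e$. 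Applying $\Hom(\IC_e,-)$ to a $\nabla$-filtration of $\cT_{w_0}$ and using $\Hom(\IC_e,\nabla_w)=0$ for $w\neq e$ (the socle of $\nabla_w$ is $\IC_w$) together with $(\cT_{w_0}:\nabla_e)=1$ (Lemma~\ref{lem:proj-std-mult} again), we get $\dim_\E\Hom(\IC_e,\cT_{w_0})\leq 1$, so the socle of $\cT_{w_0}$ is exactly $\IC_e$. By Verdier duality and the self-duality $\D\cT_{w_0}\cong\cT_{w_0}$, the head of $\cT_{w_0}$ is then also $\IC_e$.

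Since $\cT_{w_0}$ has simple head $\IC_e$, its projective cover is the projective cover $\cP_e$ of $\IC_e$; in particular there is a surjection $\cP_e\twoheadrightarrow\cT_{w_0}$. It remains to see this is an isomorphism, for which I compare composition lengths. A $\Delta$-filtration refines to a composition series, so Corollary~\ref{cor:Pe}(1) gives $\mathrm{length}(\cP_e)=\sum_{v,w\in W}[\Delta_w:\IC_v]$, while $(\cT_{w_0}:\Delta_w)=1$ for all $w$ (Lemma~\ref{lem:proj-std-mult}) gives the same value for $\mathrm{length}(\cT_{w_0})$. A surjection between objects of equal finite length is an isomorphism, whence $\cP_e\cong\cT_{w_0}$ for $\E=\K$ or $\F$.

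Finally, for $\E=\O$: by Corollary~\ref{cor:Pe}(2) we have $\F(\cP_e^\O)\cong\cP_e^\F$, and by Proposition~\ref{prop:properties-tilting} we have $\F(\cT_{w_0}^\O)\cong\cT_{w_0}^\F$, which is in particular perverse; combining these with the field case gives $\F(\cP_e^\O)\cong\F(\cT_{w_0}^\O)$, and then Lemma~\ref{lem:Hom-proj-F}(2) (applicable since $\cP_e^\O$ is projective) yields $\cP_e^\O\cong\cT_{w_0}^\O$. The only step in this plan that is not pure bookkeeping is the computation of the socle and head of $\cT_{w_0}$; once that is done, everything else follows from the universal property of projective covers, a length count, and Verdier duality, and the integral case is a routine reduction modulo $\ell$.
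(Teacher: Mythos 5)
Your proof is correct and follows essentially the same route as the paper: reduce to field coefficients via Lemma~\ref{lem:Hom-proj-F}(2), show $\cT_{w_0}$ has simple head $\IC_e$ so that it is a quotient of $\cP_e$, and conclude by the length count coming from Corollary~\ref{cor:Pe}(1) and Lemma~\ref{lem:proj-std-mult}. The only (harmless) divergence is in how the simple head is obtained: you compute the socle from a standard filtration via Lemma~\ref{lem:delta-socle} and then apply Verdier self-duality, whereas the paper argues directly on the head by showing every nonzero map $\cT_{w_0}\to\nabla_w$ is surjective (using Lemma~\ref{lem:delta-map}), so that $\Hom(\cT_{w_0},\IC_w)=0$ for $w\neq e$.
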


\begin{proof}
By Lemma~\ref{lem:Hom-proj-F}$(2)$ it is enough to prove the claim when $\E=\K$ or $\F$, which we will assume in the proof.
Lemma \ref{lem:proj-std-mult} implies that
\begin{equation}\label{eqn:dim-tw0-nabla}
\dim \Hom(\cT_{w_0}, \nabla_w) = 1
\end{equation}
for all $w \in W$. 
We claim that 
\begin{equation}
\label{eqn:tw0-nabla-surj}
\text{any nonzero map $\cT_{w_0} \to \nabla_w$ is surjective.} 
\end{equation}
Indeed, in the special case $w = w_0$ this is clear.  Otherwise, it follows from~\eqref{eqn:dim-tw0-nabla} and Lemma~\ref{lem:delta-map}\eqref{it:delta-map-nabla}.

Next, we claim that
\begin{equation}\label{eqn:tw0-icw}
\Hom(\cT_{w_0}, \IC_w) = 0 \qquad\text{if $w \ne e$.}
\end{equation}
Indeed, if there were a nonzero map $\cT_{w_0} \to \IC_w$, we could compose it with the inclusion $\IC_w \to \nabla_w$ to get a nonzero, nonsurjective map $\cT_{w_0} \to \nabla_w$, contradicting~\eqref{eqn:tw0-nabla-surj}.

On the other hand, we know from~\eqref{eqn:dim-tw0-nabla} that $\dim \Hom(\cT_{w_0}, \IC_e) = 1$.  Combining this with~\eqref{eqn:tw0-icw}, we see that $\cT_{w_0}$ has a unique simple quotient, isomorphic to $\IC_e$.  It must therefore be isomorphic to a quotient of $\cP_e$. But Corollary~\ref{cor:Pe} and Lemma~\ref{lem:proj-std-mult} imply that $\cT_{w_0}$ and $\cP_e$ have the same length, so $\cT_{w_0} \cong \cP_e$.
\end{proof}

\section{Proof of Theorem \ref{thm:main}}
\label{sec:proof}

In this section we finish the proof of Theorem \ref{thm:main}.  We retain the conventions of~\S\ref{ss:new-notation}, and we assume from now on that $\#\F > 2$.  In particular, we continue to assume that $G$ is a product of groups isomorphic either to $\GL_n(\C)$ or to an adjoint simple group that is not of type $\mathbf{A}$, so that the results of Sections~\ref{sec:soergel-modules}--\ref{sec:tilting-V} are available.  As explained at the beginning of Section~\ref{sec:soergel-modules}, it suffices to prove Theorem~\ref{thm:main} for groups of this form.

\subsection{Construction of $\nu$}
\label{ss:nu}

We define the functor $\nu$ as the composition:
\[
\Parity_{(\Bv)}(\cBv,\E) \xrightarrow[\sim]{{\rm Thm}.~\ref{thm:equivalence-parity}} \cS^\gr \xrightarrow{\For} \cS \xrightarrow[\sim]{{\rm Thm}.~\ref{thm:equivalence-tilting}} \Tilt_{(B)}(\cB,\E),
\]
and the isomorphism $\varepsilon$ in the obvious way.
It is clear by construction that this functor satisfies condition $(1)$ of Theorem \ref{thm:main}. Condition $(4)$ follows from the similar properties of the functors $\mathbb{H}$ and $\mathbb{V}$ proved in Theorems \ref{thm:equivalence-parity} and \ref{thm:equivalence-tilting}, respectively.

Let us prove now that $\nu$ also satisfies condition $(2)$. In fact, by definition we have $\mathbb{H}(\cEv_w)=D_w^\gr$ and $D_w=\For(D_w^\gr)$. Hence it is enough to check that $\mathbb{V}(\cT_w) \cong D_{w^{-1}}$. However, using Lemma~\ref{lem:xi-Tilt} one can check that, if $w=s_1 \cdots s_{\ell(w)}$ is any reduced decomposition of $w$, then $\cT_w$ is characterized by the fact that it appears as a direct summand of $\cT(s_{\ell(w)}, \ldots, s_{1})$, but does not appear as a direct summand of any $\cT(\us)$ where $\us$ is a sequence of simple reflections of length strictly less than $\ell(w)$. Since $D_{w^{-1}}$ admits a similar characterization in terms of modules $\mathsf{BS}(\us)$ (see~\S\ref{ss:modular-BS}), we conclude using~\eqref{eqn:V-BS}.

\subsection{Proof of condition $(3)$}

We will deduce condition $(3)$ of Theorem \ref{thm:main} from $(1)$ and $(2)$. In fact, it is enough to prove the formula when $\cEv=\cEv_{w^{-1}}$ for some $w \in W$. Then by $(2)$ the formula reduces to
\begin{equation}
\label{eqn:multiplicity-T-rk}
(\cT_w : \nabla_v) = \mathrm{rk}_\E \bigl( \mathbb{H}^\bullet(\cBv_{v^{-1}}, \iv_{v^{-1}}^* \cEv_{w^{-1}}) \bigr).
\end{equation}

We first note that for $v,w \in W$ we have
\[
\mathrm{rk}_\E \Hom(\cT_v, \cT_w) = \sum_{u \in W \atop u \leq v, \, u \leq w} (\cT_v : \Delta_u) \cdot (\cT_w : \nabla_u) = \sum_{u \in W \atop u \leq v, \, u \leq w} (\cT_v : \nabla_u) \cdot (\cT_w : \nabla_u).
\]
(In the second equality we have used the Verdier self-duality of $\cT_v$.)
This formula allows us to compute the numbers $(\cT_w : \nabla_u)$ by induction if one knows the ranks of the $\Hom$-spaces, using the formula
\begin{equation}
\label{eqn:multiplicities-tilting}
(\cT_w : \nabla_u) = \mathrm{rk}_\E \Hom(\cT_u, \cT_w) - \sum_{x<u} (\cT_u : \nabla_x) \cdot (\cT_w : \nabla_x)
\end{equation}
for $u \leq w$.

We have a similar formula for parity sheaves. In fact from \cite[Proposition 2.6]{jmw} we deduce
\begin{multline*}
\mathrm{rk}_\E \Hom^\bullet(\cEv_{v^{-1}}, \cEv_{w^{-1}}) = \sum_{u \in W \atop u \leq v, \, u \leq w} \mathrm{rk}_\E \mathbb{H}^\bullet(\cBv_{u^{-1}}, \iv_{u^{-1}}^* \cEv_{v^{-1}}) \cdot \mathrm{rk}_\E \mathbb{H}^\bullet(\cBv_{u^{-1}}, \iv_{u^{-1}}^! \cEv_{w^{-1}}) \\
= \sum_{u \in W \atop u \leq v, \, u \leq w} \mathrm{rk}_\E \mathbb{H}^\bullet(\cBv_{u^{-1}}, \iv_{u^{-1}}^* \cEv_{v^{-1}}) \cdot \mathrm{rk}_\E \mathbb{H}^\bullet(\cBv_{u^{-1}}, \iv_{u^{-1}}^* \cEv_{w^{-1}}).
\end{multline*}
(Here again, the second equality uses Verdier self-duality of $\cEv_{w^{-1}}$. We also use the fact that $\cEv_{v^{-1}}$, resp.~$\cEv_{w^{-1}}$, is supported on the closure of $\cBv_{v^{-1}}$, resp.~$\cBv_{w^{-1}}$.) This formula gives rise to an inductive way of computing the ranks of the cohomology groups of stalks, using the formula
\begin{multline}
\label{eqn:rank-fibers}
\mathrm{rk}_\E \mathbb{H}^\bullet(\cBv_{u^{-1}}, \iv_{u^{-1}}^* \cEv_{w^{-1}}) = \mathrm{rk}_\E \Hom^\bullet(\cEv_{u^{-1}}, \cEv_{w^{-1}}) \\
- \sum_{x<u} \mathrm{rk}_\E \mathbb{H}^\bullet(\cBv_{x^{-1}}, \iv_{x^{-1}}^* \cEv_{u^{-1}}) \cdot \mathrm{rk}_\E \mathbb{H}^\bullet(\cBv_{x^{-1}}, \iv_{x^{-1}}^* \cEv_{w^{-1}}).
\end{multline}
if $u \leq w$.

Comparing formulas \eqref{eqn:multiplicities-tilting} and \eqref{eqn:rank-fibers} and using $(1)$, one easily proves \eqref{eqn:multiplicity-T-rk} by induction on $v$.

\appendix

\section{(Twisted) equivariant derived categories\\ for acyclic groups}
\label{sec:equivariant-Db}

\subsection{Definitions}

In this section $X$ is either a complex algebraic variety equipped with the classical topology (in which case $\bk$ is an arbitrary noetherian commutative ring of finite global dimension) or a variety over $\Fpb$ equipped with the {\'e}tale topology (in which case $\bk$ is a finite extension of $\Ql$, or its ring of integers, or a finite field of characteristic $\ell$, with $\ell \neq p$). We denote by $\Dbc(X,\bk)$ the constructible derived category of $\bk$-sheaves on $X$. We assume that an algebraic group $V$ (either over $\C$ or over $\Fpb$) which satisfies
\begin{equation}
\label{eqn:V-acyclic}
\mathbb{H}^\bullet(V;\bk)=\bk
\end{equation}
acts on $X$, with action morphism $a \colon V \times X \to X$. (This condition implies in particular that $V$ is connected.)

We let $\cX$ be a rank-one local system on $V$ which is a \emph{multiplicative sheaf}, i.e.~which is endowed with an isomorphism
\begin{equation}
\label{eqn:X-multiplicative}
m^*\cX \simto \cX \boxtimes \cX
\end{equation}
(where $m \colon V \times V \to V$ is the multiplication map on $V$) satisfying the obvious associativity condition.

\begin{defn}
A \emph{$(V,\cX)$-equivariant complex} on $X$ is a pair $(\cF,\beta)$ where $\cF \in \Dbc(X,\bk)$ and $\beta$ is an isomorphism $a^*\cF \simto \cX \boxtimes \cF$ satisfying the usual cocycle condition.  A \emph{morphism} of $(V,\cX)$-equivariant complexes is a morphism in $\Dbc(X,\bk)$ that ``commutes with $\beta$'' in the obvious sense.  The category of $(V,\cX)$-equivariant complexes on $X$ is denoted $\Db_{V,\cX}(X,\bk)$.
\end{defn}

In the case where $\cX=\underline{\bk}_V$, we abbreviate $\Db_{V,\cX}(X,\bk)$ to $\Db_{V}(X,\bk)$. (We will see in Remark \ref{rmk:equiv-BL} below that the category $\Db_{V}(X,\bk)$ is equivalent to the $V$-equivariant derived category of $X$ in the sense of Bernstein--Lunts~\cite{bl}.) 

We let
\[
\For \colon \Db_{V,\cX}(X,\bk) \to \Dbc(X,\bk)
\]
be the forgetful functor.  

\begin{rmk}
\label{rmk:Dbequ-trivial}
Consider the case $X=V \times Y$, where $Y$ is any variety and $V$ acts by left multiplication on itself. Let $p_2 \colon X \to Y$ be the projection. Then $p_2^*$ defines in an obvious way a functor from $\Dbc(Y,\bk)$ to $\Db_V(X,\bk)$. We claim that this functor is an equivalence of categories. Indeed, by \eqref{eqn:V-acyclic} the functor $p_2^*$ induces an isomorphism
\[
\Hom_{\Dbc(Y,\bk)} (\cF,\cG) \simto \Hom_{\Dbc(X,\bk)}(p_2^* \cF, p_2^* \cG).
\]
From this it easily follows that the morphism
\[
 \Hom_{\Db_V(X,\bk)}(p_2^* \cF, p_2^* \cG) \to  \Hom_{\Dbc(X,\bk)}(p_2^* \cF, p_2^* \cG)
\]
induced by $\For$ is an isomorphism, and then that $p_2^*$ is fully-faithful. To show that it is essentially surjective it suffices to restrict the isomorphism $\beta \colon (m \times \id_Y)^* \cF \simto \underline{\bk}_V \boxtimes \cF$ to the image of the embedding $X \to V \times X$ given by $(v,y) \mapsto (v,1,y)$.

The functor $\Db_V(X,\bk) \to \Dbc(Y,\bk)$ defined by $p_{2*}$ provides a quasi-inverse to $p_2^*$.
\end{rmk}

\subsection{Averaging functors}
\label{ss:averaging-functors}

We will also consider two ``averaging'' functors
\[
\begin{aligned}
\av_! & \colon \Dbc(X,\bk) \to \Db_{V,\cX}(X,\bk), \\
\av_* & \colon \Dbc(X,\bk) \to \Db_{V,\cX}(X,\bk)
\end{aligned}
\qquad\text{by}\qquad
\begin{aligned}
\av_!(\cF) &= a_!(\cX \boxtimes \cF)[\dim V], \\
\av_*(\cF) &= a_*(\cX \boxtimes \cF)[\dim V].
\end{aligned}
\]
In both cases, the isomorphism $\beta$ is the natural one, obtained from the K{\"u}nneth formula and the base change theorem applied to the cartesian square
\[
\xymatrix@C=1.5cm@R=0.5cm{
V \times V \times X \ar[r]^-{m \times \id_X} \ar[d]_-{\id_V \times a} & V \times X \ar[d]^-{a} \\
V \times X \ar[r]^-{a} & X \\
}
\]
using condition \eqref{eqn:X-multiplicative} and, in the second case, the fact that $a$ and $m$ are smooth morphisms.

\begin{lem}
\label{lem:av-adjunction}
The functor $\av_!$ is left-adjoint to $\For[\dim V]$.  Similarly, $\av_*$ is right-adjoint to $\For[-\dim V]$.
\end{lem}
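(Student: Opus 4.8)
The plan is to prove the statement for $\av_!$ (the assertion for $\av_*$ being dual) by reducing the equivariant $\Hom$-space to an ordinary one through a chain of canonical identifications, each of which I will need to check is compatible with the relevant equivariant structures. The three ingredients are: the $(a_!,a^!)$-adjunction for the action map $a\colon V\times X\to X$; the smoothness of $a$, which has fibres isomorphic to $V$, giving a canonical isomorphism $a^!\simeq a^*[2\dim V]$ (the relative dualizing complex of $a$ being $\underline{\bk}[2\dim V]$ — in the \'etale setting after the standard choice suppressing the Tate twist, using that $V$ is a smooth connected group); and the acyclicity hypothesis~\eqref{eqn:V-acyclic}, packaged as Remark~\ref{rmk:Dbequ-trivial}.

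First I would record that pullback functors lift to the equivariant setting. Equip $V\times X$ with the action $v\cdot(w,x)=(vw,x)$. Then $a$ is $V$-equivariant (for the given action on the target $X$), the projection $p\colon V\times X\to X$ is $V$-equivariant for the trivial action on $X$, and, using the multiplicativity isomorphism~\eqref{eqn:X-multiplicative} for $\cX$, one checks that $\cX\boxtimes\cG=q^{*}\cX\otimes p^{*}\cG$ carries a canonical $(V,\cX)$-equivariant structure on $V\times X$ for every $\cG\in\Dbc(X,\bk)$ (here $q\colon V\times X\to V$ is the other projection, and the cocycle condition for this structure is exactly the associativity constraint on~\eqref{eqn:X-multiplicative}). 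In particular, with the equivariant structure that the text attaches to $\av_!\cF=a_!(\cX\boxtimes\cF)[\dim V]$ via base change along the cartesian square
\[
\xymatrix@C=1.5cm@R=0.5cm{
V\times V\times X \ar[r]^-{m\times\id_X} \ar[d]_-{\id_V\times a} & V\times X \ar[d]^-{a} \\
V\times X \ar[r]^-{a} & X,
}
\]
the object $\av_!\cF$ is obtained from the equivariant object $\cX\boxtimes\cF$ by the equivariant functor $a_!(-)[\dim V]$.

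The heart of the argument is then a sequence of natural isomorphisms. Using that $(a_!,a^!)$ lifts to an adjunction of the underlying functors on $\Dbc$ and that the compatibility with equivariant structures matches under this adjunction (this is where the cartesian square and the cocycle condition are used), one gets
\[
\Hom_{\Db_{V,\cX}(X,\bk)}(\av_!\cF,\cG)\;\cong\;\Hom_{\Db_{V,\cX}(V\times X,\bk)}\bigl(\cX\boxtimes\cF,\,a^{*}\cG[\dim V]\bigr),
\]
where $a^{*}\cG$ carries the pulled-back equivariant structure. Next, the datum $\beta\colon a^{*}\cG\simto\cX\boxtimes\cG$ is precisely an isomorphism in $\Db_{V,\cX}(V\times X,\bk)$ (its compatibility with the two equivariant structures is, again, exactly the cocycle condition defining the object $(\cG,\beta)$), so the right-hand side becomes $\Hom_{\Db_{V,\cX}(V\times X,\bk)}(\cX\boxtimes\cF,\cX\boxtimes\cG[\dim V])$; tensoring by the invertible local system $q^{*}\cX^{\vee}$ identifies this with $\Hom_{\Db_{V}(V\times X,\bk)}(p^{*}\cF,p^{*}\cG[\dim V])$ in the untwisted equivariant category. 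Finally, Remark~\ref{rmk:Dbequ-trivial} (with $Y=X$, the action being left multiplication on the first factor, which is exactly the action fixed above) says $p^{*}$ is an equivalence $\Dbc(X,\bk)\simto\Db_{V}(V\times X,\bk)$, so the last space equals $\Hom_{\Dbc(X,\bk)}(\cF,\cG[\dim V])=\Hom_{\Dbc(X,\bk)}(\cF,\For(\cG)[\dim V])$. Composing yields the adjunction, and naturality is visible at each step. For $\av_*$ one runs the identical argument with the $(a^{*},a_{*})$-adjunction in place of $(a_!,a^!)$, applying $\beta$ to the \emph{source} object of the $\Hom$ rather than the target.

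The main obstacle is the equivariance bookkeeping in the middle step: because the forgetful functor $\For$ is not fully faithful, an isomorphism of underlying complexes in $\Dbc(X,\bk)$ does not automatically give an isomorphism of $\Hom$-spaces in $\Db_{V,\cX}(X,\bk)$, so one must genuinely verify that the unit and counit of $(a_!,a^!)$ — together with the orientation identification $a^!\simeq a^*[2\dim V]$ — are morphisms of equivariant complexes, which is the lengthy (but routine) diagram chase built around the cartesian square displayed above. Everything else is formal.
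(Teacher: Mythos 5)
Your argument is correct and is essentially the paper's proof: the same ingredients (the multiplicative twist identifying $\cX\boxtimes({-})$ with the functor $Q$, the equivalence $p_2^*\colon \Dbc(X,\bk)\simto\Db_V(V\times X,\bk)$ from Remark~\ref{rmk:Dbequ-trivial}, the adjunctions for $a$ together with $a^!\cong a^*[2\dim V]$, and the use of $\beta$ to identify $a^*\cG$ with $\cX\boxtimes\cG$) appear in the same roles. The only difference is presentational: the paper writes $\av_!\cong a_![\dim V]\circ Q\circ p_2^*$ and takes the composite of the right adjoints, while you unroll the corresponding chain of $\Hom$-space isomorphisms, with the same equivariance compatibilities left as routine checks in both cases.
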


\begin{proof}
Let $p_1 \colon V \times X \to V$ and $p_2 \colon V \times X \to X$ be the projection maps. By Remark \ref{rmk:Dbequ-trivial} the functor $p_2^*$ defines an equivalence from $\Dbc(X,\bk)$ to $\Db_V(V \times X, \bk)$ (where $V$ acts on $V \times X$ via left multiplication on $V$), and the functor $p_{2*} \colon \Db_V(V \times X, \bk) \to \Dbc(X,\bk)$ is a quasi-inverse (in particular a right adjoint) to $p_2^*$. Similar remarks apply to $p_2^!$ and $p_{2!}$.

Let $Q \colon \Db_V(V \times X) \to \Db_{V,\cX}(V \times X)$ be the functor given by $Q(\cG) = p_1^*\cX \otimes \cG$.  This functor is an equivalence of categories; the inverse is given by $\cG \mapsto p_1^*\cX^{-1} \otimes \cG$, where $\cX^{-1}$ is the dual local system to $\cX$ on $V$.  Now, $\av_! \cong a_![\dim V] \circ Q \circ p_2^*$, where $a_!$ is considered as a functor from $\Db_{V,\cX}(V \times X)$ to $\Db_{V,\cX}(X,\bk)$. So $\av_!$ is left-adjoint to $p_{2*} \circ Q^{-1} \circ a^![-\dim V] \cong p_{2*} \circ Q^{-1} \circ a^*[\dim V]$.  The latter is clearly isomorphic to $\For[\dim V]$.

Similarly, $\av_* \cong a_* \circ Q \circ p_2^*[\dim V] \cong a_* \circ Q \circ p_2^![-\dim V]$ is right-adjoint to $p_{2!} \circ Q^{-1} \circ a^*[\dim V] \cong \For[-\dim V]$.
\end{proof}

\begin{lem}
\label{lem:av-For}
The composition $\av_! \circ \For[\dim V] \colon \Db_{V,\cX}(X,\bk) \to \Db_{V,\cX}(X,\bk)$ is isomorphic to the identity functor.
\end{lem}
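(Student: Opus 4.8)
The plan is to realise the asserted isomorphism as the counit of the adjunction of Lemma~\ref{lem:av-adjunction} and to check its invertibility fibrewise. Since $\av_!$ is left adjoint to $\For[\dim V]$, there is a counit $c\colon \av_!\circ\For[\dim V]\Rightarrow\id_{\Db_{V,\cX}(X,\bk)}$, and it suffices to show $c$ is an isomorphism. Here one uses that $\For\colon\Db_{V,\cX}(X,\bk)\to\Dbc(X,\bk)$ is conservative: it leaves the underlying complex and morphisms unchanged, hence commutes with cones, so a morphism $f$ with $\For(f)$ invertible has vanishing cone; and the inverse of $\For(f)$ is readily checked to be compatible with the equivariance data. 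Thus it is enough to prove that $\For(c_\cF)$ is an isomorphism in $\Dbc(X,\bk)$ for every $\cF$ in $\Db_{V,\cX}(X,\bk)$.

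Next I would unwind $\For(c_\cF)$. Write $a\colon V\times X\to X$ for the action and $p_1,p_2$ for the projections of $V\times X$. By definition of $\av_!$ one has $\For(\av_!(\For(\cF)[\dim V]))=a_!(p_1^*\cX\otimes p_2^*\cF)[2\dim V]$, and the equivariance isomorphism $\beta\colon a^*\cF\simto\cX\boxtimes\cF$ built into $\cF$ identifies this with $a_!a^*\cF[2\dim V]$. The morphism $a$ is smooth of relative dimension $\dim V$: the automorphism $\gamma$ of $V\times X$ with $\gamma(g,x)=(g,gx)$ satisfies $p_2\circ\gamma=a$, and $p_2$ is smooth of relative dimension $\dim V$; hence purity supplies a canonical isomorphism $a^*\cong a^![-2\dim V]$ (the one already used in the proof of Lemma~\ref{lem:av-adjunction}), so that $a_!a^*\cF[2\dim V]\cong a_!a^!\cF$. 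Tracing the adjunction $\av_!\dashv\For[\dim V]$ through the presentation used in that proof — namely $\av_!\cong a_![\dim V]\circ Q\circ p_2^*$ with $Q(\cG)=p_1^*\cX\otimes\cG$, and $\For[\dim V]\cong p_{2*}\circ Q^{-1}\circ a^*[\dim V]$ — one identifies $\For(c_\cF)$, under the isomorphisms just described, with the counit $a_!a^!\cF\to\cF$ of the adjunction $(a_!,a^!)$.

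It then remains to check that this last counit is an isomorphism, which can be verified on stalks. By base change, $(a_!a^!\cF)_x\cong R\Gamma_c(a^{-1}(x);(a^!\cF)|_{a^{-1}(x)})$ for $x\in X$. The restriction of $a$ to $a^{-1}(x)$ factors through the point $x$, and $a^{-1}(x)\cong V$ via $g\mapsto(g,g^{-1}x)$; therefore $a^*\cF$ restricts on $a^{-1}(x)$ to the constant complex with value $\cF_x$, and $a^!\cF$ to its shift by $[2\dim V]$. Poincar\'e--Verdier duality on the smooth variety $V$ together with hypothesis~\eqref{eqn:V-acyclic} gives $R\Gamma_c(V;\bk)\cong\bk[-2\dim V]$, whence the projection formula yields $R\Gamma_c(a^{-1}(x);(a^!\cF)|_{a^{-1}(x)})\cong\cF_x$, and the counit induces exactly this isomorphism. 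Hence $a_!a^!\cF\to\cF$ is a stalkwise, therefore an honest, isomorphism, and $c$ is a natural isomorphism.

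The step I expect to be the main obstacle is the identification carried out in the second paragraph: checking that $\For(c_\cF)$, transported through $\beta$ and the purity isomorphism, is genuinely the counit of $(a_!,a^!)$ and not merely some isomorphism. Concretely this means matching the unit and counit of $\av_!\dashv\For[\dim V]$ with those of the elementary adjunctions that enter the proof of Lemma~\ref{lem:av-adjunction} (the equivalence $p_2^*\dashv p_{2*}$ on $\Db_V(V\times X,\bk)$ of Remark~\ref{rmk:Dbequ-trivial}, the twist equivalence $Q$, and $a_!\dashv a^!$); the rest of the argument is formal once this compatibility is in place.
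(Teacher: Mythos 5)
Your argument is correct, but it takes a noticeably different (and longer) route than the paper's. The paper's proof is a direct global computation: for $\cF$ in $\Db_{V,\cX}(X,\bk)$ one has $\av_! \circ \For(\cF)[\dim V] = a_!(\cX \boxtimes \cF)[2\dim V] \cong a_! a^* \cF[2\dim V]$ using the equivariance isomorphism $\beta$, and then the projection formula gives $a_!a^*\cF[2\dim V] \cong \cF \lotimes_\bk a_!(\underline{\bk}_{V \times X})[2\dim V] \cong \cF$, since $\mathbb{H}^\bullet_c(V;\bk) = \bk[-2\dim V]$ by \eqref{eqn:V-acyclic}; all steps are natural in $\cF$, so no stalkwise verification and no unwinding of the adjunction is needed. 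You instead insist on showing that the counit of the adjunction of Lemma~\ref{lem:av-adjunction} is invertible, reduce to $\For$ of the counit via conservativity of $\For$, identify it (through the composite-adjunction construction and purity for the smooth map $a$) with the counit $a_!a^!\cF \to \cF$, and check the latter on stalks by base change and Poincar{\'e}--Verdier duality on $V$. The key inputs ($\beta$ and the acyclicity of $V$) are the same, and your fibrewise computation is essentially the paper's computation of $a_!\underline{\bk}_{V\times X}$ done stalk by stalk. What your version buys is that it genuinely verifies that the \emph{adjunction morphism} itself is an isomorphism --- the paper frames its proof that way but then only exhibits a natural isomorphism, glossing over the compatibility you flag; this stronger form is what is really invoked in Proposition~\ref{prop:For-Dequ}(1). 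What it costs is the bookkeeping you acknowledge (matching counits through the equivalences $p_2^* \dashv p_{2*}$ of Remark~\ref{rmk:Dbequ-trivial} and $Q$, and the compatibility of the equivariant $(a_!,a^!)$ counit with $\For$), which is formal but must be said. One small correction: at this stage $\Db_{V,\cX}(X,\bk)$ is not yet known to be triangulated (that is Proposition~\ref{prop:For-Dequ}(2), which depends on this lemma), so you should not phrase conservativity of $\For$ via ``commuting with cones''; your direct argument --- that the inverse of $\For(f)$ is automatically compatible with the equivariance data --- is the right one and suffices.
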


\begin{proof}
By Lemma \ref{lem:av-adjunction} we have an adjunction morphism
$\av_! \circ \For[\dim V] \to \id$, and it suffices to prove that this morphism is an isomorphism. Now if $\cF$ is in $\Db_{V,\cX}(X,\bk)$ we have
\begin{multline*}
\av_! \circ \For(\cF)[\dim V] = a_!(\cX \boxtimes \cF)[2 \dim V] \cong a_! a^*(\cF) [2\dim V] \\
\cong \cF \lotimes_\bk a_!(\underline{\bk}_{V \times X}) [2\dim V]
\end{multline*}
by the projection formula. However by \eqref{eqn:V-acyclic} we have $\mathbb{H}^\bullet_c(V;\bk) = \bk[-2\dim V]$, hence $a_!(\underline{\bk}_{V \times X}) \cong \underline{\bk}_X[-2\dim V]$, and the claim follows.
\end{proof}

As an immediate consequence, we obtain the following facts.

\begin{prop}
\label{prop:For-Dequ}
\begin{enumerate}
\item
The functor $\For \colon \Db_{V,\cX}(X,\bk) \to \Dbc(X,\bk)$ is fully faithful. An object $\cF \in \Dbc(X,\bk)$ is in the essential image of $\Db_{V,\cX}(X,\bk)$ if and only if the adjunction morphism $\cF \to \For(\av_!\cF[\dim V])$ is an isomorphism.
\item
The essential image of $\For \colon \Db_{V,\cX}(X,\bk) \to \Dbc(X,\bk)$ is a triangulated subcategory of $\Dbc(X,\bk)$. In particular, the category $\Db_{V,\cX}(X,\bk)$ has a natural triangulated structure.
\end{enumerate}
\end{prop}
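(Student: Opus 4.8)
The plan is to deduce both assertions formally from the adjunction $\av_! \dashv \For[\dim V]$ of Lemma~\ref{lem:av-adjunction} together with Lemma~\ref{lem:av-For}. Write $\eta\colon \id \to \For[\dim V]\circ\av_!$ and $\epsilon\colon \av_!\circ\For[\dim V]\to\id$ for the unit and counit of this adjunction. Lemma~\ref{lem:av-For} says precisely that $\epsilon$ is an isomorphism (its proof checks that the adjunction morphism $\av_!\circ\For[\dim V]\to\id$ is invertible). By the standard fact that a right adjoint with invertible counit is fully faithful, $\For[\dim V]$ is fully faithful, and hence so is $\For$ (the shift being an autoequivalence). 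This gives the first assertion of $(1)$.

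For the description of the essential image, I would invoke the elementary principle that, for any adjunction $F\dashv G$ with invertible counit, the essential image of $G$ is exactly the full subcategory of objects $\cF$ for which the unit $\eta_\cF\colon\cF\to GF(\cF)$ is an isomorphism: one direction is obvious, and for the other, if $\cF\cong G(\cG)$ then the triangle identity $G(\epsilon_{\cG})\circ\eta_{G(\cG)}=\id_{G(\cG)}$ and the invertibility of $\epsilon$ force $\eta_{G(\cG)}$, hence $\eta_\cF$, to be invertible. Applied with $F=\av_!$ and $G=\For[\dim V]$, and upon unwinding $GF(\cF)=\For(\av_!\cF[\dim V])$, this is exactly the statement in $(1)$: $\cF$ lies in the essential image of $\For$ iff the adjunction morphism $\cF\to\For(\av_!\cF[\dim V])$ is an isomorphism.

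For $(2)$, I would first observe that the endofunctor $\cF\mapsto\For(\av_!\cF[\dim V])$ of $\Dbc(X,\bk)$ is, on underlying complexes, just $\cF\mapsto a_!(\cX\boxtimes\cF)[2\dim V]$, i.e.\ the composition of pullback to $V\times X$, exterior tensor with the invertible sheaf $\cX$, the functor $a_!$, and a shift; in particular it is a triangulated endofunctor of $\Dbc(X,\bk)$, and $\eta$ is a natural transformation from the identity to it which is compatible with shifts (the isomorphisms defining $\eta$ in Lemma~\ref{lem:av-adjunction} being built from base change, the K{\"u}nneth formula and the projection formula, all compatible with shifts). Hence the full subcategory $\mathcal{I}:=\{\cF\in\Dbc(X,\bk) : \eta_\cF\text{ is an isomorphism}\}$ is closed under shifts, and given a distinguished triangle $\cF_1\to\cF_2\to\cF_3\to\cF_1[1]$ with $\cF_1,\cF_2\in\mathcal{I}$, applying $\eta$ produces a morphism from this triangle to its image under the triangulated functor above, so the five lemma in triangulated categories gives $\cF_3\in\mathcal{I}$. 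Thus $\mathcal{I}$ is a strictly full triangulated subcategory of $\Dbc(X,\bk)$; since $\For$ identifies $\Db_{V,\cX}(X,\bk)$ with $\mathcal{I}$ by $(1)$, one transports the triangulated structure of $\mathcal{I}$ along this equivalence.

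The only genuinely substantive input is Lemma~\ref{lem:av-For}, which is already available; everything else is formal. The one point needing (light) care is the bookkeeping in $(2)$ — verifying that $\eta$ really yields a morphism of distinguished triangles — but this is routine once one notes that $\For[\dim V]\circ\av_!$ agrees with the manifestly triangulated functor $a_!(\cX\boxtimes(-))[2\dim V]$ and that the structural isomorphisms defining $\eta$ respect shifts.
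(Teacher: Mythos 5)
Your argument is correct and is essentially the paper's own proof with the details written out: the paper deduces $(1)$ from Lemma~\ref{lem:av-For} (invertibility of the counit of the adjunction $\av_! \dashv \For[\dim V]$) and $(2)$ from the resulting description of the essential image plus the standard fact that the locus where a natural transformation to a triangulated endofunctor is invertible forms a triangulated subcategory. The only points you flag as routine (compatibility of the unit with shifts, and identifying the essential images of $\For$ and $\For[\dim V]$ via shift-stability of $\Db_{V,\cX}(X,\bk)$) are indeed harmless.
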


\begin{proof}
$(1)$ is a corollary of Lemma \ref{lem:av-For}. Then $(2)$ follows from the description of the essential image of $\For$ together with standard facts on triangulated categories.
\end{proof}

\begin{rmk}
\label{rmk:equiv-BL}
In the case $\cX=\underline{\bk}_V$, the category $\Db_V(X,\bk)$ is equivalent to the constructible $V$-equivariant derived category in the sense of Bernstein--Lunts. Indeed by \cite[Theorem 3.7.3]{bl}, under our assumption \eqref{eqn:V-acyclic} the latter is also equivalent to a full triangulated subcategory of $\Dbc(X,\bk)$. Moreover this full subcategory is generated (as a triangulated subcategory) by constructible $V$-equivariant sheaves in the sense of \cite[\S 0.2]{bl}, see \cite[Proposition 2.5.3]{bl}. The similar claim for the essential image of our functor $\For \colon \Db_{V}(X,\bk) \to \Dbc(X,\bk)$ is easy to check, which proves the claim.
\end{rmk}

Finally we remark that if $V' \subset V$ is a closed subgroup which also satisfies~\eqref{eqn:V-acyclic}, the restriction $\cX'$ of $\cX$ to $V'$ is multiplicative, and the functor $\For$ factors through a (fully faithful) functor $\For' \colon \Db_{V,\cX}(X,\bk) \to \Db_{V',\cX'}(X,\bk)$.

\section{Tilting perverse $\O$-sheaves and Radon transform \\ (joint with Geordie Williamson\protect\footnote{Max-Planck-Institut f{\"u}r Mathematik, Vivatsgasse~7, 53111, Bonn, Germany. E-mail: \texttt{geordie@mpim-bonn.mpg.de}.})}
\label{sec:tilting}

In this section we denote by $\O$ the ring of integers in a finite extension $\K$ of $\Ql$, and by $\F$ its residue field. We use the letter $\E$ to denote either of $\K$, $\O$ or $\F$.

\subsection{Notation}

Let us consider an algebraic variety $X$ endowed with a finite stratification
\[
X=\bigsqcup_{s \in \scS} X_s
\]
by locally closed subvarieties, and denote by $i_s \colon X_s \to X$ the inclusion.
We assume that each $X_s$ is isomorphic to an affine space, and consider the derived category $\Db_\scS(X,\O)$ of complexes whose cohomology is constant on each stratum $X_s$, and the subcategory of perverse sheaves $\Perv_{\scS}(X,\O)$. We assume either that $X$ is defined over $\C$ and work with the classical topology as in the main body of the paper, or that $X$ is defined over $\Fpb$, with $p \neq \ell$, and work with {\'e}tale sheaves as in \cite{rsw}. In the latter case we assume that the following condition is satisfied:
\begin{equation}
\label{eqn:strata-assumption}
\text{for all $s,t \in \scS$ and $n \in \Z$, $\mathcal{H}^n(i_t^*i_{s*}\underline{\O}_{X_s})$ is constant.}
\end{equation}

For any $s \in \scS$ we denote by $i_s \colon X_s \to X$ the inclusion, and consider the objects
\[
\Delta_s := i_{s!} \underline{\O}_{X_s} [\dim X_s], \qquad \nabla_s:= i_{s*} \underline{\O}_{X_s} [\dim X_s].
\]
Recall that, in this generality, a perverse sheaf $\cT$ in $\Perv_{\scS}(X,\O)$ is called \emph{tilting} if it admits both a standard filtration (i.e.~a filtration with subquotients of the form $\Delta_t$, $t \in \scS$) and a costandard filtration (i.e.~a filtration with subquotients of the form $\nabla_t$, $t \in \scS$).

We can similarly consider the categories $\Db_\scS(X,\F)$ and $\Perv_{\scS}(X,\F)$ of sheaves with coefficients in $\F$, the objects
\[
\Delta_s^\F := i_{s!} \underline{\F}_{X_s} [\dim X_s], \qquad \nabla_s^\F:= i_{s*} \underline{\F}_{X_s} [\dim X_s],
\]
and the corresponding notions of standard or costandard filtrations, and tilting perverse sheaves.

We have a ``modular reduction'' functor
\[
\F := \F \lotimes_\O (-) \colon \Db_\scS(X,\O) \to \Db_\scS(X,\F)
\]
which commutes with all the usual sheaf operations (derived direct and inverse images with or without support, derived tensor product, bifunctors $R\Hom$ and $R\mathcal{H} \hspace{-0.5pt} om$).
In particular, this implies that
\begin{equation}
\label{eqn:F-Delta-nabla}
\F(\Delta_s) \cong \Delta_s^\F \qquad \text{and} \qquad \F(\nabla_s) \cong \nabla_s^\F.
\end{equation}

\subsection{Tilting perverse sheaves: existence}

In this subsection we show how standard arguments giving constructions of
tilting modules in highest weight categories (see \cite{ringel, donkin, mathieu}) can
be adapted to prove the existence of ``enough'' tilting objects in
$\Perv_{\scS}(X,\O)$.

First, the following easy result can be proved as in \cite[Proposition 1.3]{bbm}.

\begin{lem}
\label{lem:nabla-flag}
Let $\cF$ be in $\Perv_{\scS}(X,\O)$. Then $\cF$ admits a costandard filtration iff for any $t \in \scS$, the object $i_t^! \cF$ is a direct sum of copies of $\underline{\O}{}_{X_t} [\dim X_t]$.
\end{lem}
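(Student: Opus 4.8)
The plan is to run two inductions, both powered by the hypothesis that each stratum $X_s$ is an affine space: this makes $X_s$ acyclic over $\O$ (so $R\Gamma(X_s,\underline{M})\cong M$ for every $\O$-module $M$, and in particular $H^1(X_s;\O)=0$) and makes Verdier duality on $X_s$ clean, i.e.\ $\mathbb{D}_{X_s}(\underline{\O}_{X_s}[\dim X_s])\cong\underline{\O}_{X_s}[\dim X_s]$.

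For the ``only if'' direction I would induct on the length of a costandard filtration of $\cF$. The key computation is that $i_t^!\nabla_s\cong\underline{\O}_{X_t}[\dim X_t]$ when $s=t$ and $i_t^!\nabla_s=0$ when $s\neq t$: the first case follows from $i_t^!i_{t*}\cong\mathrm{id}$, and for the second one combines $\mathbb{D}\nabla_s\cong\Delta_s$ (which uses the smoothness of $X_s$) with $i_t^*\Delta_s\cong i_t^*i_{s!}\underline{\O}_{X_s}[\dim X_s]=0$ (disjointness of strata and base change), giving $i_t^!\nabla_s\cong\mathbb{D}(i_t^*\Delta_s)=0$. Given a short exact sequence $0\to\cF'\to\cF\to\nabla_s\to 0$ with $\cF'$ carrying a shorter costandard filtration, applying $i_t^!$ yields a triangle $i_t^!\cF'\to i_t^!\cF\to i_t^!\nabla_s\xrightarrow{+1}$; by induction the two outer terms are direct sums of copies of $\underline{\O}_{X_t}[\dim X_t]$, and the connecting morphism lies in a sum of copies of $\Hom_{X_t}(\underline{\O}_{X_t},\underline{\O}_{X_t}[1])=H^1(X_t;\O)=0$, so the triangle splits and $i_t^!\cF$ has the required shape.

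For the ``if'' direction I would prove, by induction on the number of strata meeting the support, the slightly more general assertion that \emph{any} $\cF\in\Db_\scS(X,\O)$ all of whose $i_t^!\cF$ have the stated shape admits a costandard filtration (and is therefore perverse); the extra generality is needed because the boundary object $i^!\cF$ below is not a priori perverse. Choose a stratum $X_s$ open in $Z:=\mathrm{supp}\,\cF$ and let $i\colon Y:=Z\setminus X_s\hookrightarrow X$ be the closed complement inside $Z$. Since $X_s$ is open in $Z$ one has $i_s^*\cF\cong i_s^!\cF\cong\underline{\O}_{X_s}[\dim X_s]^{\oplus m}$ by hypothesis, so the adjunction unit gives a map $u\colon\cF\to i_{s*}i_s^*\cF=\nabla_s^{\oplus m}$ fitting into the recollement triangle $i_*i^!\cF\to\cF\xrightarrow{u}\nabla_s^{\oplus m}\xrightarrow{+1}$. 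The $!$-restrictions of $i^!\cF$ to the strata of $Y$ are again the $i_t^!\cF$, so $i^!\cF$ satisfies the hypothesis on $Y$, which has strictly fewer strata; by induction $i^!\cF$ has a costandard filtration on $Y$, hence is perverse, hence $i_*i^!\cF$ is perverse. A distinguished triangle whose outer terms $i_*i^!\cF$ and $\nabla_s^{\oplus m}$ are perverse has a perverse middle term and is a short exact sequence $0\to i_*i^!\cF\to\cF\to\nabla_s^{\oplus m}\to 0$; pushing the costandard filtration of $i^!\cF$ forward along the exact functor $i_*$ (using $i_*\nabla_t^Y\cong\nabla_t$) and then adjoining the $\nabla_s$'s produces a costandard filtration of $\cF$. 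The base case $\mathrm{supp}\,\cF=\emptyset$ is trivial.

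The routine points I would still check are: that $\nabla_t$, and hence every object occurring, lies in $\Perv_\scS(X,\O)$ --- this is exactly where assumption~\eqref{eqn:strata-assumption} is used, and it passes to the closed union of strata $Y$; the identification $i_s^*\cF\cong i_s^!\cF$ for $X_s$ open in $\mathrm{supp}\,\cF$, obtained by restricting $\cF$ to $Z$ and noting that $X_s\hookrightarrow Z$ is an open immersion; and that no separate freeness bookkeeping is required, since $\O$-freeness is built into the hypothesis at every stratum and is inherited verbatim by $i^!\cF$ on $Y$. The one genuinely delicate step, and the place where this argument does better than a naive ``peel off the top stratum'' over a non-field, is the surjectivity of $u$: it is equivalent to $i^!\cF$ being perverse on $Y$, which is not implied by the stalk hypothesis at $X_s$ alone but is precisely what the inductive hypothesis supplies.
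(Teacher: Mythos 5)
Your proof is correct, and it is essentially the argument the paper has in mind: the paper simply cites \cite[Proposition~1.3]{bbm}, whose standard recollement induction over strata (computing $i_t^!\nabla_s$, peeling off the open stratum of the support, and using vanishing of $H^1$ of affine space to split the relevant triangles) is exactly what you have written out, adapted to $\O$-coefficients. The strengthening to arbitrary objects of $\Db_\scS(X,\O)$ with the stated $!$-restriction property is a harmless bookkeeping device within the same method, and your handling of $i_s^*\cF\cong i_s^!\cF$ on the open stratum of the support is the right way to get surjectivity of the counit map.
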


\begin{prop}
\label{prop:existence-tiltings}

For any $s \in \scS$ there exists a tilting object $\cT$ in $\Perv_{\scS}(X,\O)$ supported on $\overline{X_s}$ and such that $i_s^* \cT \cong \underline{\O}{}_{X_s} [\dim X_s]$.

\end{prop}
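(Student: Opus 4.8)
The plan is to adapt to the present integral setting the classical inductive construction of indecomposable tilting objects in a highest weight category (see~\cite{ringel, donkin, mathieu}, and~\cite{bbm} for its incarnation in a category of perverse sheaves over a field). The construction itself is formal, so the substance of the argument lies in assembling the necessary homological input over $\O$. The facts needed are: \emph{(i)} the orthogonality relations $\Hom(\Delta_s,\nabla_t)\cong\O$ if $s=t$ and $\Hom(\Delta_s,\nabla_t)=0$ otherwise, together with $\Ext^{>0}(\Delta_s,\nabla_t)=0$ for all $s,t$; \emph{(ii)} the vanishing $\Ext^{\bullet}(\Delta_s,\Delta_t)=0$ whenever $X_s\not\subseteq\overline{X_t}$, and $\Ext^{>0}(\Delta_s,\Delta_s)=0$; \emph{(iii)} the class of objects of $\Perv_{\scS}(X,\O)$ admitting a standard filtration is stable under extensions; \emph{(iv)} the $\O$-modules $\Ext^1(\Delta_t,\cF)$ are finitely generated; and \emph{(v)} an object $\cF$ admitting a standard filtration is tilting if and only if $\Ext^1(\Delta_t,\cF)=0$ for every $t\in\scS$. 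Items \emph{(i)}--\emph{(iv)} are standard and follow from adjunction (using that $X_s\cap\overline{X_t}=\emptyset$ whenever $X_s\not\subseteq\overline{X_t}$, that $i_s^*\Delta_t=0$ for $s\neq t$, and that $\mathbb{H}^{\bullet}(X_s;\O)=\O$ since $X_s$ is an affine space), from the cohomology of affine space, and from constructibility; the nontrivial implication in \emph{(v)} is proved following the argument of~\cite{bbm}, by induction on the number of strata in $\mathrm{supp}(\cF)$, using Lemma~\ref{lem:nabla-flag} and condition~\eqref{eqn:strata-assumption}. Over $\O$ these are transported from the field case using the exact functor $\F(-)$, the isomorphisms~\eqref{eqn:F-Delta-nabla}, and the ``$\cF=0$ if and only if $\F(\cF)=0$''-type lemmas of~\cite{rsw}.

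Granting this, the construction runs as follows. Enumerate the strata contained in $\overline{X_s}$ as $s=t_0,t_1,\dots,t_N$ in an order refining the reverse of the closure order, so that $X_{t_i}\subsetneq\overline{X_{t_j}}$ forces $i>j$; here $t_0=s$ because $X_s$ is dense in $\overline{X_s}$. Set $\cF_0:=\Delta_s$, which admits a standard filtration and satisfies $i_s^*\cF_0\cong\underline{\O}_{X_s}[\dim X_s]$. Assuming $\cF_{k-1}$ has been constructed with a standard filtration, choose a finite generating set $e_1,\dots,e_n$ of the $\O$-module $\Ext^1(\Delta_{t_k},\cF_{k-1})$, regard $(e_1,\dots,e_n)$ as an element of $\Ext^1(\Delta_{t_k}^{\oplus n},\cF_{k-1})$, and let $\cF_k$ be the corresponding extension
\[
0\longrightarrow\cF_{k-1}\longrightarrow\cF_k\longrightarrow\Delta_{t_k}^{\oplus n}\longrightarrow 0.
\]
By \emph{(iii)}, $\cF_k$ again admits a standard filtration. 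Applying $\Hom(\Delta_{t_k},-)$ to this sequence, and using $\Ext^{>0}(\Delta_{t_k},\Delta_{t_k})=0$ together with the fact that the connecting map $\Hom(\Delta_{t_k},\Delta_{t_k}^{\oplus n})\to\Ext^1(\Delta_{t_k},\cF_{k-1})$ is surjective by the choice of the $e_i$, we get $\Ext^1(\Delta_{t_k},\cF_k)=0$. Applying instead $\Hom(\Delta_{t_j},-)$ for $j<k$, and using $\Ext^{\bullet}(\Delta_{t_j},\Delta_{t_k})=0$ — which holds by \emph{(ii)} since $X_{t_j}\not\subseteq\overline{X_{t_k}}$ by the choice of ordering — we see that this step does not alter $\Ext^1(\Delta_{t_j},-)$. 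Finally, applying $i_s^*$, which is exact since $X_s$ is open in $\overline{X_s}$, and using $i_s^*\Delta_{t_k}=0$ for $k\geq 1$, we see that the step does not alter $i_s^*(-)$. Put $\cT:=\cF_N$.

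By construction $\cT$ admits a standard filtration, is supported on $\overline{X_s}$, and satisfies $i_s^*\cT\cong\underline{\O}_{X_s}[\dim X_s]$. Moreover $\Ext^1(\Delta_t,\cT)=0$ for every stratum $t$: if $t=t_k$, the $k$-th step made this group vanish and no later step disturbs it, since for $j>k$ one has $X_{t_k}\not\subseteq\overline{X_{t_j}}$ and hence $\Ext^{\bullet}(\Delta_{t_k},\Delta_{t_j})=0$; and if $X_t\not\subseteq\overline{X_s}$ then $i_t^!\cT=0$, so $\Ext^1(\Delta_t,\cT)=0$ trivially. By criterion \emph{(v)}, $\cT$ is tilting, and this proves the proposition. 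Termination is automatic, since $\overline{X_s}$ has finitely many strata, each corrected exactly once, and each correction involves a finitely generated $\Ext^1$-group.

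The part of the argument demanding genuine work, rather than bookkeeping, is establishing criterion \emph{(v)} over $\O$ — concretely, checking that for $\cF$ with a standard filtration the simultaneous vanishing of all the groups $\Ext^1(\Delta_t,\cF)$ forces each costalk $i_t^!\cF$ to be a \emph{free} constant sheaf concentrated in the single degree $-\dim X_t$, so that Lemma~\ref{lem:nabla-flag} applies; the freeness is where condition~\eqref{eqn:strata-assumption} and the torsion-freeness arguments of~\cite{rsw} enter in an essential way. Everything else reduces to adjunction together with the cohomology of affine space, and transports from the field case of~\cite{bbm} in a routine manner.
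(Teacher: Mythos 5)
Your construction itself (universal extensions of standard objects, added stratum by stratum in an order refining the reverse closure order) is the same mechanism as the paper's induction over open unions of strata, and the bookkeeping steps you carry out — orthogonality, stability of the standard-filtered class under extensions, the surjectivity of the connecting map killing $\Ext^1(\Delta_{t_k},-)$, preservation of $i_s^*$ — are all fine. The genuine gap is your criterion \emph{(v)}. Over $\O$, knowing that $\cF$ has a standard filtration and that $\Ext^1(\Delta_t,\cF)=0$ for all $t$ only tells you, via $\Ext^k(\Delta_t,\cF)\cong\cH^{k-\dim X_t}(i_t^!\cF)$, that the costalk cohomology vanishes in degree $1-\dim X_t$; to apply Lemma~\ref{lem:nabla-flag} you need it to vanish in \emph{all} degrees above $-\dim X_t$ (equivalently $\Ext^i(\Delta_t,\cF)=0$ for all $i\geq 1$) and to be free in degree $-\dim X_t$. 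Your proposed justification — transporting the field-case criterion through $\F(-)$ — does not close this: by change of rings one has an exact sequence
\[
0 \to \F\otimes_\O \Ext^1_\O(\Delta_t,\cF) \to \Ext^1\bigl(\F(\Delta_t),\F(\cF)\bigr) \to \mathrm{Tor}_1^\O\bigl(\F,\Ext^2_\O(\Delta_t,\cF)\bigr)\to 0,
\]
so vanishing of $\Ext^1_\O$ does not yield vanishing of $\Ext^1$ over $\F$ unless you separately control the torsion in $\Ext^2_\O(\Delta_t,\cF)$ — which is exactly the content you are trying to establish. In other words, the step you describe as routine transport is precisely where the proposition's difficulty sits.

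The paper handles this point not by a general criterion but by exploiting its inductive set-up: its intermediate object is $j_!\cT_J$ with $\cT_J$ already known to be tilting on the open part, hence filtered by objects $j_!\nabla_{J,u}$, and the vanishing $\Ext^i(\Delta_{I,t},\cT_I)=0$ for $i\geq 2$ is proved by analyzing $j_!\nabla_{J,u}$ via the intermediate extension $\IC(X_J,\nabla_{J,u})$; the freeness of the remaining costalk is then extracted from perversity of $\F(\cT_I)$ and left t-exactness of $i_t^!$ (the part you did correctly anticipate). Your formulation discards the inductive information that the previously built object is tilting on an open subvariety (you only retain the standard filtration and $\Ext^1$-vanishing), so this route is not available to you as written. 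Criterion \emph{(v)} can likely be salvaged over $\O$ by a different argument — upgrade $\Ext^1$-vanishing to $\Ext^{\geq 1}$-vanishing by downward induction on the closure order using projective objects of $\Perv_\scS(X,\O)$ with standard filtrations (as in \cite{rsw}), then apply the freeness trick — but that is a substantive argument relying on nontrivial inputs, and your proposal neither states it nor proves it. As it stands, the proof is incomplete at its central step.
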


\begin{proof}
Assume (without loss of generality) that $X=\overline{X_s}$. For any subset $I \subset \scS$, we set $X_I:= \sqcup_{t \in I} X_t$. We will prove the following property by induction on $\#I$:
\begin{equation} \label{hyp:tilting}
\begin{array}{c}
\text{for $I \subset \scS$ containing $s$ with $X_I \subset X$ open there exists a tilting} \\ 
\text{perverse sheaf $\cT_I$ in $\Perv_{\scS}(X_I,\O)$ extending $\underline{\O}{}_{X_s} [\dim X_s]$.}\end{array}
\end{equation}
The proposition is the case $I=\scS$ of this property. Taking $\cT = \underline{\O}{}_{X_s} [\dim X_s]$ shows that \eqref{hyp:tilting} is
satisfied with $I = \{ s \}$.

So assume that $\{s\} \subset I \subset \scS$ is arbitrary (with $X_I \subset X$ open), choose $t \in I$ such that $X_t \subset X_I$ is a closed stratum and write $I = J \cup \{ t
\}$. By induction there exists a perverse sheaf $\cT_{J}$ which satisfies \eqref{hyp:tilting} for $J$.

Let $j \colon X_{J} \hookrightarrow X_I$ denote the (open) inclusion and
set $\cT_I^{\mathrm{pre}} := j_! \cT_{J}$. Note that $\cT_I^{\mathrm{pre}}$ is a perverse sheaf since it has a filtration (in the triangulated sense) with subquotients $\Delta_u$, $u \in J$. Now set
\[
E := \Ext^1_{\Perv_{\scS}(X_I,\O)}(\Delta_{I,t}, \cT_I^{\mathrm{pre}}) = \Ext^1_{\Db_{\scS}(X_I,\O)}(\Delta_{I,t}, \cT_I^{\mathrm{pre}})
\]
which is a finitely generated $\O$-module. (Here, in a minor abuse of notation, we still denote by $\scS$ the stratification of $X_I$ induced by $\scS$, and by $\Delta_{I,t}$ the shifted $!$-push-forward of $\underline{\O}{}_{X_t}$ to $X_I$, i.e.~the standard object of $\Perv_{\scS}(X_I,\O)$ associated with~$t$.) Let
$E_{\mathrm{free}}$ be a finitely generated free $\O$-module endowed with a surjection to $E$, and set $E_{\mathrm{free}}^*:=\Hom_{\O}(E_{\mathrm{free}},\O)$. We have a natural map
\begin{multline*}
\O \to E^*_{\mathrm{free}} \otimes_{\O} E = E^*_{\mathrm{free}}
\otimes_{\O} \Ext^1_{\Perv_{\scS}(X_I,\O)}(\Delta_{I,t}, \cT_I^{\mathrm{pre}}) \\
\cong \Ext^1_{\Perv_{\scS}(X_I,\O)}(E_{\mathrm{free}} \otimes_{\O} \Delta_{I,t}, \cT_I^{\mathrm{pre}}) 
\end{multline*}
and hence a canonical extension in $\Perv_{\scS}(X_I,\O)$
\begin{equation} \label{eq:tiltingextension}
\cT_I^{\mathrm{pre}} \hookrightarrow \cT_I \twoheadrightarrow
E_{\mathrm{free}} \otimes_{\O} \Delta_{I,t}
\end{equation}
obtained as the image of $1 \in \O$.

We claim that $\cT_I$ is tilting. First, this object clearly has a standard filtration (as an object of $\Perv_{\scS}(X_I,\O)$). Now we show (using Lemma \ref{lem:nabla-flag}) that it has a costandard filtration. For any $u \in I$ we denote by $i_u^I \colon X_u \hookrightarrow X_I$ the inclusion, and similarly for $J$. For $u \neq t$, it is easy to check that $(i_u^I)^! \cT_I$ is a direct sum of copies of $\underline{\O}{}_{X_u}[\dim X_u]$: indeed in this case we have $i_u^I=j \circ i_u^J$, 
so $(i_u^I)^! \cT_I = (i_u^J)^! j^! \cT_I = (i_u^J)^! \cT_{J}$. We conclude using the fact that $\cT_J$ has a costandard filtration.

Now, consider the case $u=t$. We claim that
\begin{equation} \label{eq:cond}
\Ext^i_{\Db_\scS(X_I,\O)}(\Delta_{I,t}, \cT_I) = 0 \ \text{ for } i\geq 1.
\end{equation}
We first show that \eqref{eq:cond} holds for $i=1$. Applying $\Hom(\Delta_{I,t}, -)$ to \eqref{eq:tiltingextension} yields a
long exact sequence
\begin{multline*}
\dots \to \Hom_{\Db_\scS(X_I,\O)}(\Delta_{I,t}, E_{\mathrm{free}} \otimes_{\O} \Delta_{I,t}) \to 
\Ext^1_{\Db_\scS(X_I,\O)}(\Delta_{I,t}, \cT_I^{\mathrm{pre}}) \\
\to 
\Ext^1_{\Db_\scS(X_I,\O)}(\Delta_{I,t}, \cT_I) \to \Ext^1_{\Db_\scS(X_I,\O)}(\Delta_{I,t}, E_{\mathrm{free}} \otimes_{\O} \Delta_{I,t}) \to \dots 
\end{multline*}
Now $\Ext^1_{\Db_\scS(X_I,\O)}(\Delta_{I,t}, \Delta_{I,t}) = 0$, so it only remains to see that the
first arrow is surjective. But under the canonical identification 
\[
\Hom_{\Db_\scS(X_I,\O)}(\Delta_{I,t}, E_{\mathrm{free}} \otimes_{\O} \Delta_{I,t}) = E_{\mathrm{free}}
\otimes_{\O} \Hom_{\Db_\scS(X_I,\O)}(\Delta_{I,t}, \Delta_{I,t}) = E_{\mathrm{free}}
\]
this map corresponds to the surjection $E_{\mathrm{free}} \twoheadrightarrow E = \Ext^1_{\Db_\scS(X_I,\O)}(\Delta_{I,t},  \cT_I^{\mathrm{pre}})$ chosen above. Hence this
map is indeed surjective.

We now turn to \eqref{eq:cond} for $i \geq 2$. Because $\cT_{J}$ has a costandard filtration, $\cT_I^{\mathrm{pre}}=j_! \cT_{J}$ has a filtration (in the triangulated sense) by objects of the form
$j_!\nabla_{J,u}$ for $u \in J$. (Note that $j_!\nabla_{J,u}$ need not be a perverse sheaf.) Hence it is enough to show that
\[
\Hom^i_{\Db_\scS(X_I,\O)}(\Delta_{I,t}, j_! \nabla_{J,u}) = 0 \quad \text{for all $u \in J$ and $i \geq 2$.}
\]
Consider the distinguished triangles in $\Db_{\scS}(X_I,\O)$
\begin{equation}
\label{eq:extses}
\cM \to j_! \nabla_{J,u} \to
\IC(X_{J}, \nabla_{J,u}) \xrightarrow{[1]}, \qquad
\IC(X_{J}, \nabla_{J,u}) \to j_* \nabla_{J,u} \to
\cN \xrightarrow{[1]}.
\end{equation}
(Here, $\IC(X_J,-)$ is the intermediate extension for the embedding $j$.)
Note that both $\cM$ and $\cN$ are supported on
$X_t$. Moreover, $\cN$ is perverse, and $\cM$ is concentrated in nonpositive perverse degrees. In particular, both of these objects are direct sums of objects of the form $(i^I_t)_! \mathcal{L}[\dim X_t +i]$, where $\mathcal{L}$ is a constant local system on $X_t$ and $i \geq 0$. It follows that 
$
\Hom^k_{\Db_\scS(X_I,\O)}(\Delta_{I,t}, \cM) = \Hom^k_{\Db_\scS(X_I,\O)}(\Delta_{I,t}, \cN) = 0
$
for $k > 0$. Applying $\Hom(\Delta_{I,t}, -)$ to the second triangle in
\eqref{eq:extses} and using the fact that 
\[
\Hom^i_{\Db_\scS(X_I,\O)}(\Delta_{I,t}, j_*\nabla_{J,u}) = \Hom^i_{\Db_\scS(X_I,\O)}(\Delta_{I,t}, \nabla_{I,u}) = 0
\]
for $i \ge 1$, we get that $\Ext^i_{\Db_\scS (X_I,\O)} \bigl( \Delta_{I,t}, \IC(X_{J}, \nabla_{J,u}) \bigr) = 0$ for $i \ge 2$. Then applying
the same functor to the first triangle in~\eqref{eq:extses} we get $\Ext^i_{\Db_\scS (X_I,\O)}(\Delta_{I,t},
 j_! \nabla_{J,u})= 0$ as claimed.

Property \eqref{eq:cond} implies that $(i_t^I)^! \cT_I$ is a perverse sheaf. To show that it is a shifted \emph{free} local system, it is enough to prove that $\F \bigl( (i_t^I)^! \cT_I \bigr)=(i_t^I)^! \F(\cT_I)$ is in nonnegative perverse degrees. However, $\F(\cT_I)$ is a perverse sheaf since $\cT_I$ has a standard filtration, and $(i_t^I)^!$ is left exact. Hence indeed $\pH^k \F \bigl( (i_t^I)^! \cT_I \bigr)=0$ for $k<0$, which finishes the proof.
\end{proof}

\subsection{Tilting perverse sheaves: properties}

In the case of coefficients $\F$, it is well known that
if $s \in \scS$ there exists a unique indecomposable tilting perverse
sheaf $\cT_{s}^\F$ (up to isomorphism) which is supported on
$\overline{X_s}$, and such that $i_s^* \cT_{s}^\F = \underline{\F}_{X_s}
[\dim X_s]$. Moreover, any tilting perverse sheaf in
$\Perv_{\scS}(X,\F)$ is a direct sum of objects $\cT_{s}^\F$ for $s \in
\scS$. (Although the case of coefficients $\F$ is not considered in
\cite{bbm}, the proofs generalize to this case. Alternatively, one can
can use the theory of highest weight categories, see~\cite{ringel}.)

\begin{prop}
\label{prop:properties-tilting}

\begin{enumerate}
\item For any tilting perverse sheaf $\cT$ in $\Perv_{\scS}(X,\O)$, $\F(\cT)$ is a perverse sheaf. It is tilting in $\Perv_{\scS}(X,\F)$.
\item If $\cT$ and $\cT'$ are tilting perverse sheaves in $\Perv_{\scS}(X,\O)$, the $\O$-module
\[
\Hom_{\Perv_{\scS}(X,\O)}(\cT,\cT')
\]
is free. Moreover, the natural morphism
\[
\F \otimes_{\O} \Hom_{\Perv_{\scS}(X,\O)}(\cT,\cT') \ \to \ \Hom_{\Perv_{\scS}(X,\F)} \bigl( \F(\cT), \F(\cT') \bigr)
\]
is an isomorphism.
\item A tilting perverse sheaf $\cT$ in $\Perv_{\scS}(X,\O)$ is indecomposable iff $\F(\cT)$ is indecomposable.
\item For any $s \in \scS$, there exists a unique indecomposable tilting perverse sheaf $\cT_s$ in $\Perv_{\scS}(X,\O)$ supported on $\overline{X_s}$ and such that $i_s^* \cT_s \cong \underline{\O}_{X_s} [\dim X_s]$. We have $\F(\cT_s) \cong \cT_{s}^\F$, and any tilting perverse sheaf in $\Perv_{\scS}(X,\O)$ is a direct sum of objects $\cT_s$ ($s \in \scS$).

\end{enumerate}

\end{prop}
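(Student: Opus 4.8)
The plan is to prove the four assertions in order, taking as known the $\F$-coefficient theory recalled just above (existence, uniqueness and classification of the indecomposable tilting sheaves $\cT_s^\F$) together with the standard fact that $\Ext^{>0}_{\Db_\scS(X,\F)}$ vanishes between tilting perverse sheaves over the field $\F$, and lifting each statement from $\F$ to $\O$ by means of part (2). For part (1): the functor $\F=\F\lotimes_{\O}(-)$ is triangulated and, by~\eqref{eqn:F-Delta-nabla}, carries $\Delta_s$ to $\Delta_s^\F$ and $\nabla_s$ to $\nabla_s^\F$, which are perverse. Applying $\F$ to the distinguished triangles defining a standard filtration of $\cT$, and using that an extension (in the triangulated sense) of two perverse sheaves is perverse --- immediate from the long exact sequence of perverse cohomology --- an induction shows that $\F(\cT)$ is perverse and acquires a standard filtration with subquotients among the $\Delta_s^\F$; the same argument applied to a costandard filtration shows that $\F(\cT)$ is tilting over $\F$.

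Part (2) is where the real work lies. Since modular reduction commutes with $R\Hom$ (as recalled at the start of this appendix) we have a natural isomorphism
\[ \F\lotimes_{\O} R\Hom_{\Db_\scS(X,\O)}(\cT,\cT') \;\simto\; R\Hom_{\Db_\scS(X,\F)}\bigl(\F(\cT),\F(\cT')\bigr). \]
By part (1) both $\F(\cT)$ and $\F(\cT')$ are tilting over $\F$, so the right-hand side is concentrated in degree $0$. Now $R\Hom_{\Db_\scS(X,\O)}(\cT,\cT')$ is a bounded complex of $\O$-modules with finitely generated cohomology, i.e.~a perfect complex over the discrete valuation ring $\O$; since $\O$ is hereditary every object of its bounded derived category of finitely generated modules splits as $\bigoplus_i H^i[-i]$, and for a finitely generated $\O$-module $M$ the complex $\F\lotimes_{\O} M[-i]$ is concentrated in degree $0$ if and only if $i=0$ and $M$ is free. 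It follows that $R\Hom_{\Db_\scS(X,\O)}(\cT,\cT')$ is itself concentrated in degree $0$ --- in particular $\Ext^{>0}(\cT,\cT')=0$ --- with $\Hom(\cT,\cT')$ free over $\O$; taking $H^0$ of the displayed isomorphism then yields the asserted base-change statement.

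Parts (3) and (4) are then largely formal. The functor $\F$ is conservative on $\Db_\scS(X,\O)$ (if $\F(\cM)=0$ then $\cM=0$, as one checks on stalks using that a nonzero perfect complex over the local ring $\O$ has nonzero reduction), so a nontrivial decomposition of $\cT$ forces one of $\F(\cT)$; conversely, part (2) provides a ring isomorphism identifying $\End(\F(\cT))$ with the reduction of the finite $\O$-algebra $\End(\cT)$ modulo the maximal ideal of $\O$, and since $\End(\cT)$ is complete for the maximal ideal, idempotents lift, so $\cT$ indecomposable over $\O$ implies $\F(\cT)$ indecomposable; this proves (3). For (4), observe first that a direct summand $\cT_0$ of a tilting $\O$-sheaf $\cT$ is again tilting: $i_t^!\cT_0$ is a direct summand of $i_t^!\cT$, which by Lemma~\ref{lem:nabla-flag} is a direct sum of copies of $\underline{\O}_{X_t}[\dim X_t]$, hence so is $i_t^!\cT_0$, giving a costandard filtration of $\cT_0$ (Lemma~\ref{lem:nabla-flag} again), and applying Verdier duality $\D$, which exchanges $\Delta_t$ and $\nabla_t$ (up to an immaterial Tate twist), produces the standard filtration. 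Together with the $\Hom$-finiteness over the complete local ring $\O$ from part (2), this makes the category of tilting perverse $\O$-sheaves Krull--Schmidt. To construct $\cT_s$, decompose the tilting sheaf of Proposition~\ref{prop:existence-tiltings} into indecomposables; since $\underline{\O}_{X_s}[\dim X_s]$ is indecomposable, exactly one summand $\cT_s$ has nonzero restriction to $X_s$, necessarily $i_s^*\cT_s\cong\underline{\O}_{X_s}[\dim X_s]$, and $\F(\cT_s)$ is then an indecomposable (part (3)) tilting (part (1)) $\F$-sheaf supported on $\overline{X_s}$ with $i_s^*\F(\cT_s)\cong\underline{\F}_{X_s}[\dim X_s]$, hence $\cong\cT_s^\F$. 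Finally, if $\cT$ is any indecomposable tilting $\O$-sheaf then $\F(\cT)\cong\cT_s^\F\cong\F(\cT_s)$ for a unique $s$; lifting an isomorphism $\F(\cT)\simto\F(\cT_s)$ to a morphism $f\colon\cT\to\cT_s$ (possible by the surjectivity in part (2)), the cone of $f$ is killed by $\F$ and therefore vanishes, so $\cT\cong\cT_s$. Applying this to the Krull--Schmidt summands of an arbitrary tilting $\O$-sheaf proves the direct-sum decomposition, and the same lifting argument gives the uniqueness of $\cT_s$.

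I expect part (2) to be the main obstacle: one must have the compatibility of modular reduction with $R\Hom$ for these constructible complexes, and then carry out the elementary but slightly delicate discrete-valuation-ring homological algebra showing that a perfect complex whose reduction modulo the maximal ideal is concentrated in a single cohomological degree is a free module placed in that degree. Once part (2) is in hand, the remaining points reduce to idempotent lifting, conservativity of $\F$, and the Krull--Schmidt property; the only extra care required is that, lacking the field-theoretic shortcuts, one must invoke Lemma~\ref{lem:nabla-flag} together with Verdier duality to see that direct summands of tilting $\O$-sheaves are tilting.
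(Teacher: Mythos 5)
Your argument is correct, and in parts (1), (3) and (4) it is essentially the paper's: (1) is the same immediate filtration argument, and for (3)--(4) the paper simply refers to \cite[Corollary 2.4.2]{rsw}, whose content (conservativity of $\F({-})$, idempotent lifting over the complete local ring $\O$, Krull--Schmidt, and lifting an isomorphism $\F(\cT)\simto\F(\cT_s)$ to a map whose cone is killed by $\F({-})$) is exactly what you reconstruct; your observation that a direct summand of a tilting $\O$-sheaf is tilting, via Lemma~\ref{lem:nabla-flag} and Verdier duality, is the right way to make the Krull--Schmidt step precise. The one place where your route genuinely differs is (2): the paper runs the standard/costandard filtration argument together with $\Ext^{>0}_{\Db_\scS(X,\O)}(\Delta_s,\nabla_t)=0$ over $\O$ itself (and again over $\F$), obtaining freeness of $\Hom(\cT,\cT')$ directly and then the base-change isomorphism, whereas you use the Ext-vanishing only over $\F$ to see that $R\Hom(\F\cT,\F\cT')$ sits in degree $0$, and then transport this back through $\F\lotimes_\O R\Hom_{\O}(\cT,\cT')\simeq R\Hom_{\F}(\F\cT,\F\cT')$ by the splitting of perfect complexes over the discrete valuation ring $\O$ plus Nakayama. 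Both arguments rest on the same compatibility of $\F({-})$ with $R\Hom$ (stated at the start of the appendix and used in Lemma~\ref{lem:Hom-proj-F}); yours has the small advantage of needing the Ext-vanishing only after reduction, at the cost of the (correct, but slightly delicate) bookkeeping with $\mathrm{Tor}_1$ showing that concentration of the reduction in a single degree forces the $\O$-complex to be a free module in degree $0$.
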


\begin{proof}
The proof of $(1)$ is immediate from~\eqref{eqn:F-Delta-nabla}.
$(2)$ follows from the facts that $\cT$ admits a standard filtration, and that $\cT'$ admits a costandard filtration, together with the property that
\[
\Ext^i_{\Db_\scS(X,\O)}(\Delta_s,\nabla_t)=0 \ \text{ if } i>0,
\]
and similarly for coefficients $\F$.
Then $(3)$ and $(4)$ can be proved as in \cite[Corollary 2.4.2]{rsw}.
\end{proof}

\begin{rmk}
From the uniqueness statement in Proposition \ref{prop:properties-tilting}$(4)$ one deduces that $\mathbb{D}_X(\cT_s) \cong \cT_s$ for any $s \in \scS$, where $\mathbb{D}_X$ denotes Verdier duality.
\end{rmk}

\subsection{Extension of scalars}

The results of this subsection are not used in this paper, but are needed in \cite{ar, ar2}. For simplicity, here we restrict to the case where $X$ is defined over $\C$.

Tilting perverse sheaves can also be defined when the coefficients are $\K$. We use similar notation as above in this case. We denote by $\Tilt_\scS(X,\E) \subset \Perv_\scS(X,\E)$ the additive full subcategory whose objects are tilting perverse sheaves.

\begin{lem}
The natural functors
\[
\Kb \Tilt_{\scS}(X,\E) \to \Db \Perv_\scS(X,\E) \to \Db_\scS(X,\E).
\]
are equivalences of categories.
\end{lem}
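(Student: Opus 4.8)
The plan is to verify the two equivalences separately, exploiting the highest-weight structure of $\Perv_\scS(X,\E)$ and the fact that each stratum is an affine space (so the category has a fully controlled $\Ext$-theory). For the second functor $\Db\Perv_\scS(X,\E) \to \Db_\scS(X,\E)$, the standard criterion applies: this is an equivalence provided $\Perv_\scS(X,\E)$ is a Serre subcategory of $\Db_\scS(X,\E)$ generating it as a triangulated category, and provided $\Ext$-groups computed in the abelian category agree with those computed in $\Db_\scS(X,\E)$. The generation is clear since the $\Delta_s$ (shifted $!$-pushforwards of constant sheaves on affine spaces) generate $\Db_\scS(X,\E)$ and are perverse. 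The agreement of $\Ext$-groups follows from the vanishing $\Ext^i_{\Db_\scS(X,\E)}(\Delta_s,\nabla_t) = 0$ for $i \neq 0$ (already used in the proof of Proposition~\ref{prop:properties-tilting}), which by a dévissage over standard/costandard filtrations shows that $\Db_\scS(X,\E)$ has a bounded t-structure whose heart is $\Perv_\scS(X,\E)$ and which has ``enough room'' — more precisely, one invokes the general result (Be{\u\i}linson's lemma) that the realization functor $\Db\mathsf{A} \to \mathsf{T}$ is an equivalence when $\mathsf{A}$ is the heart of a bounded t-structure on $\mathsf{T}$ and the $\Ext$-groups match. I would cite \cite[Lemma~3.1.5 and Corollary~3.1.6]{bgs} for the precise statement, noting that the hypotheses there reduce exactly to the $\Ext$-vanishing between standards and costandards, which holds here for all of $\E = \K, \O, \F$.

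For the first functor $\Kb\Tilt_\scS(X,\E) \to \Db\Perv_\scS(X,\E)$, the plan is to use the general principle that if $\mathsf{A}$ is an abelian category with enough tilting objects forming a subcategory $\mathsf{T}$ such that every object of $\mathsf{A}$ has a finite resolution by objects of $\mathsf{T}$ (in both directions, i.e.\ finite ``tilting-coresolution'' and finite ``tilting-resolution''), then the natural functor $\Kb\mathsf{T} \to \Db\mathsf{A}$ is an equivalence. Concretely: every $\Delta_s$ has a finite right resolution by tilting objects and every $\nabla_s$ has a finite left resolution by tilting objects; since every object of $\Perv_\scS(X,\E)$ has a finite filtration (after passing to $\Db$) with subquotients among the $\Delta_s$, it has a finite tilting coresolution, and dually a finite tilting resolution. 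Existence of these resolutions is where Proposition~\ref{prop:existence-tiltings} and Proposition~\ref{prop:properties-tilting} are used: Proposition~\ref{prop:existence-tiltings} gives an indecomposable tilting $\cT_s$ with $i_s^*\cT_s \cong \underline{\E}_{X_s}[\dim X_s]$, and one shows by induction on the support that the ``kernel'' of a surjection $\cT_s \twoheadrightarrow \nabla_s$ (or of $\Delta_s \hookrightarrow \cT_s$, dually) again has a standard (resp.\ costandard) filtration with strata strictly smaller, terminating the recursion. Then one checks that $\Hom_{\Kb\Tilt}(\cT,\cT'[n]) \to \Hom_{\Db\Perv}(\cT,\cT'[n])$ is an isomorphism for all $n$: for $n < 0$ both sides vanish, for $n = 0$ it is the identity, and for $n > 0$ the left side vanishes trivially while the right side vanishes because $\Ext^{>0}_{\Db_\scS(X,\E)}(\cT,\cT') = 0$ between tilting objects (a consequence of $\Ext^{>0}(\Delta_s,\nabla_t) = 0$ and the filtrations). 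Full faithfulness plus essential surjectivity onto a generating set then gives the equivalence.

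The main obstacle I anticipate is the case $\E = \O$, where $\Perv_\scS(X,\O)$ is not a finite-length category and the classical highest-weight-category arguments (e.g.\ \cite{ringel, bbm}) do not apply verbatim. The resolution is to bootstrap from the field cases via the exact, conservative modular-reduction functor $\F(-)$ together with $\K(-)$: Proposition~\ref{prop:properties-tilting} already establishes that $\F$ sends tiltings to tiltings, induces isomorphisms on $\Hom$-spaces after $\otimes_\O\F$, and detects indecomposability, so one can lift the finiteness of tilting resolutions and the $\Ext$-vanishing from $\F$ (and $\K$) to $\O$ by the usual ``free $\O$-module which becomes an isomorphism after $\F\otimes_\O(-)$ is itself an isomorphism'' argument (as in the proof of Theorem~\ref{thm:equivalence-tilting}). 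A secondary technical point is making sure the resolutions are genuinely \emph{bounded} so that one lands in $\Kb$ and $\Db$ rather than unbounded categories; this follows because $\scS$ is finite and the recursion strictly decreases the support dimension at each step.
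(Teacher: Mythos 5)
Your overall strategy coincides with what the paper actually does: its proof is a three-line sketch citing \cite[Proposition~1.5]{bbm} for $\E=\K$, observing that the same proof applies verbatim for $\F$, and asserting that the appendix's results on tilting $\O$-sheaves (Propositions~\ref{prop:existence-tiltings} and~\ref{prop:properties-tilting}), together with \cite[Corollary~2.3.4]{rsw}, make the arguments go through for $\O$. Your treatment of the first functor $\Kb\Tilt_\scS(X,\E)\to\Db\Perv_\scS(X,\E)$ is essentially that argument spelled out, and it is sound: tilting objects generate (via $\cT_s\twoheadrightarrow\nabla_s$, whose kernel is costandard-filtered with smaller strata, and the dual injections), $\Hom_{\Db_\scS(X,\E)}(\cT,\cT'[n])=0$ for $n\neq 0$ by d\'evissage through the $\Delta$/$\nabla$-filtrations and $\Hom_{\Db_\scS(X,\E)}(\Delta_s,\nabla_t[i])=\delta_{s,t}\delta_{i,0}\,\E$, and the $\O$-versions of these inputs are exactly what Proposition~\ref{prop:properties-tilting} supplies.

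The genuine gap is in your justification of the second functor, the realization functor $\Db\Perv_\scS(X,\E)\to\Db_\scS(X,\E)$. Its being an equivalence does not follow from the vanishing $\Hom_{\Db_\scS(X,\E)}(\Delta_s,\nabla_t[i])=0$ for $i\neq 0$ ``by d\'evissage over standard/costandard filtrations'': an arbitrary perverse sheaf need not admit a standard or costandard filtration, so that vanishing says nothing directly about the comparison of $\Ext$-groups for arbitrary pairs of objects of the heart, and the hypotheses of the Be{\u\i}linson-type criterion do not ``reduce exactly'' to it. The proofs you are gesturing at (\cite{bgs} over a field, and \cite[\S 2.3]{rsw} over $\O$, the latter being precisely what the paper cites) require in addition the construction of enough projective objects of $\Perv_\scS(X,\E)$ admitting standard filtrations and satisfying $\Hom_{\Db_\scS(X,\E)}(P,M[i])=0$ for $i>0$ and $M$ perverse (equivalently, an effaceability argument). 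This ingredient also cannot be obtained by the bootstrap you propose for $\E=\O$: for arbitrary perverse $\O$-sheaves the $\Hom$-spaces need not be free and $\F(-)$ is not t-exact, so the ``free $\O$-module which becomes an isomorphism after $\F\otimes_\O(-)$'' argument has no purchase here (it is available for tilting objects precisely because of Proposition~\ref{prop:properties-tilting}(2), but not beyond). Once you either reproduce that projective machinery over $\O$ or simply invoke \cite[Corollary~2.3.4]{rsw} (and \cite{bgs}, \cite{bbm} for field coefficients) for the second equivalence, the remainder of your argument is complete.
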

\begin{proof}[Proof sketch]
For $\E = \K$, this was proved in~\cite[Proposition~1.5]{bbm}.  The same proof applies verbatim for $\E = \F$.  Thanks to the results above (see also~\cite[Corollary~2.3.4]{rsw}), these arguments go through for $\E = \O$ as well.
\end{proof}

The preceding lemma makes it possible to interpret extension of scalars as the derived functor of a functor between categories of perverse sheaves.  Because $\F({-}) \colon \Db_\scS(X,\O) \to \Db_\scS(X,\F)$ is right t-exact, it gives rise to a right exact functor of abelian categories $\F^0 := \pH^0 \circ \F({-}) \colon \Perv_\scS(X,\O) \to \Perv_\scS(X,\F)$.  Since $\Perv_\scS(X,\O)$ has enough projectives (see~\cite[Corollary~2.3.3]{rsw}), we can form its left derived functor $L \F^0$.  On the other hand, by Proposition~\ref{prop:properties-tilting}(1), $\F^0$ restricts to an additive functor $\Tilt_\scS(X,\O) \to \Tilt_\scS(X,\F)$.

\begin{lem}
The following diagram commutes up to isomorphism of functors:
\[
\xymatrix@C=1.5cm{
\Kb \Tilt_\scS(X,\O) \ar[r]^-{\sim} \ar[d]_{\Kb \F^0} &
  \Db \Perv_\scS(X,\O) \ar[r]^-{\sim} \ar[d]_{L \F^0} &
  \Db_\scS(X,\O) \ar[d]^{\F({-})} \\
\Kb \Tilt_\scS(X,\F) \ar[r]^-{\sim} &
  \Db \Perv_\scS(X,\F) \ar[r]^-{\sim} &
  \Db_\scS(X,\F) }
\]
There is a similar commutative diagram for $\K({-})$.
\end{lem}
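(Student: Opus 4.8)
The plan is to reduce everything to the explicit model $\Kb\Tilt_\scS(X,\E)$, where $\F^0$ acts termwise, and to identify the derived functor $L\F^0$ with "apply $\F^0$ in each degree" on bounded complexes of tilting sheaves. Throughout I would discuss only the $\F({-})$ diagram; the case of $\K({-})$ is obtained in the same way, but is easier since $\K$ is flat over $\O$, so that $\K({-})$ is t-exact, $\K^0$ is exact, $L\K^0 = \K^0$, and every perverse sheaf is $\K^0$-acyclic. Note also that it suffices to prove that the two inner squares commute, since the outer rectangle is their pasting.

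First I would treat the right-hand square, i.e. produce a natural isomorphism between the two composites $\Db\Perv_\scS(X,\O)\to\Db_\scS(X,\F)$. Given $M\in\Perv_\scS(X,\O)$, choose a bounded projective resolution $P^\bullet\to M$ (which exists by \cite[Corollary~2.3.3]{rsw}, using that $\F^0$ has cohomological amplitude at most $1$, coming from the resolution $\O\xrightarrow{\ell}\O\to\F$). By definition $L\F^0(M)$ is the complex $\F^0(P^\bullet)$, and its image in $\Db_\scS(X,\F)$ under the natural equivalence is just that complex of perverse sheaves. On the other hand $\F(M)\cong\F(P^\bullet)$ in $\Db_\scS(X,\F)$ since $P^\bullet\cong M$ in $\Db_\scS(X,\O)$ and $\F({-})$ is triangulated; and $\F(P^i)$ is perverse by \cite[Proposition~2.4.1]{rsw}, so $\F(P^i)=\F^0(P^i)$. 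Hence both composites send $M$ to $\F^0(P^\bullet)$, and functoriality of projective resolutions up to homotopy upgrades this to an isomorphism of functors. As a byproduct one gets $L_i\F^0(M)\cong\pH^{-i}\F(M)$ for all $M$; applying $\F({-})$ to the triangle $M\xrightarrow{\ell}M\to\F(M)\xrightarrow{[1]}$ and taking perverse cohomology shows this vanishes for $i\ge 2$ and equals $\ker(\ell\colon M\to M)$ for $i=1$. Since $\ell$ acts injectively on each $\Delta_s$ and $\ell$-torsion-freeness is stable under extensions, it follows that \emph{every tilting perverse sheaf over $\O$ is $\F^0$-acyclic}.

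Next I would treat the left-hand square. Under the equivalence $\Kb\Tilt_\scS(X,\O)\xrightarrow{\sim}\Db\Perv_\scS(X,\O)$, a bounded complex $\cT^\bullet$ of tilting sheaves has $\F^0$-acyclic terms by the previous paragraph, so the standard fact that left derived functors are computed by complexes of acyclic objects — proved by induction on the length of $\cT^\bullet$ using the brutal truncation triangles $\sigma_{<n}\cT^\bullet\to\cT^\bullet\to\cT^n[-n]\xrightarrow{[1]}$ together with the equality $L\F^0(\cT)=\F^0(\cT)$ for a single tilting $\cT$ — gives a natural isomorphism $L\F^0(\cT^\bullet)\cong\F^0(\cT^\bullet)$ in $\Db\Perv_\scS(X,\F)$, where the right-hand side is the termwise application, that is, the image of $\Kb\F^0(\cT^\bullet)$ under $\Kb\Tilt_\scS(X,\F)\xrightarrow{\sim}\Db\Perv_\scS(X,\F)$ (here one uses that $\F^0$ restricts to $\Tilt_\scS(X,\O)\to\Tilt_\scS(X,\F)$, which is Proposition~\ref{prop:properties-tilting}(1)). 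This makes the left square commute, and pasting the two squares gives the whole diagram, with the analogous statement for $\K({-})$ obtained identically.

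The only non-formal input is Proposition~\ref{prop:properties-tilting}(1) — equivalently, the $\F^0$-acyclicity of tilting sheaves — and this is already available. The genuine care needed is purely bookkeeping: the well-known failure of strict functoriality of the realization functors $\Db\Perv_\scS(X,\E)\to\Db_\scS(X,\E)$, which is exactly why it is convenient to work in the model $\Kb\Tilt_\scS(X,\E)$ where $\F^0$ is termwise; and the verification that the comparison maps built from projective resolutions (resp. brutal truncations) are natural. I expect this last point to be the main, though routine, obstacle.
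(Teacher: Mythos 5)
Your two-square decomposition matches the paper's, and your treatment of the left-hand square---establishing $\F^0$-acyclicity of tilting sheaves and then proceeding by brutal truncation on a bounded complex of tiltings---is essentially equivalent to the paper's argument, which invokes the universal natural transformation $\varepsilon \colon L\F^0 \circ Q_\O \to Q_\F \circ \Kb\F^0$ and observes that it is invertible on $\F^0$-acyclic objects, from which one concludes because $\Kb\Tilt_\scS(X,\O)$ is generated by such. Your remarks about $\K(-)$ being easier by flatness are also correct.

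The real divergence is the right-hand square, and there you have a genuine gap that you have noticed but misclassified as routine. Your argument reduces to asserting that $\F(\mathrm{real}_\O(P^\bullet)) \cong \mathrm{real}_\F(\F^0(P^\bullet))$, \emph{naturally} in $M$, where $P^\bullet \to M$ is a projective resolution and $\mathrm{real}_\E$ denotes the equivalence $\Db\Perv_\scS(X,\E) \to \Db_\scS(X,\E)$. This assertion is precisely the compatibility you are trying to prove, and it does not follow from ``$\F(P^i)=\F^0(P^i)$ and $\F(-)$ is triangulated.'' The realization of a bounded complex is an iterated cone; cones are not functorial; and the comparison map between these two iterated cones is not canonically produced by the homotopy-uniqueness of projective resolutions, which only controls maps at the level of $\Kb$, not the choices made inside $\Db_\scS$. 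Supplying such a natural comparison is what the filtered derived category is for, and here a second difficulty appears: $\F\lotimes_\O(-)$ is \emph{not} exact on ordinary sheaves, so it does not directly lift to the filtered derived category, and one cannot apply \cite[Lemma~A.7.1]{beilinson} to it as is. The paper's fix is to pass to the right adjoint, restriction of scalars along $\O\to\F$, which \emph{is} exact on sheaves and therefore lifts, and then quote Beilinson's lemma. Your proof would either need to carry out that adjunction trick, or produce an honest filtered lift of one of the relevant functors by other means; the ``routine bookkeeping'' you defer is exactly this non-trivial step.
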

\begin{proof}
For the right-hand square of this diagram, by adjunction, we may instead check the commutativity of the corresponding diagram for the restriction-of-scalars functor $\Db_\scS(X,\F) \to \Db_\scS(X,\O)$ coming from the map $\O \to \F$. The latter functor is exact on (ordinary, not perverse) sheaves, so it lifts to a suitable ``filtered'' version of $\Db_\scS(X,\F)$. In other words, it conforms to the setting of~\cite[Lemma~A.7.1]{beilinson}, which asserts the desired commutativity.

We can now recast Proposition~\ref{prop:properties-tilting}(1) as follows: if $\cT \in \Perv_\scS(X,\O)$ is tilting, then $L\F^0(\cT)$ is perverse and tilting.

For the left-hand square above, commutativity is a consequence of general properties of derived functors.  In more detail, recall that the derived functor $L\F^0$ comes equipped with a natural transformation $\varepsilon \colon L\F^0 \circ Q_\O \to Q_\F \circ \Kb \F^0$, where $Q_\E \colon \Kb \Perv_\scS(X,\E) \to \Db\Perv_\scS(X,\E)$ is the obvious functor for $\E=\O$ or $\F$.  Moreover, for any object $\cF \in \Perv_\scS(X,\O)$ such that $L\F^0(\cF)$ is perverse, the morphism $\varepsilon_{\cF} \colon L\F^0(Q_\O(\cF)) \to Q_\F(\Kb \F^0(\cF))$ is an isomorphism. In particular, if $\cT$ is tilting, then $\varepsilon_{\cT}$ is an isomorphism. Since $\Kb\Tilt_\scS(X,\O)$ is generated as a triangulated subcategory of $\Kb\Perv_\scS(X,\O)$ by tilting perverse sheaves, $\varepsilon$ becomes an isomorphism when we restrict the domain of $Q_\E$ to $\Kb\Tilt_\scS(X,\E)$.

The argument for $\K({-})$ is similar and will be omitted.
\end{proof}

\subsection{Radon transform}
\label{ss:radon}

In this subsection we recall the formalism of Radon transform (see \cite{bb, bbm, yun}) and check that it generalizes to coefficients in $\O$. We follow the approach in \cite{yun} closely.

We let $B$ be an algebraic group with maximal torus $T$ and
consider two $B$-varieties $X$ and $Y$ with finitely many $B$-orbits,
each of which is isomorphic to an affine space.
We denote by
\[
X = \bigsqcup_{s \in \scS} X_s, \qquad Y = \bigsqcup_{t \in \scT} Y_t
\]
the stratifications by $B$-orbits. 
In the {\'e}tale case, 
these stratifications automatically satisfy condition~\eqref{eqn:strata-assumption}. 
We fix an open $B$-stable subvariety $U \subset X
\times Y$, and denote the natural morphisms as follows:
\[
\xymatrix{
X & U \ar[l]_-{\overleftarrow{u}} \ar[r]^-{\overrightarrow{u}} & Y.\\
}
\]
We will assume that the following
conditions are satisfied (see \cite[\S 4.1]{yun}):
\begin{enumerate}
\item for any $s \in \scS$ (resp.~$t \in \scT$), $X_s$ (resp.~$Y_t$) contains a unique $T$-fixed point $x_s$ (resp.~$y_t$);
\item for each $s \in \scS$ (resp.~$t \in \scT$), the open subset $Y^s:=\overrightarrow{u}(\overleftarrow{u}^{-1}(x_s)) \subset Y$ (resp.~$X^t:=\overleftarrow{u}(\overrightarrow{u}^{-1}(y_t)) \subset X$) contains a unique $T$-fixed point $y_{\hat{s}}$ for some $\hat{s} \in \scT$ (resp.~$x_{\hat{t}}$ for some $\hat{t} \in \scS$) and contracts to that fixed point under some one-parameter subgroup $\Gm \subset T$ (depending on $s$ or $t$);
\item for each $s \in \scS$ we have $\dim X_s=\codim_Y Y_{\hat{s}}$.
\end{enumerate}
It follows in particular from these assumptions that $s \mapsto \hat{s}$ and $t \mapsto \hat{t}$ are inverse bijections between $\scS$ and $\scT$. Note also that if $\dim X=\dim Y$ then these assumptions are symmetric in $X$ and $Y$. For $s \in \scS$ and $t\in \scT$, we denote by $i_s^X : X_s \hookrightarrow X$ and $i_t^Y : Y_t \hookrightarrow Y$ the inclusions.

\begin{rmk}
Our assumptions are satisfied in the setting of~\S\ref{ss:reminder-radon} by \cite[\S5.1]{yun}.
\end{rmk}

We set
\begin{multline*}
\mathsf{R}_{X \to Y} := \overrightarrow{u}_! \overleftarrow{u}^* [\dim Y] \colon \Db_{\scS}(X,\O) \to \Db_{\scT}(Y,\O), \\
\mathsf{R}_{X \leftarrow Y} := \overleftarrow{u}_* \overrightarrow{u}^! [-\dim Y] \colon \Db_{\scT}(Y,\O) \to \Db_{\scS}(X,\O).
\end{multline*}

\begin{prop}
\label{prop:radon-nabla-delta}

For any $s \in \scS$, $t \in \scT$ there exist isomorphisms
\begin{equation*}
\mathsf{R}_{X \to Y}(\nabla_{s}) \cong \Delta_{\hat{s}}, \qquad
\mathsf{R}_{X \leftarrow Y}(\Delta_{t}) \cong \nabla_{\hat{t}}.
\end{equation*}

\end{prop}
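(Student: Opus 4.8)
The plan is to follow Yun's argument in~\cite{yun} step by step, checking only that each step survives the passage to coefficients in $\O$ (and, in the étale case, remains valid over $\Fpb$). It will suffice to prove the first isomorphism $\mathsf{R}_{X\to Y}(\nabla_{s})\cong\Delta_{\hat s}$; the second I would obtain by the same computation with the roles of $*$ and $!$, and of stalks and costalks, interchanged.

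The starting point is that, since $U$ is open in $X\times Y$, for each $t\in\scT$ the fibre $\overrightarrow{u}^{-1}(y_{t})$ equals $U\cap(X\times\{y_{t}\})$, on which $\overleftarrow{u}$ restricts to an isomorphism onto the open subvariety $X^{t}\subset X$. Proper base change then gives
\[
(i_{t}^{Y})^{*}\,\mathsf{R}_{X\to Y}(\nabla_{s}) \;\cong\; R\Gamma_{c}\bigl(X^{t},\,\nabla_{s}|_{X^{t}}\bigr)[\dim Y].
\]
Now $X^{t}$ is $T$-stable (as $U$ is $B$-stable and $y_{t}$ is $T$-fixed), and by assumption~(2) it contracts, under a one-parameter subgroup $\Gm\subset T$, to its unique $T$-fixed point $x_{\hat t}\in X_{\hat t}$; moreover $\nabla_{s}$, being $B$-equivariant, is $\Gm$-equivariant, and so is $\nabla_{s}|_{X^{t}}$. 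I would then invoke the contraction lemma — for $\Gm$-monodromic complexes on a variety contracting to a fixed point — to identify $R\Gamma_{c}(X^{t},\nabla_{s}|_{X^{t}})$ with the $!$-costalk $(i_{x_{\hat t}}^{X})^{!}\nabla_{s}$. Since $\nabla_{s}=(i_{s}^{X})_{*}\underline{\O}_{X_{s}}[\dim X_{s}]$ is costandard, this costalk vanishes unless $\hat t=s$, that is, unless $t=\hat s$; and for $t=\hat s$ a short computation, using the equality $\dim X_{s}=\codim_{Y}Y_{\hat s}$ of assumption~(3), gives $(i_{x_{s}}^{X})^{!}\nabla_{s}[\dim Y]\cong\underline{\O}[\dim Y_{\hat s}]$.

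Thus $\mathsf{R}_{X\to Y}(\nabla_{s})$ has zero stalks outside the stratum $Y_{\hat s}$, while its restriction to $Y_{\hat s}$ — a locally constant, indeed (by $B$-equivariance) constant complex on an affine space — is $\underline{\O}_{Y_{\hat s}}[\dim Y_{\hat s}]$. A constructible complex whose stalks vanish off a single stratum is the $!$-extension of its restriction to that stratum, by the recollement formalism; hence $\mathsf{R}_{X\to Y}(\nabla_{s})\cong(i_{\hat s}^{Y})_{!}\underline{\O}_{Y_{\hat s}}[\dim Y_{\hat s}]=\Delta_{\hat s}$. For the second isomorphism I would compute $(i_{x_{s}}^{X})^{!}\,\mathsf{R}_{X\leftarrow Y}(\Delta_{t})$ using the Verdier dual of proper base change and the $*$-version of the contraction lemma applied to the contracting subvariety $Y^{s}\subset Y$, finding it isomorphic to $(i_{y_{\hat s}}^{Y})^{*}\Delta_{t}[-\dim Y]$; this vanishes unless $s=\hat t$, in which case it equals $\underline{\O}[-\dim X_{\hat t}]$, matching $(i_{x_{s}}^{X})^{!}\nabla_{\hat t}$, so that $\mathsf{R}_{X\leftarrow Y}(\Delta_{t})\cong\nabla_{\hat t}$ by the recollement characterisation of costandard objects through their $!$-costalks.

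The only place where the coefficient ring really enters is the contraction-lemma step and the elementary facts underlying it: that affine space is $\O$-cohomologically trivial (here one uses $p\ne\ell$ in the étale case), that $\Gm$-equivariance is preserved by restriction to $\Gm$-stable subvarieties, and that the $(i^{*},i^{!})$ recollement formalism is coefficient-independent. All of these hold over $\O$ in both the classical-topological and the étale settings, and none of Yun's arguments use the field property of the coefficients or any semisimplicity, so his proof should go through essentially verbatim. I expect this verification — and nothing deeper — to be the only real work, since the statement lives at the level of derived categories and no perversity of $\Delta$ or $\nabla$ over $\O$ is needed here.
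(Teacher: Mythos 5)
Your argument is correct and follows essentially the same route as the paper's own proof (which is copied from Yun~\cite{yun}): compute the stalk of $\mathsf{R}_{X \to Y}(\nabla_s)$ at each $T$-fixed point $y_v$ via proper base change, identify the fiber $\overrightarrow{u}^{-1}(y_v)$ with the open contracting subvariety $X^v$, apply the contraction lemma (the paper cites~\cite[Proposition~1]{soergel-cohom}) to convert $\mathbb{H}^\bullet_c(X^v,\nabla_s|_{X^v})$ into the $!$-costalk of $\nabla_s$ at $x_{\hat v}$, and conclude using assumption~(3) and constructibility/recollement, with the second isomorphism obtained by the dual computation exactly as the paper's ``the proof of the second one is similar.'' The only cosmetic slip is your displayed base-change formula, whose left-hand side should be the stalk at the point $y_t$ rather than the $(i_t^Y)^*$-restriction to the whole stratum; the rest of your write-up makes clear this is what you intend.
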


\begin{proof}
This proof is copied from \cite[Proposition 4.1.3]{yun}. We only prove the first isomorphism; the proof of the second one is similar. For any $v \in \scT$ (resp.~$w \in \scS$), let us denote by $j_v^Y \colon \{y_v\} \hookrightarrow Y$ (resp.~$j_w^X \colon \{x_w\} \hookrightarrow X$) the inclusion. As $\mathsf{R}_{X \to Y}(\nabla_{s})$ is $\scT$-constructible, it is sufficient to show that for $v \in \scT$ we have
\begin{equation}
\label{eqn:fiber-radon}
(j_v^Y)^* \mathsf{R}_{X \to Y}(\nabla_{s}) \cong \begin{cases}
\O [\dim Y_u] & \text{if $v=\hat{s}$;} \\
0 & \text{if $u \neq \hat{s}$.}
\end{cases}
\end{equation}
By definition and the proper base change theorem we have
\begin{multline*}
(j_v^Y)^* \mathsf{R}_{X \to Y}(\nabla_{s}) = (j_v^Y)^* \overrightarrow{u}_! \overleftarrow{u}^* (\nabla_{s}) [\dim Y] \\
\cong \mathbb{H}^\bullet_c \bigl( \overrightarrow{u}^{-1}(y_v), \overleftarrow{u}^* (\nabla_{s})_{| \overrightarrow{u}^{-1}(y_v)} \bigr) [\dim Y].
\end{multline*}
Hence, again by definition, we obtain
\[
(j_v^Y)^* \mathsf{R}_{X \to Y}(\nabla_{s}) \cong \mathbb{H}^\bullet_c(X^v, \nabla_s{}_{|X^v}) [\dim Y].
\]
Now, using assumption $(2)$ and \cite[Proposition 1]{soergel-cohom} we have
\[
\mathbb{H}^{\bullet}_c(X^v, \nabla_s{}_{|X^v}) [\dim Y] \cong k_{\hat{v}}^! \nabla_s{}_{|X^v} [\dim Y],
\]
where $k_{\hat{v}} \colon \{x_{\hat{v}}\} \hookrightarrow X^v$ is the inclusion. As $X^v \subset X$ is open, we finally obtain an isomorphism
\[
(j_v^Y)^* \mathsf{R}_{X \to Y}(\nabla_{s}) \cong (j_{\hat{v}}^X)^! \nabla_{s} [\dim Y].
\]
Now, let $l_{\hat{v}} \colon \{x_{\hat{v}}\} \hookrightarrow X_{\hat{v}}$ be the inclusion. We have
\[
(j_{\hat{v}}^X)^! \nabla_{s} [\dim Y] \cong l_{\hat{v}}^! (i_{\hat{v}}^X)^! \nabla_{s} [\dim Y].
\]
If $s \neq \hat{v}$ (or equivalently if $v \neq \hat{s}$) the right-hand side is zero, which proves \eqref{eqn:fiber-radon} in this case. If $s=\hat{v}$, the right-hand side identifies with
\[
l_{\hat{v}}^! \underline{\O}{}_{X_{\hat{v}}} [\dim Y +\dim X_{\hat{v}}] \cong \O [\dim Y -\dim X_{\hat{v}}].
\]
Using assumption $(3)$, this finishes the proof of \eqref{eqn:fiber-radon}.
\end{proof}

\begin{cor}
\label{cor:radon}

\begin{enumerate}
\item The functors $\mathsf{R}_{X \to Y}$ and $\mathsf{R}_{X \leftarrow Y}$ are quasi-inverse equivalences of categories between $\Db_{\scS}(X,\O)$ and $\Db_{\scT}(Y,\O)$.
\item For any tilting perverse sheaf $\cT$ in $\Perv_{\scS}(X,\O)$, the object $\mathsf{R}_{X \to Y}(\cT)$ is a projective perverse sheaf in $\Perv_{\scT}(Y,\O)$.
\item For any projective perverse sheaf $\cP$ in $\Perv_{\scT}(Y,\O)$, the object $\mathsf{R}_{X \leftarrow Y}(\cP)$ is a tilting perverse sheaf in $\Perv_{\scS}(X,\O)$.
\end{enumerate}

\end{cor}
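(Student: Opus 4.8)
The plan is to run the field-coefficient arguments of~\cite{bb, bbm, yun} and to bootstrap the case $\E=\O$ from the (known) cases $\E=\K$ and $\E=\F$. Two general facts will do all the work of passing from fields to $\O$. First, $\F({-}) = \F \lotimes_\O ({-})$ is $\lotimes_\O$ with the two-term complex $[\O \xrightarrow{\ell} \O]$, hence commutes with every triangulated functor and with every adjunction morphism; in particular it commutes with $\overleftarrow{u}^*$, $\overleftarrow{u}_*$, $\overrightarrow{u}_!$, $\overrightarrow{u}^!$, and so with $\mathsf{R}_{X \to Y}$ and $\mathsf{R}_{X \leftarrow Y}$, even though the latter uses the non-proper push-forward $\overleftarrow{u}_*$. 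Second, if $\mathcal{C} \in \Db_\scS(X,\O)$ satisfies $\F(\mathcal{C}) = 0$ then $\mathcal{C} = 0$, since each $\mathcal{H}^n(\mathcal{C})$ is a finitely generated $\O$-module on which $\ell$ is invertible, hence zero by Nakayama. I would also use the derived tensor--hom adjunction along $\O \to \F$: for $\mathcal{A}, \mathcal{B} \in \Db_\scT(Y,\O)$ one has $\F \lotimes_\O R\Hom(\mathcal{A},\mathcal{B}) \cong R\Hom(\F(\mathcal{A}), \F(\mathcal{B}))$ (apply $R\Hom(\mathcal{A},{-})$ to the triangle defining $\F(\mathcal{B})$, then adjunction). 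With these in hand, part~(1) is quick: the pairs $(\overleftarrow{u}^*, \overleftarrow{u}_*)$ and $(\overrightarrow{u}_!, \overrightarrow{u}^!)$ make $\mathsf{R}_{X \to Y}$ left adjoint to $\mathsf{R}_{X \leftarrow Y}$; applying $\F({-})$ to the unit $\id \to \mathsf{R}_{X \leftarrow Y} \circ \mathsf{R}_{X \to Y}$ and to the counit $\mathsf{R}_{X \to Y} \circ \mathsf{R}_{X \leftarrow Y} \to \id$ recovers the corresponding morphisms over $\F$, which are isomorphisms by the field case, so the cones of the unit and counit vanish after $\F({-})$, hence vanish, and $\mathsf{R}_{X \to Y}$, $\mathsf{R}_{X \leftarrow Y}$ are quasi-inverse equivalences. (Alternatively one could argue as in~\cite{yun}: the unit and counit are isomorphisms on the $\nabla_s$, which generate $\Db_\scS(X,\O)$, once one refines the computation in the proof of Proposition~\ref{prop:radon-nabla-delta} to track the adjunction map.)

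For part~(2), let $\cT$ be a tilting object of $\Perv_\scS(X,\O)$. Applying $\mathsf{R}_{X \to Y}$ to a costandard filtration of $\cT$ and using $\mathsf{R}_{X \to Y}(\nabla_s) \cong \Delta_{\hat{s}}$ (Proposition~\ref{prop:radon-nabla-delta}), an induction along the filtration -- using that an extension of two perverse sheaves is perverse -- shows that $\mathsf{R}_{X \to Y}(\cT)$ is a perverse sheaf admitting a standard filtration. To see it is projective I would fix $\cN \in \Perv_\scT(Y,\O)$ and set $C := R\Hom_{\Db_\scT(Y,\O)}(\mathsf{R}_{X \to Y}(\cT), \cN)$. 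Since both arguments are perverse, $C$ lies in nonnegative degrees. On the other hand
\[
\F \lotimes_\O C \cong R\Hom_{\Db_\scT(Y,\F)}\bigl( \mathsf{R}_{X \to Y}(\F(\cT)), \F(\cN) \bigr),
\]
and here $\F(\cT)$ is tilting in $\Perv_\scS(X,\F)$ by Proposition~\ref{prop:properties-tilting}(1), so $\mathsf{R}_{X \to Y}(\F(\cT))$ is projective perverse in $\Perv_\scT(Y,\F)$ by the field case of part~(2), while $\F(\cN) \in {}^{p}\Db_\scT(Y,\F)^{\leq 0}$; hence the right-hand side lies in nonpositive degrees. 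A bounded complex of finitely generated $\O$-modules lying in nonnegative degrees whose modular reduction lies in nonpositive degrees must be concentrated in degree $0$ (its top cohomology survives $\F \lotimes_\O ({-})$ by Nakayama). Therefore $\Ext^{>0}_{\Db_\scT(Y,\O)}(\mathsf{R}_{X \to Y}(\cT), \cN) = 0$ for all $\cN$, i.e.\ $\mathsf{R}_{X \to Y}(\cT)$ is projective.

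For part~(3), let $\cP$ be projective in $\Perv_\scT(Y,\O)$; it admits a standard filtration, as in the highest weight formalism over $\O$ (cf.~\cite{rsw}). Applying $\mathsf{R}_{X \leftarrow Y}$ and $\mathsf{R}_{X \leftarrow Y}(\Delta_t) \cong \nabla_{\hat{t}}$ (Proposition~\ref{prop:radon-nabla-delta}), the same induction shows $\mathsf{R}_{X \leftarrow Y}(\cP)$ is perverse with a costandard filtration. To produce a standard filtration as well it suffices, by the usual highest weight criterion, to check $\Ext^{>0}_{\Db_\scS(X,\O)}(\mathsf{R}_{X \leftarrow Y}(\cP), \nabla_t) = 0$ for all $t$; I would repeat the argument of part~(2), now with $\F(\cP)$ projective in $\Perv_\scT(Y,\F)$ (which follows from $\cP$ being projective over $\O$, using tensor--hom adjunction together with the fact that an $\F$-perverse sheaf is perverse over $\O$; see also~\cite{rsw}) and $\mathsf{R}_{X \leftarrow Y}(\F(\cP))$ tilting in $\Perv_\scS(X,\F)$ by the field case of part~(3), hence with a standard filtration. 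So $\mathsf{R}_{X \leftarrow Y}(\cP)$ is tilting.

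I do not expect a serious obstacle: there is no new geometric input beyond Proposition~\ref{prop:radon-nabla-delta}, and the whole point is the integral bookkeeping. The two places to be a little careful are that $\mathsf{R}_{X \leftarrow Y}$ uses the non-proper push-forward $\overleftarrow{u}_*$ (harmless, since $\F({-})$ and $\K({-})$ commute with all triangulated operations) and that the argument rests on the integral versions of the tilting and projective theory of highest weight categories being in place -- precisely the content of Appendix~\ref{sec:tilting} and of~\cite{rsw}. The analogous statements for $\K$-coefficients are the classical ones of~\cite{bb, bbm, yun} and require nothing new.
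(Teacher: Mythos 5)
Your proposal reaches the right statements, but by a genuinely different route from the paper. The paper argues directly over $\O$ throughout: part~(1) is the adjunction-unit argument on the generators $\nabla_s$ (which you mention only as an aside), part~(2) is a pure adjunction computation $\Ext^i(\mathsf{R}_{X\to Y}\cT,\Delta_t)\cong\Ext^i(\cT,\mathsf{R}_{X\leftarrow Y}\Delta_t)\cong\Ext^i(\cT,\nabla_{\hat t})=0$ using only Proposition~\ref{prop:radon-nabla-delta} and the standard filtration of $\cT$, and part~(3) is run through Verdier duality, the criterion of Lemma~\ref{lem:nabla-flag}, and Lemma~\ref{lem:Hom-proj-F}. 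You instead bootstrap all three parts from the known $\F$-coefficient statements via $\F(-)$, Nakayama, and $\F\lotimes_\O R\Hom(-,-)\cong R\Hom(\F(-),\F(-))$. Your route buys uniformity (no new computation over $\O$ beyond Proposition~\ref{prop:radon-nabla-delta}) at the price of the formal but nontrivial compatibilities you assert without proof, namely that the chosen isomorphisms intertwining $\F(-)$ with $\overleftarrow{u}_*$ and $\overrightarrow{u}^!$ are compatible with the adjunction unit and counit; the paper's direct arguments never need this. (Minor: $\F=\O/(\varpi)$ for a uniformizer $\varpi$, which need not be $\ell$ if $\O$ is ramified, so the two-term complex is $[\O\xrightarrow{\varpi}\O]$; your Nakayama arguments are unaffected.)

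The one step that is genuinely under-justified is in part~(3), where you invoke ``the usual highest weight criterion'': that $\Ext^{>0}(\mathsf{R}_{X\leftarrow Y}(\cP),\nabla_t)=0$ for all $t$ already yields a standard filtration. Over a field this is standard, but over $\O$ the criterion available in the paper (Lemma~\ref{lem:nabla-flag} and its dual) requires $i_t^*M$ to be a direct sum of copies of $\underline{\O}_{X_t}[\dim X_t]$, i.e.\ a \emph{free} local system, and torsion has to be excluded; this is exactly why the paper's own proof of~(3) checks both the vanishing of higher $\Ext$'s \emph{and} the $\O$-freeness of the $\Hom$-group, the latter via Lemma~\ref{lem:Hom-proj-F}. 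Your stated criterion is in fact true over $\O$, but it needs an argument: by universal coefficients over the discrete valuation ring $\O$, torsion in $H^{-\dim X_t}(i_t^*M)$ and nonvanishing cohomology in lower degrees are detected by $\Ext^{\ge 1}(M,\nabla_t)$, so vanishing of \emph{all} higher $\Ext$'s forces $i_t^*M$ to be free and concentrated. Alternatively, your own reduction argument gives more than you use: it shows $R\Hom(M,\nabla_t)$ is concentrated in degree $0$ with free $H^0$ (a torsion $H^0$ would create an $H^{-1}$ after applying $\F\lotimes_\O(-)$, contradicting the degree computation over $\F$), and then biduality over $\O$ converts this into the stalkwise condition of the dual of Lemma~\ref{lem:nabla-flag}. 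Either patch is short, but as written this step leans on a field-coefficient fact whose integral form is precisely what the paper takes care to verify.
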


\begin{proof}
(1) This proof is copied from \cite[Corollary 4.1.5]{yun}. Consider the morphism 
\begin{equation}
\label{eqn:adjunction}
\id \to \mathsf{R}_{X \leftarrow Y} \circ \mathsf{R}_{X \to Y}
\end{equation}
defined by adjunction.
By Proposition~\ref{prop:radon-nabla-delta}, this morphism is an isomorphism when applied to any object $\nabla_{s}$ ($s \in \scS$). By an easy induction on the number of strata, one can check that these objects generate the category $\Db_{\scS}(X,\O)$, which proves that \eqref{eqn:adjunction} is an isomorphism. A similar argument proves that the adjunction morphism $\mathsf{R}_{X \to Y} \circ \mathsf{R}_{X \leftarrow Y} \to \id$ is also an isomorphism.

(2) This proof is copied from \cite[Proposition 4.2.1]{yun}. Set $\cP=\mathsf{R}_{X \to Y}(\cT)$. As $\cT$ has a costandard filtration, by Proposition~\ref{prop:radon-nabla-delta} $\cP$ is in the subcategory of $\Db_{\scT}(Y,\O)$ generated by objects $\Delta_t$ under extensions, hence is a perverse sheaf. Now, let us show that it is projective, i.e.~that $\Hom_{\Db_\scT(Y,\O)}(\cP,\cM)=0$ for any $\cM \in {}^p\!\Db_{\scT}(Y,\O)^{<0}$. As ${}^p\!\Db_{\scT}(Y,\O)^{< 0}$ is generated (under extensions) by objects $\Delta_{t}[m]$ for $t \in \scT$, $m \in \Z_{> 0}$, it is sufficient to prove that $\Ext_{\Db_\scT(Y,\O)}^i(\cP,\Delta_{t})=0$ for $i >0$. Now we have, using Proposition~\ref{prop:radon-nabla-delta},
\[
\Ext_{\Db_\scT(Y,\O)}^i(\cP,\Delta_{t}) \cong \Ext_{\Db_\scS(X,\O)}^i(\cT,\mathsf{R}_{X \leftarrow Y}(\Delta_{t})) \cong \Ext_{\Db_\scS(X,\O)}^i(\cT,\nabla_{\hat{t}}),
\]
which proves the claim since $\cT$ has a standard filtration.

(3) The proof is similar to that of (2). Set $\cT=\mathsf{R}_{X \leftarrow Y}(\cP)$. As $\cP$ has a standard filtration, by Proposition~\ref{prop:radon-nabla-delta} $\cT$ is perverse and has a costandard filtration. It follows that $\D_X(\cT)$ is also perverse. (Here $\mathbb{D}_X$ is Verdier duality.) To prove that $\cT$ has a standard filtration, it is enough to prove that $\D_X(\cT)$ has a costandard filtration. By Lemma \ref{lem:nabla-flag}, this would follow if we can prove that $\Ext^i_{\Db_\scS(X,\O)}(\Delta_s,\D(\cT))=0$ for $i > 0$ and is $\O$-free for $i=0$. This follows from the chain of isomorphisms
\begin{multline*}
\Ext^i_{\Db_\scS(X,\O)}(\Delta_s,\D_X(\cT)) \cong \Ext^i_{\Db_\scS(X,\O)}(\cT,\nabla_s) \cong \Ext^i_{\Db_\scT(Y,\O)}(\cP,\mathsf{R}_{X \to Y}(\nabla_s)) \\
\cong \Ext^i_{\Db_\scT(Y,\O)}(\cP,\Delta_{\hat{s}})
\end{multline*}
(see (1) and Proposition~\ref{prop:radon-nabla-delta}), using Lemma~\ref{lem:Hom-proj-F} and the fact that $\cP$ is projective.
\end{proof}

\end{document}